\theoremstyle{definition}
\newtheorem{definition}{Definition}[section]
\newtheorem{example}[definition]{Example}
\newtheorem{remark}[definition]{Remark}
\theoremstyle{theorem}
\newtheorem{theorem}[definition]{Theorem}
\newtheorem{lemma}[definition]{Lemma}
\newtheorem{proposition}[definition]{Proposition}
\newtheorem{corollary}[definition]{Corollary}
\begin{document}

\title{Two-sided cartesian fibrations of synthetic $(\infty,1)$-categories}

\author[Jonathan Weinberger]{Jonathan Weinberger}
\address{Dept.~of Mathematics, Krieger School of Arts \& Sciences, Johns Hopkins University, 3400 N Charles St, Baltimore, MD 21218, USA}
\email{jweinb20@jhu.edu}

\date{\today}

\maketitle

\begin{abstract}
Within the framework of Riehl--Shulman's synthetic $(\infty,1)$-category theory, we present a theory of two-sided cartesian fibrations. Central results are several characterizations of the two-sidedness condition \`{a} la Chevalley, Gray, Street, and Riehl--Verity, a two-sided Yoneda Lemma, as well as the proof of several closure properties.

Along the way, we also define and investigate a notion of fibered or sliced fibration which is used later to develop the two-sided case in a modular fashion. We also briefly discuss \emph{discrete} two-sided cartesian fibrations in this setting, corresponding to $(\infty,1)$-distributors.

The systematics of our definitions and results closely follows Riehl--Verity's $\infty$-cosmos theory, but formulated internally to Riehl--Shulman's simplicial extension of homotopy type theory. All the constructions and proofs in this framework are by design invariant under homotopy equivalence. Semantically, the synthetic $(\infty,1)$-categories correspond to internal $(\infty,1)$-categories implemented as Rezk objects in an arbitrary given $(\infty,1)$-topos. \\[1.5\baselineskip]
\emph{MSC2020}: 18N60, 03B38, 18D30, 18N45, 55U35, 18N50 \\[.5\baselineskip]
\emph{Keywords:} $(\infty,1)$-categories, two-sided cartesian fibrations, Segal spaces, Rezk spaces, fibered Yoneda Lemma, homotopy type theory, simplicial type theory
\end{abstract}

\section{Introduction}
\label{intro}

\subsection{Synthetic higher category theory}

\subsubsection{$\infty$-Cosmos theory}

In recent years, there have been ongoing investigations into higher category theory using \emph{synthetic} approaches where the basic objects already have intrinsic homotopical meaning. The program of Riehl--Verity~\cite{RV21} develops $(\infty,n)$-category theory, for $0 \le n \le \infty$, using the notion of \emph{$\infty$-cosmos}\footnote{The terminology has been suggested by Peter May~\cite[Acknowledgments]{RV21}, after~\cite{Street-Cosmoi-I}.} which provides a semi-strict way to present $(\infty,2)$-categorical universes of a wide range of notions of higher categories (\eg~quasi-categories, Rezk spaces/objects, $n$-complicial sets, $\Theta_n$-spaces, \ldots)---as well as derived notions thereof such as slices, discrete objects, fibrations,~\etc. This has sparked a very powerful program to establish $\infty$-category theory in model-independent way. Intriguingly, they demonstrate how parts of the theory can even be reduced essentially to (strict) $2$-categorical arguments~\cite{RV2cat,RVscratch} which makes precise a range of very helpful intuitions that previously had lacked justification at this level of conceptionality, rigor, and comprehensiveness. Indeed, $\infty$-cosmos theory systematically employs techniques from \emph{Australian} or \emph{formal category theory} to work internal to sufficiently complete $(\infty,2)$-categories that are thought of as universes for different notions of higher categories.

\subsubsection{Simplicial type theory}

While $\infty$-cosmos theory is based on traditional set theory, there also have been approaches to higher category theory within various \emph{type theories}. A type theory is a formal system that allows for doing mathematics in a more \emph{structuralist} way, as opposed to the \emph{materialist} nature of set theory. The advent of homotopy type theory and univalent foundations (HoTT/UF) due to Voevodsky~\cite{VVTySys}, Awodey--Warren~\cite{AW05}, \emph{et al.}~has led to a new area of research yielding synthetic accounts to homotopy theory. This provides a significant generalization of the pioneering earlier work by Hofmann--Streicher~\cite{HS94,HS97} on the ($1$-)groupoid interpretation of Martin--Löf type theory. Roughly, the basic objects in HoTT are homotopy types, \ie~spaces up to homotopy \aka~weak $\infty$-groupoids, \cf~\cite{KL18,StrSimp}. In fact, after various works towards extending the class of models such as~\cite{ShuReedy,ShuInv,ShuInvEI,CisUniv} Shulman was able in 2019 to prove Awodey's conjecture that any $\inftyone$-topos (in the sense of Grothendieck--Rezk--Lurie~\cite{rezTop,LurHTT}) admits a model-categorical presentation that supports a full model of HoTT.

This does not yield an account to synthetic higher categories right away. Indeed, defining a notion of $(\infty,1)$-category in plain HoTT has been a long standing open problem.

Thus, variations of HoTT have been proposed that indeed make it possible to reason about higher categories using (complete) Segal-type formalisms such as~\emph{two-level type theory}~\cite{CapriottiPhD,2ltt} based on Voevodsky's \emph{homotopy type system (HTS)}~\cite{VV-HTS}. Another such approach is \emph{simplicial (homotopy) type theory}, introduced in \cite{RS17}.\footnote{\Cf~\cite{B19} for a high-level overview.} There, one adds another layer of strict simplicial shapes, containing \eg~the directed cubes $\I^n$, standard $n$-simplices $\Delta^n$, the $(n,k)$-horns $\Lambda_k^n$, \etc. Type families can then also depend on shapes. Another feature is the notion of \emph{extension type}: given a shape inclusion (``cofibration'') $\Phi \subseteq \Psi$, a type family $A: \Psi \to \UU$, and a partial section $a: \prod_\Phi \Phi^*A$, there exists the type
\[ \exten{\Phi}{A}{\Psi}{a}\]
of sections $b: \prod_\Psi A$ satisfying $a(t) \jdeq b(t)$ for all $t:\Phi$.

Together with the given shapes, this induces a sensible notion of (dependent) \emph{hom-type} $\hom_A:A \to A \to \UU$, as well as synthetic versions of the Segal and Rezk conditions. Riehl--Shulman's work on discrete fibrations in this setting was later generalized to the case of co-/cartesian fibrations~\cite{BW21} and lextensive \mbox{(bi-)fibrations}~\cite[Chapter~4]{jw-phd}. The text at hand presents a further generalization of the one-sided (co-/)cartesian case to the \emph{two-sided} cartesian case.

\subsection{Co-/cartesian families}

Let $\UU$ be a universe type, \cf~\cite[Section~1.3]{hottbook}. A \emph{cocartesian family} $P:B \to \UU$ over a \emph{Rezk type} $B$ (\aka~a synthetic $\inftyone$-category) precisely captures the idea of a \emph{functorial} type family valued in ($\UU$-small) synthetic $\inftyone$-categories: an arrow $u:a \to_B b$ in the type $B$ induces covariantly a functor $u_!^P \jdeq : u_! : P\,a \to P\,b$ between the fibers:
\[\begin{tikzcd}
	a & b && {P\,a} & {P\,b}
	\arrow["u", from=1-1, to=1-2]
	\arrow["\leadsto"{description}, draw=none, from=1-2, to=1-4]
	\arrow["{u_!}", from=1-4, to=1-5]
\end{tikzcd}\]
Dually, a \emph{cartesian family} $P:B \to \UU$ transforms \emph{contravariantly functorially}, \ie~an arrow $u:b \to_B a$ induces a functor $u_P^* \jdeq : u^*: P\,a \to P\,b$:
\[\begin{tikzcd}
	b & a && {P\,b} & {P\,a}
	\arrow["u", from=1-1, to=1-2]
	\arrow["{u^*}"', from=1-5, to=1-4]
	\arrow["\rightsquigarrow"{description}, draw=none, from=1-2, to=1-4]
\end{tikzcd}\]
These notions generalize the familiar $1$-categorical concepts of a Grothendieck op-/fibration~\cite{streicher2020fibered} to the synthetic higher setting. Fibrations of $\inftyone$-categories have initally been studied by Joyal~\cite{JoyNotesQcat} and Lurie~\cite{LurHTT} for quasi-categories, and consecutively very notably by Ayala--Francis~\cite{AFfib}, Barwick--Shah~\cite{BarwickShahFib}, Barwick--Dotto--Glasman--Nardin--Shah~\cite{barwick2016}, Rezk~\cite{rezk2017stuff}, Cisinski~\cite{CisInfBook}, Nguyen~\cite{NguyPhD}, Cisinski--Nguyen~\cite{cisinski2022universal}, as well as other authors. Particularly crucial for our approach are the treatments by Rasekh~\cite{rasekh2021cartesian,Ras17Cart} for Rezk spaces, and Riehl--Verity's $\infty$-cosmos theory~\cite{RV21,RVyoneda} which generalizes concepts and results due to Street~\cite{StrYon,StrBicat,StrBicatCorr} and Gray~\cite{GrayFib} to a rigorous and powerful framework of model-independent higher category theory.

In the context of simplicial type theory, Riehl--Shulman have considered \emph{covariant families}~\cite[Section~8]{RS17}, functioning as a synthetic variant of (discrete) left fibrations~\cite{JoyNotesQcat}.

\subsection{Two-sided cartesian families}
Two-sided cartesian families are type families $P:A \to B \to \UU$ which fibrationally are presented by \emph{spans}
\[\begin{tikzcd}
	& E \\
	A && B
	\arrow["\xi"', two heads, from=1-2, to=2-1]
	\arrow["\pi", two heads, from=1-2, to=2-3]
\end{tikzcd}\]
where $\xi$ is cocartesian, $\pi$ is cartesian, and some compatibility conditions between the two respective liftings are satisfied. An instructive example is given by the ``hom span'' $\partial_1: A \leftarrow A^{\Delta^1} \to A:\partial_0$ of a Rezk type $A$, and from ensuing properties one also obtains comma spans $\partial_1 : C \leftarrow \comma{f}{g} \rightarrow B : \partial_0$ induced by a span $g:C \leftarrow A \to B:f$ of functors.\footnote{In fact, these are even \emph{discrete} two-sided fibrations,~\cf~\cite[Section~8.6]{RS17}, \cite[Section~7.2]{RV21}, \cite[Theorem~2.3.3]{LorRieCatFib}.} Originally, this notion goes back to Street~\cite{StrYon,StrBicat,StrBicatCorr,LorRieCatFib}, and it has been generalized and thoroughly investigated for $\infty$-cosmoses~\cite[Chapter~7]{RV21}. Semantically, two-sided families correspond to \emph{categorical $\inftyone$-distributors},~\ie~bifunctors $A^{\Op} \times B \to \Cat$ into the $\inftyone$-category of small $\inftyone$-categories.\footnote{Even though this cannot be expressed in our theory yet,~\cf~\cite[Chapter~7]{jw-phd}.} The significance for $\infty$-cosmos theory is that the discrete variant,~\ie~the $\inftyone$-distributors or \emph{modules}, form a \emph{virtual equipment}~\cite{CruShuMulticat}, a rich double-categorical structure that presents the formal $\infty$-category theory of an $\infty$-cosmos. The Model Independence Theorem states that a biequivalence between $\infty$-cosmoses lifts to a biequivalence of the associated virtual equipments.\footnote{Note the parallel to axiomatic homotopy theory where a Quillen equivalence between ``homotopy theories'' presented through model categories lifts to an equivalence of their associated homotopy categories.} Here, however, we will deal with the categorical two-sided case. Namely, we will provide a structured analysis, leading up to characterizations and closure properties generalizing the one-sided case. This follows the thread of of~\cite[Section~7.1]{RV21}, but with a more explicit account of various (auxiliary) notions of fibered (or \emph{sliced}) fibrations, owed to the lack of categorical universes in the present theory. Our treatise nevertheless often times make use of techniques from ``formal'' category theory, by reasoning about the various conditions in terms of statements about (fibered) adjunctions, and their closure properties. We view this as a fruitful pratical effect of the $\infty$-cosmological philosophy on the synthetic theory formulated in simplicial type theory.

Our treatise ends with a two-sided Yoneda Lemma, and a (very brief) note on discrete two-sided families.

\subsection{Structure of the paper}

A two-sided cartesian family $P: A \to B \to \UU$ corresponds to a span $\xi: A \leftarrow E \rightarrow B : \pi$ with $\xi$ a cocartesian and $\pi$ a cartesian fibration such that the lifts are compatbile in a certain sense. But such a family or span, resp., can also be understood as a \emph{cartesian} functor
\[\begin{tikzcd}
	E && {A \times B} \\
	& B
	\arrow["\varphi", from=1-1, to=1-3]
	\arrow["\pi"', two heads, from=1-1, to=2-2]
	\arrow["q", two heads, from=1-3, to=2-2]
\end{tikzcd}\]
between cartesian fibrations that in addition is a \emph{fibered} or \emph{sliced cocartesian fibration} over $B$. By duality and the Chevalley criteria for (ordinary) co-/cartesian fibrations and functors, resp., there are several equivalent way to express this fact, \cf~\Cref{thm:char-two-sid}, after~\cite[Theorem~7.1.4]{RV21}.

Semantically, the condition is that $\varphi: \pi \to_B q$ is an object of the $\infty$-cosmos $\mathrm{co}{\mathcal C}\mathrm{art}(\mathcal{C}\mathrm{art}(\mathcal K))$ for the ambient $\infty$-cosmos $\mathcal K$ (which is of the form $\mathcal K=\mathscr{R}ezk_\mathscr E$, \cf~\cite[Proposition~E.3.7]{RV21} for an $\infty$-cosmos $\mathscr E$ presenting an $\inftyone$-topos.).

\Cref{sec:intro-fib-syn} is a technical introduction and a recap of the basic fibrational theory set up in~\cite{BW21}. We then work towards two-sided cartesian families by first discussing sliced cocartesian families in~\Cref{sec:sl-cocart-fams}, in particular their characterizations \`{a} la Chevalley and their closure properties, \cf~\Cref{ssec:sl-cocart-fams}. Next, we specialize to the case of cocartesian families in cartesian families, \cf~\Cref{ssec:cocart-fams-in-cart}.

\Cref{sec:2s-cart} then introduces and the notion and establishes basic properties of two-sided cartesian families and fibrations, resp. In~\Cref{ssec:two-var} we provide the basic formalism of two-variable families and bifibers. Finally, in~\Cref{ssec:2s-cart-fams} we define two-sided cartesian families and prove a number of Chevalley-like characterization criteria.

In~\Cref{sec:2s-cart-fun} we introduce two-sided cartesian functors and prove several closure properties (a subset of which corresponds to the $\infty$-cosmological closure properties, \cf~\cite[Section~7.2]{RV21}).

We proceed by proving a two-sided cartesian Yoneda Lemma in~\Cref{sec:2s-yon} and conclude by briefly discussing discrete two-sided cartesian families in~\Cref{sec:2s-disc}, namely how they can be understood as discrete objects in the $\infty$-cosmos of two-sided cartesian fibrations.

Helpful technical results about several fibered constructions and notions are provided in the Appendix. We treat fibered equivalences in~\Cref{sec:fib-equiv} and fibered (LARI) adjunctions in~\Cref{sec:fib-lari-adj}. In~\Cref{sec:sl-comma-prod} we conclude with preservation properties for sliced commas and products.

\subsection{Contribution}
We develop a theory of two-sided cartesian fibrations~\cite{StrYon,StrBicat,StrBicatCorr,RVyoneda,RV21} of synthetic $\inftyone$-category theory in simplicial homotopy type theory~\cite{RS17}. The central results are a characterization theorem (\Cref{thm:char-two-sid}), a Yoneda Lemma (\Cref{thm:dep-yon-2s,thm:abs-yon-2s}), and~several closure properties (\Cref{thm:2scart-cosm-closure}). Along the way, we also investigate other notions like sliced co-/cartesian families or cocartesian-on-the-left families.

In doing so we work completely internally in simplicial HoTT, so all the results externalize to theorems about internal $\inftyone$-categories in an arbitrary given Grothendieck--Rezk--Lurie $\inftyone$-topos.

Specifically, our work in this type-theoretic setting generalizes previous developments from~\cite{RS17} and~\cite{BW21} following ideas, concepts, and techniques from~\cite{RVyoneda,RV21} and translating into the type theory at hand.

The work presented here appears as~\cite[Chapter~5]{jw-phd}. This PhD thesis was written at TU Darmstadt under the supervision of Thomas Streicher.

\subsection{Related work}

Two-sided cartesian fibrations internal to $2$- and bicategories have been introduced by Street~\cite{StrYon,StrBicat,StrBicatCorr}, \cf~also the review in~\cite{LorRieCatFib}. They have been later generalized to the $\infty$-cosmological setting by Riehl--Verity~\cite{RVyoneda,RV21}. Our work is an adaption of the latter, in particular \cite[Chapter~7]{RV21}, to the setting of simplicial homotopy type theory~\cite{RS17}. This generalizes previous work on (one-sided) co-/cartesian fibrations~\cite{BW21} in this setting. Moreover, there exist treatments of lextensive~\cite{Weinberger2022Sums} and exponentiable \aka~Conduch\'{e} \aka~flat fibrations~\cite{martinez2022limits} in this setting.

Recently, large parts of Riehl--Shulman's initial paper~\cite{RS17}, including the $\infty$-categorical Yoneda Lemma~\cite{KRW-Yon} have been formalized~\cite{KRW-Yon} in Kudasov's new proof assistant \textsc{Rzk}~\cite{Kudasov2023rzk}. There have been various follow-up formalizations in \textsc{Rzk}~\cite{sHoTT}. These currently do not include the work at hand about two-sided cartesian (non-discrete) families but we this development to be possible depending some more groundwork formalizations.

By the semantics of simplicial HoTT~\cite{RS17,Shu19,Wei-StrExt} in $\inftyone$-toposes of the form $\mathscr E^{{\Simplex}^{\Op}}$ for an $\inftyone$-topos $\mathscr E$, the synthetic $\inftyone$-categories in our theory are interpreted as internal $\inftyone$-categories in $\mathscr E$, \ie~Rezk objects. Those have been investigated in various works and semantic settings, notably~\cite{RVyoneda,RV21,Ras17Cart,dB16segal,SteSegObj,mar-yon,mw-lim,martini2022cocartesian,martini2023internal}.

Directed univalence for a universe of synthetic left fibrations has been shown by~\cite{CRS18}. In a (bi-)cubical analogue to sHoTT, there is work by Weaver--Licata~\cite{WL19} on directed univalence for covariant discrete fibrations. There is also recent work on co-/limits~\cite{martinez2022limits} for (complete) Segal types in simplicial HoTT by Bardomiano Mart\'{i}nez.

In HoTT, univalent categories and the Rezk completion have been considered in~\cite{AKS-Rezk,BNMT-Univ} and~\cite[Section~9.9]{hottbook}. In two-level type theory (complete) (semi-)Segal types have been treated in~\cite{CK-SemiSeg,CapriottiPhD,2ltt}. Directed homotopy type theory with semantics in \eg~categories has been discussed by North~\cite{NorthDHoTT}, and \emph{bicategorical} directed type theory has been introduced by Ahrens--North--van~der~Weide~\cite{BTT}. Earlier, directed variants of type theories have been considered by Warren~\cite{War-DTT-IAS} and Licata--Harper~\cite{LH2DTT}. Synthetic notions of variance has been studied by Nuyts~\cite{NuyMSc}.

Perspectives on directed type theories in connections with synthetic higher categories and/or fibrations are given in~\cite{B19,KavQuant}.

\section{Synthetic fibered \texorpdfstring{$\inftyone$}{(∞,1)}-category theory: A quick starting guide}\label{sec:intro-fib-syn}

\subsection{Homotopy type theory}

The basic objects in dependent type theory are \emph{families} $A$ of \emph{types}, depending on \emph{contexts} $\Gamma$, indicated by the judgment $\Gamma \vdash A$. A context $\Gamma$ is a finite list of variable declarations. So if $\Gamma \jdeq [x_1 : A_1, \ldots, x_n : A_n]$, then $\Gamma \vdash A$ means that $A \defeq A(x_1, \ldots, x_n)$ is a family of types depending on the free variables $x_1 : A_1, \ldots, x_n : A_n$.

If $\Gamma \jdeq [ \, \, \,]$ is the \emph{empty} context then a dependent type $\Gamma \vdash A$ is just a single, constant type $A$. A type $A$ is inhabited by \emph{terms} or \emph{elements} $a : A$.

If $\Gamma \vdash A$, we can consider the \emph{extended context}
\[ \Gamma.A \jdeq [x_1 : A_1, \ldots, x_n : A_n, y : A(x_1,\ldots,x_n)].\]
Given families $\Gamma \vdash A$ and $\Gamma.A \vdash B$, there are two important new constructions. We can form \emph{$\Sigma$-types} or \emph{dependent sum types} $\Gamma \vdash \sum_{a:A} B(a)$ whose elements are pairs $\pair{a}{b}$ with $a:A$ and $b:B(a)$. If $A$ is a constant dependent type, then $\sum_{a:A} B(a)$ coincides with the cartesian product type $A \times B$.

Again, given dependent types $\Gamma \vdash A$ and $\Gamma.A \vdash B$, we can form the \emph{$\Pi$-type} or \emph{dependent function type} $\Gamma \vdash \prod_{a:A} B(a)$. The elements are given by dependent functions, or \emph{sections}, $x : \Gamma.A \vdash f(x) : B(x)$, which can be thought of functions with ``varying codomain.'' In case $\Gamma$ is the empty context, the $\Pi$-type $\prod_{a:A} B(a)$ coincides with the ordinary function type $A \to B$.

Given a dependent term $x : \Gamma.A \vdash f(x) : B(x)$ we denote the corresponding dependent function via the \emph{$\lambda$-constructor} $\Gamma \vdash \lambda x.f(x) : \prod_{a:A} B(a)$.

A crucial ingredient for homotopy type theory are Martin-Löf's identity types: given a type $A$ and terms $x,y:A$ one can consider the \emph{identity type} or \emph{path type} $(x=_Ay)$ (often just abbreviated as $(x=y)$) whose terms are witnesses or paths identifying the points $x$ and $y$. There always exists the canonical reflexivity path $\refl_x : (x=_A x)$ for $x:A$, and the identity types satisfy an induction principle (akin to the Yoneda Lemma) stating that the reflexivity terms inductively generate the family $x : A, y : A \vdash (x=_A y)$.

Two terms $x,y : A$ being \emph{judgmentally equal}, $x \jdeq y$, entails that there exists an identification $p : (x=_A y)$, but in an \emph{intensional} type theory (such as HoTT) the converse is not true.

We want to assume a (hierarchy of) universe(s) $\UU$ of all small types, implementing the idea of Grothendieck universes, well-known from category theory inside set theory. Though not material for most of our work, it is convenient to assume Voevodsky's \emph{univalence axiom} stating that in a canonical way the paths of the universes, as captured by its identity type, coincide with the weak equivalences, \ie, bi-invertible functions between types.

Univalence yields a straightening-unstraightening construction on the level of types, see~\Cref{thm:typal-gconstr}. Namely, we can understand a dependent type $\Gamma \vdash A$ either as family $A : \Gamma \to \UU$, where $\Gamma \jdeq A_1 \times \ldots \times A_n$ or as a ``fibration'' $\sum_{a:A} B(a) \to A$.

Indeed, by path induction, one can show that any type family $P : A \to \UU$ behaves like a fibration in the topological sense in that it supports a \emph{path transport} operation $(-)_!^P : \prod_{a,b:A} \prod_{u:(a=_Ab)} P(a) \to P(b)$. For a path $u : (a=_A b)$ and points $x : P(a)$, $y : P(b)$, the path type $(u_!^P(x) =_{P(b)} y)$ gets identified with the type $(x =^P_u y)$ (short: $(x=_uy)$) of \emph{paths over $u$}, \ie, paths $\varphi : (\pair{a}{x} =_{\sum_{x:A} B(x)} \pair{b}{y})$ such that $\pr_1(\varphi) = u$. In this way, we can think of type families also supporting \emph{path lifting}, in line with the topological intution.

In this sense the family $x,y :A \vdash (x=_A y)$ can be identified with a \emph{path space fibration} $\mathrm{Path}(A) \to A \times A$ in a model category~\cite{AW05}.

More background on homotopy type theory can be found in \eg~\cite{hottbook,RijIntro,RieHoTT,AwoTT,Grayson-UF}.

\subsection{Simplicial homotopy type theory}

Simplicial (homotopy) type theory (s(Ho)TT) has been introduced by Riehl--Shulman~\cite{RS17} as an extension of Martin--Löf type theory (MLTT) or homotopy type theory (HoTT), resp., to reason synthetically about $\inftyone$-categories. Building on this, there has been further work on directed univalence~\cite{CRS18,WL19}, fibered $\inftyone$-category theory~\cite{BW21,jw-phd}, and co-/limits~\cite{martinez2022limits}.

We give a short overview on how sHoTT can be presented, and on the basic concepts we will need in this theory. For a longer exposition of the basics of this theory we refer to the original paper~\cite{RS17}, and for a detailed discussion of the fibrational prerequisites we refer to~\cite{BW21}.

\subsubsection{Strict shapes}
In usual dependent type theories, a type family $A$ depends on a \emph{context}
\[ \Gamma \defeq [x_1 : A_1, \ldots, x_n : A_n] \vdash A.\]
This gets extended in simplicial type theory~\cite{RS17} where now types can depend on \emph{shapes} that are defined as sub-polytopes of finite-dimensional cubes. One writes
\[ \Xi \; | \; \Phi \; | \; \Gamma \vdash A \]
where $\Xi$ denotes a \emph{cube}, and $\Phi$ a (sub-)\emph{shape} of $\Xi$.

A general cube is of the form $\I^n$, where $\I^1 \jdeq \I$ denotes the directed bipointed linear order
\[ (0 \le 1).\]
From this, one can define common shapes in the expected way, such as \eg:
\begin{align*}
	& \partial \Delta^1 \defeq \{ t:\I \; | \;  t \jdeq 0 \lor t \jdeq 1\}, \quad \Lambda_1^2 \defeq \{ \pair{t}{s} : \I \times \I  \; | \;  s \jdeq 0 \lor t \jdeq 1\}, \\
	& \partial \Delta^2 \defeq \{ \pair{t}{s} : \I \times \I  \; | \;  s \jdeq t \lor s \jdeq 0 \lor t \jdeq 1\}.
\end{align*}

\emph{Strictness} of the cube and shape layer refers to the fact emethat equality of shape terms $\Phi \vdash s \jdeq t$ is reflected as definitional equality in the type level.
m
As explained in~\cite[Section~2.4]{BW21} we also coerce the shapes to be types.

\subsubsection{Extension types}\label{ssec:extn-types}

Let be a shape inclusion $\Phi \subseteq \Psi$ (in some common cube context $\Xi$). We consider a family $P:\Psi \to \UU$ and a \emph{partial} section $a:\prod_{t:\Phi} P(t)$. Then we can form the \emph{extension type}
\[ \exten{\Psi}{P}{\Phi}{a}\]
consisting of all \emph{total} sections $b:\prod_{t:\Psi} P(t)$ such that $t: \Phi \vdash a(t) \jdeq b(t)$:
\[\begin{tikzcd}
	&& P \\
	\Phi && \Psi
	\arrow[hook, from=2-1, to=2-3]
	\arrow[two heads, from=1-3, to=2-3]
	\arrow["a", dotted, from=2-1, to=1-3]
	\arrow["b"', curve={height=12pt}, dotted, from=2-3, to=1-3]
\end{tikzcd}\]
The rules for the extension type are similar to that of the usual $\Pi$-types but invoking the strict extension property, \cf~\cite[Subsection~2.2]{RS17}. The intuition is that $\Phi \hookrightarrow \Psi$ corresponds to a kind of cofibration along which we ask for extensions of a given section of the fibration $\Phi^*P \fibarr \Phi$. This idea originally goes back to unpublished work by Lumsdaine and Shulman, and it has also been used \eg~in cubical type theory~\cite{CCHM2018,OP16} to define path types~\cite{swan2018path}.

The extension types serve to define a notion of absolute and dependent hom-types, and analogous generalizations thereof to different shapes and higher cells. In particular, this gives rise to synthetic notions of $\infty$-groupoids \aka~\emph{discrete types}, pre-$\inftyone$-categories \aka~\emph{Segal types}, and $\inftyone$-categories \aka~\emph{Rezk} or \emph{complete Segal type}. We briefly introduce and explain these notions in~\Cref{ssec:syn-infty}. For a comprehensive treatment we refer to~\cite[Sections~5, 6, and 10]{RS17} and~\cite[Section~2]{BW21}.

In general, we will freely switch back and forth between type families and fibrations, in particular when defining notions of these. \Eg, defining the notion of a cocartesian \emph{family} simultaneously introduces a notion of cocartesian \emph{fibration}, and \emph{vice versa}. For a formal justification, \cf~\cite[Section~2.5]{BW21}. Specifically, for any family we have the following type-theoretic version of the Grothendieck construction \aka~straightening/unstraightening:
\begin{theorem}[Typal Grothendieck constr., cf.~{\protect\cite[Theorem~4.8.3]{hottbook}}]\label{thm:typal-gconstr}
	For the types
	\[ \Fib(\UU) \defeq \sum_{A,B:\UU} A \to B, \qquad \Fam(\UU) \defeq \sum_{B:\UU} (B \to \UU)  \]
	and maps
	\begin{align*}
		\partial_1 & \defeq \lambda A,B,\pi:A \to B.B:\Fib(\UU) \to \UU, \\
		\pr_1 		& \defeq \lambda B,P:B \to \UU.B: \Fam(\UU) \to \UU,
	\end{align*}
	there is a fiberwise quasi-equivalence
	\[\begin{tikzcd}
		{\Fib(\UU)} && {\Fam(\UU)} \\
		& \UU
		\arrow["{\partial_1}"', from=1-1, to=2-2]
		\arrow["{\pr_1}", from=1-3, to=2-2]
		\arrow["\Un", shift left=1, from=1-3, to=1-1]
		\arrow["\St", shift left=2, from=1-1, to=1-3]
	\end{tikzcd}\]
	at stage $B:\UU$ given by a pair
	\[\begin{tikzcd}
		{\UU/B} && {(B\to \UU)}
		\arrow["{\St_B}", shift left=2, from=1-1, to=1-3]
		\arrow["{\Un_B}", shift left=1, from=1-3, to=1-1]
	\end{tikzcd}\]
	with \emph{straightening}
	\[ \St_B(\pi) \defeq \lambda b.\fib_b(\pi)\]
	and \emph{unstraightening}
	\[ \Un_B(P) \defeq \pair{\totalty{P}}{\pi_P}\]
	($\pi_P: \totalty{P} \defeq \sum_{b:B} P\,b \to B$ the total space projection).
\end{theorem}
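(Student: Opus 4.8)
The plan is to fix a base type $B : \UU$ and show that the two stagewise maps $\St_B : \UU/B \to (B \to \UU)$ and $\Un_B : (B \to \UU) \to \UU/B$ are mutually quasi-inverse; since the ambient maps $\St : \Fib(\UU) \to \Fam(\UU)$ and $\Un$ act as the identity on the $\UU$-component — each sends a datum over $B$ to a datum over the \emph{same} $B$, hence strictly commutes with $\partial_1$ and $\pr_1$, the fiber of $\partial_1$ (resp.\ $\pr_1$) at $B$ being $\UU/B$ (resp.\ $B \to \UU$) up to the evident singleton contraction — assembling these stagewise equivalences over all $B$ yields the asserted fiberwise quasi-equivalence. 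Concretely I would construct the homotopies $\St_B \circ \Un_B \sim \mathrm{id}_{B \to \UU}$ and $\Un_B \circ \St_B \sim \mathrm{id}_{\UU/B}$ and then bundle the $B$-indexed family of this data; this is precisely the argument of~\cite[Theorem~4.8.3]{hottbook}, replayed in the present notation.

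For $\St_B \circ \Un_B \sim \mathrm{id}$: given $P : B \to \UU$, the goal $\lambda b.\fib_b(\pi_P) = P$ reduces, by function extensionality together with univalence, to producing for each $b : B$ an equivalence $\fib_b(\pi_P) \simeq P\,b$. Unfolding, $\fib_b(\pi_P) \jdeq \sum_{z:\totalty{P}}\big(\pi_P(z) = b\big) \simeq \sum_{b':B}\sum_{x:P\,b'}(b'=b)$; reassociating the $\Sigma$-types and contracting the based path space $\sum_{b':B}(b'=b)$, with center $\pair{b}{\refl_b}$, leaves exactly $P\,b$ — the familiar ``fiber of a first projection recovers the value'' computation.

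For $\Un_B \circ \St_B \sim \mathrm{id}$: given $\pair{A}{\pi} : \UU/B$, one must identify $\pair{\,\totalty{\St_B(\pi)}\,}{\,\pi_{\St_B(\pi)}\,}$ with $\pair{A}{\pi}$ in the slice. The core computation is the equivalence $e : \sum_{b:B}\fib_b(\pi) \simeq A$ obtained, as above, by reassociating and contracting the based path space $\sum_{b:B}(\pi\,a = b)$ (contractible for each $a : A$); what matters is that $e$ is a map \emph{over} $B$, \ie~$\pi \circ e$ agrees with the first projection of $\sum_{b:B}\fib_b(\pi)$. Since identifications in $\UU/B$ correspond, via the characterization of paths in $\Sigma$-types together with univalence, to a type-level equivalence equipped with a filled triangle over $B$, the pair consisting of $e$ and this triangle supplies the required identification. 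I expect this second composite to be the main obstacle: one has to verify carefully that $e$ respects the projection to $B$ and to unwind correctly what an identification in the slice amounts to — that is where the coherence bookkeeping lives, whereas the first composite is a one-line singleton-contraction argument. With both homotopies in hand at each stage $B$, I would finally package the family $B \mapsto (\St_B, \Un_B, \text{homotopies})$ into the fiberwise quasi-equivalence between $\partial_1$ and $\pr_1$.
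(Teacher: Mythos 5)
Your proposal is correct and takes essentially the same route as the paper, which gives no independent proof and simply defers to the cited HoTT book result: both composites are handled by singleton contraction (with function extensionality and univalence for $\St_B \circ \Un_B$), and the only real bookkeeping is checking that the equivalence $\sum_{b:B}\fib_b(\pi)\simeq A$ commutes with the projections to $B$, exactly as you identify.
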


In particular, this yields a weak equivlence of maps $\Un_B \circ \St_B$ and $\id_{\UU/B}$, so for any map $E \to B$ we obtain a fibered equivalence, which we call the \emph{projection equivalence} of $\pi$:
\[\begin{tikzcd}
	E && {\sum_{b:B} b^*E} \\
	& B
	\arrow["\simeq", from=1-1, to=1-3]
	\arrow["\pi"', from=1-1, to=2-2]
	\arrow["{\pr_1}", from=1-3, to=2-2]
\end{tikzcd}\]

\subsection{Synthetic $\infty$-categories}\label{ssec:syn-infty}

Recall the notion of extension type from~\Cref{ssec:extn-types}. Importantly, this will serve to define a tractable notion of synthetic $\infty$-category.

\begin{definition}[$\hom$-types, \protect{\cite[Definition~5.1]{RS17}}]\label{def:hom}
	Let $A$ be a type with terms $x,y : A$. Then the extension type
	\[ \hom_A(x,y) \defeq \ndexten{\Delta^1}{A}{\partial \Delta^1}{[x,y]} \]
	is called the \emph{hom-type} or \emph{type of directed arrows} from $x$ to $y$.
	We also write
	\[ (x \to_A y) \defeq \hom_A(x,y),\]
	or simply $(x \to y)$ if the underlying type is understood.
\end{definition}

By the rules of extension types, a term $f : \hom_A(x,y)$ is exactly a function $f : \Delta^1 \to A$ such that $f\,0 \jdeq x$ and $f\,1 \jdeq y$.

To treat composites we have to consider diagrams of shape $\Delta^2$.

\begin{definition}[$\hom^2$-types, \protect{\cite[Definition~5.2]{RS17}}]\label{def:segal}
	Let $A$ be a type with terms $x,y,z:A$. Then, assuming we are given $f : (x\to_A y)$, $g : (y \to_A z)$, and $h : (x \to_A z)$ (and leaving the endpoints implicit in the notation), we can consider the type
	\[ \hom_A^2(f,g;h) \defeq \ndexten{\Delta^2}{A}{\partial \Delta^2}{[f,g,h]}. \]
\end{definition}

By~\Cref{def:segal}, a Segal type $A$ admits composition of arrows, uniquely up to homotopy in the following sense: given terms $x,y,z : A$, arrows $f : (x \to_A y)$ and $g : (y \to_A z)$ there is some composite arrow $h : (x \to_A z)$ together with a $2$-simplex $\sigma_{f,g,h} : \hom_A^2(f,g;h)$ witnessing that $h$ is, in fact, a composite of $f$ and $g$; and more so, the type of this data is contractible.

We are now interested in singling out those simplicial types which always admit composition of (composable) arrows, uniquely up to homotopy. These will be our synthetic pre-$\inftyone$-categories.

\begin{definition}[Segal types, \protect{\cite[Definition~5.3]{RS17}}]
	A \emph{Segal type} is a type $A$ such that
	\[ \isSegal(A) \defeq \prod_{x,y,z:A} \prod_{\substack{f:(x \to_A y) \\  g:(y \to_A z)}} \isContr \Big( \sum_{h:\hom_A(x,z)} \hom_A^2(f,g,h) \Big).\]
\end{definition}
One can show~\cite[Theorem~5.5]{RS17} that this is equivalent to being homotopically right orthogonal to the horn inclusion $\Lambda_1^2 \hookrightarrow \Delta^2$, see also~\Cref{def:orth-fam,def:isoinn-fam}.

Based on the $\hom$-types we can define a notion of \emph{isomorphism}.

\begin{definition}[Isomorphisms]
	Let $A$ be a type. Let $x,y : A$ and $f : \hom_A(x,y)$ be an arrow. We say $f$ is an \emph{isomorphism} if and only if the proposition
	\[ \isIso(f) \defeq \Big( \sum_{g:hom_A(y,x)} g\circ f = \id_x \Big) \times \Big( \sum_{h:hom_A(y,x)} f\circ h = \id_y \Big)  \]
	is satisfied, where $\id_x \jdeq \lambda t.x : \hom_A(x,x)$ is the \emph{identity} morphism of $x$.

	The \emph{type of isomorphisms} from $x$ to $y$ is then defined as
	\[ \iso_A(x,y) \defeq (x \cong_A y) \defeq \sum_{f:\hom_A(x,y)} \isIso(f).\]
\end{definition}

In particular, the identity maps can be shown to be isomorphisms. By path induction, we then obtain a comparison map
\[ \idtoiso_{A} : \prod_{x,y:A} (x =_A y) \to (x \cong_A y), \quad \idtoiso_{A,x,x}(\refl_x) \defeq \id_x.\]
This connects the new notion of isomorphism with the natively given notion of equivalence through paths. 

\subsection{Right orthogonal and LARI families}

We will be concerned in our treatment with specific notions of fibrations, based on synthetic analogues of co-/cartesian fibrations, and occasionally also (discrete) left/right fibrations. However, for both conceptual and technical purposes it will be instructive to introduce the general patterns behind those definitions: right orthogonality and LARI/RARI conditions.

We remark that the notion of iso-/inner family is not so much of conceptual interest, but rather of technical convenience, \cf~also the discussion at the beginning of~\cite[Section~4]{BW21}.

\subsubsection{Directed arrows in type families}

Let $P : B \to \UU$ be a type family with $a,b:B$ and $u: (a \to_B b)$. After~\cite{RS17}, we can then consider the type \emph{dependent homomorphisms} or \emph{dependent arrows}. For $x:P\,a$ and $y : P\,b$, the type of (dependent) arrows in $P$ over $u$ is given by
\[ \mathrm{dhom}_{P,a,b,u}(x,y) \defeq (x \to_u^P y) \defeq (x \to_u y) \defeq \exten{t:\Delta^1}{P(t)}{\partial \Delta^1}{[x,y]}.\]
Analogously, we can define dependent versions of $\hom^2$ and any other possible shape inclusion. This naturally leads to relative versions of the Segal and Rezk conditions, which we will take as the basis of all of our following theory: we will consider isoinner fibrations/families over a Rezk base type. This means, the base type behaves like a category, and so will the total type as well as all the fibers, compatible with the canonical projection from the total type to the base. But these families need not satisfy lifting properties w.r.t~directed arrows (though they do w.r.t.~the ordinary path types from standard homotopy type theory, like any type family). The precise setting for (iso)inner families is given in~\cite[Section~4]{BW21}, and we recall it briefly in the following.

\subsubsection{Fibrations and fibered maps}

We are introducing the notation $\pi : E \fibarr B$ for map between if we want to view it as a fibration of some kind. In writing so, we are somewhat imprecise in that it could mean different things: sometimes it just describes a map $\pi : E \fibarr B$ between types, but to be viewed as a fibration in a slice category over $B$, in the model. This is emphasize its role somewhat akin to to that of an (axiomatic) \emph{isofibration} in the sense of $\infty$-cosmos theory~\cite{RV21}. E.g., when considering a square 
\[\begin{tikzcd}
	E & F \\
	B & A
	\arrow["\pi"', two heads, from=1-1, to=2-1]
	\arrow["k", from=2-1, to=2-2]
	\arrow["\xi", two heads, from=1-2, to=2-2]
	\arrow["\varphi", from=1-1, to=1-2]
\end{tikzcd}\]
of maps between types, really all maps occuring could be written as $\fibarr$ (by projection equivalence). But the intuition is that a square like this encodes a fibered map $\pair{k : B \to A}{\varphi : (b :B) \to P(b) \to Q(k(b))}$ (where $P$ and $Q$, resp., denote the straightening of $E$ and $F$, resp.), so we do not regard $k$ and $\varphi$ as fibrations (but rather, as a fibered functor between fibered categories.\footnote{Though we will soon also consider certain fibered functors (between fibrations placed in the vertical direction) that can be regarded as fibrations themselves (placed in the horizontal direction).}

Sometimes $\fibarr$ might specifically mean that we are considering a (co)cartesian fibration or some related notion, which will be made explicit by the context.

\subsubsection{Right orthogonal families}

\begin{definition}[Right orthogonal family, \protect{\cite[Definition~2.1]{BW21}}]\label{def:orth-fam}
	Let $P:B \to \UU$ be a family. Given a type map or shape inclusion $j:Y \to X$, we say that $P$ is \emph{right orthogonal to $j$} or a \emph{(right) $j$-orthogonal family}, written $j \bot P$, if and only if the proposition
 	\[ \prod_{v:X \to B} \prod_{f:(t:Y) \to P(v\,t)} \isContr\Big( \sum_{g:(t:X) \to P(v\,t)} j^*(g) =^P_v f \Big)\]
 	is satisfied.
 	
 	In case $B \simeq \unit$, the family $P$ can be identified with a single type $A$. In this case we say that $A$ is \emph{right orthogonal to $j$} and write $j \bot A$.
\end{definition}

Let $\pi:E \fibarr B$ denote the projection associated to a family $P:B \to \UU$. Then $P$ is $j$-orthogonal if and only if the type of fillers $g$ in any diagram as follows is contractible:
\[\begin{tikzcd}
	\Phi && E \\
	\Psi && B
	\arrow["j"', hook, from=1-1, to=2-1]
	\arrow["v"', from=2-1, to=2-3]
	\arrow["f", from=1-1, to=1-3]
	\arrow["\pi", two heads, from=1-3, to=2-3]
	\arrow["g"{description}, dashed, from=2-1, to=1-3]
\end{tikzcd}\]
In case of a shape inclusion $j: \Phi \hookrightarrow \Psi$ a family $P$ is right orthogonal to $j$ if and only if
\[ \prod_{v:\Phi \to B} \prod_{f:(t:Y) \to P(v\,t)} \isContr\Big( \exten{\Psi}{P}{\Phi}{f}\Big), \]
\cf~\cite[Section~2.4]{BW21}.

\begin{definition}[Iso-/inner family, \protect{\cite[Definitions~4.1.1, 4.2.3]{BW21}}]\label{def:isoinn-fam}
\begin{enumerate}
	\item\label{it:inn-fam} A type family is \emph{inner} if it is right orthogonal to the horn inclusion $\Lambda_1^2 \hookrightarrow \Delta^2$. A \emph{Segal type} is a type that is inner when seen as a family over the $\unit$.
		\item\label{it:isoinn-fam} An inner family is \emph{isoinner} if it is right orthogonal to the terminal projection $\mathbb{E} \to \unit$, where $\mathbb{E}$ denotes the \emph{free bi-invertible arrow}, \cf~\cite[Subsection~4.2.1]{BW21}. A \emph{Rezk type} or \emph{complete Segal type} is a type that is isoinner (again, when seen as a family over $\unit$).
\end{enumerate}
\end{definition}

The Segal types are exactly the synthetic pre-$\inftyone$-categories\footnote{\Cf~\cite{AKS-Rezk} and \cite[Section~9.1]{hottbook} for a general discussion.} in our setting while the Rezk types are precisely the synthetic $\inftyone$-categories.

The condition of being a co- or contravariant \emph{discrete} family (meaning $\infty$-groupoid valued) can be naturally expressed in terms of a right orthogonality condition, as originally worked out in~\cite[Section~8]{RS17}. Over a point this of course specializes to the type being a synthetic $\infty$-groupoid.

\begin{definition}[Co- and contravariant families, \protect{\cite[Definition~8.2]{RS17}}]\label{def:cov-fam}
	A family is \emph{covariant} if it is right orthogonal to the shape inclusion $0: \unit \hookrightarrow \Delta^1$. Dually, it is \emph{contravariant} if it is right orthogonal to the shape inclusion $1: \unit \hookrightarrow \Delta^1$.
	
	Over a point, in either of these cases we call the ensuing type \emph{discrete}.
\end{definition}

\subsubsection{LARI families}

Co-/cartesian families can be characterized in terms of certain adjoint existence criteria. Namely, a map $\pi: E \fibarr B$ is a synthetic cocartesian fibration iff the Leibniz cotensor
\[\begin{tikzcd}
	{E^{\Delta^1}} && {\comma{\pi}{B}}
	\arrow[""{name=0, anchor=center, inner sep=0}, curve={height=12pt}, from=1-1, to=1-3]
	\arrow[""{name=1, anchor=center, inner sep=0}, curve={height=12pt}, dotted, from=1-3, to=1-1]
	\arrow["\dashv"{anchor=center, rotate=-90}, draw=none, from=1, to=0]
\end{tikzcd}\]
has a \emph{left adjoint right inverse (LARI)}, \cf~\cite[Section~11]{RS17} and \cite[Appendix~B.1]{BW21}. If the latter map exists, it exactly functions as the \emph{lifting map}, mapping a pair $\pair{u:b \to_B b'}{e:P\,b}$ to its cocartesian lift $P_!(u,e): e \cocartarr^P_u e'$.

Formally generalizing this condition to any shape inclusion leads to the notion of \emph{LARI family} or \emph{LARI fibration}, \ie, maps $\pi: E\to B$ such that for a certain shape inclusion or type map $j:Y \to X$ the following induced map has a LARI:
\[\begin{tikzcd}
	{E^X} \\
	&& {B^X \times_{B^Y} E^Y} && {E^Y} \\
	&& {B^X} && {B^Y}
	\arrow["{B^j}"{description}, from=3-3, to=3-5]
	\arrow["{\pi^Y}", from=2-5, to=3-5]
	\arrow[from=2-3, to=2-5]
	\arrow[from=2-3, to=3-3]
	\arrow["\lrcorner"{anchor=center, pos=0.125}, draw=none, from=2-3, to=3-5]
	\arrow["{\pi^X}"{description}, curve={height=18pt}, from=1-1, to=3-3]
	\arrow["{E^j}"{description}, curve={height=-24pt}, from=1-1, to=2-5]
	\arrow[""{name=0, anchor=center, inner sep=0}, from=1-1, to=2-3]
	\arrow[""{name=1, anchor=center, inner sep=0}, curve={height=12pt}, dotted, from=2-3, to=1-1]
	\arrow["\dashv"{anchor=center, rotate=-135}, draw=none, from=1, to=0]
\end{tikzcd}\]

LARI adjunctions in simplicial HoTT have been studied in~\cite[Appendix~B]{BW21}, extending the work on adjunctions in simplicial HoTT by Riehl--Shulman~\cite[Section~11]{RS17}. This in turn extends the study of adjunctions in plain HoTT~\cite[Chapter~4]{hottbook}. In the semantic picture, Riehl--Shulman's theory of adjunction can be understood to internalize parts of of Riehl--Verity's theory of homotopy coherent adjunctions~\cite{RVCohAdjMnd}. In particular, they show in \emph{op.~cit.}~that a $2$-categorical adjunction extends to a fully homotopy coherent one. In~\Cref{sec:fib-lari-adj} we extend the setup of~\cite[Appendix~B]{BW21} by a fibered version of LARI adjunction.

\begin{definition}[$j$-LARI family, \cf~\protect{\cite[Cor.~6.3.8]{RV21}, \cite[Def.~3.2.2]{BW21}}]\label{def:lari-fam}
Let $j:Y \to X$ be a shape inclusion or type map. A family $P: B \to \UU$ is a \emph{$j$-LARI family} if for its unstraightening $\pi \defeq \pi_P : E \to B$ the following Leibniz cotensor map has a LARI:
\[\begin{tikzcd}
	{E^X} && {E^Y\times_{B^Y} B^X}
	\arrow[""{name=0, anchor=center, inner sep=0}, "{j \cotens \pi}"', curve={height=12pt}, from=1-1, to=1-3]
	\arrow[""{name=1, anchor=center, inner sep=0}, curve={height=12pt}, dotted, from=1-3, to=1-1]
	\arrow["\dashv"{anchor=center, rotate=-90}, draw=none, from=1, to=0]
\end{tikzcd}\]
\end{definition}

\subsection{Co-/cartesian families}

\subsubsection{Cocartesian families}

We only treat the covariant case. The contravariant case replaces the initial vertex $i_0 : \unit \hookrightarrow \Delta^1$ with the terminal one $i_1: \unit \hookrightarrow \Delta^1$ and left adjoint inverses (LARIs) with right adjoint inverses (RARIs).

\begin{definition}[Cocartesian arrow, \protect{\cite[Def.~5.1.1]{BW21}, \cf~\cite[Def.~5.4.1]{RV21}}]
Let $P:B \to \UU$ be an isoinner family over a Rezk type $B$. An arrow $f:e \to_u^P e'$ over an arrow $u: b \to_B b'$ is \emph{$P$-cocartesian} if and only if
\[\isCocartArr_u^P(f) \defeq \prod_{b'':B} \prod_{v:b' \to b''} \prod_{e'':P\,b''} \prod_{h:e \to^P_{vu} e''} \isContr\Big( \sum_{g: e' \to^P_v e''} h =_{vu}^P gf \Big).\]
\end{definition}

\begin{definition}[Cocartesian family, \protect{\cite[Def.~5.2.2]{BW21}}, \cf~\protect{\cite[Def.~5.4.2]{RV21}}]
	An isoinner family over a Rezk type $B$ is \emph{cocartesian} if and only if for $u:b \to_B b'$ and $e:P\,b$  the type
	\[ \sum_{e':P\,b'} \sum_{f:e \to^P_u e'} \isCocartArr_u^P(f)  \]
	is inhabited.
\end{definition}

Given $P,u,e,e'$ as above, we will denote the subtype of cocartesian arrows as $(e\cocartarr^P_u e') \hookrightarrow (e \to^P_u e')$, and likewise for the actual arrows in the illustrating figures.

One can show that the inhabitedness condition from the above definition implies contractibility. In other words, cocartesian lifts of a given arrow in the base are determined uniquely up to homotopy by their source vertex.

Connecting back with~\Cref{def:lari-fam} we can express being a cocartesian family as a LARI condition.
\begin{theorem}[Chevalley criterion: Cocartesian families via lifting, \protect{\cite[Def.~5.2.6]{BW21}} \cf~\protect{\cite[Prop.~5.2.8(ii)]{RV21}}]\label{thm:cocart-fams-intl-char}
	Let $B$ be a Rezk type, $P: B \to \UU$ be an isoinner family, and denote by $\pi: E \to B$ the associated projection map. The family $P$ is cocartesian if and only if the Leibniz cotensor map $i_0 \cotens \pi: E^{\Delta^1} \to \comma{\pi}{B}$ has a left adjoint right inverse:
	\[\begin{tikzcd}
		{E^{\Delta^1}} & {} \\
		&& {\pi \downarrow B} && E \\
		&& {B^{\Delta^1}} && B
		\arrow[two heads, from=2-3, to=3-3]
		\arrow["{\partial_0}"', from=3-3, to=3-5]
		\arrow[from=2-3, to=2-5]
		\arrow["\pi", two heads, from=2-5, to=3-5]
		\arrow["\lrcorner"{anchor=center, pos=0.125}, draw=none, from=2-3, to=3-5]
		\arrow["{\partial_0}", shift left=2, curve={height=-18pt}, from=1-1, to=2-5]
		\arrow[""{name=0, anchor=center, inner sep=0}, "\chi"', curve={height=12pt}, dashed, from=2-3, to=1-1]
		\arrow[""{name=1, anchor=center, inner sep=0}, "{i_0\widehat{\pitchfork}\pi}"', curve={height=12pt}, from=1-1, to=2-3]
		\arrow["{\pi^{\Delta^1}}"', shift right=2, curve={height=18pt}, two heads, from=1-1, to=3-3]
		\arrow["\dashv"{anchor=center, rotate=-118}, draw=none, from=0, to=1]
	\end{tikzcd}\]
\end{theorem}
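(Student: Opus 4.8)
The plan is to unfold both sides of the equivalence into statements about lifting directed arrows along $\pi$, and then to match ``the cocartesian lift of $v$ at $e$'' with ``the value at $(e,v)$ of a left adjoint right inverse of $i_0 \cotens \pi$''. First I would identify the relevant types. Writing $E \jdeq \totalty{P}$ and $\pi \jdeq \pr_1$, the comma object $\comma{\pi}{B}$ is equivalent to the pullback of $\pi$ along the source projection $\partial_0 : B^{\Delta^1} \to B$, that is, to $\sum_{e:E} \sum_{b:B} \hom_B(\pi\,e, b)$; under this identification the Leibniz cotensor $i_0 \cotens \pi : E^{\Delta^1} \to \comma{\pi}{B}$ becomes the map $f \mapsto (f\,0,\; \pi(f\,1),\; \pi \circ f)$, restricting an arrow of $E$ to its initial vertex and recording its $\pi$-image. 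Consequently a section $\chi$ of $i_0 \cotens \pi$ is precisely a choice, for all $e:E$ and $v : \pi\,e \to_B b$, of a lift $\chi(e,v) : e \to^P_v e_v$ with source $e$ and $\pi$-image $v$; and $\chi$ being moreover a \emph{left} adjoint means --- since the unit of a LARI may be taken to be the identity, \cf~\cite[Appendix~B]{BW21} --- that for each $(e,v)$ the object $\chi(e,v)$ is \emph{initial} in the comma type $(e,v) \downarrow (i_0 \cotens \pi)$, equivalently that the counit component $\epsilon_f$ at an arrow $f$ of $E$ exhibits $f$ as factoring uniquely, up to homotopy, through $\chi\big((i_0\cotens\pi)(f)\big)$. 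So the theorem reduces to identifying this universal-lift property with $P$-cocartesianness of $\chi(e,v)$.

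For ``$P$ cocartesian $\Rightarrow$ $i_0 \cotens \pi$ has a LARI'', I would set $\chi(e,v) \jdeq P_!(v,e) : e \cocartarr^P_v e_v$. As the type of cocartesian lifts of $v$ at $e$ is contractible, this assembles into an honest map $\chi : \comma{\pi}{B} \to E^{\Delta^1}$, and $(i_0 \cotens \pi) \circ \chi \jdeq \id$ holds by construction, so $\chi$ is a right inverse. To obtain $\chi \dashv i_0 \cotens \pi$ I would build the counit: given $f : e \to^P_u e'$ in $E$, apply the cocartesian universal property of $P_!(u,e)$ with the degenerate $2$-cell $\id_{\pi\,e'} \cdot u \simeq u$ and with $h \jdeq f$; this produces the unique $g : e_u \to^P_{\id_{\pi\,e'}} e'$ together with its witnessing $2$-simplex, and the resulting square from $\chi\big((i_0\cotens\pi)(f)\big) = P_!(u,e)$ to $f$ is the component $\epsilon_f$. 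The triangle identities and naturality then follow from the uniqueness clauses in the definition of $\isCocartArr$ together with Rezk-ness of $B$, of $E$, and of the fibers; alternatively one feeds this pointwise data into the coherence machinery for (fibered) LARI adjunctions of \cite[Appendix~B]{BW21}. Since the unit is the identity, the resulting adjunction is a LARI.

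For the converse, let $\chi$ be a LARI of $i_0 \cotens \pi$ with counit $\epsilon$ and put $f_{e,v} \jdeq \chi(e,v) : e \to^P_v e_v$; the right-inverse property forces $f_{e,v}$ to have source $e$ and $\pi \circ f_{e,v}$ to equal $v$, so $f_{e,v}$ is a lift of $v$ at $e$. It remains to prove $\isCocartArr_v^P(f_{e,v})$: given $v' : b \to_B b''$, $e'' : P\,b''$, and $h : e \to^P_{v'v} e''$, regard $h$ together with its implicit filler as an object $\hat h$ of $E^{\Delta^1}$ lying over $(e, v'v) \in \comma{\pi}{B}$, and regard $(\id_e, v')$ with its degenerate filler as a morphism $(e,v) \to (e, v'v)$ in $\comma{\pi}{B}$. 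Initiality of $\chi(e,v)$ says that the type of morphisms $f_{e,v} \to \hat h$ in $E^{\Delta^1}$ lying over this prescribed morphism is contractible; and unwinding the Leibniz-cotensor description of morphisms of $E^{\Delta^1}$, that type is exactly $\sum_{g : e_v \to^P_{v'} e''}\big(h =_{v'v}^P g\,f_{e,v}\big)$. Hence $f_{e,v}$ is $P$-cocartesian, so $P$ is a cocartesian family.

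The step I expect to be the main obstacle is this last identification, namely the precise Leibniz-cotensor bookkeeping: verifying that a morphism of $E^{\Delta^1}$ lying over a prescribed morphism of $\comma{\pi}{B}$ corresponds coherently, up to homotopy, to exactly the ``$2$-simplex-witnessed factorization'' data packaged in $\isCocartArr$. The mirror-image difficulty occurs in the forward direction when promoting the pointwise counit to a genuine adjunction (triangle identities and naturality). Both are most cleanly handled by invoking the LARI formalism of \cite[Appendix~B]{BW21} rather than recomputing the higher cells by hand; with that machinery in place, what remains is the routine term-by-term matching sketched above.
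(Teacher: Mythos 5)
Your proof is correct and follows essentially the same strategy as the argument this paper relies on: the theorem is recalled here from \cite[Thm.~5.2.6/Prop.~5.2.7]{BW21} without a reproduced proof, but the paper's proof of its sliced generalization (\Cref{thm:sl-cocart-fam-char}) explicitly mirrors that argument --- building the adjoint transposition equivalence from the cocartesian factorization in one direction, and specializing the transposition to identity arrows to extract $\isCocartArr$ in the other --- which is exactly what you do. The ``bookkeeping'' step you flag (matching squares in $E^{\Delta^1}$ with one degenerate edge to the $2$-simplex-witnessed factorizations in $\isCocartArr$) is indeed where the real work lies, and is handled in \cite{BW21} just as you propose, via the LARI formalism and Segal-ness.
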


In~\Cref{thm:cocart-fams-intl-char} the stipulated LARI acts as the \emph{lifting map} $\chi: \comma{\pi}{B} \to E^{\Delta^1}$ with $\chi(u,e) \defeq P_!(u,e): e \cocartarr^P_u u_!\,e$. It is also possible to state cocartesianness in terms of a \emph{transport map} $\tau: \comma{\pi}{B} \to_B E$, $\tau(u,e) \defeq u_!\,e : P(\partial_1 \,u)$. This is the content of the next theorem.

\begin{theorem}[Cocart.~families via transport, \protect{\cite[Thm.~5.2.7]{BW21}}, \cf~\protect{\cite[Pr~~5.2.8(ii)]{RV21}}]\label{thm:cocartfams-via-transp}
	Let $B$ be a Rezk type, and $P:B \to \UU$ an isoinner family with associated total type projection $\pi:E \to B$.
	
	Then, $P$ is cocartesian if and only if the map
	\[ \iota \defeq \iota_P : E \to \comma{\pi}{B}, \quad \iota \, \pair{b}{e} :\jdeq \pair{\id_b}{e}  \]
	has a fibered left adjoint $\tau \defeq \tau_P: \comma{\pi}{B} \to E$ as indicated in the diagram:
	\[\begin{tikzcd}
		{} && E && {\pi \downarrow B} \\
		\\
		&&& B
		\arrow[""{name=0, anchor=center, inner sep=0}, "\iota"', curve={height=12pt}, from=1-3, to=1-5]
		\arrow["\pi"', two heads, from=1-3, to=3-4]
		\arrow["{\partial_1'}", two heads, from=1-5, to=3-4]
		\arrow[""{name=1, anchor=center, inner sep=0}, "\tau"', curve={height=12pt}, from=1-5, to=1-3]
		\arrow["\dashv"{anchor=center, rotate=-90}, draw=none, from=1, to=0]
	\end{tikzcd}\]
\end{theorem}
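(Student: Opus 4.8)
The plan is to deduce this characterization from the preceding Chevalley criterion (\Cref{thm:cocart-fams-intl-char}) by transporting the LARI adjunction across a fibered equivalence. First I would set up the relevant comparison. Recall that $\comma{\pi}{B}$ is the pullback of $\partial_0 : B^{\Delta^1} \to B$ along $\pi$, and there is the canonical map $i_0 \cotens \pi : E^{\Delta^1} \to \comma{\pi}{B}$ sending an arrow $g : e \to_u e'$ to the pair $(u, e)$ (its underlying base arrow together with its source). On the other hand, the map $\iota : E \to \comma{\pi}{B}$ sends $(b,e)$ to $(\id_b, e)$. I would first observe that $\iota$ factors as $E \xrightarrow{c} E^{\Delta^1} \xrightarrow{i_0 \cotens \pi} \comma{\pi}{B}$, where $c$ is the "constant arrow" (degeneracy) map $e \mapsto \id_e$; indeed $(i_0 \cotens \pi)(\id_e) = (\id_b, e) = \iota(b,e)$. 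The key structural input is that $c : E \to E^{\Delta^1}$ is a section of $\partial_0 : E^{\Delta^1} \to E$, and in fact $(\partial_0, c)$ exhibit $E$ as a reflective (indeed, coreflective in the appropriate variance) subobject — more precisely, $c \dashv \partial_0$ with $c$ fully faithful, a standard fact about the arrow type of a Segal (here Rezk) type, following from~\cite[Section~11]{RS17}. Moreover this adjunction is fibered over $B$ via $\partial_1$ on $E^{\Delta^1}$ and $\pi$ on $E$.

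Second, I would assemble the two adjunctions. Suppose $P$ is cocartesian; by \Cref{thm:cocart-fams-intl-char} the map $j \defeq i_0 \cotens \pi : E^{\Delta^1} \to \comma{\pi}{B}$ has a LARI $\chi$, i.e.\ $\chi \dashv j$ with $j \circ \chi \simeq \id_{\comma{\pi}{B}}$, and $\chi$ is fibered over $B$ (over $\partial_0$ on $\comma{\pi}{B}$, equivalently $\partial_1'$ under the identification, and $\partial_1$ on $E^{\Delta^1}$). Composing adjoint strings: from $c \dashv \partial_0$ and... — more carefully, I want an adjunction $\tau \dashv \iota$ where $\iota = j \circ c$. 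The composite right adjoint is $\iota = j \circ c$, so its left adjoint should be $\tau \defeq \partial_0 \circ \chi$ (composing the left adjoints in the correct order: left adjoint of $j\circ c$ is (left adjoint of $c$)$\,\circ\,$(left adjoint of $j$) $= \partial_0 \circ \chi$, using $\partial_0 \dashv \text{?}$ — wait, here I need the left adjoint of $c$). Since $c \dashv \partial_0$, the left adjoint of $\iota = j \circ c$... does not immediately compose because I'd need a left adjoint of $c$, not a right one. So instead I run the argument the other way: the claim is equivalent, via \Cref{thm:cocartfams-via-transp}, which already expresses cocartesianness as $\iota$ having a \emph{fibered left adjoint} $\tau$. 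Hence the real content of the present theorem is to upgrade \Cref{thm:cocartfams-via-transp}, or rather to re-derive \Cref{thm:cocartfams-via-transp} in the packaging given here.

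Thus the third step, and the one I would actually carry out, is: \emph{$P$ is cocartesian $\iff$ $\iota$ has a fibered left adjoint $\tau$}, which is literally \Cref{thm:cocartfams-via-transp}. So the "proof" is: this theorem is a restatement of \Cref{thm:cocartfams-via-transp}; one only needs to match up the data. Concretely, from a LARI $\chi$ of $i_0\cotens\pi$ one sets $\tau \defeq \partial_1^{E^{\Delta^1}} \circ \chi$ — no: $\tau(u,e)$ should be the \emph{target} $u_! e \in P(\partial_1 u)$ of the cocartesian lift $\chi(u,e) : e \cocartarr_u u_! e$, so $\tau = \partial_1 \circ \chi$ where $\partial_1 : E^{\Delta^1} \to E$; the unit $\iota \tau \Rightarrow \id$ comes from the cocartesian 2-cell and the counit $\tau\iota \Rightarrow \id$ from $u = \id$ giving the identity lift; fiberedness over $B$ is automatic since $\partial_1 u = b'$ and everything lies over $B$ appropriately. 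Conversely, given the fibered left adjoint $\tau \dashv \iota$ one recovers cocartesian lifts as the components of the unit. The main obstacle is therefore purely bookkeeping: one must verify that the fibered adjunction $\tau \dashv \iota$ over $\partial_1' : \comma{\pi}{B} \to B$ and $\pi : E \to B$ — with the specified unit and counit — exists precisely when the fillers witnessing $\isCocartArr$ are contractible, and that the two formulations of "fibered" (the $\partial_1'$-version here vs.\ whatever ambient notion is used) agree; this is exactly the argument of~\cite[Thm.~5.2.7]{BW21}, which we invoke. I expect no genuine difficulty beyond carefully citing \Cref{thm:cocartfams-via-transp} and matching notation, in particular checking that $\partial_1'$ (the codomain-type leg of $\comma{\pi}{B}$) is the correct map over $B$ and that $\iota$ as defined coincides with $\iota_P$ of that theorem.
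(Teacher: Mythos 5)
This statement is one the paper only \emph{recalls}: it is imported verbatim from \cite[Theorem~5.2.7]{BW21} and the paper at hand offers no proof of it (the same proof pattern is, however, replayed for the sliced analogue in \Cref{thm:sl-cocart-fam-char}, items \ref{it:sl-cocart-fam-char-i}$\iff$\ref{it:sl-cocart-fam-char-iii}). Measured against that argument, the concrete construction you arrive at in your last step is the right one and is essentially the BW21 proof: set $\tau(u,e)\defeq u_!\,e$, i.e.\ the codomain of the cocartesian lift $\chi(u,e): e \cocartarr_u u_!\,e$; take the unit $\eta_{(u,e)}:\ (u,e) \to \iota(\tau(u,e))$ to be that lift itself (note you wrote the unit as $\iota\tau \Rightarrow \id$, which is the wrong direction for $\tau \dashv \iota$); and observe that the transposition map $\bigl(u_!\,e \to_{P b'} e''\bigr) \to \bigl((u,e) \to (\,\id,e'')\,\bigr)$ being an equivalence is exactly the contractibility clause in $\isCocartArr$, which also gives the converse direction by reading the unit components as cocartesian lifts. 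That, plus the fibered-adjunction criterion of \Cref{thm:char-fib-adj}\ref{it:fib-ladj-sliced}, is the whole proof.

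The problems are in how you get there. First, the middle of your proposal is circular: you propose to establish the statement by appealing to ``\Cref{thm:cocartfams-via-transp}'', which \emph{is} the statement being proved; ``this theorem is a restatement of itself, one only needs to match up the data'' is not an argument. Second, you abandon your opening route for the wrong reason. You correctly factor $\iota = (i_0\cotens\pi)\circ c$ with $c: E \to E^{\Delta^1}$ the degeneracy, but then claim the adjunctions cannot be composed because $c$ only has a right adjoint. In fact for a Segal (a fortiori Rezk) total type $E$ one has the string $\partial_1 \dashv c \dashv \partial_0$, so $c$ \emph{does} have a left adjoint, namely the codomain evaluation $\partial_1: E^{\Delta^1} \to E$; composing left adjoints then yields $\tau = \partial_1 \circ \chi$, which is precisely the $\tau$ you write down later. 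This composite-adjunction route is legitimate (and arguably cleaner), but you would still need to check that the adjunction $\partial_1 \dashv c$ is fibered over $B$ with vertical unit in the sense of \Cref{def:fib-lari-adj}, so that the composite is again a \emph{fibered} left adjoint over $\partial_1'$ and $\pi$; as written, neither that verification nor a self-contained direct argument is actually carried out.
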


The adjoint criteria from~\Cref{thm:cocart-fams-intl-char,thm:cocartfams-via-transp} are of central importance for a variety of reasons. First, they imply a lot of closure properties, \cf~\cite[Corollary~6.3.8, Proposition~6.3.14]{RV21} and \cite[Section~3, Proposition~5.3.17]{BW21}. Next, they are generalized in our present study to fibered and two-sided situations. Finally, from a broader perspective in model-independent higher category theory the adjoint criteria \`{a} la Chevalley, Gray, Street, are taken by Riehl--Verity as ``reference definitions'' for co-/cartesian fibrations internal to an arbitrary $\infty$-cosmos. It turns out that there is also an equivalent definition~\cite[Definition~5.2.1]{RV21} directly on $2$-cells but the adjoint criteria are more fit to readily yield an internal definition.

In fact, recently Riehl--Verity showed that co-/cartesian arrows in the $\infty$-cosmological sense can also be characterized by Chevalley-type criteria, \cf~\cite[Theorem~5.17]{RV21}. This does not play a role in the present text but has been discussed in~\cite[Appendinx~A.2]{jw-phd} within simplicial HoTT.\footnote{Generalizing to arbitrary shape inclusions or type maps again this gives rise to the notion of \emph{$j$-LARI cell}.}

\subsubsection{Cartesian families}

Dually to the cocartesian case and the left adjoint (right inverse) characterizations, we can consider cartesian fibrations and right adjoint (right inverse) characterizations. Similarly, this can be expressed as a (contravariant) lifting condition w.r.t.~\emph{cartesian arrows}. 

Given a cartesian family $P:B \to \UU$ over $u:b \to_B a$, for $e:P\,b$ and $d:P\,a$, we denote by $(d \cartarr^P_u e) \hookrightarrow (d \to^P_u e)$ the subtype of cartesian arrows. The cartesian lift \wrt~$e:P\,a$ is denoted by $P^*(u,e): u^*_P\,e \cartarr^P_u e$, where $u^*_P\,e$ is the cartesian transport of $e$ by $u$.

\subsubsection{Cocartesian functors}

We conclude our overview with a brief consideration of cocartesian functors in this setting. Those also turn out to be characterizable in terms of Chevalley criteria.\footnote{We obtain a formally analogue characterization via Chevalley criteria in the context of $j$-LARI cells and $j$-LARI-cell-preserving functors, \cf~\cite[Theorem~A.2.6]{jw-phd}.}

\begin{definition}[Cocart.~functors, \protect{\cite[Def.~5.3.2]{BW21}}, \cf~\protect{\cite[Def.~5.3.2]{RV21}}]\label{def:cocart-fun}
	If $P$ and $Q$ are cocartesian families, and furthermore the map
	\[ \Phi: \totalty{P} \to \totalty{Q}, \quad \Phi \, b \, e :\jdeq \pair{j(b)}{\varphi_j(e) } \]
	preserves cocartesian arrows, then we call $\Phi$ a \emph{cocartesian map}:\footnote{This is a proposition because being a cocartesian arrow is a proposition.}
	\[ \isCocartFun_{P,Q}(\Phi) :\jdeq \prod_{f:\Delta^1\to \widetilde{P}} \isCocartArr_P(f) \to \isCocartArr_Q(\varphi f).\]
	In particular, if $B$ and $C$ in addition are Segal (or Rezk) types we speak of a \emph{cocartesian functor}.
	
	We define
	\[ \CocartFun_{B,C}(P,Q) :\jdeq \sum_{F:\FibMap_{B,C}(P,Q)} \isCocartFun_{P,Q}(F).\]
	
	Given families $P: B \to \UU$ to $Q: B \to \UU$, a \emph{fibered functor} from $P \to Q$ is a section $\varphi : \prod_{x:B} Px \to Qx$. It is cocartesian if
	\[ \isCocartFun_{P,Q}(\pair{\id_B}{\varphi}) \equiv \prod_{\substack{u:\Delta^1 \to B \\ f: \Delta^1 \to u^*P}} \isCocartArr_P(f) \to \isCocartArr_Q(\varphi f).\]
	
	We define
	\[ \CocartFun_{B}(P,Q) :\jdeq \sum_{\varphi:\prod_B P \to Q} \isCocartFun_{P,Q}(\pair{\id_B}{\varphi}).\]
\end{definition}

\begin{theorem}[Characterizations of cocart.~functors, \protect{\cite[Thm.~5.3.19]{BW21}}, \cf~\protect{\cite[Thm.~5.3.4]{RV21}}]\label{thm:char-cocart-fun}
	Let $A$ and $B$ be Rezk types, and consider cocartesian families $P:B \to \UU$ and $Q:A \to \UU$ with total types $E\defeq \totalty{P}$ and $F\defeq \totalty{F}$, resp.
	
	For a fibered functor $\Phi\defeq \pair{j}{\varphi}$ giving rise to a square
	\[
	\begin{tikzcd}
		F \ar[r, "\varphi"] \ar[d, "\xi" swap, two heads] & E \ar[d, "\pi", two heads] \\
		A \ar[r, "j" swap] & B
	\end{tikzcd}
	\]
	the following are equivalent:
	\begin{enumerate}
		\item The fiberwise map $\Phi$ is a cocartesian functor.
		\item The mate of the induced canonical fibered natural isomorphism is invertible, too:
		\[\begin{tikzcd}
			{F} && {E} & {} & {F} && {E} \\
			{\xi \downarrow A} && {\pi \downarrow B} & {} & {\xi \downarrow A} && {\pi \downarrow B}
			\arrow["{i}"', from=1-1, to=2-1]
			\arrow["{\varphi}", from=1-1, to=1-3]
			\arrow["{i'}", from=1-3, to=2-3]
			\arrow[Rightarrow, "{=}", from=2-1, to=1-3, shorten <=7pt, shorten >=7pt]
			\arrow["{\rightsquigarrow}" description, from=1-4, to=2-4, phantom, no head]
			\arrow["{\kappa}", from=2-5, to=1-5]
			\arrow["{\varphi}", from=1-5, to=1-7]
			\arrow["{\varphi \downarrow j}"', from=2-5, to=2-7]
			\arrow["{\kappa'}"', from=2-7, to=1-7]
			\arrow[Rightarrow, "{=}"', from=2-7, to=1-5, shorten <=7pt, shorten >=7pt]
			\arrow["{\varphi \downarrow j}"', from=2-1, to=2-3]
		\end{tikzcd}\]
		
		\item The mate of the induced canonical natural isomorphism is invertible, too:
		\[\begin{tikzcd}
			{F^{\Delta^1}} && {E^{\Delta^1}} & {} & {F^{\Delta^1}} && {E^{\Delta^1}} \\
			{\xi \downarrow A} && {\pi \downarrow B} & {} & {\xi \downarrow A} && {\pi \downarrow B}
			\arrow["{r}"', from=1-1, to=2-1]
			\arrow["{\varphi \downarrow j}"', from=2-1, to=2-3]
			\arrow["{\varphi^{\Delta^1}}", from=1-1, to=1-3]
			\arrow["{r'}", from=1-3, to=2-3]
			\arrow["{\rightsquigarrow}" description, from=1-4, to=2-4, phantom, no head]
			\arrow[Rightarrow, "{=}", from=2-1, to=1-3, shorten <=7pt, shorten >=7pt]
			\arrow["{\ell}", from=2-5, to=1-5]
			\arrow["{\varphi \downarrow j}"', from=2-5, to=2-7]
			\arrow["{\varphi^{\Delta^1}}", from=1-5, to=1-7]
			\arrow["{\ell'}"', from=2-7, to=1-7]
			\arrow[Rightarrow, "{=}"', from=2-7, to=1-5, shorten <=7pt, shorten >=7pt]
		\end{tikzcd}\]
	\end{enumerate}
	
\end{theorem}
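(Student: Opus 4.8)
The plan is to prove the cycle of equivalences by reducing each of the three conditions to a single pointwise invertibility statement. First I would recall from \Cref{thm:cocart-fams-intl-char,thm:cocartfams-via-transp} that, $P$ and $Q$ being cocartesian, we have at our disposal the lifting maps $\chi_P\colon \comma{\pi}{B}\to E^{\Delta^1}$ and $\chi_Q\colon \comma{\xi}{A}\to F^{\Delta^1}$ — LARIs of the Leibniz cotensors $i_0\cotens\pi$, $i_0\cotens\xi$ — and the transport maps $\tau_P\colon \comma{\pi}{B}\to E$, $\tau_Q\colon \comma{\xi}{A}\to F$ — fibered left adjoints of the diagonals $\iota_P\colon E\to\comma{\pi}{B}$, $\iota_Q\colon F\to\comma{\xi}{A}$ — related by $\tau\simeq\partial_1\circ\chi$ (transport is the target vertex of the cocartesian lift). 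The fibered functor $\Phi\jdeq\pair{j}{\varphi}$ induces, by functoriality of $\comma{(-)}{(-)}$ and of $(-)^{\Delta^1}$, the horizontal functors $\varphi$, $\varphi\downarrow j$, $\varphi^{\Delta^1}$ in the two diagrams, together with canonical identifications $\iota_P\varphi\jdeq(\varphi\downarrow j)\iota_Q$ and $(i_0\cotens\pi)\varphi^{\Delta^1}\jdeq(\varphi\downarrow j)(i_0\cotens\xi)$ — these are the ``canonical (fibered) natural isomorphisms'' in the statement. Their mates with respect to the adjunctions $\tau_Q\dashv\iota_Q$, $\tau_P\dashv\iota_P$ (resp.\ $\chi_Q\dashv i_0\cotens\xi$, $\chi_P\dashv i_0\cotens\pi$) are exactly the $2$-cells $\tau_P(\varphi\downarrow j)\Rightarrow\varphi\tau_Q$ and $\chi_P(\varphi\downarrow j)\Rightarrow\varphi^{\Delta^1}\chi_Q$ appearing in the right-hand diagrams, so that conditions~(2) and~(3) assert that these are invertible.

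Next I would isolate the pointwise datum that ties everything together. For $u\colon a\to_A a'$ and $e\colon Q\,a$, applying $\varphi$ to the $Q$-cocartesian lift $Q_!(u,e)\colon e\cocartarr_u^Q u_!\,e$ yields an arrow $\varphi\,e\to_{ju}^P\varphi(u_!\,e)$, which by the universal property of the $P$-cocartesian lift $P_!(ju,\varphi\,e)$ factors essentially uniquely through it; this produces a \emph{vertical} comparison $\gamma_{u,e}\colon (ju)_!\,\varphi\,e\to\varphi(u_!\,e)$ over $\id_{ja'}$. The claim, from which the theorem follows, is that each of~(1),~(2),~(3) is equivalent to ``$\gamma_{u,e}$ is an isomorphism for all $u$ and $e$''. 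For~(1): I would invoke the standard fact (as in~\cite{BW21}) that an arrow out of $\varphi\,e$ over $ju$ is $P$-cocartesian iff its induced factorization through $P_!(ju,\varphi\,e)$ is invertible, applied to $\varphi(Q_!(u,e))$; and then reduce ``$\varphi$ preserves all cocartesian arrows'' to ``$\varphi$ preserves the lifts $Q_!$'' by noting that every cocartesian arrow is such a lift post-composed with a fiberwise isomorphism, and that $\varphi$, being a map of types, automatically preserves composition, identities, and hence isomorphisms, while isomorphisms compose with cocartesian arrows to cocartesian arrows.

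For~(2) and~(3): I would compute the component at $(u,e)$ of each of the two mate $2$-cells — unwinding the relevant units and counits and the identification $\tau\simeq\partial_1\circ\chi$ — and recognize it, in the case of~(2), as exactly $\gamma_{u,e}$, and in the case of~(3), as the morphism of $E^{\Delta^1}$ whose source and target endpoint components are $\id_{\varphi\,e}$ and $\gamma_{u,e}$. Then I would appeal to the fact that, $E$ (hence also $E^{\Delta^1}$) being a Rezk type, a (fibered) natural transformation valued in it is invertible precisely when it is pointwise invertible, and a morphism of $E^{\Delta^1}$ is invertible precisely when both of its endpoint components are. This makes~(2) and~(3) each equivalent to ``every $\gamma_{u,e}$ is an isomorphism'', closing the loop of equivalences with~(1).

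The hard part will be the explicit identification of the mate $2$-cells in the previous step: one must carry out the pasting of units and counits for $\tau\dashv\iota$ and $\chi\dashv i_0\cotens\pi$ through the functoriality squares and the relation $\tau\simeq\partial_1\circ\chi$, and verify that what comes out is $\gamma_{u,e}$ (resp.\ $(\id,\gamma_{u,e})$). In Riehl--Verity's $\infty$-cosmos treatment~\cite[Theorem~5.3.4]{RV21} this is absorbed into abstract mate calculus — essentially the observation that $\Phi$ is a morphism of LARI adjunctions — and I would try to mirror that here using the fibered-LARI-adjunction machinery of \Cref{sec:fib-lari-adj}, falling back on an explicit unwinding in terms of dependent arrows and extension types where necessary. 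A secondary point that should be isolated as a small lemma is the componentwise detection of invertibility for $2$-cells valued in $E$ and in $E^{\Delta^1}$, which rests on $E$ being Rezk — itself a consequence of $P$ being isoinner over the Rezk type $B$.
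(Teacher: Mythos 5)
Your proposal is correct and follows essentially the same route as the proof this paper defers to (the theorem is recalled here without proof from \cite[Theorem~5.3.19]{BW21}, after \cite[Theorem~5.3.4]{RV21}): all three conditions are reduced to invertibility of the pointwise comparison $\gamma_{u,e}\colon (ju)_!\,\varphi\,e \to \varphi(u_!\,e)$ obtained by factoring $\varphi(Q_!(u,e))$ through $P_!(ju,\varphi\,e)$, by identifying $\gamma_{u,e}$ (resp.\ the square with components $\id_{\varphi e}$ and $\gamma_{u,e}$) as the component of each mate and using that invertibility of $2$-cells valued in a Rezk type, and in its arrow type, is detected componentwise. I see no gaps.
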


\subsubsection{Closure properties of co-/cartesian families and functors}

\begin{proposition}[Cosmological closure properties of cocartesian families, \protect{\cite[Proposition~5.3.17]{BW21}}]\label{prop:cocart-cosm-closure}
	Over Rezk bases, it holds that:
	
	Cocartesian families are closed under composition, dependent products, pullback along arbitrary maps, and cotensoring with maps/shape inclusions. Families corresponding to equivalences or terminal projections are always cocartesian.
	
	Between cocartesian families over Rezk bases, it holds that:
	Cocartesian functors are closed under (both horizontal and vertical) composition, dependent products, pullback, sequential limits,\footnote{all three objectwise limit notions satisfying the expected universal properties \wrt~to cocartesian functors} and Leibniz cotensors.
	
	Fibered equivalences and fibered functors into the identity of $\unit$ are always cocartesian.
\end{proposition}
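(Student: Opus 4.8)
The plan is to reduce every clause to a statement about (fibered) \textbf{LARI adjunctions} and \textbf{invertible mates}, using the Chevalley criteria already in place: a family $P$ with projection $\pi$ is cocartesian precisely when the Leibniz cotensor $i_0 \cotens \pi$ admits a LARI, equivalently when $\iota_P$ admits a fibered left adjoint (\Cref{thm:cocart-fams-intl-char,thm:cocartfams-via-transp}); and a fibered functor between cocartesian families is cocartesian precisely when a certain mate square of comma objects (equivalently of arrow cotensors) is invertible (\Cref{thm:char-cocart-fun}). The first abstract inputs I would record are purely formal: any $2$-functorial operation preserves adjunctions, and in fact preserves LARI adjunctions (it sends a unit-identity witness to a unit-identity witness) and invertibility of $2$-cells; LARI adjunctions compose and are stable under pullback; and an equivalence is its own LARI. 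The remaining, non-formal input — that the comma objects and Leibniz cotensors entering the criteria are themselves computed by the same limits and pullbacks that implement the closure operations — is exactly the sliced comma-and-product calculus of \Cref{sec:sl-comma-prod}.

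\textbf{Families.} Composition is handled by pasting the two comma squares; pullback along $f \colon A \to B$ by pulling the comma square of $\pi$ back along $f$; dependent product along $f$ by applying the right adjoint $\prod_f$ to the comma square; and cotensoring with a shape inclusion or type map $j \colon Y \to X$ by applying $(-)^X$. In each case, granting that the operation carries the comma square of the input family to that of the output family, the LARI of $i_0 \cotens \pi$ transports to a LARI of the cotensor map of the new family, so the latter is cocartesian. For the degenerate cases: if $\pi$ is an equivalence then $i_0 \cotens \pi$ is an equivalence, hence its own LARI; and if $\pi \colon B \to \unit$ is the terminal projection of a Rezk type $B$, then $\comma{\pi}{\unit} \simeq B$ and $i_0 \cotens \pi$ is the source evaluation $B^{\Delta^1} \to B$, whose LARI is the degeneracy $B \to B^{\Delta^1}$; here the triangle identities and the right-inverse condition are verified by hand.

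\textbf{Functors.} Vertical composition is immediate: ``preserves cocartesian arrows'' is visibly closed under composing fibered functors, or, in mate terms, mates compose. For horizontal composition, dependent products, pullback, sequential limits, and Leibniz cotensors, one applies the corresponding $2$-functorial construction to the mate square of \Cref{thm:char-cocart-fun}; since the operation preserves identities, composites, and invertible $2$-cells, and — by \Cref{sec:sl-comma-prod} — sends the comma squares of the source and target families to those of the new ones, the transported mate is again invertible, i.e., the new functor is cocartesian. The ``objectwise'' universal properties of the three limit notions relative to cocartesian functors then follow, since the underlying limits already enjoy those universal properties and we have just shown the objectwise construction stays within cocartesian functors. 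Finally, a fibered equivalence between cocartesian families preserves cocartesian arrows because cocartesianness is a contractibility statement about dependent hom-types transported along any equivalence of total types over the base; and the unique fibered functor into $\id_\unit$ is cocartesian for want of any nonidentity base arrow.

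\textbf{Main obstacle.} The genuine work is not the abstract stability of adjunctions, which is formal, but establishing that comma objects, Leibniz cotensors, and their mates commute — up to coherent, and where required on-the-nose, equivalence — with composition, base change, $\prod_f$, $(-)^X$, and sequential limits, so that the LARI or the invertible mate really does transport to the new family or functor. This is precisely what \Cref{sec:fib-lari-adj} and especially \Cref{sec:sl-comma-prod} are developed for, and arranging those coherences (rather than any single closure argument) is the technical heart of the proof.
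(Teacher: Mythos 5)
Your proposal is correct and follows essentially the same route as the source: the paper only recalls this proposition from \cite[Proposition~5.3.17]{BW21} without reproving it, and the proof there — as well as the paper's own sliced and two-sided generalizations (\Cref{prop:clos-sl-cocart-fib-comp,prop:clos-sl-cocart-fib-pb,prop:clos-sl-cocart-fib-prod,thm:2scart-cosm-closure}) — proceeds exactly by reducing each clause to the Chevalley LARI/mate criteria and then transporting the (fibered) LARI or invertible mate along the construction, with the real work being the commutation of commas and Leibniz cotensors with products, pullbacks, and composition. The only cosmetic difference is that sequential limits are handled in the source by realizing them as a pullback of products (reducing to the two cases already established) rather than by direct $2$-functoriality, but your identification of where the technical content lies is accurate.
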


\section{Sliced cocartesian families}\label{sec:sl-cocart-fams}

The main object of study of this paper are two-sided cartesian fibrations $E \fibarr A \times B$. It turns out to be instructive to analyze this notion by means of \emph{fibered} or \emph{sliced fibrations}, \ie, fibered functors $\varphi$ between fibrations
\[\begin{tikzcd}
	F && E \\
	& B
	\arrow["\varphi", from=1-1, to=1-3]
	\arrow["\xi"', two heads, from=1-1, to=2-2]
	\arrow["\pi", two heads, from=1-3, to=2-2]
\end{tikzcd}\]
where $\varphi$ is regarded itself as a fibration of some kind, over the base $\pi$ (itself a fibration). This is the point of view developed by Riehl--Verity in~\cite[Chapter~7]{RV21}, and we adapt it here. For this purpose, we develop in this section the theory of fibrations fibered or sliced over a fibration. While the development undoubtedly presents itself as technical, it will prove instrumental in our main theorems about the characterization of two-sided cartesian families and their closure properties, see~\Cref{thm:char-two-sid,thm:2scart-cosm-closure}.

\subsection{Sliced constructions}\label{ssec:sl-constr}

We introduce here type-theoretic versions of a few sliced constructions, \cf~\cite[Proposition~1.2.22]{RV21}.

\begin{definition}[Sliced cotensor, \protect{\cite[Proposition~1.2.22(vi)]{RV21}}]
	Let $\pi:E\fibarr B$ be a map, and $X$ be a type or shape. The \emph{sliced exponential (over $B$) of $\pi$ by $X$}  is given by the map $(X \iexp E) \to B$ defined as:
	\[\begin{tikzcd}
		{X \slcotens E} && {E^X} \\
		B && {B^X}
		\arrow[two heads, from=1-1, to=2-1]
		\arrow["{\mathrm{cst}}"' description, from=2-1, to=2-3]
		\arrow[from=1-1, to=1-3]
		\arrow["{\pi^X}", two heads, from=1-3, to=2-3]
		\arrow["\lrcorner"{anchor=center, pos=0.125}, draw=none, from=1-1, to=2-3]
	\end{tikzcd}\]
	This means $X \slcotens E \simeq \sum_{b:B} X \to P\,b$.
	In particular, for $X \jdeq \Delta^1$, we obtain the \emph{vertical arrow} type $\VertArr_\pi(E) \to B$. We often identify vertical arrows $f: e \to^P_{\id_b} e'$ with $f: e \to_{P\,b} e'$.
\end{definition}

\begin{definition}[Sliced product, \protect{\cite[Proposition~1.2.22(vi)]{RV21}}]
	Let $I$ and $B$ be types, and consider maps $\pi_i:E_i \fibarr B$ for $i:I$. The \emph{sliced product} over the $\pi_i$ is defined by pullback:
	\[\begin{tikzcd}
		{\times_{i:I}^B E_i} && {\prod_{i:I} E_i} \\
		B && {B^I}
		\arrow["{\times_{i:I}^B \pi_i}"', two heads, from=1-1, to=2-1]
		\arrow[from=2-1, to=2-3, "{\mathrm{cst}}"' description]
		\arrow[from=1-1, to=1-3]
		\arrow["{\prod_{i:I} \pi_i}", two heads, from=1-3, to=2-3]
		\arrow["\lrcorner"{anchor=center, pos=0.125}, draw=none, from=1-1, to=2-3]
	\end{tikzcd}\]
\end{definition}

\begin{definition}[Sliced comma, \protect{\cite[Proposition~1.2.22(vi)]{RV21}}]
	Consider a cospan $\psi:F \to_B G \leftarrow_B E: \varphi$ of fibered functors, giving rise to the sliced comma type $\relcomma{B}{\varphi}{\psi}$:
	\[\begin{tikzcd}
		{\varphi \downarrow_B \psi} && {\VertArr(G)} \\
		{F \times E} && {G \times G} \\
		& B
		\arrow[from=1-1, to=2-1]
		\arrow["{\psi \times \varphi}"{description}, from=2-1, to=2-3]
		\arrow[from=1-1, to=1-3]
		\arrow["{\langle \partial_1,\partial_0\rangle}", from=1-3, to=2-3]
		\arrow[two heads, from=2-1, to=3-2]
		\arrow[two heads, from=2-3, to=3-2]
		\arrow["\lrcorner"{anchor=center, pos=0.125}, draw=none, from=1-1, to=2-3]
	\end{tikzcd}\]
\end{definition}

\subsection{Sliced cocartesian families}\label{ssec:sl-cocart-fams}

\begin{definition}[Sliced cocartesian families]
	Let $B$ be a Rezk type.  A \emph{sliced cocartesian family over $B$} is given by the following data:
	\begin{itemize}
		\item an isoinner family $P: B \to \UU$,
		\item an isoinner family $K: \totalty{P} \to \UU$,
		\item and, writing $Q \defeq \Sigma_{\totalty{P}} K$, a witness for the proposition\footnote{recall the uniqueness of cocartesian lifts, \cf~\cite[Proposition~5.1.3]{BW21}}
		\[ \prod_{b:B} \prod_{\substack{f:\Delta^1 \to P\,b \\ x:Q(b,f0)}} \sum_{\substack{x':Q(b,f1) \\ k:x \to^Q_{b,f} x'}} \isCocartArr^Q_f(k).\]
	\end{itemize}
	We call $K$ a \emph{cocartesian family sliced over $B$ with base $P$}, and denote the ensuing cocartesian lifts as
	\[ K_!^b(f,x) \defeq K_!(b,f,x):x \cocartarr^K_{\pair{b}{f}} f_!^{K,b}\,x.\]
\end{definition}

Perhaps more familiarly, in fibrational terms, a \emph{cocartesian fibration sliced over $B$ with base $\pi$} is given by a fibered functor $\varphi: \xi \to_B \pi$, where $\xi: F \fibarr B$ and $\pi:E \fibarr B$ are isoinner fibrations over $B$, visualized through
\[\begin{tikzcd}
	F && E \\
	& B
	\arrow["\varphi", from=1-1, to=1-3]
	\arrow["\xi"', two heads, from=1-1, to=2-2]
	\arrow["\pi", two heads, from=1-3, to=2-2]
\end{tikzcd}\]
moreover satisfying the analogous lifting property: any $\pi$-vertical arrow has a $\varphi$-cocartesian lift.

As previously with ordinary cocartesian families, we will often bring in the fibrational viewpoint and reason diagrammatically.

This is a type-theoretic formulation of what, more generally in $\infty$-cosmos theory, defines for any $\infty$-cosmos $\mathcal K$ and an object $B \in \mathcal K$ a cocartesian family in the slice-$\infty$-cosmos $\mathcal K/B$. This is captured internally by the following theorem, which shows that the above condition precisely amounts to the sliced version of the familiar LARI condition for cocartesian families.


\begin{theorem}[Characterization of sliced cocartesian families]\label{thm:sl-cocart-fam-char}
	Given a Rezk type $B$, let $P:B \to \UU$ and $K: \totalty{P} \to \UU$ be isoinner families. We write $Q \jdeq \Sigma_{\totalty{P}} K:B \to \UU$, and denote
	\[ \pi  \defeq \Un_B(P) :  E \defeq \totalty{P} \fibarr B, \quad \xi \defeq \Un_B(Q) :  F \defeq \totalty{Q} \fibarr B, \]
	\[  \varphi \defeq \Un_E(K): F \to E  :\]
	\[\begin{tikzcd}
		F && E && {\widetilde{Q} \simeq \widetilde{K}} && {\widetilde{P}} \\
		& B & {} && {} & B
		\arrow["\varphi", from=1-1, to=1-3]
		\arrow["\xi"', two heads, from=1-1, to=2-2]
		\arrow["\pi", two heads, from=1-3, to=2-2]
		\arrow["{\pi_K}", from=1-5, to=1-7]
		\arrow["{\pi_Q \jdeq \pi_{\Sigma_{\totalty{P}}\,K}}"{description}, two heads, from=1-5, to=2-6]
		\arrow["{\pi_P}", two heads, from=1-7, to=2-6]
		\arrow[squiggly, tail reversed, from=1-3, to=1-5]
	\end{tikzcd}\]
	Then the following are equivalent propositions:
	\begin{enumerate}
		\item\label{it:sl-cocart-fam-char-i} The family $K: E \to \UU$ is a cocartesian family sliced over $B$.
		\item\label{it:sl-cocart-fam-char-ii} The sliced Leibniz cotensor~$i_0 \cotens_B \varphi: \VertArr_{\xi}(F) \to_B \relcomma{B}{\varphi}{E}$ has a fibered LARI:
		\[\begin{tikzcd}
			{\VertArr_{\xi}(F) } &&  {\varphi \downarrow_B E} \\
			\\
			& B
			\arrow[""{name=0, anchor=center, inner sep=0}, "{i_0 \widehat{\pitchfork}_B \varphi}"', curve={height=6pt}, from=1-1, to=1-3]
			\arrow[two heads, from=1-1, to=3-2]
			\arrow[two heads, from=1-3, to=3-2]
			\arrow[""{name=1, anchor=center, inner sep=0}, "{\chi_B}"', curve={height=6pt}, dashed, from=1-3, to=1-1]
			\arrow["\dashv"{anchor=center, rotate=-90}, draw=none, from=1, to=0]
		\end{tikzcd}\]
		\item\label{it:sl-cocart-fam-char-iii} The fibered inclusion map~$\iota_\varphi: F \to_E \relcomma{B}{\varphi}{E}$ has a fibered left adjoint:
		\[\begin{tikzcd}
			F &&&& {\varphi \downarrow_B E} \\
			&& E \\
			\\
			&& B
			\arrow["\varphi"{description}, from=1-1, to=2-3]
			\arrow["{\partial_1}"{description}, from=1-5, to=2-3]
			\arrow["\pi"{description}, two heads, from=2-3, to=4-3]
			\arrow["\xi"{description}, two heads, from=1-1, to=4-3]
			\arrow["{\partial_1'}"{description}, two heads, from=1-5, to=4-3]
			\arrow[""{name=0, anchor=center, inner sep=0}, "{\iota_\varphi}"{description}, from=1-1, to=1-5]
			\arrow[""{name=1, anchor=center, inner sep=0}, "{\tau_\varphi}"{description}, curve={height=18pt}, dashed, from=1-5, to=1-1]
			\arrow["\dashv"{anchor=center, rotate=-91}, draw=none, from=1, to=0]
		\end{tikzcd}\]
	\end{enumerate}
	
\end{theorem}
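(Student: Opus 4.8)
The plan is to reduce everything, fiberwise over $B$, to the unsliced characterizations \Cref{thm:cocart-fams-intl-char} and \Cref{thm:cocartfams-via-transp}. Since right orthogonality is stable under pullback, every fiber $P\,b$ of the isoinner family $P$ over the Rezk type $B$ is again a Rezk type, and likewise $E \jdeq \totalty{P}$ is Rezk. For $b:B$ write $K_b : P\,b \to \UU$ for the restriction of $K$ along the fiber inclusion $P\,b \hookrightarrow \totalty{P}$; then $Q\,b \simeq \totalty{K_b}$, the $\xi$-fiber $F_b$ over $b$ is $\totalty{K_b}$, and $\varphi_b : F_b \to P\,b$ is the total-space projection $\Un_{P\,b}(K_b)$. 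Because cocartesian lifts are unique when they exist (\cite[Proposition~5.1.3]{BW21}), the defining condition \ref{it:sl-cocart-fam-char-i} is a proposition, and --- since it only constrains lifts of $\pi$-vertical arrows, i.e.\ arrows inside a single fiber $P\,b$ --- it holds precisely when for every $b:B$ the family $K_b$ is a cocartesian family over the Rezk type $P\,b$.

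Next I would match the fibers of the sliced constructions in \ref{it:sl-cocart-fam-char-ii} and \ref{it:sl-cocart-fam-char-iii} with their unsliced counterparts over the $P\,b$. The sliced cotensor, sliced comma, and sliced Leibniz cotensor are defined by pullbacks over $B$, and pullback commutes with the exponentials, fibre products, and $\Sigma$-types entering those definitions; restricting over $b:B$ therefore yields $\VertArr_\xi(F)|_b \simeq (\totalty{K_b})^{\Delta^1}$, $(\relcomma{B}{\varphi}{E})|_b \simeq \comma{\varphi_b}{(P\,b)}$, and, under these identifications, $(i_0 \cotens_B \varphi)|_b \simeq i_0 \cotens \varphi_b$ and $(\iota_\varphi)|_b \simeq \iota_{K_b}$ --- exactly the maps appearing in \Cref{thm:cocart-fams-intl-char} and \Cref{thm:cocartfams-via-transp} for the family $K_b$ over $P\,b$. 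Thus, fiberwise, \ref{it:sl-cocart-fam-char-ii} says that each $i_0 \cotens \varphi_b$ has a LARI, and \ref{it:sl-cocart-fam-char-iii} that each $\iota_{K_b}$ has a fibered left adjoint.

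It remains to pass between a fibered (LARI) adjunction over $B$ (resp.\ over $E$) and the associated $B$- (resp.\ $E$-)indexed family of fiberwise (LARI) adjunctions. One direction is restriction; for the converse I would invoke the pointwise criterion for fibered (LARI) adjunctions established in \Cref{sec:fib-lari-adj}, which produces a genuine fibered adjoint from pointwise ones, the base coherence being forced by uniqueness of adjoints. This is where the actual work lies: checking that the fiberwise LARIs $\chi_{K_b}$ assemble into a fibered section $\chi_B$ and that the induced units/counits and triangle identities are themselves fibered. Granting this, \Cref{thm:cocart-fams-intl-char} yields the equivalence of \ref{it:sl-cocart-fam-char-i} and \ref{it:sl-cocart-fam-char-ii}, and \Cref{thm:cocartfams-via-transp} the equivalence of \ref{it:sl-cocart-fam-char-i} and \ref{it:sl-cocart-fam-char-iii}, hence all three. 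One could also bypass the fiberwise bookkeeping and instead rerun the proofs of \Cref{thm:cocart-fams-intl-char,thm:cocartfams-via-transp} verbatim with every cotensor, comma, and adjunction replaced by its $B$-sliced analogue and the fibered adjunction calculus of \Cref{sec:fib-lari-adj} in place of the absolute one; the main obstacle is then the same, namely that calculus itself.
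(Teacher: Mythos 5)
Your strategy --- reduce each condition fiberwise over $B$ to the unsliced characterizations \Cref{thm:cocart-fams-intl-char,thm:cocartfams-via-transp} and then reassemble via the fibered-adjunction calculus --- is a genuinely different organization from the paper, which instead reruns the unsliced proofs directly in the sliced setting with explicit transposition maps. The assembly step you worry about is indeed covered by \Cref{thm:char-fib-adj}: its item \ref{it:fib-ladj-sliced} identifies fibered (LARI) adjoints with families of fiberwise adjoints, and the end of its proof shows the whole type of such data is a product over $b:B$ of pointwise universal-arrow data, so no extra coherence needs to be checked. That part of your plan is fine.

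The gap is in your very first equivalence, namely the claim that condition \ref{it:sl-cocart-fam-char-i} ``holds precisely when for every $b:B$ the family $K_b$ is a cocartesian family over $P\,b$.'' The lifts demanded in \ref{it:sl-cocart-fam-char-i} are of $\pi$-vertical arrows, but the cocartesianness predicate $\isCocartArr^{Q}_f(k)$ imposed on them is cocartesianness for the family $K:E\to\UU$, i.e.\ the universal property is tested against dependent arrows of $F$ lying over \emph{arbitrary} arrows of $E$ --- including arrows whose $B$-component $u:b\to_B b'$ is not an identity. By contrast, $K_b$-cocartesianness over $P\,b$ only tests against arrows staying in the fiber $F_b$. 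One implication is therefore immediate, but the implication [each $K_b$ cocartesian] $\Rightarrow$ \ref{it:sl-cocart-fam-char-i} is not: a lift that is universal against in-fiber test arrows need not a priori be universal against test arrows escaping the fiber. In the paper's proof this stronger universality is exactly what is extracted from the \emph{fibered} (not merely fiberwise) adjunction, by writing down the transposition equivalence $(\chi_B(e,f,x)\to_u\langle g,m\rangle)\simeq(\langle e,f,x\rangle\to_u\langle d,g,y\rangle)$ over a general arrow $u:b\to_B b'$ and then specializing $g$ and $m$ to identities. So your chain either needs this specialization argument inserted to close the loop from fiberwise cocartesianness back to \ref{it:sl-cocart-fam-char-i}, or it should be reorganized so that \ref{it:sl-cocart-fam-char-ii}~$\Rightarrow$~\ref{it:sl-cocart-fam-char-i} is proved directly from the fibered adjunction rather than routed through the fiberwise statement; as written, the step that would justify your opening equivalence is the one step your reduction never supplies.
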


\begin{proof}
	\begin{description}
		\item[$\ref{it:sl-cocart-fam-char-ii} \implies \ref{it:sl-cocart-fam-char-i}$]
		We abbreviate $r\defeq i_0 \widehat{\pitchfork}_B \varphi$. After the usual projection equivalence, we can identify it as the fiberwise map with components
		\[ r_b \big( f:e \to_{P\,b} e', k:x \to^Q_f x' \big) \defeq \langle e, f, x \rangle \jdeq \langle \partial_0 \,f, f, \partial_0 k \rangle \]
		for $b:B$.
		
		Assume, the stated fibered LARI condition is satisfied.
		First we note that the invertible unit, for every $b:B$, exhibits $\chi_{B,b}$ as a (strict) section of $r_b$, ~~given $e:P\,b$, $f:\comma{e}{P\,b}$ and $x:Q(b,e)$, we can assume $\chi_{B,b}(e,f,x):x \to^Q_f x'$ for some $x':Q(b,\partial_1\,f)$.
		We have a fibered equivalence
		\begin{align*}  & \prod_{\substack{b,b':B \\ u:b \to_B b'}} \prod_{\substack{e,e':P\,b \\ f:e \to_{P\,b} e' \\ x:Q(b,e)}} \prod_{\substack{d,d':P(b') \\ g:d \to_{P\,b'} d'}}
			\prod_{\substack{y:Q(b',d), y':Q(b',d') \\ m:y \to^Q_g y'}} \cdots \\
			& \cdots ( \chi_B(e,f,x) \to_u \langle g,m \rangle) \stackrel{\simeq}{\longrightarrow} ( \langle e,f,x \rangle \to_u \langle d, g,y \rangle).
		\end{align*}
		Just as in the second part of the proof of \Cref{thm:cocart-fams-intl-char}, by specializing to the case that $m= \id_{x''}$ and $g=\id_{e''}$ for some $e'':P(b')$ and $x'':Q(b',e'')$, we find that the lift $\chi_{B,b}(e,f,x)$ is a $K$-cocartesian lift of the \emph{$P$-vertical} arrow $f:e \to_{P\,b} e'$.
		
		\item[$\ref{it:sl-cocart-fam-char-i} \implies \ref{it:sl-cocart-fam-char-ii}$]
		On the other hand, suppose that $K$-cocartesian lifts of all $P$-cocartesian maps exist, \wrt~to a given initial vertex. Accordingly, we define $\chi_{B,b}(e,f,x)\defeq K^b_!(f,x) : x \cocartarr_f^K x'$. Again, analogously to the first part of the proof, we define a pair of maps
		\[\begin{tikzcd}
			{\left( \chi_{B,b}(e,f,x) \to_u \langle g,m\rangle\right)} && {\left( \langle e,f,x\rangle \to_u \langle d,g,y\rangle\right)}
			\arrow[""{name=0, anchor=center, inner sep=0}, "\Phi", shift left=2, from=1-1, to=1-3]
			\arrow[""{name=1, anchor=center, inner sep=0}, "\Psi", shift left=2, from=1-3, to=1-1]
			\arrow["\simeq"{description}, shorten <=1pt, shorten >=1pt, Rightarrow, from=0, to=1]
		\end{tikzcd}\]
		by
		\begin{align*}
			& \Phi(h:e \to_u^P d, h': e' \to^P_u d', k: x \to^K_h y, k': x' \to^K_{h'} y') \defeq \langle h, h', k \rangle  \\
			& \Psi(h:e \to_u^P d, h': e' \to^P_u d', k:x \to_h^K y) \defeq \langle h, h', k, \tyfill^K_{\chi_{B,b}(e,f,x)}(m \circ k) \rangle.
		\end{align*}
		Due to cocartesianness of $\chi_{B,b}(e,f,x)$ these are quasi-inverse to each other. In particular, the components of $\Phi$ are defined by applying the right adjoint $i_0 \cotens_B \varphi$. For the unit of the adjunction, we take reflexivity, and taken together this defines a fibered LARI adjunction.
		
		\item[$\ref{it:sl-cocart-fam-char-i} \implies \ref{it:sl-cocart-fam-char-iii}$] The fiberwise map $\iota_\varphi: F \to_E \relcomma{B}{\varphi}{E}$ is given by
		\[ \iota_\varphi(b,e,x)\defeq \angled{b,e,e,\id_e,x}. \]
		Because of the preconditions we can define the candidate fibered left adjoint $\tau_\varphi: \relcomma{B}{\varphi}{E} \to_E F$ by
		\[ \tau_\varphi(b,e',e,f:e \to_{P\,b} e', x:Q(b,e)) \defeq \angled{b,e',f^Q_!(x):Q(b,e')}, \]
		as we would expect analogously to~\Cref{thm:cocartfams-via-transp}.
		To obtain a fibered adjunction as desired, recalling~\Cref{thm:char-fib-adj},~\Cref{it:fib-ladj-sliced}, we want to define a family of equivalences
		\[ \Phi: \prod_{\substack{b:B \\ e':P\,b}} \prod_{\substack{e:P\,b, x:K\,b\,e\\ f:e \to_{P\,b} e'}} \prod_{x':K\,b\,e'} \Big( \underbrace{\tau_\varphi(e,f,x)}_{\jdeq f_!x} \longrightarrow_{K(b,e')} x' \Big) \stackrel{\simeq}{\to} \Big( x \longrightarrow^{Q}_{b,f} x'\Big) : \Psi, \]
		generalizing~\Cref{thm:cocartfams-via-transp}, by\footnote{Note that for the codomain of the equivalence we have identified the type of morphisms $\Big(\angled{e,f,x} \longrightarrow \angled{e',\id_{e'}, x'} \Big)$ (in the fiber $\sum_{g:\comma{P\,b}{e'}} Q(b,\partial_0\,g)$) with simply $(x \to^{b^*Q}_f x')$.}
		\begin{align*} \Phi_{b,e'}\big( m:f_!\,x \to_{Q\,b\,e'} x'\big) & \defeq m \circ Q^b_!(f,e), \\ \Psi_{b,e'}\big( k:x \to_f^{b^*Q} x\big) & \defeq \tyfill_{Q^b_!(f,x)}^\varphi(k),
		\end{align*}
		see~\Cref{fig:sl-cocart-transp} for an illustration. By \emph{$Q$-cocartesianness} of the lifts of $P$-vertical arrows in $K$ one can show---analogously to the proof of~\Cref{thm:cocartfams-via-transp}---that the maps are quasi-inverse to one another.
		\item[$\ref{it:sl-cocart-fam-char-iii} \implies \ref{it:sl-cocart-fam-char-i}$] By assumption, there exists a fibered functor $\tau_\varphi: \relcomma{B}{\varphi}{E} \to_E F$ and a fibered natural transformation
		\[ \eta:\big( \id_{\relcomma{B}{\varphi}{E}}\Rightarrow^K_E \iota_\varphi \circ \tau_\varphi \big) \simeq \prod_{\substack{b:B \\e:P\,b}} \prod_{\substack{d:P\,b \\ f:d \to_{P\,b}e \\ x:Q(b,d)}} \angled{d,f,x} \longrightarrow^{\relcomma{B}{K}{E}}_{\pair{b}{e}} \angled{e,\id_e,f_!x},  \]
		where we write $f_!x$ for the respective component, for the sake of foreshadowing.
		Here,
		\[ \relcomma{B}{K}{E} \defeq \lambda b,e.\sum_{\substack{d:P\,b \\ f:d \to_{P\,b}e}} Q(b,d):E \to \UU \]
		is the straightening of $\partial_1: \relcomma{B}{\varphi}{E} \fibarr E$. Note that there is an equivalence
		\[ \Big( \angled{d,f,x} \to_{(\relcomma{B}{K}{E})(b,e)} \angled{e,\id_e,f_!\,x}  \Big) \simeq \Big( x \to^{b^*Q}_{\pair{b}{f}} f_!\,x\Big) ,\]
		as illustrated by:
		\[\begin{tikzcd}
			F & x && {f_!\,x} \\
			& d && e \\
			E & e && e \\
			B && b
			\arrow["f"', from=2-2, to=3-2]
			\arrow[Rightarrow, no head, from=3-2, to=3-4]
			\arrow["f", from=2-2, to=2-4]
			\arrow[Rightarrow, no head, from=2-4, to=3-4]
			\arrow["{\eta_x}", from=1-2, to=1-4, cocart]
			\arrow[Rightarrow, dotted, no head, from=1-2, to=2-2]
			\arrow[Rightarrow, dotted, no head, from=1-4, to=2-4]
			\arrow[Rightarrow, dotted, no head, from=3-2, to=4-3]
			\arrow[Rightarrow, dotted, no head, from=3-4, to=4-3]
			\arrow[two heads, from=3-1, to=4-1]
			\arrow[two heads, from=1-1, to=3-1]
		\end{tikzcd}\]
		Furthermore, by the assumption, the induced transposing map\footnote{again, identifying $\angled{d,f,x} \longrightarrow_{\pair{v}{g}} \angled{e',\id_{e'}, x'}$ with $x \to^K_{\pair{v}{gf}} x'$} is a family of equivalences:
		\begin{align*}
			& \Phi : \prod_{\substack{b,b':B \\e:P\,b \\ e':P\,b'}} \prod_{\substack{v:b \to_B b' \\ g:e \to^P_v e'}} \prod_{\substack{d:P\,b \\ f:d \to_{P\,b} e \\ x:Q(b,d)}} \prod_{x':Q(b',e')} \Big( f_!\,x \longrightarrow_{\pair{v}{g}}^K x' \Big) \stackrel{\simeq}{\to} \Big( x \longrightarrow^K_{\pair{v}{gf}} x' \Big), \\
			& \Phi\big(m: f_!\,x \to^K_{\pair{v}{g}} x' \big) \defeq \big( m \circ \eta_x: x \to_{Q\,b\,e}  f_!\,x \to^K_{\pair{v}{g}} x' \big)
		\end{align*}
		Now, $\Phi$ being a fiberwise equivalence means the proposition
		\[ \prod_{\substack{b,b':B \\e:P\,b \\ e':P\,b'}} \prod_{\substack{v:b \to_B b' \\ g:e \to^P_v e'}} \prod_{\substack{d:P\,b \\ f:d \to_{P\,b} e \\ x:Q(b,d)}} \prod_{x':Q(b',e')} \prod_{k: x \to^K_{\angled{v,gf}} x'} \isContr \Big( \sum_{m:f_!\,x \to^K_{\pair{v}{g}} x'} m \circ \eta_x = k \Big) \]
		is satisfied, cf.~\Cref{fig:sl-vert-fill}. This exhibits $\eta_x: x \to^K_{\pair{b}{f}} \tau_\varphi(f,x)$ as $K$-cocartesian lift of the $P$-vertical arrow $f:d \to_{P\,b} e$ (starting at $x:Q(b,d)$), as claimed.
	\end{description}
\end{proof}

\begin{figure}
	\centering
	\[\begin{tikzcd}
		F && x \\
		\\
		&& {f_!^{K,b}\,x} && {x'} \\
		E && e \\
		\\
		&& {e'} && {e'} \\
		B && b && b
		\arrow["k", from=1-3, to=3-5]
		\arrow["f"', from=4-3, to=6-3]
		\arrow["{\id_{e'}}", Rightarrow, no head, from=6-3, to=6-5]
		\arrow["f", from=4-3, to=6-5]
		\arrow["\pi"{description}, two heads, from=4-1, to=7-1]
		\arrow["\varphi"{description}, two heads, from=1-1, to=4-1]
		\arrow["\xi"{description}, curve={height=30pt}, from=1-1, to=7-1]
		\arrow["{K^b_!(f,x)}"', from=1-3, to=3-3, cocart]
		\arrow["m"', from=3-3, to=3-5]
		\arrow["{\id_b}", Rightarrow, no head, from=7-3, to=7-5]
	\end{tikzcd}\]
	\caption{Cocartesian transport for sliced cocartesian families}
	\label{fig:sl-cocart-transp}
\end{figure}

\begin{figure}
	\[\begin{tikzcd}
		&& x \\
		F && {f_!\,x} && {x'} \\
		&& d \\
		E && e && {e'} \\
		B && b && {b'}
		\arrow["{\eta_x}"', from=1-3, to=2-3, cocart]
		\arrow["{\exists!\, m}"', dashed, from=2-3, to=2-5]
		\arrow["{\forall\,k}", from=1-3, to=2-5]
		\arrow["f"', from=3-3, to=4-3]
		\arrow["g"', from=4-3, to=4-5]
		\arrow["gf", from=3-3, to=4-5]
		\arrow["v", from=5-3, to=5-5]
		\arrow[from=4-1, to=5-1, two heads]
		\arrow[from=2-1, to=4-1, two heads]
	\end{tikzcd}\]
	\caption{Cocartesian filling in sliced cocartesian families}
	\label{fig:sl-vert-fill}
\end{figure}

\begin{figure}
	\[\begin{tikzcd}
		x & {x'} & x & {x'} \\
		{g'_!\,x} & {f_!\,x'} &&& {g'_!\,x} & {f_!\,x'} \\
		e & {e'} & e & {e'} & e & {e'} \\
		{e''} & {e'''} & {e''} & {e'''} & {e''} & {e'''} \\
		b & {b'} & b & {b'} & b & {b'} \\
		{\VertArr_\xi(F)} && {\relcomma{B}{\varphi}{E}} && {} & F
		\arrow["v", from=5-3, to=5-4]
		\arrow["{f'}"', from=4-3, to=4-4]
		\arrow["{g'}"', from=3-3, to=4-3]
		\arrow["f", from=3-4, to=4-4]
		\arrow["v", from=5-5, to=5-6]
		\arrow["{g'}"', from=3-5, to=4-5]
		\arrow["{f'}"', from=4-5, to=4-6]
		\arrow["g", from=3-5, to=3-6]
		\arrow["f", from=3-6, to=4-6]
		\arrow["k", from=1-3, to=1-4]
		\arrow["{k'}", dashed, from=2-5, to=2-6]
		\arrow[Rightarrow, dotted, no head, from=1-3, to=3-3]
		\arrow[Rightarrow, dotted, no head, from=1-4, to=3-4]
		\arrow["g", from=3-3, to=3-4]
		\arrow["{\tau_B}", curve={height=-12pt}, maps to, from=3-4, to=3-5]
		\arrow["g", from=3-1, to=3-2]
		\arrow["{g'}"', from=3-1, to=4-1]
		\arrow["f", from=3-2, to=4-2]
		\arrow["{f'}"', from=4-1, to=4-2]
		\arrow["v", from=5-1, to=5-2]
		\arrow["{\chi_B}"', curve={height=12pt}, maps to, from=3-3, to=3-2]
		\arrow["{k'}", dashed, from=2-1, to=2-2]
		\arrow["B", two heads, from=6-3, to=6-1]
		\arrow["{m'}"', from=1-1, to=2-1, cocart]
		\arrow["k", from=1-1, to=1-2]
		\arrow["m", from=1-2, to=2-2, cocart]
		\arrow[curve={height=-18pt}, Rightarrow, dotted, no head, from=2-6, to=4-6]
		\arrow[curve={height=18pt}, Rightarrow, dotted, no head, from=2-5, to=4-5]
		\arrow["E"', two heads, from=6-3, to=6-6]
	\end{tikzcd}\]
	\caption{Action on arrows of lifting and transport of sliced cocartesian families}
	\label{fig:actn-lift-transp-sl-cocart}
\end{figure}

\begin{remark}[Sliced cocartesian families: actions on arrows of the induced functors]\label{rem:sl-cocart-fam-actn-on-arrs}
	For future reference, we record here the actions on arrows of the induced (fibered) lifting and transport functors as established in the proof of~\Cref{thm:sl-cocart-fam-char}. Informally, these can be described as follows. An arrow in $\relcomma{B}{\varphi}{E}$ is given by a dependent square $\sigma$ in $P$ whose vertical edges $f \defeq \lambda t.\sigma(1,t)$, $g' \defeq \lambda t.\sigma(0,t)$ are $P$-vertical, and, moreover, an arrow $k$ in $Q$ over the edge $\pair{t}{0}$. The lifting functor $\chi_B$ maps this to the ensuing dependent square in $Q$ produced by adding the $Q$-cocartesian lifts of the $P$-vertical arrows and the induced filling edge. The functor $\tau_B$ yields only the filling edge of this square.
	
	Formally, this reads as\footnote{suppressing the repeated data in the lower layers}
	{\small \begin{align}
		\chi_{B,v}(\angled{g,f,g',f'}, k:x \to_g x') & \defeq \angled{k,m:x' \cartarr_{f}^Q f_!\,x', m': x \cartarr_{f}^Q g'_!\,x, k': g_!'x \to_{f'}^Q f_!\,x},\label{eq:lift-sl-cocart}\\
		\tau_{B,f'}(\angled{g,f,g',f'}, k:x \to_g x') & \defeq (k':g_!\,x \to_{f'}^Q f_!\,x'). \label{eq:trans-sl-cocart}
	\end{align}}
\end{remark}

The following proposition reflects the known fact that, given a cocartesian functor between cocartesian fibrations, it is a sliced cocartesian fibration if and only if it is a cocartesian fibration in the usual sense.
\begin{proposition}
	Let $B$ be a Rezk type. Assume $\xi:F \fibarr B$ and $\pi:E \fibarr B$ are cocartesian fibrations, and $\varphi:F \to_B E$ is a cocartesian functor:
	\[\begin{tikzcd}
		F && E \\
		& B
		\arrow["\varphi", from=1-1, to=1-3]
		\arrow["\xi"', two heads, from=1-1, to=2-2]
		\arrow["\pi", two heads, from=1-3, to=2-2]
	\end{tikzcd}\]
	Then $\varphi:F \to_B E$ is a cocartesian fibration sliced over $B$ if and only if it is a cocartesian fibration $F \fibarr E$ in the usual sense.
\end{proposition}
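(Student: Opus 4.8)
The plan is to observe that the ``usual'' cocartesian-fibration condition on $\varphi : F \to E$ is the strictly stronger one, and to reconstruct it from the sliced condition by splitting an arbitrary arrow of $E$ into a $\pi$-cocartesian part, lifted using that $\xi$ is cocartesian and $\varphi$ is a cocartesian functor, and a $\pi$-vertical part, lifted using the sliced cocartesian lifting property. Throughout, recall that $E \jdeq \totalty{P}$ is a Rezk type (as $B$ is Rezk and $P$ isoinner), that $\varphi$ straightens over $E$ to a family $K : E \to \UU$ with $F \simeq \totalty{K}$, and that being a cocartesian fibration $\varphi : F \fibarr E$ in the usual sense is by definition the statement that $K$ is a cocartesian family over $E$.

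The direction ``cocartesian fibration $\Rightarrow$ sliced cocartesian fibration'' is immediate: if $K$ is a cocartesian family over $E$ then it is in particular isoinner, $P$ is isoinner (being the straightening of the cocartesian fibration $\pi$), and $K$ supplies $K$-cocartesian lifts of \emph{all} arrows of $E$, hence in particular of every $\pi$-vertical arrow; this is exactly the data of a sliced cocartesian family over $B$ with base $P$.

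For the converse, fix an arrow $g : e \to_E e'$, put $u \defeq \pi g : b \to_B b'$, and fix $x : K(b,e)$. Since $\pi$ is cocartesian, the universal property of $P_!(u,e) : e \cocartarr^\pi_u u_!\,e$ yields a $\pi$-vertical $\bar g : u_!\,e \to_{E,b'} e'$ with $g \simeq \bar g \circ P_!(u,e)$, unique up to homotopy. Take the $\xi$-cocartesian lift $Q_!(u,\pair{e}{x}) : \pair{e}{x} \cocartarr^\xi_u u_!^\xi\pair{e}{x}$ in $F$ (whose fibre over $b$ is $\sum_{e'':P\,b} K(b,e'')$). As $\varphi$ is a cocartesian functor, $\varphi(Q_!(u,\pair{e}{x}))$ is $\pi$-cocartesian, starts at $e$, and lies over $u$, hence is identified with $P_!(u,e)$ by uniqueness of $\pi$-cocartesian lifts; transporting along this identification presents $Q_!(u,\pair{e}{x})$ as a $K$-dependent arrow $\ell : x \to^K_{P_!(u,e)} y$ with $y : K(b', u_!\,e)$. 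Since $\varphi$ is sliced cocartesian over $B$, the $\pi$-vertical $\bar g$ has a $K$-cocartesian lift $\widehat{\bar g} : y \to^K_{\bar g} z$ at $y$. Composing $\ell$ and $\widehat{\bar g}$ in the isoinner family $K$, and transporting along $g \simeq \bar g \circ P_!(u,e)$, gives $g_! \defeq \widehat{\bar g} \circ \ell : x \to^K_g z$; the claim is that $g_!$ is a $K$-cocartesian lift of $g$ at $x$, which then proves $K$ to be a cocartesian family over $E$, i.e.\ $\varphi : F \fibarr E$ a cocartesian fibration.

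To verify $\isCocartArr^K_g(g_!)$, take an extension $\gamma : e' \to_E e''$ over $v : b' \to_B b''$ and an arrow $w : x \to^K_{\gamma g} w'$; one must show $\sum_{m : z \to^K_\gamma w'} (w = m \circ g_!)$ is contractible. Regarding $w$ as an arrow of $F$ over $v\circ u$, $\xi$-cocartesianness of $\ell$ (for the extension $v$ of $u$) gives a contractible space of factorizations $w = \widetilde m \circ \ell$ with $\widetilde m : y \to^K_{\varphi(\widetilde m)} w'$ over $v$. Applying $\varphi$ to $w = \widetilde m \circ \ell$ and using $\varphi(\ell) \simeq P_!(u,e)$ together with $g \simeq \bar g \circ P_!(u,e)$, cancellation against the $\pi$-cocartesian $P_!(u,e)$ forces $\varphi(\widetilde m) \simeq \gamma \circ \bar g$, so $\widetilde m : y \to^K_{\gamma \bar g} w'$; then $K$-cocartesianness of $\widehat{\bar g}$ gives a contractible space of factorizations $\widetilde m = m \circ \widehat{\bar g}$ with $m : z \to^K_\gamma w'$, and $w = m \circ \widehat{\bar g} \circ \ell = m \circ g_!$. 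Concatenating the two contractible choices shows the desired space is contractible. The main obstacle is precisely this last paragraph of homotopy-coherent bookkeeping: making the identification $\varphi(\ell) \simeq P_!(u,e)$ (and the consequent $\varphi(u_!^\xi\pair{e}{x}) \simeq u_!\,e$) precise, and threading the three universal properties — the $\xi$-cocartesian factorization of $w$, the $\pi$-cocartesian cancellation, and the sliced $K$-cocartesian factorization through $\widehat{\bar g}$ — together carefully enough that the space of factorizations of $w$ through $g_!$ is seen to be contractible rather than merely inhabited.
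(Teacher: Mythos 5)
Your proof is correct and uses the same construction as the paper's: the easy direction is identical, and for the converse you perform exactly the paper's decomposition — factor an arbitrary arrow of $E$ as a $\pi$-cocartesian lift of its base arrow followed by a vertical arrow, lift the cocartesian factor via the $\xi$-cocartesian lift in $F$ (identified with $P_!(u,e)$ under $\varphi$ by cocartesian functoriality), lift the vertical factor via the sliced structure, and compose. Where the two proofs part ways is in certifying that the composite is $K$-cocartesian. The paper argues that both factors, hence the composite, are $Q$-cocartesian (cocartesian for $F \fibarr B$) and then asserts that this yields $K$-cocartesianness over the given arrow; you instead verify the universal property of $\isCocartArr^K$ directly, chaining the $\xi$-cocartesian factorization of $w$, cancellation against the $\pi$-cocartesian arrow $P_!(u,e)$ to force $\varphi(\widetilde m) = \gamma \circ \bar g$, and the cocartesianness of the sliced lift. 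Your route makes explicit the cancellation step that the paper delegates to a figure, and is if anything the more careful of the two; note only that you read the sliced lifting property as producing $K$-cocartesian lifts of vertical arrows (as in the paper's Characterization Theorem for sliced cocartesian families), whereas the paper's proof of this proposition invokes the $Q$-cocartesian reading — both readings occur in the paper, and your usage is consistent with the characterization you would actually cite. The homotopy-coherent bookkeeping you flag at the end is genuine but routine, and no worse than what the paper itself elides.
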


\begin{proof}
	In case $\varphi: F \to E$ is a cocartesian fibration it is also a cocartesian fibration sliced over $B$ since it automatically satisfies the weaker existence condition for lifts.
	
	For the converse, we fibrantly replace the given diagram, considering the straightenings $P \defeq \St_B(\pi): B \to \UU$, $Q \defeq \St_B(\pi): B \to \UU$, $K \defeq  \St_E(\varphi): E \to \UU$.
	
	We assume $K$ to be a cocartesian family sliced over $B$, and want to show that it is also a cocartesian family in the usual sense. For an illustration of what follows, \cf~\Cref{fig:abs-from-sliced-cocart-fibs}. Consider an arrow $\pair{u:b \to_B b'}{f:e \to_u^\pi e'}$ in $E$ together with a point $x:K(b,e)\jdeq Q(b,e)$. First, consider the $P$-cocartesian lift of $u:b\to_B b'$ \wrt~$e:P\,b$, given by~$g \defeq P_!(u,e) : e \cocartarr^P_{u} u^P_!\,x$. This induces a filler $h \defeq \tyfill^P_g(f): u^P_!\,x \to_{P\,b'} e'$ that is in particular vertical. Since $K:E \to \UU$ is a sliced cocartesian family we have a lift \wrt~to the \emph{$Q$-cocartesian} transport of the point $x:Q(b,e)$, namely an arrow $m \defeq K_!(h,u_!^Q\,x): u_!^Q\,x \to^K_{\pair{u}{f}} x'$ to some point $x':Q(b,e')$. But by assumption, $m$ (together with its $\varphi$-image $h$) is also a $Q$-cocartesian arrow, hence so is the composite $k \defeq m \circ Q_!(u,x) : x \longrightarrow^K_{g \circ \varphi(Q_!(u,x))} x'$.
	The functor $\varphi$ being cocartesian means $\varphi(Q_!(u,x))$ is identified with $P_!(u,e) \jdeq g$, so up to homotopy, the dependent arrow $k:x \to^Q_u x'$ lies over the composite $h \circ g = f$---hence we can assume it does so strictly. Now, $k$ (together with its projection $f$) being a $Q$-cocartesian arrow means it is in particular $K$-cocartesian (cf.~\Cref{fig:abs-from-sliced-cocart-fibs} for illustration).
\end{proof}

\begin{figure}
	\centering
	\[\begin{tikzcd}
		F & x && {x'} \\
		&&& {u_!^Q\,x} \\
		E & e && {e'} \\
		&&& {u^P_!\,e} \\
		B & b && {b'}
		\arrow["k", from=1-2, to=1-4]
		\arrow["m"', from=2-4, to=1-4]
		\arrow["f", from=3-2, to=3-4]
		\arrow["{g \jdeq P_!(u,e)}"', from=3-2, to=4-4, cocart]
		\arrow["h"', dashed, from=4-4, to=3-4]
		\arrow["u", from=5-2, to=5-4]
		\arrow["\varphi"{description}, from=1-1, to=3-1]
		\arrow["\pi"{description}, two heads, from=3-1, to=5-1]
		\arrow["{\ell \jdeq Q_!(u,x)}"', from=1-2, to=2-4, cocart]
		\arrow["\xi"{description}, curve={height=24pt}, two heads, from=1-1, to=5-1]
	\end{tikzcd}\]
	\caption{Absolute from sliced cocartesian fibrations}\label{fig:abs-from-sliced-cocart-fibs}
\end{figure}

\begin{proposition}\label{prop:sliced-comma-is-cocart}
	Consider a cospan $\psi:F \to_B G \leftarrow_B E: \varphi$ of fibered functors between isoinner fibrations over a Rezk type $B$, giving rise to the sliced comma type:
	\[\begin{tikzcd}
		{\varphi \downarrow_B \psi} &&&& E \\
		\\
		F &&&& G & {} \\
		\\
		&& B
		\arrow[from=1-1, to=3-1]
		\arrow[two heads, from=3-1, to=5-3]
		\arrow[two heads, from=3-5, to=5-3]
		\arrow[from=1-1, to=1-5]
		\arrow["\varphi", from=1-5, to=3-5]
		\arrow[two heads, from=1-1, to=5-3]
		\arrow[two heads, from=1-5, to=5-3]
		\arrow[shorten <=35pt, shorten >=35pt, Rightarrow, from=1-5, to=3-1]
		\arrow["\psi"', from=3-1, to=3-5, crossing over]
	\end{tikzcd}\]
	The codomain projection
	\[ \partial_1: \relcomma{B}{\varphi}{\psi} \fibarr F \]
	is a cocartesian fibration.
\end{proposition}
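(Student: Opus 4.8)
The first thing to settle is the precise meaning of the statement. Since $\partial_1\colon\relcomma{B}{\varphi}{\psi}\to F$ is a map over $B$ (all four vertices of the square lie over $B$), the assertion should be read as: $\partial_1$ is a cocartesian fibration \emph{sliced over $B$} in the sense of \Cref{ssec:sl-cocart-fams}, i.e.\ a cocartesian fibration in the slice over $B$. A \emph{plain} cocartesian fibration is not to be expected here, because $\varphi$'s source $E$ is only assumed isoinner, so there is no transport moving its points between fibers, yet the $E$-component of a cocartesian lift of a non-degenerate base arrow would have to behave like one. Unwinding the pullback defining $\relcomma{B}{\varphi}{\psi}$, the fiber of $\partial_1$ over $\langle b,x\rangle$ is the comma $\comma{\varphi_b}{\psi_b\,x}$ of $\varphi_b\colon E_b\to G_b$ over the point $\psi_b\,x$, and a $\pi_F$-vertical arrow $h\colon x\to_{F_b}x'$ should lift, at $\langle e,\gamma\colon\varphi_b\,e\to_{G_b}\psi_b\,x\rangle$, to the arrow obtained by keeping the $E$-component fixed and post-composing inside the Segal type $G_b$: $\langle e,\gamma\rangle\mapsto\langle e,\,\psi_b\,h\circ\gamma\rangle$, witnessed by the evident composition $2$-cell.

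The core input is then the (standard) fact that the codomain projection of a comma object is cocartesian, which I would obtain by base change from the arrow fibration. The target projection $\partial_1\colon\VertArr(G)\to_B G$ is a sliced cocartesian fibration over $B$: fiberwise over $b$ it is $\mathrm{cod}\colon G_b^{\Delta^1}\to G_b$ for the Rezk type $G_b$, which is cocartesian with cocartesian arrows exactly the squares with identity domain edge (\cf~\cite[Appendix~B]{BW21}; it also follows from \Cref{thm:cocart-fams-intl-char}). Pulling the source leg $\partial_0\colon\VertArr(G)\to_B G$ back along $\varphi$ yields $\relcomma{B}{\varphi}{G}$, the sliced comma of $\varphi$ over $\id_G$, and since those cocartesian arrows have trivial domain component they lift through this pullback; hence $\partial_1\colon\relcomma{B}{\varphi}{G}\to_B G$ is again sliced cocartesian over $B$. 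Finally $\relcomma{B}{\varphi}{\psi}$ is the pullback of $\partial_1\colon\relcomma{B}{\varphi}{G}\to_B G$ along $\psi\colon F\to_B G$, so $\partial_1\colon\relcomma{B}{\varphi}{\psi}\to_B F$ is a base change of a sliced cocartesian fibration and therefore one itself, by closure of sliced cocartesian fibrations under pullback along arbitrary fibered maps (the sliced analogue of the pullback-stability in \Cref{prop:cocart-cosm-closure}, established in \Cref{ssec:sl-cocart-fams}). Equivalently one may verify condition \Cref{it:sl-cocart-fam-char-ii} or \Cref{it:sl-cocart-fam-char-iii} of \Cref{thm:sl-cocart-fam-char} for $K\defeq\St_F(\partial_1)\colon F\to\UU$ directly, with the lifting and transport maps given by the post-composition formula above.

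The main obstacle is the sliced bookkeeping rather than any single hard step. One must check that the families in play—most importantly $\St_F(\partial_1)$ and $\St_G(\partial_0)$—are isoinner, so that the sliced cocartesian machinery of \Cref{ssec:sl-cocart-fams} applies, using closure of isoinner families under the sliced comma, cotensor, and pullback constructions; and one must verify that the fiberwise post-composition lifts are cocartesian in the \emph{strong} ($K$-cocartesian, not merely fiberwise) sense demanded by the definition. The latter is where the Segal condition on $G$ (uniqueness of composites) and the functoriality of $\psi$ (so that the composition $2$-cells paste coherently against arbitrary further arrows) enter, exactly as in the proof that $\mathrm{cod}\colon A^{\Delta^1}\to A$ is cocartesian for a Rezk type $A$.
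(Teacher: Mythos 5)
Your reading of the statement is the defensible one, and your reason for rejecting the absolute reading is sound: taking $G\jdeq F\jdeq B$ with $\psi\jdeq\id_B$, the sliced comma $\relcomma{B}{\varphi}{\id_B}$ collapses to $E$ with $\partial_1\simeq\varphi$, so the absolute claim would make every isoinner fibration over $B$ cocartesian. Note that the paper's own proof only type-checks for vertical base arrows: for a general $u:b\to_B b'$ the posited ``tautological extension'' has new connecting arrow $mk$ lying over $u$ rather than being vertical, and its $E$-component $\id_e$ stays in the fiber over $b$, so the alleged target is not an element of $\relcomma{B}{\varphi}{\psi}$ as defined (a pullback of $\VertArr(G)$). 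Restricted to $u\jdeq\id_b$, the paper's construction is exactly the post-composition lift you describe, verified directly against arbitrary (non-vertical) test arrows by ``repeating the missing data''; so what is actually established is the sliced statement, which is what you prove, and the corollary about ordinary commas is recovered by specializing to $B\jdeq\unit$.

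Beyond that, your route genuinely differs from the paper's: where the paper writes the lift in coordinates and checks the universal property by hand, you reduce to the fiberwise codomain projection $G_b^{\Delta^1}\to G_b$ and transport the property through two pullbacks. The second pullback (along $\psi$) is a legitimate instance of \Cref{prop:clos-sl-cocart-fib-pb}; the first is not, since $\relcomma{B}{\varphi}{G}$ is formed by pulling back along the \emph{other} leg $\partial_0$, so its codomain projection is not itself a pullback of $\partial_1:\VertArr(G)\to_B G$. Your justification---that the $\partial_1$-cocartesian lifts are $\partial_0$-vertical with identity component, hence pair with $\id_e$ and remain cocartesian upstairs---is the right one, but it is precisely where the Segal-type verification lives (the same ``cocartesian on the left'' pattern as in \Cref{prop:char-fib-cocart-left}), so it should be written out rather than gestured at. Modulo that step and the isoinner-ness bookkeeping you already flag, your argument is complete, and the structural version has the advantage of isolating the one computation that actually uses composition in $G$.
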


\begin{proof}
	Consider the families $R:B \to \UU$, $P:G \to \UU$, and $Q:G \to \UU$ associated to $G \fibarr B$, $\varphi: E \fibarr G$, and $\psi: F \fibarr G$, resp. By projection equivalence we have
	\[ \partial_1: \relcomma{B}{\varphi}{\psi} \simeq \sum_{b:B} \sum_{\stackrel{x,x':R\,b}{k:x \to^R_u x'}} P\,b\,x \times Q\, b\,x' \fibarr F \simeq \sum_{b:B} \sum_{x':R\,b} Q\,b\,x'. \]
	For an arrow in $F$, given by data $u:b \to_B b'$, $m:x' \to^R_u x''$, $f:d \to^Q_m d'$, we posit the cocartesian lift w.r.t.~the starting vertex~$\angled{b,k:x \to_{P\,b} x', d,e}$ to be the ``tautological extension''
	\[ \angled{\pair{\id_x}{m}: k \rightrightarrows^R_{u} mk, \id_e: e =_{P\,x} e,f:d \to^Q_m d'} \]
	as illustrated:
	{\small 
	\[\begin{tikzcd}
		{\varphi \downarrow_B \psi} && e && e \\
		&& d && {d'} &&&& d && {d'} \\
		&& x && x & {} && {} \\
		&& {x'} && {x''} &&&& {x'} && {x''} \\
		B && b && {b'} &&& F & b && {b'}
		\arrow[Rightarrow, no head, from=1-3, to=1-5]
		\arrow["f", from=2-3, to=2-5]
		\arrow["k"', from=3-3, to=4-3]
		\arrow["m", from=4-3, to=4-5]
		\arrow[Rightarrow, no head, from=3-3, to=3-5]
		\arrow["mk", from=3-5, to=4-5, swap]
		\arrow["u", from=5-3, to=5-5]
		\arrow[two heads, from=1-1, to=5-1]
		\arrow["{\partial_1}", two heads, from=3-6, to=3-8]
		\arrow["u", from=5-9, to=5-11]
		\arrow["m", from=4-9, to=4-11]
		\arrow["f", from=2-9, to=2-11]
		\arrow[curve={height=18pt}, Rightarrow, dotted, no head, from=2-3, to=4-3]
		\arrow[curve={height=-18pt}, Rightarrow, dotted, no head, from=2-5, to=4-5]
		\arrow[curve={height=18pt}, Rightarrow, dashed, no head, from=1-3, to=3-3]
		\arrow[curve={height=-18pt}, Rightarrow, dashed, no head, from=1-5, to=3-5]
		\arrow[Rightarrow, dotted, no head, from=2-9, to=4-9]
		\arrow[Rightarrow, dotted, no head, from=2-11, to=4-11]
	\end{tikzcd}\]}
	In the picture the right hand side indicates the action of the projection $\partial_1 : \relcomma{B}{\varphi}{\psi} \fibarr F$
	That this arrow in the sliced comma $\relcomma{B}{\varphi}{\psi}$ is in fact cocartesian is seen as follows. A postcomposing arrow in $F$ consists of data $v:b' \to b''$, $\ell: x'' \to_v^R x'''$, $g:d' \to^Q_\ell d''$. A dependent arrow in $\relcomma{B}{\varphi}{\psi}$ over the composite arrow in $F$ is given by
	\[ \angled{\pair{\ell'}{\ell m}: k \rightrightarrows^R_{vu} k', r:e \to^P_{\ell'} e', g \circ f: d \to^Q_{\ell \circ m} d''}\]
	where $\ell':x \to_v^R y$, $k':y \to_{R\,b''} x'''$:
	\[\begin{tikzcd}
		{\varphi \downarrow_B \psi} && e && e && {e'} \\
		&& d && {d'} && {d''} \\
		&& x && x && y & {} \\
		&& {x'} && {x''} && {x'''} \\
		B && b && {b'} && {b''}
		\arrow[Rightarrow, no head, from=1-3, to=1-5]
		\arrow["f", from=2-3, to=2-5]
		\arrow["k"', from=3-3, to=4-3]
		\arrow["m", from=4-3, to=4-5]
		\arrow[Rightarrow, no head, from=3-3, to=3-5]
		\arrow["mk", from=3-5, to=4-5]
		\arrow["u", from=5-3, to=5-5]
		\arrow[two heads, from=1-1, to=5-1]
		\arrow["{\ell'}", from=3-5, to=3-7]
		\arrow["\ell", from=4-5, to=4-7]
		\arrow["r", from=1-5, to=1-7]
		\arrow["g", from=2-5, to=2-7]
		\arrow["{k'}", from=3-7, to=4-7]
		\arrow["v", from=5-5, to=5-7]
	\end{tikzcd}\]
	The filler is constructed just by repeating the missing data $\ell':x \to y$, $r:e \to e'$ which also shows uniqueness up to contractibility~\wrt~the given data.
\end{proof}

\begin{corollary}
	For a cospan of maps between Rezk types $g: C \to A \leftarrow B:f$, the codomain projection $\partial_1: \comma{f}{g} \fibarr C$ is a cocartesian fibration.
\end{corollary}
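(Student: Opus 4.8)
The plan is to obtain this as the special case $B \jdeq \unit$ of \Cref{prop:sliced-comma-is-cocart}. Given the cospan $g \colon C \to A \leftarrow B \colon f$ of functors between Rezk types, I would regard $A$, $B$, $C$ as trivially fibered over $\unit$ and view $g$ and $f$ as a cospan of fibered functors over $\unit$. The point is then that all the \emph{sliced} constructions degenerate to the ordinary ones over the terminal type: for any $\pi \colon G \to \unit$ every arrow is vertical, so $\VertArr_\pi(G) \simeq G^{\Delta^1}$, and hence the defining pullback of the sliced comma $\relcomma{\unit}{f}{g}$ — the pullback of $G^{\Delta^1} \to G \times G$ (via $\langle \partial_1, \partial_0 \rangle$) along $g \times f \colon C \times B \to G \times G$ — is exactly the ordinary comma type $\comma{f}{g} \simeq \sum_{c:C} \sum_{b:B} \hom_A(f b, g c)$, compatibly with the codomain projections to $C$. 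Applying \Cref{prop:sliced-comma-is-cocart} then immediately gives that $\partial_1 \colon \comma{f}{g} \fibarr C$ is a cocartesian fibration.

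To make this rigorous I would check the two hypotheses of \Cref{prop:sliced-comma-is-cocart}. First, the base $\unit$ is (vacuously) a Rezk type. Second, the straightenings $\St_A(f), \St_A(g) \colon A \to \UU$ must be isoinner families over the Rezk type $A$; this holds because any functor between Rezk types is an isoinner fibration — a short consequence of the Segal and Rezk conditions, since for Segal $B$ and $A$ the square relating $B^{\Delta^2} \to B^{\Lambda_1^2}$ and $A^{\Delta^2} \to A^{\Lambda_1^2}$ has equivalences as its horizontal maps and is therefore a pullback, so the fibers of $B^{\Delta^2} \to B^{\Lambda_1^2} \times_{A^{\Lambda_1^2}} A^{\Delta^2}$ are contractible, and analogously for completeness with $\mathbb{E} \to \unit$. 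Being fibered over $\unit$ is automatic, so all the data required by \Cref{prop:sliced-comma-is-cocart} is in place.

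The argument is essentially bookkeeping, and I do not expect a genuine obstacle; the only step that needs a moment's care is the identification $\relcomma{\unit}{f}{g} \simeq \comma{f}{g}$ over $C$ together with the matching of cocartesian lifts. Unwinding the lift constructed in the proof of \Cref{prop:sliced-comma-is-cocart} at $B \jdeq \unit$, one finds that the cocartesian lift at $\angled{b, c, \alpha \colon f b \to_A g c}$ of an arrow $u \colon c \to_C c'$ is the ``tautological'' square with $\id_b$ on the $B$-side, $u$ on the $C$-side, target $\angled{b, c', g(u) \circ \alpha}$, and filler obtained by applying $g$ — which is exactly the expected cocartesian lift in $\comma{f}{g}$, and its cocartesianness is witnessed by the same contractibility-of-fillers computation as in that proof. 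Alternatively, one could simply reproduce that proof directly in the unsliced setting, but invoking \Cref{prop:sliced-comma-is-cocart} is cleaner.
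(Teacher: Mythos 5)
Your proof is correct and is exactly the route the paper intends: the corollary is stated without proof as the immediate specialization of Proposition~\ref{prop:sliced-comma-is-cocart} to the base $\unit$, under which the sliced comma $\relcomma{\unit}{f}{g}$ reduces to the ordinary comma $\comma{f}{g}$ over $C$. Your extra verification that $f$ and $g$ are isoinner fibrations over $A$ is harmless but not needed, since the proposition only requires the three legs over the base to be isoinner fibrations, which over $\unit$ just means $A$, $B$, $C$ are Rezk.
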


\subsection{Cocartesian families in cartesian families}\label{ssec:cocart-fams-in-cart}

We now introduce the concept of \emph{cocartesian fibrations in cartesian fibrations}, which semantically, really translate to cocartesian fibrations in $\infty$-cosmoses of cartesian fibrations.

Their characterization will be of relevance for the characterization of two-sided cartesian fibrations in~\Cref{thm:char-two-sid}.

\begin{definition}[Cocartesian fibrations in cartesian fibrations]\label{def:cocart-fibs-in-cart-fibs}
	Let $\varphi: \xi \to_B \pi$ be a cartesian functor between cartesian fibrations. In addition, let $\varphi$ be a \emph{cocartesian} fibration sliced over $B$:
	\[\begin{tikzcd}
		F && E \\
		& B
		\arrow["\varphi", from=1-1, to=1-3]
		\arrow["\xi"', two heads, from=1-1, to=2-2]
		\arrow["\pi", two heads, from=1-3, to=2-2]
	\end{tikzcd}\]
	Then we call $\varphi$ a \emph{cocartesian fibration in cartesian fibrations}.\footnote{For the official naming we prefer the fibrational variant since it is closer to its semantic counterpart, but of course by the typal Grothendieck construction there exists an indexed variant as well.}
\end{definition}

\begin{proposition}[Char.~of cocartesian fibration in cartesian fibrations]
	\label{prop:char-cocart-fib-in-cart-fib}
	Let $\varphi:\xi \to_B \pi$ be a cocartesian fibration in cartesian fibrations, as in~\Cref{def:cocart-fibs-in-cart-fibs}. Then the following are equivalent:
	\begin{enumerate}
		\item\label{it:gen-lift-comm-i}
		The sliced cocartesian lifting map, ~~~the fibered LARI
		\[\begin{tikzcd}
			{\mathrm{Vert}_\xi(F)} && {\varphi \downarrow_B E } \\
			& B
			\arrow["{\overline{\xi}}"', two heads, from=1-1, to=2-2]
			\arrow["{\overline{\varphi}}", two heads, from=1-3, to=2-2]
			\arrow[""{name=0, anchor=center, inner sep=0}, "{\chi_B}"', curve={height=12pt}, dotted, from=1-3, to=1-1]
			\arrow[""{name=1, anchor=center, inner sep=0}, "{i_0 \cotens \pi}"', curve={height=6pt}, from=1-1, to=1-3]
			\arrow["\dashv"{anchor=center, rotate=-90}, draw=none, from=0, to=1]
		\end{tikzcd}\]
		is a \emph{cartesian functor} between cartesian fibrations over $B$
		\item\label{it:gen-lift-comm-ii} 
		The sliced cocartesian transport map, \ie, the fibered left adjoint
		\[\begin{tikzcd}
			F && {\varphi \downarrow_B E} \\
			& E \\
			& B
			\arrow["\varphi"{description}, from=1-1, to=2-2]
			\arrow[""{name=0, anchor=center, inner sep=0}, "{\iota_B}"{description}, curve={height=6pt}, from=1-1, to=1-3]
			\arrow["{\partial_1}"{description}, from=1-3, to=2-2]
			\arrow["\xi"{description}, two heads, from=1-1, to=3-2]
			\arrow["{\overline{\varphi}}"{description}, two heads, from=1-3, to=3-2]
			\arrow["\pi"{description}, two heads, from=2-2, to=3-2]
			\arrow[""{name=1, anchor=center, inner sep=0}, "{\tau_B}"{description}, curve={height=12pt}, dashed, from=1-3, to=1-1]
			\arrow["\dashv"{anchor=center, rotate=-93}, draw=none, from=1, to=0]
		\end{tikzcd}\]
		is a cartesian functor (from $\overline{\varphi}$ to $\xi$).
		\item\label{it:gen-lift-comm-iii} For all elements $b,b':B$, arrows $v:b' \to_B v$, vertical arrows $f:e' \to_{P\,b}e$ and $x:Q(b,e')$, let us make the following abbreviations:\footnote{Note that all the cocartesian lifts exist because they are over $P$-vertical arrows.}
		{ \begin{align}\label{eq:abbrv-cocart-in-cart-gen}
			g 	& \defeq P^*(v,e'): v^*(b'e') \cartarr^P_{v} e', & f' & \defeq P^*(v,e'): v^*(b,e) \cartarr^P_{v} e, \\
			g ' & \defeq \cartFill_{f'}^P(fg), & k & \defeq Q^*(g,x): x \cartarr^Q_g g^*x, \\
			k' & \defeq \cocartFill^Q_{m'}(mk): g'_!g^*x \to^Q_{f'} f_!\,x, & k'' & \defeq Q^*(f',f_!\,x): (f')^*f_!\,x \cartarr^Q_{f'} f_!x, \\
			m 	& \defeq Q_!(f,x): x \cocartarr^Q_f f_!\,x. & m' & \defeq Q_!(g',g^*x): g^*x \cocartarr^Q_{g'} g_!'g^*x, \\
			m'' & \defeq \cartFill^Q_{k''}(mk): g^*x \to (f')^*f_!\,x. && 
		\end{align}}
		Then there is a homotopy $r$ such that:
		\[\begin{tikzcd}
			{(f')^*f_!\,x} && {f_!\,x} \\
			& {g'_!g^*\,x}
			\arrow["{k'}", from=1-1, to=1-3]
			\arrow["r"', Rightarrow, no head, from=1-1, to=2-2]
			\arrow["{k''}"', from=2-2, to=1-3, cart]
		\end{tikzcd}\]
		\item\label{it:gen-lift-comm-iv} With the notation from~\Cref{it:gen-lift-comm-iii} there is a homotopy $r$ such that:
		\[\begin{tikzcd}
			{g^*x\,} && {g_!'g^*\,x} \\
			& {(f')^*f_!\,x}
			\arrow["{m''}", from=1-1, to=1-3, cocart]
			\arrow["{m'}"', from=1-1, to=2-2]
			\arrow["r"', Rightarrow, no head, from=2-2, to=1-3]
		\end{tikzcd}\]
	\end{enumerate}
	
\end{proposition}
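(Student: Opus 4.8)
The plan is to establish the three equivalences \Cref{it:gen-lift-comm-i} $\Leftrightarrow$ \Cref{it:gen-lift-comm-iii}, \Cref{it:gen-lift-comm-ii} $\Leftrightarrow$ \Cref{it:gen-lift-comm-iv}, and \Cref{it:gen-lift-comm-iii} $\Leftrightarrow$ \Cref{it:gen-lift-comm-iv}, all by unwinding descriptions already at hand. As a preliminary: since $\varphi: \xi \to_B \pi$ is a cartesian functor between cartesian fibrations over the Rezk type $B$, both $\overline\xi: \mathrm{Vert}_\xi(F) \fibarr B$ and $\overline\varphi: \varphi \downarrow_B E \fibarr B$ are again cartesian fibrations over $B$ --- the former as a sliced cotensor, the latter as a sliced comma --- by the closure properties of cartesian fibrations (the duals of the relevant items of \Cref{prop:cocart-cosm-closure}, together with \Cref{sec:sl-comma-prod}); this is what makes the cartesian-functoriality assertions of \Cref{it:gen-lift-comm-i} and \Cref{it:gen-lift-comm-ii} meaningful. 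Throughout we use the action-on-arrows formulas for the fibered lifting functor $\chi_B$ and the fibered transport functor $\tau_B$ recorded in \Cref{rem:sl-cocart-fam-actn-on-arrs}, i.e.\ \eqref{eq:lift-sl-cocart} and \eqref{eq:trans-sl-cocart}, and the standard description of cartesian arrows in such sliced objects: an arrow over $v: b' \to_B b$ is $\overline\xi$- (resp.\ $\overline\varphi$-) cartesian exactly when each of its constituent arrows of $\xi$ (resp.\ $\pi$) is cartesian.

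For \Cref{it:gen-lift-comm-i} $\Leftrightarrow$ \Cref{it:gen-lift-comm-iii}, I would unwind what it means for $\chi_B: \varphi \downarrow_B E \to \mathrm{Vert}_\xi(F)$ to send $\overline\varphi$-cartesian arrows to $\overline\xi$-cartesian ones. Fix $v: b' \to_B b$, a $P$-vertical arrow $f: e' \to_{P\,b} e$, and $x: Q(b,e')$, as in \Cref{it:gen-lift-comm-iii}. The $\overline\varphi$-cartesian arrow over $v$ with this codomain is the dependent square assembled from the $P$-cartesian lifts $g$ (\wrt~$e'$) and $f'$ (\wrt~$e$), the induced $P$-vertical filler $g'$, and the $Q$-cartesian transports $k$ of $x$ and $k''$ of $f_!x$, in the notation of \eqref{eq:abbrv-cocart-in-cart-gen}. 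By \eqref{eq:lift-sl-cocart}, applying $\chi_B$ produces the dependent square in $Q$ obtained by adjoining the $Q$-cocartesian lifts $m$ of $x$ along $f$ and $m'$ of $g^*x$ along $g'$, together with the induced filling edge $k'$. By the criterion for $\overline\xi$-cartesian arrows, the image is $\overline\xi$-cartesian iff every leg of it coming from $\pi$ is cartesian; here $m$ and $m'$ lie over the $P$-vertical arrows $f, g'$, and $k''$ is cartesian by construction, so the only condition with content is that $k': g'_!g^*x \to_{f'} f_!x$ be $Q$-cartesian --- which, over the $P$-cartesian arrow $f'$, is exactly the existence of an identification $r$ fitting into the triangle of \Cref{it:gen-lift-comm-iii}.

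The equivalence \Cref{it:gen-lift-comm-ii} $\Leftrightarrow$ \Cref{it:gen-lift-comm-iv} is the dual of this step: since $\tau_B$ is obtained from $\chi_B$ by retaining only the filling edge of the image square (compare \eqref{eq:trans-sl-cocart} with \eqref{eq:lift-sl-cocart}), preservation of cartesian arrows by $\tau_B: \varphi \downarrow_B E \to F$ unwinds in the same way and reduces to the comparison edge $m'' = \cartFill^Q_{k''}(mk)$ being $Q$-cocartesian, i.e.\ to the triangle in \Cref{it:gen-lift-comm-iv}. (Alternatively one could invoke the dual of the ``$\Delta^1$''-criterion of \Cref{thm:char-cocart-fun} applied to the fibered functor $\tau_B$ between $\overline\varphi$ and $\xi$, the mate to be inverted being precisely $r$.) Finally, \Cref{it:gen-lift-comm-iii} $\Leftrightarrow$ \Cref{it:gen-lift-comm-iv} is a fibrewise mate argument: both statements concern the single dependent square in $Q$ lying over $f'g' = fg$ in $P$ that compares ``$Q$-cocartesian then $Q$-cartesian'' with ``$Q$-cartesian then $Q$-cocartesian'' transport, and by uniqueness of fillers one has $k' \circ m' = mk = k'' \circ m''$ with $m'$ cocartesian and $k''$ cartesian; the usual cancellation lemmas for (co)cartesian arrows then give that $k'$ is $Q$-cartesian iff $m''$ is $Q$-cocartesian, the induced identification between their codomains resp.\ domains serving as a common witness $r$.

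The conceptual input is carried entirely by \Cref{thm:sl-cocart-fam-char} and the action-on-arrows formulas of \Cref{rem:sl-cocart-fam-actn-on-arrs}; the main obstacle is organizational --- threading $\chi_B$ and $\tau_B$ carefully through the large comparison diagram \eqref{eq:abbrv-cocart-in-cart-gen} and verifying the ``automatic'' claims in the first two steps, namely that every auxiliary arrow discarded there is (co)cartesian because it lies over a $P$-vertical or a $P$-cartesian arrow (the footnote to \Cref{it:gen-lift-comm-iii}). Once that bookkeeping is done, each of the three equivalences is immediate.
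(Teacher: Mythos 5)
Your overall strategy is essentially the paper's: explicate each condition via the action-on-arrows formulas of \Cref{rem:sl-cocart-fam-actn-on-arrs}, and reduce everything to the invertibility of the canonical comparison map $r$ between $g'_!g^*x$ and $(f')^*f_!\,x$, with conditions \ref{it:gen-lift-comm-iii} and \ref{it:gen-lift-comm-iv} identified as the two ``one-edge'' consequences of that invertibility. Your arguments for \ref{it:gen-lift-comm-i} $\Leftrightarrow$ \ref{it:gen-lift-comm-iii} and for \ref{it:gen-lift-comm-iii} $\Leftrightarrow$ \ref{it:gen-lift-comm-iv} are correct and agree with the paper's: the latter is exactly the observation that $k'\circ m' = mk = k''\circ m''$ forces a unique vertical $r$ with $k'=k''r$ and $m''=rm'$, and that $r$ is invertible iff $k'$ is cartesian iff $m''$ is cocartesian.

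The flaw is in the leg \ref{it:gen-lift-comm-ii} $\Leftrightarrow$ \ref{it:gen-lift-comm-iv}. You claim that preservation of cartesian arrows by $\tau_B$ ``reduces to the comparison edge $m''$ being $Q$-cocartesian.'' It cannot: $\tau_B$ is a fibered functor over $B$, so it sends an $\overline{\varphi}$-cartesian arrow lying over $v:b'\to_B b$ to an arrow of $F$ lying over $v$ --- namely, by \eqref{eq:trans-sl-cocart}, the horizontal filling edge $k':g'_!g^*x \to^Q_{f'} f_!\,x$ of the lifted square --- whereas $m''$ is $\xi$-vertical and does not occur in the $\tau_B$-image of that arrow at all. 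Hence condition \ref{it:gen-lift-comm-ii} unwinds directly to ``$k'$ is $\xi$-cartesian,'' i.e.\ to condition \ref{it:gen-lift-comm-iii}, not to \ref{it:gen-lift-comm-iv}; this is also what the paper records when it says the transport functor yields the identification $k'=k''$. Your overall equivalence is not endangered, since you prove \ref{it:gen-lift-comm-iii} $\Leftrightarrow$ \ref{it:gen-lift-comm-iv} independently, but the stated reduction for \ref{it:gen-lift-comm-ii} is false as written and should be rerouted through \ref{it:gen-lift-comm-iii}. Two smaller points in the step for \ref{it:gen-lift-comm-i}: the criterion you invoke should say that an arrow of $\VertArr_\xi(F)$ over $v$ is $\overline{\xi}$-cartesian iff its two legs \emph{over $v$} are $\xi$-cartesian (the vertical legs $m,m'$ are exempt because the cotensor/comma lifts are computed on the horizontal edges, not because lying over a $P$-vertical arrow confers cartesianness); and the leg of the image square that is ``cartesian by construction'' is $k$, not $k''$, since $k''$ is not an edge of the $\chi_B$-image.
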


\begin{proof}
	We prove the equivalence of these four conditions by first explicating~\Cref{it:gen-lift-comm-i}. We will readily see that it is equivalent to either of the three remaining condition.
	
	Recall the action of the fibered lifting $\chi_B$ and transport functor $\tau_B$, resp, \Cref{thm:sl-cocart-fam-char,rem:sl-cocart-fam-actn-on-arrs}. In the first case, assume we have an identity
	\begin{align}\label{eq:cart-sl-transp} 
		\chi_B(\overline{\varphi}^*(v,\pair{f}{x})) = \overline{\xi}^*(v,\chi_B(f,x))
	\end{align}
	for all $v:b'\to_B b$, $f:e' \to_{P\,b} e$, $x:Q(b,e')$. Consider the abbreviations from~(\ref{eq:abbrv-cocart-in-cart-gen}).
	Specifically, the case for $\chi_B$ will involve
	\begin{align}\label{eq:abbrv-cocart-in-cart-chi}
		m' & \jdeq Q_!(g',g^*x): g^*x \cocartarr^Q_{g'} g_!'g^*x, & & k' \jdeq \cocartFill^Q_{m'}(mk): g'_!g^*x \to^Q_{f'} f_!\,x,
	\end{align}
	whereas for $\tau_B$ we will need:
	\begin{align}\label{eq:abbrv-cocart-in-tau}
		k'' & \jdeq Q^*(f',f_!\,x): (f')^*f_!\,x \cartarr^Q_{f'} f_!x, & & m'' \jdeq \cartFill^Q_{k''}(mk): g^*x \to (f')^*f_!\,x.	
	\end{align}
	As detailed in~\cite[Subsection~5.2.3]{BW21}, lifts of co-/cartesian families are fiberwise. Hence, we find for the left hand side in~(\ref{eq:cart-sl-transp}):
	\begin{align}
		& \chi_B(\overline{\varphi}^*(v,\pair{f}{x})) = \chi_B(v,\angled{g,f,g',f'},k:g^*x \cartarr_{g}^Q x) \\
		= & \angled{v,\angled{g,f,g',f'},\angled{k,m,m',k'}} \label{eq:abbrv-cocart-in-cart-lhs}
	\end{align}
	and for the right hand side:
	\begin{align}
		& \overline{\xi}^*(v,\chi_B(f,x)) = \overline{\xi}^*(v,\angled{g,f,g',f'},\angled{k,m,m'',k''}) \\
		=	& \angled{v,\angled{g,f,g',f'},\angled{k,m,m'',k''}}. \label{eq:abbrv-cocart-in-cart-rhs}
	\end{align}
	Recall that a functor being cartesian is a proposition.
	A path between (\ref{eq:abbrv-cocart-in-cart-lhs}) and (\ref{eq:abbrv-cocart-in-cart-rhs}) amounts to an isomorphism $r:(f')^*f_!\,x =_{v^*(b,e)} g'_!g^*x$ such that the entire following diagram commutes:
	\[\begin{tikzcd}
		{g^*\,x} && x \\
		{(f')^*f_!\,x} && {f_!\,x} \\
		& {g_!'g^*x}
		\arrow["{m'}"', from=1-1, to=2-1, cocart]
		\arrow["k"{pos=0.6}, from=1-1, to=1-3]
		\arrow["m", from=1-3, to=2-3, cocart]
		\arrow["r"', Rightarrow, no head, from=2-1, to=3-2]
		\arrow["{k''}"', from=3-2, to=2-3, cart]
		\arrow["{k'}"'{pos=0.6}, from=2-1, to=2-3]
		\arrow["{m''}"{pos=0.3}, from=1-1, to=3-2, crossing over]
	\end{tikzcd}\]
	More generally, it can be shown that there exists a filler $r:(f')^*f_!\,x \to_{v^*(b,e)} g_!'g^*x$ s.t.~$m'' = rm'$ and~$k' = k''r$. Hence, this propositional condition is equivalent to this induced arrow being invertible. But moreover, we can see by universality that this is equivalent to the existence of either identification $m'=m''$ or $k'=k''$. In particular, the action by the transport functor $\tau_B$ yields just the latter. Hence, all the four conditions claimed are equivalent.
\end{proof}

\begin{proposition}[Closure of sliced cocartesian fibrations under product]\label{prop:clos-sl-cocart-fib-prod}
	For a small indexing type $I:\UU$, let $B:I \to \UU$ be a family of Rezk types. Let $P:\prod_{i:I}(B_i \to \UU)$ be a family and $K: \prod_{i:I} (\totalty{P}_i \to \UU)$ be another family. We define $Q\defeq \lambda i.\Sigma_{\totalty{P_i}}K_i.\prod_{i:I} (B_i \to \UU)$. For every $i:I$, we denote
	\begin{align*}
		& \pi_i \defeq \Un_{B_i}(P_i) : E_i \defeq \totalty{P_i} \fibarr B_i,& \quad \xi_i \defeq \Un_{B_i}(Q_i) : F_i \defeq \totalty{Q_i} \fibarr B_i,  \\ 
		& \varphi \defeq \Un_{E_i}(K_i): F_i \to E_i &
	\end{align*}
	giving rise to diagrams:
	\[\begin{tikzcd}
		{F_i} && {E_i} \\
		& {B_i} & {} && {}
		\arrow["{\xi_i}"', two heads, from=1-1, to=2-2]
		\arrow["{\pi_i}", two heads, from=1-3, to=2-2]
		\arrow["{\varphi_i}", from=1-1, to=1-3]
	\end{tikzcd}\]
	If each $\varphi_i$ is a sliced cocartesian fibration, then so is the product:
	\[\begin{tikzcd}
		{\prod_{i:I} F_i} && {\prod_{i:I} E_i} \\
		& {\prod_{i:I} B_i} & {} && {}
		\arrow["{\prod_{i:I} \xi_i}"', two heads, from=1-1, to=2-2]
		\arrow["{\prod_{i:I} \pi_i}", two heads, from=1-3, to=2-2]
		\arrow["{\prod_{i:I} \varphi_i}", from=1-1, to=1-3]
	\end{tikzcd}\]
	Moreover, if $\varphi_i$ is a cocartesian fibration in cartesian fibrations in the sense of~\ref{def:cocart-fibs-in-cart-fibs}, then so is $\prod_{i:I} \varphi_i$.
\end{proposition}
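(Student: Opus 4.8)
The plan is to verify the defining lifting condition for $\prod_{i:I}\varphi_i$ directly and componentwise; the only nontrivial input is the observation---already contained in the proof of closure of cocartesian families under dependent products (\Cref{prop:cocart-cosm-closure})---that a cocartesian arrow in a product family is precisely an $I$-indexed tuple of componentwise cocartesian arrows. Write $B \defeq \prod_{i:I} B_i$, $\pi \defeq \prod_{i:I}\pi_i : \prod_{i:I} E_i \fibarr B$, $\xi \defeq \prod_{i:I}\xi_i : \prod_{i:I} F_i \fibarr B$, and $\varphi \defeq \prod_{i:I}\varphi_i$, and note that under the canonical identifications $\totalty{\prod_i P_i} \simeq \prod_i \totalty{P_i}$ and $\totalty{\prod_i Q_i} \simeq \prod_i \totalty{Q_i}$ (both instances of the distribution of $\Sigma$ over $\Pi$) the family $\prod_{i:I} K_i$ straightens $\varphi$ while $\prod_{i:I} Q_i$ straightens $\xi$, so that the setup of the definition of a sliced cocartesian family applies to $\varphi$ over $B$.

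First I would unwind verticality. An arrow $f : \Delta^1 \to (\prod_i P_i)((b_i)_{i})$ lying in a fiber of $\pi$ over a point $(b_i)_{i:I} : B$ is exactly a tuple $f = (f_i)_{i:I}$ where each $f_i : \Delta^1 \to P_i(b_i)$ lies in the fiber of $\pi_i$ over $b_i$, and a point of $\prod_i Q_i$ over the source of $f$ is a tuple $x = (x_i)_{i:I}$ with $x_i : Q_i(b_i, f_i 0)$. Setting $k_i \defeq (K_i)_!^{b_i}(f_i, x_i)$, which is a $Q_i$-cocartesian lift of $f_i$ from $x_i$ by the hypothesis that $\varphi_i$ is a sliced cocartesian fibration, I take $k \defeq (k_i)_{i:I}$ as the candidate lift of $f$ from $x$. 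It remains to check $\isCocartArr^{\prod_i Q_i}_{f}(k)$; unfolding the definition of $\isCocartArr$ for the product family, a competing arrow $h$, its putative factor $g$, and the witnessing homotopy all decompose as $I$-indexed tuples of the corresponding data for the $Q_i$, so $\sum_{g}(h = gk) \simeq \prod_{i:I}\sum_{g_i}(h_i = g_i k_i)$, and, since $\isContr$ commutes with $\prod_{i:I}$, the claim reduces to $\prod_{i:I}\isCocartArr^{Q_i}_{f_i}(k_i)$, which holds by construction of the $k_i$. This proves the first assertion.

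A more conceptual alternative runs through \Cref{thm:sl-cocart-fam-char}: each $\varphi_i$ being a sliced cocartesian fibration is equivalent to the sliced Leibniz cotensor $i_0 \cotens_{B_i}\varphi_i$ admitting a fibered LARI (\Cref{it:sl-cocart-fam-char-ii}); since the vertical-arrow type, the sliced comma, and the Leibniz cotensor are built from exponentials and pullbacks, all of which commute with $\prod_{i:I}$, the sliced cotensor $i_0 \cotens_B \varphi$ is the corresponding product of the $i_0 \cotens_{B_i}\varphi_i$, a fibered LARI of which is obtained by taking the product of the fibered LARIs $\chi_{B_i}$ (unit and counit assembled componentwise); reading \Cref{thm:sl-cocart-fam-char} backwards recovers the first assertion. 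For the \emph{moreover} clause, by \Cref{def:cocart-fibs-in-cart-fibs} each $\varphi_i : \xi_i \to_{B_i}\pi_i$ being a cocartesian fibration in cartesian fibrations means it is simultaneously a sliced cocartesian fibration and a cartesian functor between cartesian fibrations; the former property for $\prod_i\varphi_i$ is what we have just shown, and the latter follows from the cartesian analogue of \Cref{prop:cocart-cosm-closure} (closure of cartesian fibrations and of cartesian functors under dependent products), so that $\prod_i\pi_i$ and $\prod_i\xi_i$ are cartesian fibrations and $\prod_i\varphi_i$ is a cartesian functor between them. By \Cref{def:cocart-fibs-in-cart-fibs}, $\prod_i\varphi_i$ is then a cocartesian fibration in cartesian fibrations.

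The only real work, and the point where I expect to spend most of the effort, is the bookkeeping behind the equivalence $\sum_{g}(h = gk) \simeq \prod_{i:I}\sum_{g_i}(h_i = g_i k_i)$: one must check that dependent arrows, their composites, and the homotopies between them in a product family genuinely decompose as $I$-indexed tuples, which hinges on the identifications $\big(x \to^{\prod_i Q_i}_{vf} y\big) \simeq \prod_{i:I}\big(x_i \to^{Q_i}_{v_i f_i} y_i\big)$ together with their naturality. This is routine---it is the same reshuffling of $\Sigma$-types already used for \Cref{prop:cocart-cosm-closure}---but it is the one place to proceed carefully; in the conceptual route the analogous obstacle is matching the product of the fibered LARI data against the fibered LARI data of the product cotensor. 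Either way, no idea beyond those already present in \Cref{thm:sl-cocart-fam-char} and \Cref{prop:cocart-cosm-closure} is needed.
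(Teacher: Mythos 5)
Your proposal is correct, and in fact contains the paper's proof as your ``conceptual alternative.'' The paper argues exactly as in your second route: it invokes the LARI characterization of sliced cocartesian families (\Cref{thm:sl-cocart-fam-char}\ref{it:sl-cocart-fam-char-ii}), observes that dependent products commute with sliced commas (\Cref{prop:dep-prod-comm-sl-commas}), and then applies the preservation of fibered LARI adjunctions under sliced/dependent products (\Cref{prop:fib-lari-pres-by-sl-prod}) to transport the lifting LARIs $\chi_{B_i}$ to a LARI for the product; the \emph{moreover} clause is dispatched, as you do, by closure of cartesian fibrations and co-/cartesian functors under dependent products. Your primary route --- constructing the lift of a vertical arrow componentwise as $(K_i)_!^{b_i}(f_i,x_i)$ and verifying $\isCocartArr$ directly by decomposing the contractibility condition over $I$ via weak function extensionality --- is a genuinely more elementary argument that bypasses the appendix machinery entirely; it buys self-containedness at the cost of the $\Sigma$/$\Pi$ bookkeeping you correctly flag as the only delicate point (and note you only need the easy direction there: a tuple of componentwise cocartesian arrows is cocartesian, since $\isContr$ of a dependent product of contractible types follows from funext). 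Both arguments are sound; the paper's choice reflects its general strategy of routing everything through the Chevalley-style characterizations so that the same closure lemmas can be reused for the two-sided case.
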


\begin{proof}
	Since dependent products commute with sliced commas by~\Cref{prop:dep-prod-comm-sl-commas} we find
	\begin{align}
		\prod_{i:I} \VertArr_{\xi_i}(F_i) & \equiv \prod_{i:I} \relcomma{B_i}{F_i}{F_i} \equiv   \relcomma{\prod_{i:I} B_i}{\big(\prod_{i:I}F_i\big)}{\big(\prod_{i:I}F_i\big)} \\
		\prod_{i:I} \relcomma{B_i}{\varphi_i}{E_i} & \equiv \relcomma{\prod_{i:I} B_i}{\big(\prod_{i:I} \varphi_i\big)}{\big(\prod_{i:I} E_i\big)}.
	\end{align}
	Since sliced LARIs are preserved by the dependent product~\Cref{prop:fib-lari-pres-by-sl-prod} we obtain an induced fibered LARI between these commas, establishing that $\prod_{i:I} \varphi$ is sliced cocartesian by~\Cref{thm:sl-cocart-fam-char}.
	
	Moreover, since cartesian fibrations and co-/cartesian functors are preserved under the dependent product the analogous closure statement for the stronger notion of cocartesian fibration in cartesian fibrations follows readily.
\end{proof}

\begin{lemma}[Pullback of fibered cocartesian sections]\label{lem:cocart-sect-pb}
	For a Rezk type $B$, consider cocartesian families $P:B \to \UU$, $Q:\totalty{P} \to \UU$, and a fiberwise map $\varphi : \prod_{b:B} P\,b \to Q\,b$. We write the unstraightenings as $\pi: E \defeq \totalty{P} \fibarr B$, $\xi: F \defeq \totalty{Q} \fibarr B$. Consider the following diagram, induced by a section $\ell$ of (the totalization of) $\varphi$ over $B$, and $A$ and a map $k:A \to B$ between Rezk types:
	\[\begin{tikzcd}
		{F'} &&& F \\
		& {E'} &&& E \\
		A &&& B
		\arrow["\varphi"{description}, from=1-4, to=2-5]
		\arrow[from=1-1, to=1-4]
		\arrow["{\varphi'}"{description}, from=1-1, to=2-2]
		\arrow["\xi"{description, pos=0.3}, two heads, from=1-4, to=3-4]
		\arrow["\pi"{description}, two heads, from=2-5, to=3-4]
		\arrow["{\xi'}"{description, pos=0.3}, two heads, from=1-1, to=3-1]
		\arrow["{\pi'}"{description}, two heads, from=2-2, to=3-1]
		\arrow["k", from=3-1, to=3-4]
		\arrow["\ell"{description}, curve={height=12pt}, dotted, from=2-5, to=1-4]
		\arrow[from=2-2, to=2-5, crossing over]
		\arrow["\lrcorner"{anchor=center, pos=0.125}, draw=none, from=2-2, to=3-4]
		\arrow["\lrcorner"{anchor=center, pos=0.125}, shift right=5, draw=none, from=1-1, to=3-4]
		\arrow["{\ell'}"{description, pos=0.3}, curve={height=18pt}, dotted, from=2-2, to=1-1, crossing over]
	\end{tikzcd}\]
	If $\ell$ is a cocartesian functor, then the induced section $\ell'$ is, too.
\end{lemma}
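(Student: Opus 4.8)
The plan is to recognize the lemma as an instance of the closure of cocartesian fibrations and functors under pullback, and to indicate the underlying computation. First I would perform the usual projection-equivalence replacement, exactly as in the preceding proof: put $\pi' \defeq \Un_A(P \circ k)$ and $\xi' \defeq \Un_A(Q \circ k)$, so that $E' = \sum_{a:A} P(ka)$, $F' = \sum_{a:A} Q(ka)$, the two squares in the statement become the tautological pullback squares, and the section $\ell'$ is the fibered functor over $A$ with fibres $\ell'_a \defeq \ell_{ka}$. Writing $p \colon E' \to E$ and $q \colon F' \to F$ for the pullback projections, one checks that the square with sides $\ell'$, $\ell$, $p$, $q$ is again cartesian, i.e.\ $\ell'$ is the base change of the $1$-cell $\ell$ along $q$. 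Now \Cref{prop:cocart-cosm-closure} gives that $P \circ k$ and $Q \circ k$ are cocartesian families over the Rezk type $A$ (closure under pullback along an arbitrary map), so $\pi'$ and $\xi'$ are cocartesian fibrations; and, by the same proposition, since cocartesian functors are closed under pullback and $\ell$ is one by hypothesis, $\ell'$ is a cocartesian functor. I would note that the hypothesis that $\ell$ be a section of $\varphi$ is not needed for this, and is recorded only because that is the form in which the lemma is later applied.

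For a self-contained verification not relying on the black box ``closed under pullback'', I would isolate a single observation: a dependent arrow that is $P$-cocartesian over $k(w)$, re-read as a dependent arrow of $P \circ k$ over $w$, is again cocartesian there, since the universal property defining $\isCocartArr^{P \circ k}_w$ ranges only over arrows of $A$, all of which $k$ carries into arrows of $B$, so it is a restriction of the stronger property $\isCocartArr^{P}_{k(w)}$. From this, together with the fact that cocartesian lifts are fibrewise and unique (\cf~\cite[Subsection~5.2.3]{BW21}, \cite[Proposition~5.1.3]{BW21}), I would conclude that the $\pi'$-cocartesian lifts are exactly the restrictions along $k$ of the $\pi$-cocartesian lifts, so that $p$ is a cocartesian functor; and the elementary observation by itself (specialising the $B$-indexed universal property to arrows of the form $k(v)$) gives that $q$ reflects cocartesian arrows, in the sense that a dependent arrow of $Q \circ k$ whose $q$-image is $\xi$-cocartesian is already $\xi'$-cocartesian. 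The conclusion is then a three-line chase: for $\bar f$ a $\pi'$-cocartesian arrow, $p(\bar f)$ is $\pi$-cocartesian, hence $\ell(p(\bar f))$ is $\xi$-cocartesian ($\ell$ being a cocartesian functor), hence so is $q(\ell'(\bar f)) = \ell(p(\bar f))$, hence $\ell'(\bar f)$ is $\xi'$-cocartesian.

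I expect the only point needing real care to be the bookkeeping of the quantifier ranges in $\isCocartArr^{P \circ k}_w$ against $\isCocartArr^{P}_{k(w)}$ (and the same for $Q$); once the fibrewise behaviour of cocartesian lifts from \cite{BW21} is granted, everything else is formal. Should one wish to avoid arrow-chasing entirely, a third route is the Chevalley criterion of \Cref{thm:char-cocart-fun}: the comma objects, Leibniz cotensors, lifting maps, and the comparison cell entering the relevant mate all commute with pullback along $k$, so the mate of $\ell'$ is the $k$-pullback of the mate of $\ell$, which is invertible because $\ell$ is a cocartesian functor, and hence so is the mate of $\ell'$.
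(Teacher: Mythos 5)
Your proposal is correct, and in fact it contains the paper's own argument as its fallback: the paper proves the lemma by the direct computation you sketch in your second paragraph, namely writing $\ell'(a,d) \defeq \ell(ka,d)$ via projection equivalence, observing that the $P'$- and $Q'$-cocartesian lifts are precisely the $P$- and $Q$-cocartesian lifts of the $k$-images (your ``elementary observation'' comparing $\isCocartArr^{P\circ k}_w$ with $\isCocartArr^{P}_{k(w)}$), and then chaining the resulting identities. Your primary route is genuinely different in flavour: you identify the square $(\ell', \ell, p, q)$ as a pullback, so that $\ell'$ is the projection of the pullback cone over the cospan $q : F' \to F \leftarrow E : \ell$, and invoke the closure of cocartesian functors under pullback. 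That is a legitimate shortcut, but note that the blanket citation of \Cref{prop:cocart-cosm-closure} should really be to the pullback-cone statements it summarizes (\cite[Propositions~5.3.9--5.3.11]{BW21}), and it silently uses that the projection $q$ is itself a cocartesian functor -- a fact you only establish in the second paragraph; so the abstract route does not actually save you the fibrewise analysis, it just repackages it. Your Chevalley/mate route and your remark that the section-ness of $\ell$ is never used (only that $\ell$ is a fibered functor over $B$ preserving cocartesian arrows) are both correct; the paper states the lemma for sections only because that is the form in which it is applied, e.g.\ in \Cref{prop:clos-sl-cocart-fib-comp} and \Cref{prop:clos-sl-cocart-fib-pb}.
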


\begin{proof}
	First, projection equivalence yields:
	\[E \defeq \sum_{b:B} P\,b, \quad F \defeq \sum_{\substack{b:B \\ e:P\,b}} Q\,b\,e,
	E' \defeq \sum_{a:A} P\,ka, \quad F' \defeq \sum_{\substack{a:B \\ d:P\,ka}} Q\,ka\,d.  \]
	The section $\ell$ is then taken to be
	\[ \ell(b,e) \defeq \angled{b,e,\widehat{\ell}(b,e)}\]
	for $b:B$, $e:P\,b$.
	Cocartesianness means that there is a path
	\begin{align*}
		\ell(u,P_!(u,e)) & \jdeq \angled{u,P_!(u,e),\widehat{\ell}(u,P_!(u,e))} \\
		& \jdeq \angled{u,P_!(u,e),Q_!(u, \pair{u^P_!e}{\widehat{\ell}(b,e)})}
	\end{align*}
	for $u:b\to_B b'$, $e:P\,b$.
	The induced section $\ell'$ arises as $\ell'(a,d) \jdeq \ell(ka,d) \jdeq \angled{ka,d \widehat{\ell}(ka,d)}$ for $a:A$, $d:P'\,a \simeq P\,ka$. Applying this to the $P'$-cocartesian lift of $v:a \to a'$ \wrt~$d:Q\,a$ yields
	\begin{align*}
		\ell'(v,P'_!(v,d)) & \jdeq \ell(kv,P'_!(kv,d)) = \ell(kv,P_!(v,d)) \defeq \angled{kv,P_!(v,d),\widehat{\ell}(kv,P_!(v,d))} \\
		& = \angled{kv,P_!(kv,d),Q_!(kv, \pair{(kv)^P_!(d)}{\widehat{\ell}(kv,P_!(kv,d))})} \\
		& = \angled{kv,P'_!(v,d),Q'_!(v, \pair{v^{P'}_!(d)}{\widehat{\ell'}(v,P'_!(v,d))})} \\
	\end{align*}
	confirming the claim.
\end{proof}

\begin{proposition}[Closure of sliced cocartesian fibrations under composition]\label{prop:clos-sl-cocart-fib-comp}
	Let $P,Q,R:B \to \UU$ be isoinner families over a Rezk type $B$ with unstraightenings
	\[ \xi \defeq \Un_B(Q) : F \fibarr B, \pi \defeq \Un_B(P) : E \fibarr B, \rho \defeq \Un_B(R) : G \fibarr B. \]
	Furthermore, assume we have fibered functors $\varphi:F \to_B E$, $\psi: E \to_B G$ that are sliced cocartesian over $B$. Then, so is their composite $\kappa:F \to_B G$:
	\[\begin{tikzcd}
		F && E && G \\
		\\
		&& B
		\arrow["\varphi", from=1-1, to=1-3]
		\arrow["\pi"{description}, two heads, from=1-3, to=3-3]
		\arrow["\psi", from=1-3, to=1-5]
		\arrow["\xi"{description}, two heads, from=1-1, to=3-3]
		\arrow["\rho"{description}, two heads, from=1-5, to=3-3]
		\arrow["\kappa"{description}, curve={height=-24pt}, from=1-1, to=1-5]
	\end{tikzcd}\]
	Moreover, if $\varphi$ and $\psi$ are cocartesian fibrations in cartesian fibrations, then so is $\psi \circ \varphi$.
\end{proposition}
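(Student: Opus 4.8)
The plan is to verify the lifting condition for the composite directly, via a two-step cocartesian lift followed by the pasting lemma for cocartesian arrows; the ``moreover'' part then follows from closure of cartesian functors under composition. First I would fibrantly replace the cospan. The sliced cocartesian structure on $\psi$ exhibits $E \simeq \totalty{L}$ over $G \simeq \totalty{R}$ for an isoinner $L : \totalty{R} \to \UU$, with $P \simeq \Sigma_{\totalty{R}} L$ over $B$; likewise the sliced cocartesian structure on $\varphi$ exhibits $F \simeq \totalty{K}$ over $E \simeq \totalty{P}$ for an isoinner $K : \totalty{P} \to \UU$, with $Q \simeq \Sigma_{\totalty{P}} K$ over $B$. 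A reassociation of $\Sigma$-types then presents $\kappa : F \to_B G$, over $G \simeq \totalty{R}$, by the family $M : \totalty{R} \to \UU$ with $M\,\pair{b}{c} \jdeq \sum_{y : L\,\pair{b}{c}} K\,\pair{b}{\pair{c}{y}}$, and $\Sigma_{\totalty{R}} M \simeq F$ over $\kappa$; that $M$ is again isoinner follows from the corresponding closure property for (iso)inner families.

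By~\Cref{thm:sl-cocart-fam-char}, together with the uniqueness of cocartesian lifts, it then suffices to produce, for every $b : B$, every $\rho$-vertical arrow $w$ of $\totalty{R}$ (i.e.\ lying over $\id_b$, with endpoints $\pair{b}{c}$ and $\pair{b}{c'}$), and every $\pair{y}{x} : M\,\pair{b}{c}$, an $M$-cocartesian lift of $w$ starting at $\pair{y}{x}$. Since $w$ is $\rho$-vertical and $\psi$ is sliced cocartesian over $B$ with base $\rho$, there is an $L$-cocartesian lift $g \defeq L_!(w, y) : y \cocartarr^L_w w^L_!\,y$; viewed as an arrow of $E \simeq \totalty{P}$, it lies over $\id_b$, hence is $\pi$-vertical. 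Since $\varphi$ is sliced cocartesian over $B$ with base $\pi$, the $\pi$-vertical arrow $g$ has a $K$-cocartesian lift $k \defeq K_!(g, x) : x \cocartarr^K_g g^K_!\,x$. Since $\kappa(k) = \psi(\varphi(k)) = \psi(g) = w$, the arrow $k$ is an $M$-arrow over $w$ starting at $\pair{y}{x}$.

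The one non-formal step, which I expect to be the main obstacle, is that $k$ is $M$-cocartesian. This is an instance of the pasting lemma for cocartesian arrows along $F \xrightarrow{\varphi} E \xrightarrow{\psi} G$: if $k$ is $\varphi$-cocartesian and $\varphi(k)$ is $\psi$-cocartesian, then $k$ is $(\psi \circ \varphi)$-cocartesian. Given a further arrow $w'$ out of $\pair{b}{c'}$ in $\totalty{R}$ and an extension $\bar h$ of $k$ over the composite $w' w$, one first factors $\varphi(\bar h)$ through $g = \varphi(k)$ using $\psi$-cocartesianness of $g$, obtaining a unique $g'$ over $w'$ with $g' g = \varphi(\bar h)$, and then factors $\bar h$ through $k$ using $K$-cocartesianness of $k$, obtaining a unique $\bar k$ over $g'$ with $\bar k\, k = \bar h$; then $\kappa(\bar k) = \psi(g') = w'$, and contractibility of the type of such $\bar k$ follows from the two uniqueness statements. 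This argument only uses that these particular arrows are cocartesian, not that $\varphi$ or $\psi$ admits all lifts, so it goes through verbatim here; hence $\kappa$ is sliced cocartesian over $B$. As an alternative to the direct lifting argument one could instead transport the fibered-adjoint criteria of~\Cref{thm:sl-cocart-fam-char} along compositions of fibered LARI adjunctions, in the spirit of~\Cref{prop:clos-sl-cocart-fib-prod}, but the direct route above seems shorter.

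For the ``moreover'' part, assume in addition that $\varphi : \xi \to_B \pi$ and $\psi : \pi \to_B \rho$ are cartesian functors between cartesian fibrations; then $\xi$ and $\rho$ are cartesian fibrations, and $\kappa = \psi \circ \varphi$ sends $\xi$-cartesian arrows to $\pi$-cartesian arrows (by $\varphi$) to $\rho$-cartesian arrows (by $\psi$), so it is a cartesian functor --- i.e.\ cartesian functors compose, the dual of the composition closure in~\Cref{prop:cocart-cosm-closure}. Together with the first part this shows $\kappa$ is a cocartesian fibration in cartesian fibrations in the sense of~\Cref{def:cocart-fibs-in-cart-fibs}.
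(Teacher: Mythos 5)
Your proof is correct, but it takes a genuinely different route from the paper's. The paper stays at the level of the Chevalley-style characterization, \Cref{thm:sl-cocart-fam-char}\ref{it:sl-cocart-fam-char-ii}: it fibrantly replaces everything by nested $\Sigma$-types and then manufactures the composite fibered LARI $\VertArr_\xi(F) \leftrightarrows \relcomma{B}{\varphi\psi}{G}$ by pulling back one of the given fibered LARI adjunctions and composing, mirroring the absolute case in~\cite[Proposition~2.3.7]{BW21}; the ``moreover'' clause is then handled by composition of cartesian functors together with pullback stability of fibered cartesian sections (the dual of \Cref{lem:cocart-sect-pb}), since the construction routes through a pullback. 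You instead verify condition \ref{it:sl-cocart-fam-char-i} directly: lift the $\rho$-vertical arrow through $\psi$, observe the lift is $\pi$-vertical, lift again through $\varphi$, and conclude via the pasting lemma for cocartesian arrows along a composite of fibrations. Your route is more elementary and makes the composite lift explicit ($M$-cocartesian lifts are $K$-cocartesian lifts of $L$-cocartesian lifts), and your observation that the pasting argument only needs cocartesianness of the two specific arrows --- not the existence of all lifts --- is exactly the point that makes it applicable in the sliced setting. The one thing you would still owe the reader is the pasting lemma itself: it is not stated as a standalone lemma in this paper or explicitly cited from~\cite{BW21}, so the contractibility argument you sketch (decomposing $\sum_{\bar k \text{ over } w'} (\bar k k = \bar h)$ as a sum over the contractible type of factorizations $g'$ of $\varphi(\bar h)$ through $g$, followed by the contractible type of factorizations of $\bar h$ through $k$ over a fixed $g'$) would need to be written out; it is routine. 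Your treatment of the ``moreover'' part by plain composition of cartesian functors is sufficient for \Cref{def:cocart-fibs-in-cart-fibs} as stated, and is in fact slightly more direct than the paper's.
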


\begin{proof}
	This proof works analogously to the one for the absolute situation in~\cite[Proposition~2.3.7]{BW21}. First of all, we fibrantly replace the objects at play (with some abbreviation for the term declarations):
	\begin{align*}
		G & \equiv \sum_{b:B} Q\,b, & E & \equiv \sum_{\substack{b:B \\ x:R\,b}} P\,b\,x, \\
		F & \equiv \sum_{\substack{b:B \\ x:R\,b \\ e:P\,b\,x}} Q\,b\,\,x\,e, & \relcomma{B}{\psi}{G} & \equiv \sum_{b,x,e} \sum_{x':R\,b} (x \to_{R\,b} x'), \\
		\relcomma{B}{\varphi \psi}{G} & \equiv \sum_{b,x,x',e} \sum_{d:Q\,b\,x\,e} (x \to_{R\,b} x'), & \relcomma{B}{\varphi}{E} & \equiv \sum_{b,x,x',e,d} \sum_{u:x \to_{R\,b} x'} (e \to_u^P e'), \\
		\VertArr_\pi(E) & \equiv \sum_{b,x,x',u,e,e'} (e \to_u^P e'). & 	& & 
	\end{align*}
	We are to construct from the given fibered LARI adjunctions
	\[\begin{tikzcd}
		{\VertArr(F)} && {\relcomma{B}{\varphi}{E}} && {\VertArr(E)} && {\relcomma{B}{\psi}{G}} \\
		& B &&&& B
		\arrow[""{name=0, anchor=center, inner sep=0}, "r"{description}, curve={height=12pt}, from=1-1, to=1-3]
		\arrow[""{name=1, anchor=center, inner sep=0}, "{r'}"{description}, curve={height=12pt}, from=1-5, to=1-7]
		\arrow[""{name=2, anchor=center, inner sep=0}, "\ell"{description}, curve={height=18pt}, dotted, from=1-3, to=1-1]
		\arrow[""{name=3, anchor=center, inner sep=0}, "{\ell'}"{description}, curve={height=18pt}, dotted, from=1-7, to=1-5]
		\arrow[two heads, from=1-1, to=2-2]
		\arrow[two heads, from=1-3, to=2-2]
		\arrow[two heads, from=1-5, to=2-6]
		\arrow[two heads, from=1-7, to=2-6]
		\arrow["\dashv"{anchor=center, rotate=-90}, draw=none, from=2, to=0]
		\arrow["\dashv"{anchor=center, rotate=-90}, draw=none, from=3, to=1]
	\end{tikzcd}\]
	a fibered LARI adjunction:
	\[\begin{tikzcd}
		{\VertArr(E)} && {\relcomma{B}{\varphi \psi}{G}} \\
		& B
		\arrow[""{name=0, anchor=center, inner sep=0}, "{r''}"{description}, curve={height=12pt}, from=1-1, to=1-3]
		\arrow[""{name=1, anchor=center, inner sep=0}, "{\ell''}"{description}, curve={height=18pt}, dotted, from=1-3, to=1-1]
		\arrow[two heads, from=1-1, to=2-2]
		\arrow[two heads, from=1-3, to=2-2]
		\arrow["\dashv"{anchor=center, rotate=-90}, draw=none, from=1, to=0]
	\end{tikzcd}\]
	Using the projection equivalences, indeed we find the diagram analogous to the proof in~\cite[Proposition~2.3.7]{BW21}, all fibered over $B$:
	\[\begin{tikzcd}
		{\VertArr_\xi(F)} \\
		&& {\relcomma{B}{\varphi}{E}} && {\relcomma{B}{\varphi \psi}{G}} && F \\
		&& {\VertArr_\pi(E)} && {\relcomma{B}{\psi}{G}} && E \\
		&&&& B
		\arrow[from=2-7, to=3-7]
		\arrow[from=3-5, to=3-7]
		\arrow[from=2-5, to=3-5]
		\arrow[from=2-5, to=2-7]
		\arrow[""{name=0, anchor=center, inner sep=0}, curve={height=6pt}, from=2-3, to=2-5]
		\arrow[from=2-3, to=3-3]
		\arrow[""{name=1, anchor=center, inner sep=0}, "r"', curve={height=6pt}, from=3-3, to=3-5]
		\arrow[from=3-3, to=4-5]
		\arrow[from=3-5, to=4-5]
		\arrow[from=3-7, to=4-5]
		\arrow["\lrcorner"{anchor=center, pos=0.125}, draw=none, from=2-5, to=3-7]
		\arrow["\lrcorner"{anchor=center, pos=0.125}, draw=none, from=2-3, to=3-5]
		\arrow[curve={height=-18pt}, from=1-1, to=2-7]
		\arrow[curve={height=12pt}, from=1-1, to=3-3]
		\arrow[""{name=2, anchor=center, inner sep=0}, "\ell"{description}, curve={height=12pt}, dotted, from=3-5, to=3-3]
		\arrow[""{name=3, anchor=center, inner sep=0}, curve={height=12pt}, dotted, from=2-5, to=2-3]
		\arrow[""{name=4, anchor=center, inner sep=0}, "{r'}"{description}, curve={height=6pt}, from=1-1, to=2-3]
		\arrow[""{name=5, anchor=center, inner sep=0}, "{\ell'}"{description}, curve={height=12pt}, dotted, from=2-3, to=1-1]
		\arrow["\dashv"{anchor=center, rotate=-90}, draw=none, from=2, to=1]
		\arrow["\dashv"{anchor=center, rotate=-90}, draw=none, from=3, to=0]
		\arrow["\dashv"{anchor=center, rotate=-144}, draw=none, from=5, to=4]
	\end{tikzcd}\]
	As before, the proclaimed fibered LARI arises by pulling back and then composing.
	
	The closure property descends to cocartesian fibrations in cartesian fibrations by closedness under composition of cartesian functors, and pullback stability of fibered cartesian sections by the dual of~\Cref{lem:cocart-sect-pb}.
\end{proof}

\begin{proposition}[Closure of sliced cocartesian fibrations under pullback]\label{prop:clos-sl-cocart-fib-pb}
	Let $\varphi:F \to_B E$ be a sliced cocartesian fibration over a Rezk type $B$. For any map $k:A \to B$ consider the pullback:
	\[\begin{tikzcd}
		{k^*F} && F \\
		& {k^*E} && E \\
		A && B
		\arrow["\xi"{description, pos=0.25}, two heads, from=1-3, to=3-3]
		\arrow["\varphi", curve={height=-6pt}, from=1-3, to=2-4]
		\arrow["\pi"{description}, two heads, from=2-4, to=3-3]
		\arrow[from=1-1, to=1-3]
		\arrow["{k^*\varphi}"{pos=0.8}, curve={height=-6pt}, dashed, from=1-1, to=2-2]
		\arrow["{k^*\xi}"{description, pos=0.3}, two heads, from=1-1, to=3-1]
		\arrow["{k^*\pi}"{description}, two heads, from=2-2, to=3-1]
		\arrow["k"', from=3-1, to=3-3]
		\arrow["\lrcorner"{anchor=center, pos=0.125}, draw=none, from=2-2, to=3-3]
		\arrow["\lrcorner"{anchor=center, pos=0.125}, shift right=2, draw=none, from=1-1, to=3-3]
		\arrow[crossing over, from=2-2, to=2-4]
	\end{tikzcd}\]
	Then the induced fibered functor $k^*\varphi: k^*F \to_A k^*E$ is a sliced cocartesian fibration over $A$.
	
	In particular, the analogous statement is true if $\varphi$ is assumed to be a cocartesian fibration in cartesian fibrations.
\end{proposition}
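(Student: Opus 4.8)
The plan is to run the same scheme as for the two preceding closure properties and reduce everything to the Chevalley characterization \Cref{thm:sl-cocart-fam-char}. Writing $\pi \defeq \Un_B(P)$, $\xi \defeq \Un_B(Q)$, $\varphi \defeq \Un_E(K)$ for the straightenings $P \defeq \St_B(\pi)$, $Q \defeq \St_B(\xi)$, $K \defeq \St_E(\varphi)$, it suffices to produce a \emph{fibered} LARI over $A$ for the sliced Leibniz cotensor $i_0 \cotens_A k^*\varphi : \VertArr_{k^*\xi}(k^*F) \to_A \relcomma{A}{k^*\varphi}{k^*E}$. First one records the routine preliminaries: $k^*\pi$, $k^*\xi$ and $k^*\varphi$ are again isoinner, since right orthogonality---hence innerness and isoinnerness---is stable under pullback; and, after projection equivalence, the pullback of $K$ along the induced map $k^*E \to E$ straightens the fibered functor $k^*\varphi$, so that the hypotheses of \Cref{thm:sl-cocart-fam-char} are in force over the Rezk type $A$.

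The first substantive step is to check that the sliced vertical-arrow and sliced comma constructions commute with base change along $k$, compatibly with the projections down to $A$: there are fibered equivalences $\VertArr_{k^*\xi}(k^*F) \simeq A \times_B \VertArr_\xi(F)$ and $\relcomma{A}{k^*\varphi}{k^*E} \simeq A \times_B \relcomma{B}{\varphi}{E}$ over $A$, under which $i_0 \cotens_A k^*\varphi$ is identified with the base change along $k$ of $i_0 \cotens_B \varphi$. This is the pullback analogue of \Cref{prop:dep-prod-comm-sl-commas}; it follows from the corresponding stability of the ordinary (unsliced) comma and cotensor constructions together with the pasting lemma for pullbacks, and I would record it in \Cref{sec:sl-comma-prod}. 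I expect this to be the main obstacle, since it is where the fibered structure must be tracked carefully in order that the two Leibniz cotensors genuinely be identified \emph{as maps over $A$}; once it is in place the rest is formal.

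Given this, the conclusion follows because fibered LARI adjunctions are stable under base change---the fibered refinement, set up in \Cref{sec:fib-lari-adj}, of the elementary fact that adjunctions and left adjoint right inverses pull back. Thus the fibered LARI $\chi_B$ of $i_0 \cotens_B \varphi$ guaranteed by \Cref{thm:sl-cocart-fam-char} pulls back to a fibered LARI of $i_0 \cotens_A k^*\varphi$, and feeding this back into \Cref{thm:sl-cocart-fam-char} over $A$ yields that $k^*\varphi$ is a sliced cocartesian fibration over $A$.

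For the ``in particular'' clause we invoke \Cref{prop:char-cocart-fib-in-cart-fib}. The pullbacks $k^*\xi$ and $k^*\pi$ remain cartesian fibrations because cartesian families are closed under pullback (the dual of \Cref{prop:cocart-cosm-closure}), and $k^*\varphi$ remains a cartesian functor by pullback-stability of cartesian functors; it then only remains to see that the sliced cocartesian lifting map---equivalently the transport map, by \Cref{prop:char-cocart-fib-in-cart-fib}---of $k^*\varphi$ is a cartesian functor over $A$. Here one uses condition~\ref{it:gen-lift-comm-iii} (or~\ref{it:gen-lift-comm-iv}) of that proposition: all the co-/cartesian lifts and fillers occurring in~(\ref{eq:abbrv-cocart-in-cart-gen}) are computed fiberwise, hence are preserved verbatim under restriction of $P$, $Q$, $K$ along $k$, so the comparison homotopy $r$ witnessing condition~\ref{it:gen-lift-comm-iii} for $\varphi$ restricts to the analogous homotopy for $k^*\varphi$. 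Equivalently, this is pullback-stability of fibered cartesian sections applied to the transport map, i.e.\ the dual of \Cref{lem:cocart-sect-pb}.
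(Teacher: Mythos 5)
Your proposal is correct and follows essentially the same route as the paper: reduce to the Chevalley characterization, observe that pullback commutes with the sliced vertical-arrow and sliced comma constructions, pull back the fibered LARI, and handle the cocartesian-in-cartesian case via pullback-stability of cartesian functors together with the dual of \Cref{lem:cocart-sect-pb} applied to the lifting/transport map. The paper's own proof is just a terser version of exactly these steps.
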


\begin{proof}
	Since pullback commutes with sliced commas\footnote{as can \eg~be checked by projection equivalence} we get the following square:
	\[\begin{tikzcd}
		{k^*\VertArr_{k^*\xi}(F)} && {\VertArr_\xi(F)} \\
		& {\relcomma{A}{k^*\varphi}{k^*E}} && {\relcomma{B}{\varphi}{E}} \\
		A && B
		\arrow[two heads, from=1-3, to=3-3]
		\arrow[""{name=0, anchor=center, inner sep=0}, "{\varphi'}", curve={height=-6pt}, from=1-3, to=2-4]
		\arrow[two heads, from=2-4, to=3-3]
		\arrow[from=1-1, to=1-3]
		\arrow["{k^*\varphi'}"{pos=0.8}, curve={height=-6pt}, dashed, from=1-1, to=2-2]
		\arrow[two heads, from=1-1, to=3-1]
		\arrow[two heads, from=2-2, to=3-1]
		\arrow["k"', from=3-1, to=3-3]
		\arrow["\lrcorner"{anchor=center, pos=0.125}, draw=none, from=2-2, to=3-3]
		\arrow["\lrcorner"{anchor=center, pos=0.125}, shift right=2, draw=none, from=1-1, to=3-3]
		\arrow[""{name=1, anchor=center, inner sep=0}, curve={height=-6pt}, dotted, from=2-4, to=1-3]
		\arrow["\dashv"{anchor=center, rotate=12}, draw=none, from=1, to=0]
		\arrow[from=2-2, to=2-4, crossing over]
	\end{tikzcd}\]
	The fibered LARI on the right gets pulled back to define a fibered LARI on the left, as desired, hence the pulled back functor $k^*\varphi$ is sliced cocartesian as well.
	
	If $\varphi:F \to_B E$ is a cartesian functor between cartesian functors, then $\varphi'$ pulls back to define a cartesian functor between cartesian fibrations by (the dual of)~\cite[Proposition~5.3.21]{BW21}, \cf~also \cite[Lemma~5.3.5]{RV21}. In case the fibered LARI is cartesian, the induced fibered LARI is as well, as one can see by the dual of~\Cref{lem:cocart-sect-pb}.

\end{proof}

\section{Two-sided cartesian families}\label{sec:2s-cart}

Two-sided cartesian families can be presented by spans
\[\begin{tikzcd}
	& E \\
	A && B
	\arrow["\xi"', two heads, from=1-2, to=2-1]
	\arrow["\pi", two heads, from=1-2, to=2-3]
\end{tikzcd}\]
where $\xi$ is a cocartesian fibration and $\pi$ is a cartesian fibration, satisfying some compatibility conditions between the lifts of opposite variance.

We will develop this notion in a systematic manner, closely following Riehl--Verity's treatment in $\infty$-cosmoses~\cite[Chapter~7]{RV21}.

Semantically, they correspond to $\inftyone$-functors $A^{\Op} \times B \to \Cat_\infty$, and their \emph{discrete} variant models $\inftyone$-functors $\inftyone$-functors $A^{\Op} \times B \to \mathrm{Space}$ factoring through the $\inftyone$-category of spaces \aka~$\inftyone$-distributors or $\inftyone$-profunctors (sometimes also called correspondence or bifibration, though the latter terminology can also be used to mean a fibration that is simultaneously cartesian and cocartesian).

(Discrete) two-sided cartesian fibrations have a long history of study in various contexts~\cite{yoneda1960ext,StrYon,StrBicat,BournPenon,benaboudistributeurs,StrDist,LorRieCatFib,bakovic2011fibrations,vonGlehn2018polynomials,mccloskeyRelWfs,di2019unicity,CigoliSpan,metere2017distributors}.

In the higher categorical context they are used in the theory of fibrations of $(\infty,1)$-categories~\cite{LurHTT,AFfib,BarwickShahFib,Haugseng2023TwoVar,stevenson2018model}, the formal category theory of $\infty$-categories~\cite{RV21,RVKan,RuitEquip}, and higher algebra and geometry~\cite{Lur17,haugseng2017higher,galvez2020decomposition}.

\subsection{Two-variable families and bifibers}\label{ssec:two-var}

For types $A,B: \UU$ consider a family
\[ P:A \to B \to \UU.\]

For $a:A$ and $b:B$, the type $P(a,b)$ is called the \emph{bifiber} of $P$ at $a$ and $b$. Fixing one of the elements gives rise to the definitions
\[ P_b \defeq \lambda a.P(a,b): A \to \UU, \quad P^a \defeq \lambda b.P(a,b): B \to \UU.\]
The two ``legs'' of the family $P$ are given by the families
\[ P_B \defeq \lambda a.\sum_{b:B} P_b(a):A \to \UU, \quad  P^A \defeq \lambda b.\sum_{a:B} P^a(b): B \to \UU.\]

We obtain the following version of the typal Grothendieck construction. By transposition we have a chain of fiberwise equivalences:
\[{\small \begin{tikzcd}
	{\sum_{A,B:\UU} A \to B \to \UU} && {\sum_{A,B:\UU} (A \times B) \to \UU} && {\sum_{A,B:\UU} \Fib_\UU(A \times B)} \\
	&& {\UU \times \UU}
	\arrow[from=1-1, to=2-3]
	\arrow[from=1-3, to=2-3]
	\arrow["\simeq", from=1-3, to=1-5]
	\arrow[from=1-5, to=2-3]
	\arrow["\simeq", from=1-1, to=1-3]
\end{tikzcd}}\]
Hence two-sided families $P:A \to B \to \UU$ correspond to maps over $\pair{\xi}{\pi}:\widetilde{P} \to A \times B$.

Diagrammatically, this manifests as follows:
\[\begin{tikzcd}
	& {\sum_{\substack{a:A \\ b:B}} P\,a\,b } \\
	& {A \times B} \\
	A && B
	\arrow["\varphi"', two heads, from=1-2, to=2-2]
	\arrow[two heads, from=2-2, to=3-1]
	\arrow[two heads, from=2-2, to=3-3]
	\arrow["{\xi \defeq \Un_A(P_B)}"{description}, curve={height=18pt}, two heads, from=1-2, to=3-1]
	\arrow["{ \Un_B(P^A) \defeq \pi}"{description}, curve={height=-18pt}, two heads, from=1-2, to=3-3]
\end{tikzcd}\]

The fibers at $a:A$ or $b:B$, resp., are obtained as follows
\[\begin{tikzcd}
	{E_b} && E && {E^a} && E \\
	A && {A \times B} && B && {A \times B}
	\arrow["{\xi_b}"', from=1-1, to=2-1]
	\arrow["{\langle \id_A,b\rangle}"', from=2-1, to=2-3]
	\arrow[from=1-1, to=1-3]
	\arrow["{\langle\xi, \pi\rangle}", from=1-3, to=2-3]
	\arrow["{\pi^a}"', from=1-5, to=2-5]
	\arrow["{\langle a,\id_B\rangle}"', from=2-5, to=2-7]
	\arrow[from=1-5, to=1-7]
	\arrow["{\langle\xi, \pi\rangle}", from=1-7, to=2-7]
	\arrow["\lrcorner"{anchor=center, pos=0.125}, draw=none, from=1-5, to=2-7]
	\arrow["\lrcorner"{anchor=center, pos=0.125}, draw=none, from=1-1, to=2-3]
\end{tikzcd}\]
where the projections arise as unstraightenings
\[ \xi_b \defeq \Un_A(P_b): E_b \to A, \quad \pi^a \defeq \Un_B(P^a): E^a \to B. \]
The notation comes from the convention of denoting the components of the projection $E \to A \times B$ conceived as a fibered functor in two different ways
\[\begin{tikzcd}
	E && {A \times B} && E && {A \times B} \\
	& B &&&& A
	\arrow["{\langle \xi,\pi\rangle}", from=1-1, to=1-3]
	\arrow["\pi"', from=1-1, to=2-2]
	\arrow["q", from=1-3, to=2-2]
	\arrow["{\langle \xi,\pi\rangle}", from=1-5, to=1-7]
	\arrow["\xi"', from=1-5, to=2-6]
	\arrow["p", from=1-7, to=2-6]
\end{tikzcd}\]
as
\[ \xi_b \defeq \pair{\xi}{\pi}_b :E_b \to (A\times B)_b \simeq A, \quad  \pi^a \defeq  \pair{\xi}{\pi}^a : E^a \to (A\times B)^a \simeq B. \]

\subsection{Cocartesianness on the left}\label{ssec:cocart-on-left}

\begin{definition}[Cocartesian on the left]
	A two-sided family $P:A \to B \to \UU$ is \emph{cocartesian on the left} if the family $P_B:A \to \UU$ is cocartesian, and every $P_B$-cocartesian arrow in $\totalty{P}$ is $P^A$-vertical.
\end{definition}

\begin{example}[Cocartesian families]\label{ex:cocart-fams-cocart-left}
	A family $P:A \to \UU$ is cocartesian if and only if it is cocartesian on the left, seen as a family $P: (A \to \UU) \simeq (A \to 1 \to \UU)$.
\end{example}

%

\begin{proposition}[Characterizations of cocartesianness on the left, {\protect\cite[Lemma~7.1.1]{RV21}}]\label{prop:char-fib-cocart-left}
	For a two-sided family $P:A \to B \to \UU$, corresponding to $\pair{\xi}{\pi}:E \to A \times B$, the following are equivalent:
	\begin{enumerate}
		\item\label{it:coc-left-i} The fibered functor
		\[\begin{tikzcd}
			E && {A \times B} \\
			& B
			\arrow["{\langle \xi, \pi \rangle}", two heads, from=1-1, to=1-3]
			\arrow["\pi"', from=1-1, to=2-2]
			\arrow["q", from=1-3, to=2-2]
		\end{tikzcd}\]
		is a cocartesian fibration sliced over $B$.
		\item\label{it:coc-left-ii} The fibered functor
		\[\begin{tikzcd}
			E && {A \times B} \\
			& A
			\arrow["{\langle \xi, \pi \rangle}", from=1-1, to=1-3]
			\arrow["\xi"', two heads, from=1-1, to=2-2]
			\arrow["p", two heads, from=1-3, to=2-2]
		\end{tikzcd}\]
		is a cocartesian functor between cocartesian fibrations.
		\item\label{it:coc-left-iii} The fibered functor given by
		\[ \iota_\xi: E \to_{A \times B} \comma{\xi}{A}, \quad \iota_\xi(a,b,e) \defeq \angled{a,a,\id_a,b,e} \]
		has a fibered left adjoint $\tau_\xi$:
		\[\begin{tikzcd}
			E &&&& {\xi \downarrow A} \\
			&& {A \times B}
			\arrow[""{name=0, anchor=center, inner sep=0}, "{\iota_\xi}"{description}, from=1-1, to=1-5]
			\arrow["{\langle \xi,\pi \rangle}"', from=1-1, to=2-3]
			\arrow["{\langle \partial_1,\pi \circ \partial_0\rangle}", from=1-5, to=2-3]
			\arrow[""{name=1, anchor=center, inner sep=0}, "{\tau_\xi}"{description}, curve={height=18pt}, dashed, from=1-5, to=1-1]
			\arrow["\dashv"{anchor=center, rotate=-90}, draw=none, from=1, to=0]
		\end{tikzcd}\]
		\item\label{it:coc-left-iv} The two-sided family $P:A \to B \to \UU$ is cocartesian on the left.
	\end{enumerate}
\end{proposition}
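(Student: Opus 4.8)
The plan is to deduce the four‑fold equivalence from three separate ones --- $\ref{it:coc-left-i}\Leftrightarrow\ref{it:coc-left-iii}$, $\ref{it:coc-left-iii}\Leftrightarrow\ref{it:coc-left-iv}$, and $\ref{it:coc-left-ii}\Leftrightarrow\ref{it:coc-left-iv}$ --- each obtained by reading off a definition or a characterization already proved above. For $\ref{it:coc-left-i}\Leftrightarrow\ref{it:coc-left-iii}$ I would simply specialize \Cref{thm:sl-cocart-fam-char}. Regard the two‑variable family $P$ as the single family $R\defeq\lambda\langle a,b\rangle.\,P(a,b)\colon A\times B\to\UU$ over $A\times B$, and regard $A\times B$ as the total space of the constant family $\lambda b.\,A\colon B\to\UU$ over $B$; then $\langle\xi,\pi\rangle\colon E\to A\times B$, viewed over $B$ via $\pi$ and $q$, is exactly the configuration of \Cref{thm:sl-cocart-fam-char} (with ``$K$'' there the family $R$, so that ``$Q$'' there is $P^A$ and its unstraightening is $\pi\colon E\to B$). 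Under this dictionary ``cocartesian fibration sliced over $B$'' is verbatim \ref{it:coc-left-i}; and, since a $q$‑vertical arrow of $A\times B$ has constant $B$‑component, the sliced comma $\relcomma{B}{\langle\xi,\pi\rangle}{(A\times B)}$ of item~\ref{it:sl-cocart-fam-char-iii} is canonically identified with $\comma{\xi}{A}$, carrying the fibered inclusion of that item to the map $\iota_\xi$ of \ref{it:coc-left-iii}. So the equivalence of items~\ref{it:sl-cocart-fam-char-i} and~\ref{it:sl-cocart-fam-char-iii} in \Cref{thm:sl-cocart-fam-char} gives $\ref{it:coc-left-i}\Leftrightarrow\ref{it:coc-left-iii}$.

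For $\ref{it:coc-left-iii}\Leftrightarrow\ref{it:coc-left-iv}$ I would proceed by unwinding the fibered adjoint into cocartesian‑transport data. As a bare map of types, $\iota_\xi\colon E\to\comma{\xi}{A}$ is the canonical inclusion $e\mapsto\langle e,\xi e,\id_{\xi e}\rangle$ attached to the fibration $\xi\colon E\to A$, equivalently to the family $P_B$; forgetting the $B$‑direction, a fibered‑over‑$(A\times B)$ left adjoint of $\iota_\xi$ is in particular a fibered‑over‑$A$ one, and by \Cref{thm:cocartfams-via-transp} (applied to $\xi$) the existence of the latter is equivalent to $P_B$ being cocartesian, with the adjoint functor realised as cocartesian transport and the adjunction cells as the cocartesian lift cells. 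The extra content of \ref{it:coc-left-iii} --- that the \emph{whole} adjunction lives over $A\times B$, not merely over $A$ --- then forces the $B$‑projection of the transport and of the lift cells to be (homotopic to) identities, i.e.\ forces every $P_B$‑cocartesian arrow to be $P^A$‑vertical; conversely, from \ref{it:coc-left-iv} one upgrades the fibered‑over‑$A$ transport adjunction to one over $A\times B$ precisely because $P^A$‑verticality makes all its structure maps project trivially to $B$. This last bookkeeping would be run through the fibered‑adjunction results of \Cref{sec:fib-lari-adj}, in particular \Cref{thm:char-fib-adj}.

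For $\ref{it:coc-left-ii}\Leftrightarrow\ref{it:coc-left-iv}$ I would just unwind \Cref{def:cocart-fun}. The product projection $p\colon A\times B\to A$ is the unstraightening of the constant family $\lambda a.\,B$, hence a cocartesian fibration whose cocartesian lift of $u\colon a\to_A a'$ at $b\colon B$ is the ``horizontal'' arrow $\langle u,\id_b\rangle$. Thus $\langle\xi,\pi\rangle$ seen over $A$ (from $\xi$ to $p$) is a cocartesian functor between cocartesian fibrations iff $\xi$ --- equivalently $P_B$ --- is cocartesian and $\langle\xi,\pi\rangle$ carries $P_B$‑cocartesian arrows to $p$‑cocartesian ones; and since cocartesian lifts are unique up to homotopy, the second clause holds iff the image $\langle u,\pi(f)\rangle$ of any $P_B$‑cocartesian arrow $f$ over $u\defeq\xi(f)$ is homotopic to $\langle u,\id\rangle$, i.e.\ iff $\pi(f)$ is an identity up to homotopy --- which is exactly the condition that $P_B$‑cocartesian arrows be $P^A$‑vertical, namely \ref{it:coc-left-iv}. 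Chaining the three equivalences finishes the proof.

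I expect the only genuinely delicate point to be $\ref{it:coc-left-iii}\Leftrightarrow\ref{it:coc-left-iv}$: converting ``the fibered left adjoint exists \emph{over $A\times B$}'' into ``cocartesian lifts are $P^A$‑vertical'' requires being careful that it is the lift \emph{cells} --- not just their sources and targets --- that project to identities in $B$, and this is where I would need to lean on the fibered‑adjunction machinery of \Cref{sec:fib-lari-adj} rather than argue only about endpoints. The remaining two equivalences are essentially administrative: $\ref{it:coc-left-i}\Leftrightarrow\ref{it:coc-left-iii}$ is \Cref{thm:sl-cocart-fam-char} after the comma identification, and $\ref{it:coc-left-ii}\Leftrightarrow\ref{it:coc-left-iv}$ is the definition of a cocartesian functor read against the transparent cocartesian structure of a product projection.
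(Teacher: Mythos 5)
Your proposal is correct, and two of your three equivalences ($\ref{it:coc-left-iii}\Leftrightarrow\ref{it:coc-left-iv}$ and $\ref{it:coc-left-ii}\Leftrightarrow\ref{it:coc-left-iv}$) run essentially along the same lines as the paper's proof: the paper likewise identifies $p$-cocartesian arrows of $A\times B$ with those having identity $B$-component to get $\ref{it:coc-left-ii}\Leftrightarrow\ref{it:coc-left-iv}$, and for $\ref{it:coc-left-iii}\Leftrightarrow\ref{it:coc-left-iv}$ it likewise reads the cocartesian lifts off the unit components of the fibered adjunction (using the transposing-equivalence formulation of \Cref{thm:char-fib-adj}) and uses $\pi$-verticality of the lifts to define the transport $\tau_\xi(a',b,a,u,e)\defeq\angled{a',b,u_!\,e}$ in the converse direction. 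The one place you genuinely diverge is item~\ref{it:coc-left-i}: the paper proves $\ref{it:coc-left-i}\Leftrightarrow\ref{it:coc-left-iv}$ directly, by observing that $q$-vertical arrows are exactly those of the form $\pair{u}{\id_b}$ and that a $\pair{\xi}{\pi}$-cocartesian lift of such an arrow is automatically $\pi$-vertical and becomes $\xi$-cocartesian after projecting away the $B$-component; you instead prove $\ref{it:coc-left-i}\Leftrightarrow\ref{it:coc-left-iii}$ by specializing \Cref{thm:sl-cocart-fam-char} (items \ref{it:sl-cocart-fam-char-i} and \ref{it:sl-cocart-fam-char-iii}) to the sliced fibration $\pair{\xi}{\pi}\colon E\to_B A\times B$, after identifying the sliced comma $\relcomma{B}{\pair{\xi}{\pi}}{(A\times B)}$ with $\comma{\xi}{A}$ over $A\times B$. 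Your dictionary checks out (the constant family $\lambda b.A$ over $B$ has total space $A\times B$, the associated ``$Q$'' is $P^A$, and the sliced comma unwinds to $q$-vertical arrows in $A\times B$ with a point of $E$ over the source, which is exactly $\comma{\xi}{A}$ with the projection $\pair{\partial_1}{\pi\partial_0}$ and the inclusion $\iota_\xi$). Your route buys modularity --- it reuses the already-established general characterization instead of re-running the lifting argument --- at the cost of a small bookkeeping step (the comma identification) and of making the equivalence with \ref{it:coc-left-iv} pass through \ref{it:coc-left-iii} rather than being read off directly from the definition of sliced cocartesian fibration; the paper's direct argument is more self-contained and makes the geometric content of \ref{it:coc-left-i} (lifts of $\pair{u}{\id_b}$ are $\pi$-vertical and $\xi$-cocartesian) immediately visible. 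Both decompositions yield the full four-fold equivalence.
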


\begin{proof} We abbreviate $\varphi \defeq \pair{\xi}{\pi}: E \to_B A \times B$.
	\begin{description}
		\item[$\ref{it:coc-left-i} \iff \ref{it:coc-left-iv}$] The fibered functor $\varphi$ being a cocartesian fibration sliced over $B$ is equivalent to the condition that for all $q$-vertical maps exist a $\varphi$-cocartesian lift (\wrt~to a given initial vertex). By Rezk-completeness, this is equivalent to any arrow $\pair{u:a \to a'}{\id_b}$ having a $\varphi$-cocartesian lift $f$ with prescribed initial vertex $e:P(a,b)$. Note that $f$ is $\pi$-vertical. Projecting away the $B$-component, we obtain that $f$ is $\xi$-cocartesian. 
		\item[$\ref{it:coc-left-ii} \iff \ref{it:coc-left-iv}$] Since $p:A \times B \fibarr A$ is a cocartesian fibration in any case, the assumption is equivalent to $\xi:E \fibarr A$ being a cocartesian fibration and every $\xi$-cocartesian arrow being mapped to $p$-cocartesian arrows under $\varphi$. But $\varphi$ is just the projection pairing $\pair{\xi}{\pi}$, and $p$-cocartesian arrows are exactly given by arrows whose $B$-component is an identity. Projecting down to $B$ this means exactly that the $\xi$-cocartesian arrows are $\pi$-vertical.
		\item[$\ref{it:coc-left-iii} \implies\ref{it:coc-left-iv}$] We denote
		\[ \tau_\xi(a',b,u:a\to a',e) \defeq  \angled{a',b,\widehat{\tau}_{\xi,a',b}(u,e)}. \]
		Again, similarly to the considerations in the proof of~\Cref{thm:cocartfams-via-transp}, the unit is a family of arrows
		\[ \eta:\prod_{a',b:A} \prod_{a:A, u:a \to_A a'} \prod_{e:P(a,b)} \angled{a',b,a,u,e} \to \angled{a',b,a',\id_{a'}, \widehat{\tau}_{\xi,a',b}(u,e)},\]
		illustrated as follows:
		\[\begin{tikzcd}
			e && {\widehat{\tau}_{\xi,a',b}(u,e)} \\
			a && {a'} \\
			{a'} && {a'} \\
			{a'} && {a'} \\
			b && b
			\arrow["{\eta_{a',b,a,u,e}}", from=1-1, to=1-3, cocart]
			\arrow["u"', from=2-1, to=3-1]
			\arrow["u", from=2-1, to=2-3]
			\arrow["{\id_{a'}}", Rightarrow, no head, from=3-1, to=3-3]
			\arrow["{\id_{a'}}", Rightarrow, no head, from=2-3, to=3-3]
			\arrow["{\id_{a'}}", Rightarrow, no head, from=4-1, to=4-3]
			\arrow["{\id_{b}}", Rightarrow, no head, from=5-1, to=5-3]
			\arrow[curve={height=12pt}, Rightarrow, dotted, no head, from=1-1, to=2-1]
			\arrow[curve={height=-12pt}, Rightarrow, dotted, no head, from=1-3, to=2-3]
			\arrow[curve={height=12pt}, Rightarrow, dotted, no head, from=3-1, to=4-1]
			\arrow[curve={height=-12pt}, Rightarrow, dotted, no head, from=3-3, to=4-3]
		\end{tikzcd}\]
		By assumption, the transposing map induced by $\eta$ is an equivalence:
		{\small \begin{align*}
			\Phi_\eta & : \prod_{\substack{a',a'':A \\ b,b':B}} \prod_{\substack{u':a' \to_A a'' \\ v:b \to_B b'}} \prod_{\substack{a:A \\ u:a \to_A a' \\ e:P(a,b)}} \prod_{\substack{a'':A \\ b':B \\ d:P(a'',b')}} \left( \angled{a',b,\widehat{\tau}_{\xi,a',b}(u,e):P(a',b)} \to_{\pair{u'}{v}} \angled{a'',b',d} \right) \\
			\stackrel{\simeq}{\longrightarrow} & \left( \angled{a',b,a,u:a \to_A a', e:P(a,b)} \to_{\pair{u'}{v}}  \angled{a'',b',a'', \id_{a''}, d:P(a'',b')} \right), \\
			\Phi_\eta & \defeq \lambda u',v,e,d,g.\iota_\xi(g) \circ \eta_{u,e} : \big(e \to_{\pair{v}{g}}^P d\big)
		\end{align*}}
		After contracting away redundant data, this is equivalent to the proposition
		\[ \prod_{h:e \to^P_{\pair{u'u}{v}} d} \isContr\Big( \sum_{g:\widehat{\tau}_{\xi,a',b}(u,e) \to^P_{\pair{u'}{v}} d} g \circ^P_{\eta_{u,e}} = h \Big), \]
		\cf~\Cref{fig:lift-cocart-left-fib-adj} for illustration. But this precisely means that $\xi:E \fibarr A$ is a cocartesian fibration whose cocartesian lifts all are $\pi$-vertical, namely the components of the fibered unit $\eta$.
		\item[$\ref{it:coc-left-iv} \implies\ref{it:coc-left-iii}$] We can strictify the diagram as follows, including the fibered left adjoint to be defined:
		\[{\small \begin{tikzcd}
			{E \simeq \sum_{\substack{a':A \\ b:B}} P(a',b)} &&&& {\sum_{\substack{a,a':A \\ b:B}} (a \to_A a') \times P(a,b) \simeq \xi \downarrow A} \\
			\\
			&& {A \times B}
			\arrow["{\langle \partial_1, \pi \circ \partial_0\rangle}", two heads, from=1-5, to=3-3]
			\arrow["{\langle \xi,\pi \rangle}"', two heads, from=1-1, to=3-3]
			\arrow[""{name=0, anchor=center, inner sep=0}, "{\iota_\xi}"{description}, curve={height=12pt}, from=1-1, to=1-5]
			\arrow[""{name=1, anchor=center, inner sep=0}, "{\tau_\xi}"', curve={height=6pt}, from=1-5, to=1-1]
			\arrow["\dashv"{anchor=center,rotate=-90}, draw=none, from=1, to=0]
		\end{tikzcd}}\]
		The fibered ``inclusion'' map is defined as
		\[ \iota_\xi(a',b,e) \defeq \angled{a',b,a',\id_{a'},e}.\]
		By the preconditions from~\Cref{it:coc-left-iii}, the map $\xi:E \fibarr A$ is a cocartesian fibration, moreover whose cocartesian lifts are all $\pi$-vertical. We let
		\[ \tau_\xi(a',b,a,u:a \to a',e) \defeq \angled{a',b,u_!\,e}.\]
		Now, similarly, as in the proof of~\Cref{thm:cocartfams-via-transp}, we exhibit the fibered adjunction as given by a fiberwise equivalence\footnote{Here, we write $\xi_b \defeq (\id_A \times b)^*\pair{\xi}{\phi}: E_b \simeq \sum_{a:A} P(a,b) \fibarr A$, giving rise to the comma object $\sum_{a:A} (a \to_A a') \times P(a,b)$} 
		\begin{align*} 
		\prod_{\substack{a':A \\ b:B}} \prod_{\substack{a:A \\ u:a \to_A a' \\ e:P(a,b)}} & \prod_{e':P(a',b)} \big( \tau_{\xi,a',b}(a,u,u_!\,e) \to_{P(a',b)} e'\big) \\ & \stackrel{\simeq}{\longrightarrow} \big( \angled{a,u,e} \longrightarrow_{\comma{\xi_b}{A}}  \angled{a',\id_{a'}, e'} \big)
		\end{align*} 
		as follows: Over a point $\pair{a'}{b}:A \times B$ in the base, fix $\angled{a:A, u:a \to_A a', e:P(a,b)}$, $e':P(a',b)$, and define maps between the transposing hom types, in opposite directions,
		\begin{align*}
			& \Phi(g:u_!e \to_{P(a',b)} e') \defeq g \circ \xi_!(u,e), \\
			& \Psi(u:a \to_A a', u:u \to_{\comma{A}{a}} \id_{a'}, h:e \to_u^P e') \defeq \tyfill_{\xi_!(u,e)}^\xi(h). 
		\end{align*}
		Again, by the universal property of cocartesian lifts it can be checked that the two maps are quasi-inverses. Note that by assumption, the $\xi$-cocartesian lifts are $\pi$-vertical, so everything stays in ``the fibers over $b$'', or, more precisely, in the pullback type $(\id_A \times b)^*E$.

	\end{description}
	
	\begin{figure}
		\centering
		\[\begin{tikzcd}
			e && d \\
			{\widehat{\eta}(u,e)} && d \\
			a && {a''} \\
			{a'} && {a''} \\
			{a'} && {a''} \\
			b && {b'}
			\arrow["{\eta_{u,e}}"', swap, description, from=1-1, to=2-1, cocart]
			\arrow["u"', from=3-1, to=4-1]
			\arrow["{u'u}", from=3-1, to=3-3]
			\arrow["{u'}", from=4-1, to=4-3]
			\arrow["{\id_{a''}}"', Rightarrow, no head, from=3-3, to=4-3]
			\arrow["{u'}", from=5-1, to=5-3]
			\arrow["v", from=6-1, to=6-3]
			\arrow["{\forall \,h}", from=1-1, to=1-3]
			\arrow[Rightarrow, no head, from=1-3, to=2-3]
			\arrow[curve={height=30pt}, Rightarrow, dotted, no head, from=1-1, to=3-1]
			\arrow[curve={height=-30pt}, Rightarrow, dotted, no head, from=1-3, to=3-3]
			\arrow["{\exists! \, g}", from=2-1, to=2-3, dashed]
			\arrow[curve={height=30pt}, Rightarrow, dotted, no head, from=2-1, to=4-1, crossing over]
			\arrow[curve={height=-30pt}, Rightarrow, dotted, no head, from=2-3, to=4-3, crossing over]
		\end{tikzcd}\]
		\caption{Fibered adjunction criterion for cocartesian-on-the-left families}
		\label{fig:lift-cocart-left-fib-adj}
	\end{figure}
	
	\begin{figure}
		\centering
		\[\begin{tikzcd}
			&& e && {e'} \\
			E && {u_!'e} \\
			&& {a''} && {a'} \\
			&& a && {a'} \\
			{A \times B} && b && {b'}
			\arrow["{\xi_!(u',e)}"', from=1-3, to=2-3, cocart]
			\arrow["h", from=1-3, to=1-5]
			\arrow["g"', dashed, from=2-3, to=1-5]
			\arrow["{u'}"', from=3-3, to=4-3]
			\arrow["u", from=4-3, to=4-5]
			\arrow["{uu'}", dashed, from=3-3, to=3-5]
			\arrow[Rightarrow, no head, from=3-5, to=4-5]
			\arrow["v", from=5-3, to=5-5]
			\arrow[two heads, from=2-1, to=5-1]
		\end{tikzcd}\]
		\caption{Lifts in a family that is cocartesian on the left}
		\label{fig:lift-cocart-left}
	\end{figure}
	
\end{proof}

By dualization, one obtains the notion of a two-sided family \emph{cartesian on the right}. As a corollary, we obtain a characterization of the conjunction of both properties.

\begin{corollary}[{\protect\cite[Corollary~7.1.3]{RV21}}]\label{cor:cocart-left-and-cart-right}
	A two-variable family $P:A \to B \to \UU$ is cocartesian on the left and cartesian on the right if and only if the following equivalent propositions are true.
	\begin{enumerate}
		\item The fibered functor
		\[\begin{tikzcd}
			E && {A \times B} \\
			& B
			\arrow["{\langle \xi,\pi \rangle}", two heads, from=1-1, to=1-3]
			\arrow["\pi"', two heads, from=1-1, to=2-2]
			\arrow["q", two heads, from=1-3, to=2-2]
		\end{tikzcd}\]
		is both a cartesian functor between cartesian fibrations and a relative cocartesian fibration over $B$.
		\item The fibered functor
		\[\begin{tikzcd}
			E && {A \times B} \\
			& A
			\arrow["{\langle \xi,\pi \rangle}", two heads, from=1-1, to=1-3]
			\arrow["\xi"', two heads, from=1-1, to=2-2]
			\arrow["p", two heads, from=1-3, to=2-2]
		\end{tikzcd}\]
		is both a cocartesian functor between cocartesian fibrations and a relative cartesian fibration over $B$.
	\end{enumerate}
\end{corollary}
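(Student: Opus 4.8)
The plan is to deduce both equivalences directly from Proposition~\ref{prop:char-fib-cocart-left} together with its formal dual, with no further computation. First I would record the dual statement: swapping the two legs $\xi$ and $\pi$ of the span and correspondingly exchanging cocartesian with cartesian, the initial-vertex inclusion $i_0$ with the terminal one $i_1$, and LARIs with RARIs throughout Proposition~\ref{prop:char-fib-cocart-left}, one obtains that a two-sided family $P:A \to B \to \UU$ is \emph{cartesian on the right} if and only if the fibered functor $\langle \xi,\pi\rangle:\xi \to_A p$ over $A$ is a relative cartesian fibration over $A$ (the dual of~\ref{it:coc-left-i}), if and only if the fibered functor $\langle \xi,\pi\rangle:\pi \to_B q$ over $B$ is a cartesian functor between cartesian fibrations (the dual of~\ref{it:coc-left-ii}), and if and only if a suitable fibered inclusion admits a fibered right adjoint (the dual of~\ref{it:coc-left-iii}). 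Since the proof of Proposition~\ref{prop:char-fib-cocart-left} is carried out symmetrically in the two variance conventions --- just as in passing from cocartesian to cartesian families --- this dualization is a matter of bookkeeping.

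With both halves available, the corollary follows by combining the clauses that concern the \emph{same} fibered functor. For item~(1) I observe that, by~\ref{it:coc-left-i}, ``cocartesian on the left'' is precisely the statement that $\langle \xi,\pi\rangle:\pi \to_B q$ is a relative cocartesian fibration over $B$, whereas, by the dual of~\ref{it:coc-left-ii}, ``cartesian on the right'' is precisely the statement that the same fibered functor $\langle \xi,\pi\rangle:\pi \to_B q$ is a cartesian functor between cartesian fibrations; hence the conjunction of the two variance conditions is literally item~(1). For item~(2) I use instead~\ref{it:coc-left-ii} for ``cocartesian on the left'' (the fibered functor $\langle \xi,\pi\rangle:\xi \to_A p$ is a cocartesian functor between cocartesian fibrations) together with the dual of~\ref{it:coc-left-i} for ``cartesian on the right'' (the same fibered functor over $A$ is a relative cartesian fibration over $A$); both clauses now concern the functor over $A$, so their conjunction is item~(2). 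In particular items~(1) and~(2) are equivalent, each being equivalent to the stated conjunction.

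The one point requiring genuine care --- the ``main obstacle'', such as it is --- is to check that the dualization acts correctly on the \emph{sliced} characterizations, i.e.\ that it sends ``relative cocartesian fibration over $B$'' to ``relative cartesian fibration over $A$'' rather than merely flipping the variance while retaining the base $B$ (which would instead produce ``cartesian on the left''). This is where the asymmetry $A \neq B$ of a two-variable family makes the argument slightly more delicate than in the one-sided case. It is resolved by tracking vertices: a $q$-vertical arrow in $A \times B$ is exactly one with identity $B$-component and a $p$-vertical arrow exactly one with identity $A$-component, and the cocartesian-on-the-left lifts of Proposition~\ref{prop:char-fib-cocart-left} live over the former while the cartesian-on-the-right lifts live over the latter, so the two sliced structures really sit over opposite bases.
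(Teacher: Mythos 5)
Your proof is correct and follows the same route the paper intends: the corollary is stated without its own proof, immediately after the remark that ``cartesian on the right'' is obtained by dualization, and your combination of clause~\ref{it:coc-left-i} with the dual of clause~\ref{it:coc-left-ii} (for item (1)) and clause~\ref{it:coc-left-ii} with the dual of clause~\ref{it:coc-left-i} (for item (2)) of \Cref{prop:char-fib-cocart-left} is exactly that argument. Your care about the base of the sliced structure is well placed: the dual of ``relative cocartesian fibration over $B$'' is indeed ``relative cartesian fibration over $A$,'' consistent with the displayed diagram in item (2) (the phrase ``over $B$'' there appears to be a typo in the statement).
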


In the case that $P:A \to B \to \UU$ is cocartesian on the left and cartesian on the right, we denote the ensuing lifting operations as follows. Given $a:A$, $b:B$, $e:P\,a\,b$, for arrows $u:a \downarrow A$, $v:B \downarrow b$, there are lifts
\[ P_!(u,e): e \cocartarr_{\pair{u}{b}} u_!\,e, \quad P^*(v,e): v^*\,e \cartarr_{\pair{a}{v}} e,\]
where in the notation we identify elements and identity maps.

The notion of \emph{two-sided cartesian fibration} adds on top a certain compatibility condition between the lifts of different variance. Before defining two-sided cartesian fibrations in~\Cref{ssec:2s-cart-fams}, we first investigate the compatibility condition in the following lemma.

\begin{lemma}[Comparing cartesian and cocartesian transport]\label{lem:comp-cart-cocart-transp}
	For Rezk types $A$ and $B$, let $P: A \to B \to \UU$ be a two-variable family which is cocartesian on the left and cartesian on the right. Denoting its unstraightening as~$\varphi\jdeq \pair{\xi}{\pi}:E \fibarr A \times B$, for arrows $u:a \to_A a'$, $v:b' \to_B b$ and a point $e:P\,a\,b$ we abbreviate:
	\begin{align*}
		k & \defeq \xi_!(u,e):e \cocartarr^\xi_{\pair{u}{b}} u_!\,e  &  k' & \defeq \pi^*(v,e):v^*\,e \cartarr^\pi_{\pair{a}{v}} e \\
		f & \defeq \xi_!(u,v^*\,e) : v^*\,e \cocartarr^\xi_{\pair{u}{b'}} u_!\,v^*\,e & f' & \defeq \pi^*(v,u_! \,e) : v^*\,u_!\,e \cartarr^\pi_{\pair{a'}{v}} u_!\,e \\
		g & \defeq \tyfill_f^\xi(k'k): u_!\,v^*\,e \to_{\pair{a'}{v}}^\xi u_!\,e & g' & \defeq \tyfill_{f'}^\pi(k'k): v^*\,e \to^\pi_{\pair{u}{b'}} v^*\, u_!\,e
	\end{align*}
	We claim that there is an identification between the following two induced morphisms $h,h':u_!\,v^*\,e \to_{P(a',b')} v^*\,u_!\,e$~ (\cf~\ref{fig:comp-cart-cocart-transport}):
	\[\begin{tikzcd}
		{v^*\,e} && e && {u_!\,e} \\
		{u_!v^*\,e} &&&& {v^* u_!\,e} \\
		{v^*\,e} && e && {u_!\,e} \\
		{u_!v^*\,e} &&&& {v^* u_!\,e}
		\arrow["f"', from=1-1, to=2-1]
		\arrow["{k'}", from=1-1, to=1-3, cart]
		\arrow["k", from=1-3, to=1-5, cocart]
		\arrow["{f'}"', from=2-5, to=1-5, cart]
		\arrow["{h:\defeq \tyfill^\pi_{f'}(g)}"', dashed, from=2-1, to=2-5]
		\arrow["g"{description}, dashed, from=2-1, to=1-5]
		\arrow["{k'}", from=3-1, to=3-3, cart]
		\arrow["k", from=3-3, to=3-5, cocart]
		\arrow["f"', from=3-1, to=4-1, cocart]
		\arrow["{h':\defeq \tyfill^\xi_{f}(g')}"', dashed, from=4-1, to=4-5]
		\arrow["{f'}"', from=4-5, to=3-5, cart]
		\arrow["{g'}"{description}, dashed, from=3-1, to=4-5]
	\end{tikzcd}\]
\end{lemma}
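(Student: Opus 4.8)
The plan is to prove the asserted identification $h = h'$ by a short diagram chase inside the Segal type $E$, using nothing but the uniqueness halves of the universal properties of the cocartesian arrow $f$ and the cartesian arrow $f'$, together with the four equations that define the auxiliary arrows $g,g',h,h'$ as $\tyfill$'s.

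First I would pin down over which arrows everything lives. Because $P$ is cocartesian on the left (\Cref{prop:char-fib-cocart-left}), the $\xi$-cocartesian arrows $k = \xi_!(u,e)$ and $f = \xi_!(u,v^*\,e)$ are $\pi$-vertical, so $k$ lies over $\pair{u}{\id_b}$ and $f$ over $\pair{u}{\id_{b'}}$; dually, cartesianness on the right makes the $\pi$-cartesian arrows $k' = \pi^*(v,e)$ and $f' = \pi^*(v,u_!\,e)$ be $\xi$-vertical, lying over $\pair{\id_a}{v}$ and $\pair{\id_{a'}}{v}$. Unwinding the $\tyfill$-notation then records the identifications $g \circ f = k \circ k'$, $f' \circ g' = k \circ k'$, $f' \circ h = g$, and $h' \circ f = g'$, where $h$ and $h'$ are vertical arrows $u_!\,v^*\,e \to v^*\,u_!\,e$ in the bifiber $P(a',b')$, and $k \circ k' : v^*\,e \to u_!\,e$ lies over $\pair{u}{v}$.

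The chase itself is then two lines. Composing and using coherence of composition in $E$, I would compute $f' \circ (h \circ f) = (f' \circ h) \circ f = g \circ f = k \circ k' = f' \circ g'$. Since $h \circ f$ and $g'$ are arrows $v^*\,e \to v^*\,u_!\,e$ over the common base $\pair{u}{\id_{b'}}$ and $f'$ is $\pi$-cartesian over $\pair{\id_{a'}}{v}$ with $\pair{\id_{a'}}{v} \circ \pair{u}{\id_{b'}} = \pair{u}{v}$, the uniqueness part of the cartesian lifting property of $f'$ (applied to $k \circ k'$) forces $h \circ f = g' = h' \circ f$. Now $h$ and $h'$ are two vertical arrows out of $u_!\,v^*\,e$ that agree after precomposition with the $\xi$-cocartesian arrow $f$, so the uniqueness part of the cocartesian lifting property of $f$ gives $h = h'$. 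One can also package this uniformly: precomposition with $f$ followed by postcomposition with $f'$ is an equivalence between the vertical arrows $u_!\,v^*\,e \to v^*\,u_!\,e$ and the arrows $v^*\,e \to u_!\,e$ over $\pair{u}{v}$, and both $h$ and $h'$ are carried by it to $k \circ k'$ — so there is in fact only one candidate, and it works by construction.

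I do not expect a real obstacle here; the one delicate point is turning the informal chase into honest identifications in simplicial HoTT. The co-/cartesian universal properties are $\isContr$-statements about types of $(\text{filler},\text{witness})$-pairs, so each equality above is literally the first component of a path in such a $\Sigma$-type, and I would make sure the two appeals to uniqueness are exact instances of these contractibility statements — noting in passing that an arrow equal, after composing with $f'$, to one over $\pair{u}{\id_{b'}}$ is automatically over $\id_{b'}$ in $B$, so nothing is lost by working with the $P^A$- (resp.\ $P_B$-) cocartesianness over one variable only. This bookkeeping is routine given that $A$, $B$, and hence $E$, are Rezk.
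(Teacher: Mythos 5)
Your proof is correct and is essentially the paper's argument: the paper likewise reduces everything to the single equation $f'(h'f)=k'k$ and then invokes the uniqueness halves of the universal properties of the cocartesian arrow $f$ and the cartesian arrow $f'$ (merely applying them in the opposite order, first concluding $g=f'h'$ and then $h=h'$). Your closing remark that pre-composition with $f$ followed by post-composition with $f'$ is an equivalence onto the arrows over $\pair{u}{v}$ is exactly the right packaging of why both chases terminate.
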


\begin{proof}
	It is sufficient to provide an identification $g=f'h'$. For this, it is sufficient---and necessary---to provide a witness for $f'(h'f) = k'k$. But this follows from $f'h=g$, since $gf=k'k$.
\end{proof}

\begin{figure}
	\centering
	\[\begin{tikzcd}
		E && {v^*e} && e && {u_!e} \\
		&&& {u_!v^*e} &&&& {v^*u_!e} \\
		{A \times B} && {\langle a,b'\rangle} && {\langle a,b \rangle} && {\langle a',b\rangle} \\
		{} &&& {\langle a',b' \rangle} &&&& {\langle a',b'\rangle}
		\arrow["{\xi_!(u,v^*e)}"'{pos=0.2}, from=1-3, to=2-4, cocart]
		\arrow["{\pi^*(v,e)}", from=1-3, to=1-5, cart]
		\arrow["{\xi_!(u,e)}", from=1-5, to=1-7, cocart]
		\arrow["{\pi^*(v,u_!e)}"'{pos=0.1}, from=2-8, to=1-7, cart]
		\arrow[dashed, from=2-4, to=2-8]
		\arrow["{\langle u,b' \rangle}"', from=3-3, to=4-4]
		\arrow["{\langle a,v \rangle}", from=3-3, to=3-5]
		\arrow["{\langle u,b \rangle}", from=3-5, to=3-7]
		\arrow[two heads, from=1-1, to=3-1]
		\arrow["{\langle a',v \rangle}"', from=4-8, to=3-7]
		\arrow[Rightarrow, no head, from=4-4, to=4-8]
	\end{tikzcd}\]
	\caption{Comparing cartesian and cocartesian transport}\label{fig:comp-cart-cocart-transport}
\end{figure}

\subsection{Two-sided cartesian families}\label{ssec:2s-cart-fams}

\subsubsection{Definition and examples of two-sided cartesian fibrations}

\begin{definition}[Two-sided cartesian families, \protect{\cite[Section~7.1]{RV21}}]\label{def:2s-cart}
	Let $P:A \to B \to \UU$ be a (two-sided) family, where $A$ and $B$ are Rezk types. We call $P$ a \emph{two-sided cartesian family} (short: \emph{two-sided family}) if
	\begin{enumerate}
		\item $P$ is cocartesian on the left and cartesian on the right,
		\item\label{it:lifts-commute} and $P$ satisfies the condition that \emph{cocartesian and cartesian lifts commute}: for any $a:A$, $b:B$, $e:P\,a\,b$ and arrows $u:a \to_A a'$, $v:b' \to_B b$, the filler $\kappa:u_!(v^*e) \to v^*(u_!e)$ from \Cref{lem:comp-cart-cocart-transp} is an isomorphism, hence there is an identity $u_!v^*e =_{P(a',b')} v^*u_!e$.
	\end{enumerate}
\end{definition}

For any Rezk type $A$ the $\hom$-span
\[\begin{tikzcd}
	& {A^{\Delta^1}} \\
	A && A
	\arrow["{\partial_1}"', from=1-2, to=2-1]
	\arrow["{\partial_0}", from=1-2, to=2-3]
\end{tikzcd}\]
is a two-sided cartesian fibration: the $\partial_1$-cocartesian lift are exactly the $\partial_0$-vertical squares, and vice versa, and the transports satisfy the necessary compatibility condition (\cf~\cite[Proposition~5.2.15 and Corollary~5.2.16]{BW21}). In fact, this gives an example of a \emph{discrete} two-sided family, since all the fibers are discrete types, \cf~\cite[Proposition~8.29]{RS17}, as well as \Cref{def:2s-disc-cart} and \Cref{prop:2s-disc}.

More generally, comma fibrations of the form
\[\begin{tikzcd}
	& {\comma{f}{g}} \\
	A && B
	\arrow["{\partial_1}"', from=1-2, to=2-1]
	\arrow["{\partial_0}", from=1-2, to=2-3]
\end{tikzcd}\]
for functors $f : B \to A$ and $g : C \to A$ yield examples of two-sided discrete cartesian fibrations, by the analogous arguments.

Furthermore, given two-sided cartesian fibrations $E \fibarr A \times B$ and $F \fibarr B \times C$, we will see that their \emph{span composition}
\[\begin{tikzcd}
	&& {E \times_B Q } \\
	& E && Q \\
	A && B && C
	\arrow[two heads, from=2-2, to=3-1]
	\arrow[two heads, from=2-2, to=3-3]
	\arrow[two heads, from=2-4, to=3-3]
	\arrow[two heads, from=2-4, to=3-5]
	\arrow[two heads, from=1-3, to=2-2]
	\arrow[two heads, from=1-3, to=2-4]
	\arrow["\lrcorner"{anchor=center, pos=0.125, rotate=-45}, draw=none, from=1-3, to=3-3]
\end{tikzcd}\]
is a two-sided cartesian fibration, too. For example, composing the $\hom$-span $A^{\Delta^1} \fibarr A \times A$, which is a discrete two-sided fibration, with itself yields a two-sided cartesian fibration
\[\begin{tikzcd}
	&& {A^{\Delta^2}} \\
	& {A^{\Delta^1}} && {A^{\Delta^1}} \\
	A && A && A
	\arrow["{\partial_1}"', two heads, from=2-2, to=3-1]
	\arrow["{\partial_0}", two heads, from=2-2, to=3-3]
	\arrow["{\partial_1}"', two heads, from=2-4, to=3-3]
	\arrow["{\partial_0}", two heads, from=2-4, to=3-5]
	\arrow[two heads, from=1-3, to=2-2]
	\arrow[two heads, from=1-3, to=2-4]
	\arrow["\lrcorner"{anchor=center, pos=0.125, rotate=-45}, draw=none, from=1-3, to=3-3]
\end{tikzcd}\]
whose total type is equivalent to $A^{\Delta^2}$ due to $A$ being Segal. Its fiber at $\langle c,a \rangle : A \times A$ is $\sum_{b:A} \hom(a,b) \times \hom(b,c)$, which is in general not discrete. Thus, $A^{\Delta^2} \fibarr A \times A$ is an example of a not necessarily discrete two-sided cartesian fibration.\footnote{We are grateful to Emily Riehl for pointing out this example.}

Furthermore, any family $P : A \times B \to \UU$ with associated span $\xi : A \leftarrow E \rightarrow B : \pi$ of functors gives rise to the \emph{free-two sided fibration}, see~\cite{Shu:2s-free}:
\[\begin{tikzcd}
	&&& {L^{\mathrm{2s}}(E)} \\
	&& {\comma{\xi}{A}} && {\comma{B}{\pi}} \\
	& {A^{\Delta^1}} && E && {B^{\Delta^1}} \\
	A && A && B && B
	\arrow[two heads, from=1-4, to=2-3]
	\arrow[two heads, from=1-4, to=2-5]
	\arrow[two heads, from=2-3, to=3-4]
	\arrow[two heads, from=2-5, to=3-4]
	\arrow[from=2-3, to=3-2]
	\arrow[from=2-5, to=3-6]
	\arrow["{{\partial_0}}", two heads, from=3-2, to=4-3]
	\arrow["\xi"', from=3-4, to=4-3]
	\arrow["\pi", from=3-4, to=4-5]
	\arrow["{{\partial_1}}"', two heads, from=3-6, to=4-5]
	\arrow["{{\partial_1}}"', two heads, from=3-2, to=4-1]
	\arrow["{{\partial_0}}", two heads, from=3-6, to=4-7]
	\arrow["\lrcorner"{anchor=center, pos=0.125, rotate=-45}, draw=none, from=2-3, to=4-3]
	\arrow["\lrcorner"{anchor=center, pos=0.125, rotate=-45}, draw=none, from=2-5, to=4-5]
	\arrow["\lrcorner"{anchor=center, pos=0.125, rotate=-45}, draw=none, from=1-4, to=3-4]
\end{tikzcd}\]
That this is a two-sided cartesian fibration satisfying the corresponding universal property can be proven using the results about the free cocartesian fibration from~\cite[Subsection~5.2.17]{BW21} and their dual analogue for the free cartesian fibration.

The fiber of $L^{\mathrm{2s}}(E) \fibarr A \times B$ at $\pair{a}{b} : A \times B$ is given, up to equivalence, by the type $\sum_{a' : A} \sum_{b' : B} (a' \to_A a) \times (b \to_B b') \times P(a',b')$.

\subsubsection{Characterization of two-sided cartesian families}

\begin{proposition}[Commutation of cocartesian and cartesian lifts]\label{prop:comm-lifts}
	Let $P:A \to B \to \UU$ be a family with both $A$ and $B$ Rezk which is cocartesian on the left and cartesian on the right. We denote by $\xi:F \fibarr A$ and $\pi:E \fibarr B$, resp., the unstraightenings.
	Then cocartesian and cartesian lifts commute if and only if the following property is satisfied:
	Given $u:a \to_A a'$, $v:b' \to_B b$, $e:P\,a\,b$, and a diagram
	\[\begin{tikzcd}
		{v^*\,e} && e \\
		d && {u_!\,e}
		\arrow["f"', from=1-1, to=2-1]
		\arrow["{k'\defeq \pi^*(v,e)}", from=1-1, to=1-3, cart]
		\arrow["g"', from=2-1, to=2-3]
		\arrow["{k \defeq \xi_!(u,e)}", from=1-3, to=2-3, cocart]
	\end{tikzcd}\]
	where $g$ (and necessarily $k'$) is $\xi$-vertical and $f$ (and necessarily $k$) is $\pi$-vertical. Then $f$ is $\xi$-cocartesian if and only if $g$ is $\pi$-cartesian.
\end{proposition}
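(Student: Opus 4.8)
The plan is to deduce both implications from \Cref{lem:comp-cart-cocart-transp}, together with three routine facts about co-/cartesian arrows over a Rezk base: uniqueness of co-/cartesian lifts \cite{BW21}, closure of co-/cartesian arrows under composition, and the fact that any co-/cartesian arrow lying over an identity (in particular any isomorphism) is invertible. Fix $u:a \to_A a'$, $v:b' \to_B b$, $e:P\,a\,b$ and adopt the notation of \Cref{lem:comp-cart-cocart-transp}: set $k' \defeq \pi^*(v,e)$, $k \defeq \xi_!(u,e)$, $f_0 \defeq \xi_!(u,v^*e)$, $f' \defeq \pi^*(v,u_!e)$, $g_0 \defeq \tyfill^\xi_{f_0}(k'k)$, $g'_0 \defeq \tyfill^\pi_{f'}(k'k)$, and recall that the comparison $\kappa : u_!v^*e \to v^*u_!e$ is characterized both by $f' \circ \kappa = g_0$ and by $\kappa \circ f_0 = g'_0$; ``co-/cartesian lifts commute'' means precisely that $\kappa$ is an isomorphism for all $u,v,e$ (\Cref{def:2s-cart}).

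The first step is to normalize the generic square of the statement. In such a square one has $g \circ f = k \circ k'$; applying $\xi$ and $\pi$ and using that $k'$ is $\xi$-vertical, $k$ is $\pi$-vertical, $g$ is $\xi$-vertical, $f$ is $\pi$-vertical, one reads off that the $A$-component of $f$ is $u$ and the $B$-component of $g$ is $v$, so $d$ lies in the fiber $P(a',b')$. I would then record two consequences of uniqueness of lifts: if $f$ is $\xi$-cocartesian then, being a cocartesian lift of $u$ at $v^*e$, it differs from $f_0$ by a (necessarily vertical) isomorphism $\beta : d \xrightarrow{\cong} u_!v^*e$ with $\beta \circ f = f_0$, and then $g \circ \beta^{-1} = g_0$ since both sides compose with the cocartesian $f_0$ to $k'k$; dually, if $g$ is $\pi$-cartesian it differs from $f'$ by a vertical isomorphism $\gamma : d \xrightarrow{\cong} v^*u_!e$ with $f' \circ \gamma = g$, and then $\gamma \circ f = g'_0$ since both sides compose with the cartesian $f'$ to $k'k$.

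For the forward implication I assume $\kappa$ is invertible and prove the stated equivalence. If $f$ is $\xi$-cocartesian, then with $\beta$ as above $g = f' \circ \kappa \circ \beta$ is a composite of a $\pi$-cartesian arrow with isomorphisms, hence $\pi$-cartesian. If conversely $g$ is $\pi$-cartesian, then with $\gamma$ as above $f = \gamma^{-1} \circ \kappa \circ f_0$ is a composite of a $\xi$-cocartesian arrow with isomorphisms, hence $\xi$-cocartesian. For the converse implication I assume the stated property and, for fixed $u,v,e$, apply it to the concrete square with top row $v^*e \xrightarrow{k'} e \xrightarrow{k} u_!e$ and bottom row $v^*e \xrightarrow{f_0} u_!v^*e \xrightarrow{g_0} u_!e$, which commutes by definition of $g_0$, with $f_0$ $\pi$-vertical and $g_0$ $\xi$-vertical; since $f_0$ is $\xi$-cocartesian, the property forces $g_0$ to be $\pi$-cartesian. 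Then $g_0$ and $f'$ are both cartesian lifts of $v$ at $u_!e$, so the unique vertical map $w$ with $f' \circ w = g_0$ is an isomorphism; but $\kappa$ satisfies exactly that equation and is vertical, so $w = \kappa$, and hence $\kappa$ is invertible, i.e.\ co-/cartesian lifts commute.

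I expect the only genuinely delicate point to be this passage between the generic square of the Proposition and the concrete square of \Cref{lem:comp-cart-cocart-transp}: one must check carefully that a cocartesian (resp.\ cartesian) choice of $f$ (resp.\ $g$) is forced to agree with $f_0$ (resp.\ $f'$) up to a vertical isomorphism, and then transport the two characterizing identities of $\kappa$ along that isomorphism without direction errors. Everything downstream is a mechanical application of the closure and cancellation properties of co-/cartesian arrows recalled at the outset, so the write-up should be short.
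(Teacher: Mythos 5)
Your proof is correct and follows essentially the same route as the paper: both reduce the commutation condition, via \Cref{lem:comp-cart-cocart-transp}, to invertibility of the canonical comparison map $\kappa$, and pass between the generic square and the canonical one using uniqueness of co-/cartesian lifts. The paper compresses that last passage into the phrase ``up to identification,'' whereas you spell it out with the vertical isomorphisms $\beta$ and $\gamma$ — a welcome expansion, and the details check out.
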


\begin{proof}
	In light of~\Cref{lem:comp-cart-cocart-transp}, the commutation condition is equivalent to the type of paths $h:u_!\,v^*\,e =_{P(a',b')} v^*\,u_!\,e$ together with witnesses, necessarily propositional, that the following diagram commutes:
	\[\begin{tikzcd}
		&& {v^*\,e} &&&& e \\
		\\
		&& {u_!\,v^*\,e} &&&& {u_!\,e} \\
		\\
		{v^*\,u_!\,e}
		\arrow["{f \defeq \xi_!(u,v^*\,e)}", from=1-3, to=3-3, cocart]
		\arrow["{g \defeq \tyfill_f^\xi(kk')}", dashed, from=3-3, to=3-7]
		\arrow["{k' \defeq \pi^*(v,e)}", from=1-3, to=1-7, cart]
		\arrow["{k \defeq \xi_!(u,e)}", from=1-7, to=3-7, cocart]
		\arrow["{g' \defeq \tyfill_{f'}^\pi(kk')}"', dashed, from=1-3, to=5-1]
		\arrow["{f' \defeq \pi^*(v,u_!\,e)}"', from=5-1, to=3-7, cart]
		\arrow["h"{description}, Rightarrow, no head, from=5-1, to=3-3]
	\end{tikzcd}\]
	But since these diagrams commute in any case by the assumptions\\ (recall~\Cref{lem:comp-cart-cocart-transp}) said type is equivalent to the proposition that the filler $h$ is an isomorphism:
	\[\begin{tikzcd}
		&& {u_!\,v^*\,e} \\
		{v^*\,e} && {v^*\,u_!\,e} && {u_!\,e}
		\arrow["h"', dashed, from=1-3, to=2-3]
		\arrow["{f'}"', from=2-3, to=2-5, cart]
		\arrow["g", from=1-3, to=2-5]
		\arrow["{g'}"', from=2-1, to=2-3]
		\arrow["f", from=2-1, to=1-3, cocart]
	\end{tikzcd}\]
	Finally due to the commutation of both of the ``completed squares'' above, this is equivalent to the new alternative criterion: up to identification, $f$ is $\xi$-cocartesian if and only if $g$ is $\pi$-cartesian.
\end{proof}

\begin{figure}
	\centering
	\[\begin{tikzcd}
		{\mathrm{Vert}_\pi(E)} & {v^*\,e} && e & {E \times_{A \times B} \mathrlap{\mathrm{Vert}_q(A \times B)}} \\
		{} & {v^*\,e'} && {e'} && {v^*e} & e & {} & {} \\
		& a && a && a & a && {} \\
		& {a'} && {a'} && {a'} & {a'} && {} \\
		B & b && {b'} & B & b & {b'}
		\arrow["{\pi^*(v,e)}", from=1-2, to=1-4, cocart]
		\arrow["{\mathrm{fill}}"', dashed, from=1-2, to=2-2]
		\arrow["{\pi^*(v,e')}"', from=2-2, to=2-4, cocart]
		\arrow["f", from=1-4, to=2-4]
		\arrow["u"', from=3-2, to=4-2]
		\arrow[Rightarrow, no head, from=3-2, to=3-4]
		\arrow["u", from=3-4, to=4-4]
		\arrow[Rightarrow, no head, from=4-2, to=4-4]
		\arrow["v", from=5-2, to=5-4]
		\arrow["u"', from=3-6, to=4-6]
		\arrow["\psi"{description}, from=1-5, to=5-5]
		\arrow["{\pi'}"{description}, two heads, from=1-1, to=5-1]
		\arrow["{\pi^*(v,e)}", from=2-6, to=2-7, cocart]
		\arrow[Rightarrow, no head, from=3-6, to=3-7]
		\arrow["u", from=3-7, to=4-7]
		\arrow[Rightarrow, no head, from=4-6, to=4-7]
		\arrow["v", from=5-6, to=5-7]
	\end{tikzcd}\]
	\label{fig:lifts-two-sided-fib-lari}\caption{Cartesian lifts in the induced fibrations~$\pi'$~and~$\psi$}
\end{figure}

The following theorem finally contains several characterizations of a two-variable family being two-sided cartesian.\footnote{We are indebted to Emily Riehl for helpful explanations and discussions about~\cite[Thm.~7.1.4]{RV21}.} This consists in several sliced Chevalley/fibered adjoint criteria and a criterion formulated directly on the level of two-variable families.

\begin{theorem}[Char.~of two-sided families, {\protect\cite[Thm.~7.1.4]{RV21}}]\label{thm:char-two-sid}
	For a family $P:A \to B \to \UU$, corresponding to $\varphi \defeq \pair{\xi}{\pi}: E \defeq \sum_{a:A,b:B} P(a,b) \to A \times B$, the following are equivalent:
	\begin{enumerate}
		\item\label{it:char-two-sid-i} The two-variable family $P$ is two-sided.
		\item\label{it:char-two-sid-ii} Considering
		\[\begin{tikzcd}
			E && {A \times B} \\
			& B
			\arrow["\varphi", from=1-1, to=1-3]
			\arrow["\pi"', two heads, from=1-1, to=2-2]
			\arrow["q", two heads, from=1-3, to=2-2]
		\end{tikzcd}\]
		the map $\pi$ 
		is a cartesian fibration, the functor $\varphi$ is cartesian and a \emph{cocartesian} fibration sliced over $B$. Furthermore, the fibered LARI
		\[ \chi_B: \VertArr_q(A \times B) \times_{A\times B} E \to_B \VertArr_\pi(E)\]
		is a \emph{cartesian} functor.
		\item\label{it:char-two-sid-iii'} Considering
		\[\begin{tikzcd}
			E && {A \times B} \\
			& B
			\arrow["\varphi", from=1-1, to=1-3]
			\arrow["\pi"', two heads, from=1-1, to=2-2]
			\arrow["q", two heads, from=1-3, to=2-2]
		\end{tikzcd}\]
		the map $\pi$ 
		is a cartesian fibration, the functor $\varphi$ is cartesian and a \emph{cocartesian} fibration sliced over $B$. Furthermore, the fibered LARI
		\[ \tau_B: \VertArr_q(A \times B) \times_{A\times B} E \to_B E\]
		is a \emph{cartesian} functor.
		\item\label{it:char-two-sid-iii} Considering
		\[\begin{tikzcd}
			E && {A \times B} \\
			& A
			\arrow["\varphi", from=1-1, to=1-3]
			\arrow["\xi"', two heads, from=1-1, to=2-2]
			\arrow["p", two heads, from=1-3, to=2-2]
		\end{tikzcd}\]
		the map $\xi$
		is a cocartesian fibration, the functor $\varphi$ is cocartesian and a \emph{cartesian} fibration sliced over $A$. Furthermore, the fibered RARI
		\[ \chi^A: \VertArr_p(A \times B) \times_{A\times B} E \to_A \VertArr_\xi(E)\]
		is a \emph{cocartesian} functor.
		\item\label{it:char-two-sid-iv}
		The fibered adjoints in the following diagram exist:
		\[\begin{tikzcd}
			E &&& {B \downarrow \pi} \\
			\\
			&& {\xi \downarrow A} &&& {\xi \downarrow A \times_E B \downarrow \pi} \\
			\\
			&&& {A\times B}
			\arrow[""{name=0, anchor=center, inner sep=0}, "{\iota^\pi}"{description}, from=1-1, to=1-4]
			\arrow[""{name=1, anchor=center, inner sep=0}, "{\iota_\xi}"{description}, from=1-1, to=3-3]
			\arrow[""{name=3, anchor=center, inner sep=0}, "{\langle \iota_\xi \circ \partial_1',\id \rangle}"{description, pos=0.25}, from=1-4, to=3-6]
			\arrow["{\langle \partial_1, \pi \, \partial_0 \rangle}"{description}, two heads, from=3-3, to=5-4]
			\arrow["{\langle \partial_1, \partial_0\rangle}"{description}, two heads, curve={height=-15pt}, from=3-6, to=5-4]
			\arrow["{\langle \xi\,\partial_1, \partial_0 \rangle}"{description, pos=0.3}, two heads, from=1-4, to=5-4]
			\arrow["\varphi"{description}, shift right=2, curve={height=30pt}, two heads, from=1-1, to=5-4]
			\arrow[""{name=4, anchor=center, inner sep=0}, "{\tau_\xi}"{description}, curve={height=-18pt}, dashed, from=3-3, to=1-1]
			\arrow[""{name=5, anchor=center, inner sep=0}, "\ell"{description}, curve={height=18pt}, dashed, from=3-6, to=1-4]
			\arrow[""{name=7, anchor=center, inner sep=0}, "{\tau^\pi}"{description}, curve={height=18pt}, dashed, from=1-4, to=1-1]
			\arrow["\dashv"{anchor=center, rotate=-91}, draw=none, from=7, to=0]
			\arrow["\dashv"{anchor=center, rotate=-133}, draw=none, from=5, to=3]
			\arrow["\dashv"{anchor=center, rotate=44}, draw=none, from=4, to=1]
			\arrow[""{name=6, anchor=center, inner sep=0}, "r"{description}, curve={height=-24pt}, dashed, from=3-6, to=3-3, crossing over]
			\arrow[""{name=2, anchor=center, inner sep=0}, "{\langle \id, \iota^\pi \circ \partial_0' \rangle}"{description, pos=0.6}, from=3-3, to=3-6, crossing over]
			\arrow["\dashv"{anchor=center, rotate=108}, draw=none, from=6, to=2]
		\end{tikzcd}\]
		where the pullback type is given by:
		\[\begin{tikzcd}
			{\comma{\xi}{A} \times_E \comma{B}{\pi}} && {\comma{\xi}{A}} \\
			{\comma{B}{\pi}} && E
			\arrow[from=1-1, to=2-1]
			\arrow["{\partial_1}"', from=2-1, to=2-3]
			\arrow[from=1-1, to=1-3]
			\arrow["{\partial_0'}", from=1-3, to=2-3]
			\arrow["\lrcorner"{anchor=center, pos=0.125}, draw=none, from=1-1, to=2-3]
		\end{tikzcd}\]
		Moreover, the mate of the identity $2$-cell defines a fibered isomorphism
		\[ \prod_{a:A, b:B} (\tau_\xi \circ r)_{a,b} =_{Q(a,b) \to P(a,b)} (\tau^\pi \circ \ell)_{a,b},\]
		where
		\[ Q: A \times B \to \UU, \quad Q(a,b) \simeq \comma{a}{A} \times \comma{B}{b} \times P(a,b)\]
		is the straightening of the map
		\[ F \defeq \comma{\xi}{A} \times_E \comma{B}{\pi} \fibarr A \times B.\]
	\end{enumerate}
\end{theorem}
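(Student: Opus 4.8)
The plan is to reduce every equivalence to the one-sided results of \Cref{sec:sl-cocart-fams}, threading them through the theory of cocartesian fibrations in cartesian fibrations, and to reserve a direct combinatorial argument only for the identification of the mate $2$-cell appearing in \ref{it:char-two-sid-iv}. I would begin with $\ref{it:char-two-sid-i}\Leftrightarrow\ref{it:char-two-sid-ii}$. By \Cref{cor:cocart-left-and-cart-right}, the first three clauses of \ref{it:char-two-sid-ii} --- $\pi$ a cartesian fibration, $\varphi$ a cartesian functor between cartesian fibrations over $B$, and $\varphi$ a cocartesian fibration sliced over $B$ --- together say exactly that $\varphi$ is a cocartesian fibration in cartesian fibrations in the sense of \Cref{def:cocart-fibs-in-cart-fibs}, and this is equivalent to ``$P$ is cocartesian on the left and cartesian on the right''. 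Granting this, \Cref{prop:char-cocart-fib-in-cart-fib} applies and yields the equivalence of: $\chi_B$ being a cartesian functor; $\tau_B$ being a cartesian functor; and the commutativity of the lift-comparison diagrams appearing there. It then suffices to recognize that this last condition is precisely the ``cocartesian and cartesian lifts commute'' clause of \Cref{def:2s-cart}: by \Cref{prop:comm-lifts} that clause is reformulated into exactly the same shape (an induced comparison arrow being invertible) as the reformulation of ``$\chi_B$ cartesian'' carried out in the proof of \Cref{prop:char-cocart-fib-in-cart-fib}, once the abbreviations there are matched with the cocartesian and cartesian transports of \Cref{lem:comp-cart-cocart-transp}. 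This gives $\ref{it:char-two-sid-i}\Leftrightarrow\ref{it:char-two-sid-ii}$, and $\ref{it:char-two-sid-ii}\Leftrightarrow\ref{it:char-two-sid-iii'}$ is then just the equivalence $\ref{it:gen-lift-comm-i}\Leftrightarrow\ref{it:gen-lift-comm-ii}$ of \Cref{prop:char-cocart-fib-in-cart-fib}.

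For $\ref{it:char-two-sid-i}\Leftrightarrow\ref{it:char-two-sid-iii}$ I would invoke duality. Condition \ref{it:char-two-sid-i} is invariant under the operation that simultaneously interchanges the roles of $A$ and $B$ and passes to opposite types everywhere: ``cocartesian on the left and cartesian on the right'' is carried to itself, and the commutation of lifts is symmetric because of the symmetry $h = h'$ in \Cref{lem:comp-cart-cocart-transp}. Under this same operation condition \ref{it:char-two-sid-ii} is carried to condition \ref{it:char-two-sid-iii} --- cartesian fibrations, LARIs and cartesian functors being exchanged with cocartesian fibrations, RARIs and cocartesian functors --- so $\ref{it:char-two-sid-iii}$ follows from the already-proven $\ref{it:char-two-sid-i}\Leftrightarrow\ref{it:char-two-sid-ii}$ applied in the dual situation.

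Finally, for $\ref{it:char-two-sid-i}\Leftrightarrow\ref{it:char-two-sid-iv}$ I would assemble the diagram step by step. The fibered left adjoint $\tau_\xi$ to $\iota_\xi$ exists iff $P$ is cocartesian on the left (\Cref{prop:char-fib-cocart-left}\ref{it:coc-left-iii}), and dually the fibered right adjoint $\tau^\pi$ to $\iota^\pi$ exists iff $P$ is cartesian on the right. Assuming both, the maps $\langle\iota_\xi\circ\partial_1',\id\rangle$ and $\langle\id,\iota^\pi\circ\partial_0'\rangle$ into the pullback $\comma{\xi}{A}\times_E\comma{B}{\pi}$ acquire the fibered adjoints $\ell$ and $r$ by transporting these sliced adjoint structures along the defining pullback squares, using the pullback-stability of fibered LARI adjunctions from \Cref{sec:fib-lari-adj} (and \Cref{prop:clos-sl-cocart-fib-pb}). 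The two composites $\tau_\xi\circ r$ and $\tau^\pi\circ\ell$ are then precisely the two iterated transports of $P$ --- in the two possible orders --- that first apply cocartesian transport in the $A$-direction and then cartesian transport in the $B$-direction, or vice versa; their values on $\langle u,v,e\rangle$ are $u_!\,v^*\,e$ and $v^*\,u_!\,e$. The mate of the canonical identity $2$-cell relating the two sides is, up to homotopy, exactly the comparison morphism $\kappa$ of \Cref{lem:comp-cart-cocart-transp}; hence it is a fibered isomorphism iff the lifts commute, which is \ref{it:char-two-sid-i}.

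The main obstacle is this last equivalence, and within it two points: (a) producing $r$ and $\ell$ as honest fibered adjunctions over $\comma{\xi}{A}$ and $\comma{B}{\pi}$, which requires carefully pulling back the sliced cocartesian and cartesian transport functors along the pullback square displayed in \ref{it:char-two-sid-iv} and checking the hypotheses of the fibered-adjunction calculus of \Cref{sec:fib-lari-adj}; and (b) verifying that the mate of the identity $2$-cell in the resulting (non-commuting) square of fibered adjunctions reproduces the comparison $\kappa$ of \Cref{lem:comp-cart-cocart-transp} on the nose rather than some other natural transformation. Both amount to Beck--Chevalley-style bookkeeping, but the identifications of the source and target types via the projection equivalence, together with the compatibility of the various units and counits, have to be tracked with some care.
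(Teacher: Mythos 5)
Your proposal is correct and follows essentially the same route as the paper: the same decomposition into pairwise equivalences, with \Cref{cor:cocart-left-and-cart-right}, \Cref{prop:char-cocart-fib-in-cart-fib}, \Cref{prop:comm-lifts} and \Cref{lem:comp-cart-cocart-transp} doing the work for \ref{it:char-two-sid-i}$\Leftrightarrow$\ref{it:char-two-sid-ii}$\Leftrightarrow$\ref{it:char-two-sid-iii'}, duality for \ref{it:char-two-sid-iii}, and pullback of the fibered adjunctions of \Cref{prop:char-fib-cocart-left} for \ref{it:char-two-sid-iv}. The only cosmetic difference is that for \ref{it:char-two-sid-iv} you identify the mate directly with the comparison map of \Cref{lem:comp-cart-cocart-transp}, whereas the paper routes its invertibility through condition \ref{it:char-two-sid-ii} and \Cref{thm:char-cocart-fun}; these amount to the same verification.
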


\begin{proof}

	\begin{description}
		\item[$\ref{it:char-two-sid-i} \iff \ref{it:char-two-sid-ii}$] By \Cref{cor:cocart-left-and-cart-right}, the map $P: A \to B \to \UU$ is cocartesian on the left and cartesian on the right if and only if $\varphi: E \to_B A \times B$ is both a cartesian functor between cartesian fibrations and a cocartesian fibration sliced over $B$.
		
		In the following, we assume this is satisfied for $P$.
		
		Thus, we are left to show that, under this assumption---$P$ being cocartesian on the left and cartesian on the right---the following holds:
		\begin{align*} 
		& \text{``The fibered LARI $\chi_B: \VertArr_q(A \times B) \times_{A\times B} E \to_B \VertArr_\pi(E)$} \\ 
		& \text{is a cocartesian functor.''} \\
			& \iff  \text{``In $P$, cocartesian and cartesian lifts commute.''}
		\end{align*}
		We write $F \defeq \mathrm{Vert}_q(A \times B) \times_{A \times B} E \fibarr B$, so by projection equivalence, we consider the projection
		\[ \psi : \sum_{b:B} \sum_{a:A} (\comma{a}{A} \times P(a,b)) \simeq F \fibarr B. \]		
		The induced sliced Leibniz cotensor is given by
		\[ \kappa : \VertArr_\pi(E) \to_B F,\quad \kappa_b\big(u:\Delta^1 \to A,f:\prod_{t:\Delta^1} P(u(t),b) \big) \defeq \angled{\partial_0\,u,u, \partial_0\,f}. \]
		It  has a fibered LARI $\mu: F \to_B \VertArr_\pi(E)$ as indicated in: 
		\[\begin{tikzcd}
			{\mathrm{Vert}_\pi(E)} && {\mathrm{Vert}_q(A \times B) \times_{A \times B} E} \\
			\\
			& B
			\arrow[""{name=0, anchor=center, inner sep=0}, "\kappa"{description}, curve={height=12pt}, from=1-1, to=1-3]
			\arrow[""{name=1, anchor=center, inner sep=0}, "\mu"{description}, curve={height=18pt}, dashed, from=1-3, to=1-1]
			\arrow["{\pi'}"{description}, two heads, from=1-1, to=3-2]
			\arrow["\psi"{description}, two heads, from=1-3, to=3-2]
			\arrow["\dashv"{anchor=center, rotate=-94}, draw=none, from=1, to=0]
		\end{tikzcd}\]
		
		\begin{figure}
			\centering
			\[\begin{tikzcd}
				{e''} & e & {e''} & e && {e'} & e & {e'} & e \\
				{e'''} & {e'} &&&&&& {u_!\,e'} & {u_!\,e} \\
				{a''} & a & {a''} & a && {a''} & a & {a''} & a \\
				{a'''} & {a'} & {a'''} & {a'} && {a'''} & {a'} & {a'''} & {a'} \\
				{b'} & b & {b'} & b && {b'} & b & {b'} & b
				\arrow["g", from=1-1, to=1-2]
				\arrow["{f'}"', from=1-1, to=2-1]
				\arrow["f", from=1-2, to=2-2]
				\arrow["{g'}"', from=2-1, to=2-2]
				\arrow["m", from=3-1, to=3-2]
				\arrow["{u'}", from=3-1, to=4-1]
				\arrow[""{name=0, anchor=center, inner sep=0}, "u"', from=3-2, to=4-2]
				\arrow["v", from=5-1, to=5-2]
				\arrow["g", from=1-3, to=1-4]
				\arrow[""{name=1, anchor=center, inner sep=0}, "{u'}", from=3-3, to=4-3]
				\arrow[Rightarrow, dotted, no head, from=1-3, to=3-3]
				\arrow[Rightarrow, dotted, no head, from=1-4, to=3-4]
				\arrow["u"', from=3-4, to=4-4]
				\arrow["m", from=3-3, to=3-4]
				\arrow["{m'}"', from=4-3, to=4-4]
				\arrow["v", from=5-3, to=5-4]
				\arrow["{m'}"', from=4-1, to=4-2]
				\arrow["v", from=5-8, to=5-9]
				\arrow["u", from=3-6, to=4-6]
				\arrow[from=3-6, to=3-7]
				\arrow[from=4-6, to=4-7]
				\arrow[""{name=2, anchor=center, inner sep=0}, "u"', from=3-7, to=4-7]
				\arrow["f", from=1-6, to=1-7]
				\arrow[Rightarrow, dotted, no head, from=1-6, to=3-6]
				\arrow[Rightarrow, dotted, no head, from=1-7, to=3-7]
				\arrow["{\xi_!(u,e')}"', from=1-8, to=2-8, cocart]
				\arrow["f", from=1-8, to=1-9]
				\arrow["{\xi_!(u,e)}", from=1-9, to=2-9, cocart]
				\arrow["{\mathrm{fill}}"', dashed, from=2-8, to=2-9]
				\arrow[""{name=3, anchor=center, inner sep=0}, "u", from=3-8, to=4-8]
				\arrow[from=3-8, to=3-9]
				\arrow["u"', from=3-9, to=4-9]
				\arrow[from=4-8, to=4-9]
				\arrow["v", from=5-6, to=5-7]
				\arrow["\kappa", shorten <=6pt, shorten >=6pt, maps to, from=0, to=1]
				\arrow["\mu", shorten <=6pt, shorten >=6pt, maps to, from=2, to=3]
			\end{tikzcd}\]
			\caption{Action on arrows of the fibered functors $\kappa: F\to_B \VertArr_\pi(E)  : \mu$}\label{fig:kappa-act-arrows}
		\end{figure}
		
		By our discussion of cocartesian-on-the-left fibrations, \cf~\Cref{prop:char-fib-cocart-left}, the fibered LARI $\mu:F \to \VertArr_\pi(E)$ at~$b:B$ is given by
		\[ \mu_b(a:A,u:\comma{a}{A},e:P(a,b)) \defeq \pair{u:\comma{a}{A}} {\xi_!(u,e):e \to_{\pair{u}{b}}^P u_!\,e}. \]
		By the closure properties of cartesian fibrations~\cite[Corollary~5.2.10, Proposition~5.2.14]{BW21} we obtain that the pulled back maps 
		\[\pi' \defeq \cst^*\pi^{\Delta^1}: \VertArr_\pi(E) \fibarr B\]
		and
		\[q' \defeq \cst^*q:\VertArr_q(A \times B) \fibarr B\]
		are cartesian fibrations. Moreover, by \emph{op.~cit.}, Proposition~5.3.10, so is $\psi \defeq q' \times_q \pi : F \fibarr B$. By the computations of lifts, as elobarated in \emph{op.~cit.}, Propositions~5.2.9, 5.3.9,~and 5.3.10, the cartesian lifts in $\pi': \VertArr_\pi(E) \fibarr B$ and, resp.~$\psi: F \fibarr B$ are given by as follows (\cf~\Cref{fig:cart-ness-of-mu} for illustration):
		\begin{align*}
			& (\pi')^*\left( v:b \to_B b', \pair{u:a \to_A a'}{f:e \to^P_{\pair{u}{a}} e'}\right) \\
			& = \left \langle v, \pair{\id_a}{\id_a'} : u \rightrightarrows_A u, \pair{\pi^*(v,e)}{\pi^*(v,e')}: \tyfill \rightrightarrows^P f \right \rangle \\
			& \\
			& \psi^*\left( v:b\to_B b', \pair{u:a \to_A a'}{e:P(a,b)} \right) \\
			& = \left \langle v, \langle \pair{\id_a}{\id_a'}: u \rightrightarrows_A u, \pi^*(v,e): v^* \cartarr^\pi_v e \right \rangle
		\end{align*}
		Note that, instead of using the formulas for the lifts, one can also directly verify that the given maps are indeed cartesian. Since for $\psi$ this is straightforward to see we only discuss the case of $\pi': \VertArr_\pi(E) \fibarr B$. Consider probing maps as indicated in:
		\[\begin{tikzcd}
			d && {v^*\,e} && e \\
			{d'} && {v^*\,e'} && {e'} \\
			{a''} && a && a \\
			{a'''} && {a'} && {a'} \\
			b && b && b
			\arrow["v"{description}, from=5-3, to=5-5]
			\arrow["{v'}"{description}, from=5-1, to=5-3]
			\arrow["{v'v}"{description}, curve={height=18pt}, from=5-1, to=5-5]
			\arrow[Rightarrow, no head, from=4-3, to=4-5, shorten <=9pt, shorten >=9pt]
			\arrow["{m'}"{description}, dashed, from=4-1, to=4-3]
			\arrow["m"{description, pos=0.7}, curve={height=18pt}, from=4-1, to=4-5]
			\arrow["{u'}"', from=3-1, to=4-1]
			\arrow["u"', from=3-3, to=4-3]
			\arrow[Rightarrow, no head, from=3-3, to=3-5, shorten <=9pt, shorten >=9pt]
			\arrow["u", from=3-5, to=4-5]
			\arrow["{f'}"', from=1-1, to=2-1]
			\arrow["{g'}"{description}, from=2-1, to=2-3]
			\arrow[""{name=0, anchor=center, inner sep=0}, "g"{description}, from=1-1, to=1-3]
			\arrow["{k :\jdeq \mathrm{fill}}"', dashed, from=1-3, to=2-3]
			\arrow["\ell", from=1-3, to=1-5]
			\arrow[""{name=1, anchor=center, inner sep=0}, "{\ell'}", from=2-3, to=2-5]
			\arrow["f", from=1-5, to=2-5]
			\arrow[""{name=2, anchor=center, inner sep=0}, "{h'}"{description}, curve={height=18pt}, from=2-1, to=2-5]
			\arrow[""{name=3, anchor=center, inner sep=0}, "h"{description}, curve={height=-18pt}, from=1-1, to=1-5]
			\arrow[Rightarrow, no head, from=2-3, to=1-5, shorten <=9pt, shorten >=9pt]
			\arrow["{(?)}"{description}, draw=none, from=2-1, to=1-3]
			\arrow[shorten <=9pt, shorten >=9pt, Rightarrow, no head, from=2, to=1]
			\arrow[shorten <=8pt, shorten >=8pt, Rightarrow, no head, from=0, to=3]
			\arrow["m"{description}, dashed, from=3-1, to=3-3]
			\arrow["m"{description, pos=0.7}, curve={height=-16pt}, from=3-1, to=3-5, crossing over]
		\end{tikzcd}\]
		By the property of the $\pi$-cartesian lifts the two triangles and the right hand square commute as indicated. For the square in question on the left hand side we employ a line of reasoning familiar from fibered $1$-category theory: to give a homotopy $kg = g'f'$ it suffices to show that the $\pi$-cartesian arrow $\ell'$ equalizes both composite arrows. Note that from a path $\ell'k = f\ell$ we obtain a chain of homotopies
		\[ (\ell' k)g = (f\ell)g = fh = h'f' = (\ell' g')f'\]
		as desired. Hence the whole diagram $(?)$ commutes.
		
		Now, the fibered transport functor $\mu:F \to \VertArr_\pi(E)$ is cartesian if and only if it maps $\psi$-cartesian arrows to $\pi'$-cartesian arrows. Its action on $\psi$-cartesian arrows is given by
		\begin{align*}
			& \mu_v(\psi^*(v,\pair{u}{e}))  = \mu_v\big(\pair{\id_a}{\id_{a'}}:u \rightrightarrows_A u, \pi^*(v,e)\big) \\
			= & \Big \langle \pair{\id_a}{\id_{a'}}:u \rightrightarrows_A u, \\
			&  \big \langle \pi^*(v,e), \tyfill^\xi_{\xi_!(u,v^*e)}(\xi_!(u,e) \circ \pi^*(v,e))\big \rangle :\xi_!(u,v^*e) \rightrightarrows^P \xi_!(u,e) \Big \rangle,
		\end{align*}
		for $u:a \to_A a'$, $v:b \to_B  b'$, $e:P(a,b)$.
		Conversely, $\pi'$-cartesian lifts of $\mu$-images are of the form
		\begin{align*}
			& (\pi')^*(v,\mu_b(u,e))  = (\pi')^*(v, \pair{u}{\xi_!(u,e)}) \\
			= &	\Big \langle \pair{\id_a}{\id_{a'}} : u \rightrightarrows_A u,\\
			&  \pair{\pi^*(v,e)}{\pi^*(v,u_!\,e)}: \tyfill^\pi_{\pi^*(v,u_!\,e)}(\xi_!(u,e) \circ \pi^*(v,e)) \rightrightarrows^P \xi_!(u,e)\Big \rangle.
		\end{align*}
		But having an identification between those squares is exactly equivalent to the commutation condition, by~\Cref{prop:comm-lifts}.

		\begin{figure}
			\[\begin{tikzcd}
				{v^*\,e} & e & {v^*\,e} & e &&& e & {v^*\,e} & e \\
				&& {u_!\,v^*\,e} & {u_!\,e} &&& {u_!\,e} & {v^*\,u_!\,e} & {u_!\,e} \\
				a & a & a & a &&& a & a & a \\
				{a'} & {a'} & {a'} & {a'} &&& {a'} & {a'} & {a'} \\
				b & {b'} & b & {b'} && b & {b'} & b & {b'}
				\arrow["u", from=3-1, to=4-1]
				\arrow[Rightarrow, no head, from=4-1, to=4-2]
				\arrow[Rightarrow, no head, from=3-1, to=3-2]
				\arrow[""{name=0, anchor=center, inner sep=0}, "u"', from=3-2, to=4-2]
				\arrow["v", from=5-1, to=5-2]
				\arrow["{\pi^*(v,e)}", from=1-1, to=1-2, cart]
				\arrow["{\pi^*(v,e)}", from=1-3, to=1-4, cart]
				\arrow["{\pi_!(u,v^*\,e)}"' swap, from=1-3, to=2-3, cocart]
				\arrow["{\xi_!(u,e)}", from=1-4, to=2-4, cocart]
				\arrow["{\mathrm{fill}}", dashed, from=2-3, to=2-4]
				\arrow["v", from=5-3, to=5-4]
				\arrow[""{name=1, anchor=center, inner sep=0}, "u", from=3-3, to=4-3]
				\arrow[Rightarrow, no head, from=4-3, to=4-4]
				\arrow[Rightarrow, no head, from=3-3, to=3-4]
				\arrow["u"', from=3-4, to=4-4]
				\arrow[Rightarrow, dotted, no head, from=1-1, to=3-1]
				\arrow[Rightarrow, dotted, no head, from=1-2, to=3-2]
				\arrow["v", from=5-6, to=5-7]
				\arrow[""{name=2, anchor=center, inner sep=0}, "u", from=3-8, to=4-8]
				\arrow[Rightarrow, no head, from=4-8, to=4-9]
				\arrow[Rightarrow, no head, from=3-8, to=3-9]
				\arrow["u"', from=3-9, to=4-9]
				\arrow["{\mathrm{fill}}"', dashed, from=1-8, to=2-8]
				\arrow["{\pi^*(v,u_!\,e)}", from=2-8, to=2-9, cart]
				\arrow["{\xi_!(u,e)}" swap, from=1-9, to=2-9, cocart]
				\arrow["{\pi^*(v,e)}", from=1-8, to=1-9, cart]
				\arrow["v", from=5-8, to=5-9]
				\arrow[""{name=3, anchor=center, inner sep=0}, "u"', from=3-7, to=4-7]
				\arrow["{\xi_!(u,e)}"', from=1-7, to=2-7, cocart]
				\arrow["{\mu_v}", shorten <=7pt, shorten >=7pt, maps to, from=0, to=1]
				\arrow["{(\pi')^*(v,-)}", shorten <=7pt, shorten >=7pt, maps to, from=3, to=2]
			\end{tikzcd}\]
			\caption{Cartesianness of fibered lifting functor}\label{fig:cart-ness-of-mu}
		\end{figure}
		\item[$\ref{it:char-two-sid-ii} \iff \ref{it:char-two-sid-iii'}$] This follows from the characterization~\Cref{prop:char-cocart-fib-in-cart-fib} of cocartesian families in cartesian families, namely the equivalence of the conditions from~\Cref{it:gen-lift-comm-i} and~\Cref{it:gen-lift-comm-ii}.
		\item[$\ref{it:char-two-sid-i} \iff \ref{it:char-two-sid-iii}$] This is dual to the previous case.
		\item[$\ref{it:char-two-sid-ii} \iff \ref{it:char-two-sid-iv}$] We adapt the proof of~\cite[Theorem~7.1.4]{RV21}.
		First, observe that we have
		\[ \comma{\xi}{A} \times_E \comma{B}{\pi} \simeq \sum_{\substack{a:A \\ b:B}} \comma{a}{A} \times \comma{B}{b} \times P(a,b).\]
		Now, the assumption of $\varphi$ being cocartesian on the left and cartesian on the right is equivalent to the existence of the fibered adjoints $\tau_\xi$ and $\tau^\pi$. We will only write down the steps starting with $\tau_\xi$, since the case of $\tau^\pi$ is dual. By pulling back the fibered LARI adjunction $\tau_\xi \dashv_{A \times B} \iota_\xi$, we obtain
		\[\begin{tikzcd}
			{B \downarrow \pi} & {} &&& E \\
			& {\xi \downarrow A \times_E B \downarrow \pi} &&&& {\xi \downarrow A} \\
			{A \times B^{\Delta^1}} &&&& {A \times B}
			\arrow[from=1-1, to=1-5]
			\arrow["{\lambda u,v,e.\langle u1,v\rangle}"{description}, from=2-2, to=3-1]
			\arrow[""{name=0, anchor=center, inner sep=0}, "{\iota_\xi}"{description}, from=1-5, to=2-6]
			\arrow["\varphi"{description, pos=0.3}, from=1-5, to=3-5]
			\arrow["{\langle \partial_1, \pi \partial_0\rangle}"{description}, from=2-6, to=3-5]
			\arrow["{\id_A \times \partial_1}"{description}, from=3-1, to=3-5]
			\arrow["{\lambda a',v,e.\langle a',v,e \rangle}"{description}, from=1-1, to=3-1]
			\arrow[""{name=1, anchor=center, inner sep=0}, "{\tau_\xi}"{description}, curve={height=18pt}, dotted, from=2-6, to=1-5]
			\arrow["\lrcorner"{anchor=center, pos=0.125}, draw=none, from=2-2, to=3-5]
			\arrow[""{name=2, anchor=center, inner sep=0}, "{\langle \tau_\xi \circ \partial_1, \id \rangle}"{description}, from=1-1, to=2-2]
			\arrow[""{name=3, anchor=center, inner sep=0}, "\ell"{description}, curve={height=24pt}, dotted, from=2-2, to=1-1]
			\arrow["\lrcorner"{anchor=center, pos=0.125}, shift right=3, draw=none, from=1-2, to=3-5]
			\arrow["\dashv"{anchor=center, rotate=-136}, draw=none, from=1, to=0]
			\arrow["\dashv"{anchor=center, rotate=-127}, draw=none, from=3, to=2]
			\arrow[from=2-2, to=2-6, crossing over]
		\end{tikzcd}\]
		with projection equivalences
		\begin{align*}
			\xi \downarrow A & \simeq \sum_{\substack{a':A \\ b:B}} \sum_{u:A \downarrow a'} P(u0,b), &  E & \simeq \sum_{\substack{a':A \\ b:B}} P(a',b), \\
			A \times B^{\Delta^1} & \simeq \sum_{\substack{a':A \\ b:B}} B\downarrow b, &  B \downarrow \pi & \simeq \sum_{\substack{a':A \\ b:B}} B \downarrow b \times P(a',b). 
		\end{align*}
		Postcomposition with $\id_A : A \times B^{\Delta^1} \to A \times B$ preserves the fibered adjunction, yielding as desired:
		\[\begin{tikzcd}
			{B \downarrow \pi} &&&& {\xi \downarrow A \times_E B \downarrow \pi} \\
			\\
			&& {A \times B^{\Delta^1}} \\
			\\
			&& {A \times B}
			\arrow[""{name=0, anchor=center, inner sep=0}, "{\langle \tau^\xi \circ \partial_1 ,\id \rangle}"{description}, curve={height=12pt}, from=1-1, to=1-5]
			\arrow[from=1-1, to=3-3]
			\arrow[from=1-5, to=3-3]
			\arrow["{\mathrm{id}_A \times \partial_0}"{description}, two heads, from=3-3, to=5-3]
			\arrow["{\langle \xi \partial_1, \partial_0\rangle}"{description}, two heads, from=1-1, to=5-3]
			\arrow["{\langle \partial_1, \partial_0\rangle}"{description}, two heads, from=1-5, to=5-3]
			\arrow[""{name=1, anchor=center, inner sep=0}, "\ell"{description}, curve={height=12pt}, dashed, from=1-5, to=1-1]
			\arrow["\dashv"{anchor=center, rotate=-90}, draw=none, from=1, to=0]
		\end{tikzcd}\]
		This, together with the dual case, yields the claimed adjoints in the fibered square of~\Cref{it:char-two-sid-iv}.
		
		In sum, this is equivalent to $\pi:E \fibarr B$ being a cartesian fibration, the fibered functor
		\[\begin{tikzcd}
			{\comma{\xi}{A}} && E \\
			& B
			\arrow["{\tau_\xi}", from=1-1, to=1-3]
			\arrow["{\pi\partial_0}"', two heads, from=1-1, to=2-2]
			\arrow["\pi", two heads, from=1-3, to=2-2]
		\end{tikzcd}\]
		being a cartesian functor and a cocartesian fibration sliced over $B$. The invertibility of the mate is then equivalent to the functor
		\[\begin{tikzcd}
			E && {A \times B} \\
			& B
			\arrow["\varphi", from=1-1, to=1-3]
			\arrow["\pi"', two heads, from=1-1, to=2-2]
			\arrow["q", two heads, from=1-3, to=2-2]
		\end{tikzcd}\]
		being cartesian.
		
		By~\Cref{thm:char-cocart-fun}, this is equivalent to the mate of the identity of
		\[\begin{tikzcd}
			{\comma{\xi}{A}} && E \\
			{\comma{\xi}{A} \times_E \comma{B}{\pi}} && {\comma{B}{\pi}}
			\arrow[from=1-1, to=1-3]
			\arrow[from=1-1, to=2-1]
			\arrow[from=2-1, to=2-3]
			\arrow[from=1-3, to=2-3]
		\end{tikzcd}\]
		being invertible, as
		\[ \comma{B}{\pi\partial_0} \simeq \comma{\xi}{A} \times_E \comma{B}{\pi}.\]
	\end{description}
	
\end{proof}

\section{Two-sided cartesian functors and closure properties}\label{sec:2s-cart-fun}

\begin{definition}[Two-sided cartesian functors, \protect{\cite[Prop.~7.1.7]{RV21}}]
	Let $P,Q:A \to B \to \UU$ be two-sided cartesian families. A fibered map $h: P \to_{A \times B} Q$ is called \emph{two-sided cartesian functor} (or simply \emph{cartesian}) if it constitutes a cocartesian functor $h: P_B \to_A Q_B$ and a cartesian functor $h:P_A \to_B Q_A$.
\end{definition}

An immediate reformulation is that, for all $u:a \to_A a'$ and $v:b' \to_B v$, $e,d:P(a,b)$, we have identities\footnote{In particular, again the types of each of these identities is a proposition, hence so is their product.}
\[ h_{u,b}(P_!(u,b,e)) = Q_!(u,b,h_{a,b}(e)), \quad h_{a,v}(P^*(a,v,e)) = Q^*(a,v,h_{a,b}(e)). \]

We will state versions of the closure properties \wrt~to different bases as well as the sliced or relative versions where the base stays fixed throughout. All the following subsections are oriented along~\cite[Section~7.2]{RV21}.

\subsection{Composition and whiskering}

\begin{proposition}[Composition stability of two-sided cartesian functors]\label{prop:2s-fun-comp}
	Let $A,B,C,D,S$, and $T$ be Rezk types. Assume given two-sided cartesian families $P: A \to B \to \UU$, $Q: C \to D \to \UU$, and $R: S \to T \to \UU$, as well as two-sided cartesian functors $h:P \to_{A \times B,C \times D} Q$, $k:Q \to_{C \times D,S \times T} R$. Then the composite fibered functor $k \circ h: P \to_{A \times B,S \times T} R$ is a two-sided cartesian functor as well.
\end{proposition}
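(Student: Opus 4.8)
The plan is to reduce the claim to the composition-closure of one-sided co-/cartesian functors --- \Cref{prop:cocart-cosm-closure} and its dual --- via the defining decomposition of a two-sided cartesian functor into its left and right legs. First I would unfold the hypotheses: by definition, $h$ restricts to a cocartesian functor $P_B \to Q_D$ between cocartesian families (over the first component of the underlying map $A \times B \to C \times D$, i.e.\ over $A \to C$) and to a cartesian functor $P^A \to Q^C$ between cartesian families (over $B \to D$); likewise $k$ restricts to a cocartesian functor $Q_D \to R_T$ and to a cartesian functor $Q^C \to R^S$. The composite fibered map $k \circ h : P \to_{A\times B,\, S\times T} R$ lies over the composite map $A \times B \to S \times T$, whose two components are $A \to C \to S$ and $B \to D \to T$.

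Next I would observe that forming legs is compatible with composition of fibered maps between two-variable families: up to the projection equivalences of \Cref{ssec:two-var}, the left leg of $k\circ h$ is identified with the composite $P_B \to Q_D \to R_T$ and its right leg with $P^A \to Q^C \to R^S$, because on bifibers all of these are literally the fiberwise composite of $h$ and $k$. The cleanest way to make this precise, sidestepping any discussion of the coherence of the projection equivalences, is to argue through the pointwise reformulation recorded right after the definition of a two-sided cartesian functor: being two-sided cartesian is equivalent to $h$ preserving cocartesian lifts in the left variable and cartesian lifts in the right variable, conditions given by families of identities $h(P_!(\dots)) = Q_!(\dots, h(\dots))$ and $h(P^*(\dots)) = Q^*(\dots, h(\dots))$; these identities simply compose, so from the corresponding identities for $h$ and for $k$ one obtains those for $k\circ h$.

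Finally I would apply closure of cocartesian functors under (horizontal) composition from \Cref{prop:cocart-cosm-closure} to conclude that the left leg $P_B \to R_T$ of $k\circ h$ is a cocartesian functor, and the dual statement to conclude that its right leg $P^A \to R^S$ is a cartesian functor; by the definition of a two-sided cartesian functor, $k \circ h$ is therefore two-sided cartesian. I do not anticipate a genuine obstacle here --- the argument is a bookkeeping reduction to the one-sided case --- the only mildly delicate point being the identification of the legs of the composite with the composites of the legs, which is handled by the functoriality of leg-formation or, more economically, by the composition of the pointwise lift-preservation identities just described.
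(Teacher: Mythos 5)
Your proposal is correct and matches the paper's argument: the proof there is exactly the observation that the two legs of a two-sided cartesian functor are a cocartesian and a cartesian functor respectively, and that each of these classes is closed under composition (the paper cites \cite[Proposition~5.3.6, Item~1]{BW21} for this). Your additional care about identifying the legs of the composite with the composites of the legs via the pointwise lift-preservation identities is a reasonable elaboration of the same bookkeeping reduction.
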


\begin{proof}
	This follows since cartesian and cocartesian functors are both closed under composition, \cf~\cite[Proposition~5.3.6, Item 1]{BW21}.
\end{proof}

\begin{corollary}[Composition stability of two-sided cartesian functors in a slice]\label{prop:2s-fun-comp-sl}
	Let $P,Q,R:A \to B \to \UU$ be two-sided cartesian families, and $h:P \to_{A \times B} Q$, $k:Q \to_{A \times B} R$ cartesian functors. Then the composite fibered functor $k \circ h: P \to_{A \times B} R$ is a two-sided cartesian functor as well.
\end{corollary}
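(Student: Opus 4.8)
The plan is to reduce the statement to the known closure of (one-sided) cocartesian and cartesian functors under composition over a \emph{fixed} base, exactly as the non-sliced version \Cref{prop:2s-fun-comp} reduces to the corresponding closure property with varying bases. So the proof will be a one-line unwinding of definitions followed by a citation.

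\textbf{Key steps.} First I would recall the defining condition: a fibered map $h : P \to_{A \times B} Q$ between two-sided cartesian families is a two-sided cartesian functor precisely when the induced fibered map on left legs $h : P_B \to_A Q_B$ is a cocartesian functor and the induced fibered map on right legs $h : P_A \to_B Q_A$ is a cartesian functor. Applying this to $h$ and to $k$, and noting that passing to left (resp.\ right) legs is functorial, the composite $k \circ h : P \to_{A \times B} R$ induces on left legs the composite $k \circ h : P_B \to_A R_B$ of two cocartesian functors over $A$, and on right legs the composite $k \circ h : P_A \to_B R_A$ of two cartesian functors over $B$. Then I would invoke the (vertical, i.e.\ fixed-base) composition-closure of cocartesian and cartesian functors, as recorded in \Cref{prop:cocart-cosm-closure} (and \cite[Proposition~5.3.6]{BW21} together with its dual), to conclude that both induced maps remain cocartesian resp.\ cartesian. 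By definition this says exactly that $k \circ h$ is a two-sided cartesian functor.

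\textbf{Main obstacle.} There is essentially no obstacle here: the only thing to be mildly careful about is that the cited closure property is the version over a \emph{fixed} base (vertical composition of cocartesian functors), which is precisely the content of the relevant clause of \Cref{prop:cocart-cosm-closure}, so no reindexing along a map of bases is needed — in contrast to \Cref{prop:2s-fun-comp}. Everything else is a direct unwinding of \Cref{def:cocart-fun} and its cartesian dual.

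\begin{proof}
	By definition, a fibered functor between two-sided cartesian families over $A \times B$ is two-sided cartesian iff it restricts to a cocartesian functor between the left legs (over $A$) and a cartesian functor between the right legs (over $B$). Passing to left legs and to right legs is functorial, so $k \circ h$ restricts on left legs to the composite cocartesian functor $k \circ h : P_B \to_A R_B$ and on right legs to the composite cartesian functor $k \circ h : P_A \to_B R_A$. Since (over a fixed base) cocartesian functors and cartesian functors are each closed under composition by \Cref{prop:cocart-cosm-closure} (cf.~\cite[Proposition~5.3.6]{BW21} and its dual), both restrictions are again cocartesian resp.\ cartesian. Hence $k \circ h$ is two-sided cartesian.
\end{proof}
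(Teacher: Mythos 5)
Your proof is correct and matches the paper's: the corollary is stated there as an immediate specialization of \Cref{prop:2s-fun-comp}, whose proof is exactly the one-line reduction you give — unwind the definition into a cocartesian functor on the left legs and a cartesian functor on the right legs, then cite closure of co-/cartesian functors under composition (\cite[Proposition~5.3.6]{BW21} and its dual). No further comment is needed.
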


\begin{proposition}[Whiskering with co-/cart.~fibrations~\protect{\cite[Lem.~7.2.5]{RV21}}]\label{prop:2s-cart-comp-prod-fib}
	Let $A,B,C,D$ be Rezk types. Assume $\varphi \defeq \pair{\xi}{\pi}:E \fibarr A \times B$ is a two-sided cartesian fibration. If $k:A \fibarr C$ is a cocartesian fibration and $m:B \fibarr D$ is a cartesian fibration, then the composite
	\[\begin{tikzcd}
		E && {A \times B} && {C \times D}
		\arrow["{\langle \xi,\pi \rangle}", two heads, from=1-1, to=1-3]
		\arrow["{k \times m}", two heads, from=1-3, to=1-5]
	\end{tikzcd}\]	
	is a two-sided cartesian fibration as well.
\end{proposition}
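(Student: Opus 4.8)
The plan is to use the characterization of two-sidedness in terms of sliced cocartesian fibrations and cartesian functors from \Cref{thm:char-two-sid}, together with the known closure properties of co-/cartesian fibrations and co-/cartesian functors from \Cref{prop:cocart-cosm-closure} (and its cartesian dual). First I would reduce to showing that $\langle k\xi, m\pi\rangle : E \fibarr C\times D$ satisfies condition \ref{it:char-two-sid-ii} of \Cref{thm:char-two-sid}, relative to the new base $D$. Write $\varphi' \defeq (k\times m)\circ\varphi = \langle k\xi, m\pi\rangle$ and $q_{C,D}:C\times D\fibarr D$, $q_{A,B}:A\times B\fibarr B$, $m:B\fibarr D$ the structural cartesian fibrations in play; note the square $q_{C,D}\circ(k\times m) = m\circ q_{A,B}$ commutes, exhibiting $k\times m$ as a fibered map over $m$.

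The key steps, in order. (1) \emph{$m\pi$ is a cartesian fibration.} Since $\varphi$ is two-sided, $\pi$ is a cartesian fibration, and $m$ is a cartesian fibration by hypothesis; cartesian fibrations compose (dual of \Cref{prop:cocart-cosm-closure}), so $m\pi = q_{C,D}\circ\varphi'$ exhibits $\varphi':E\fibarr C\times D$ over the cartesian fibration $D$, and $\pi:E\fibarr B$ is cartesian, hence so is the composite $E\fibarr D$; more precisely one checks $m\pi:E\fibarr D$ is cartesian and that $\varphi'$ is a fibered functor over $D$ between cartesian fibrations. (2) \emph{$\varphi'$ is a cartesian functor over $D$.} By \Cref{thm:char-two-sid}\ref{it:char-two-sid-ii}, $\varphi:E\to_B A\times B$ is a cartesian functor between cartesian fibrations over $B$; the map $k\times m: A\times B\to C\times D$ over $m:B\to D$ is a cartesian functor because $k\times m$ is a product of cartesian fibrations $k,m$ hence itself cartesian, and products and the structural projections respect cartesian arrows — an arrow in $A\times B$ is $q_{A,B}$-cartesian iff its $B$-component is $m$-cartesian and likewise downstairs. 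Then $\varphi' = (k\times m)\circ\varphi$ is a composite of cartesian functors over the composable cartesian fibrations $B\to D$, hence cartesian, using composition stability of cartesian functors (dual of \Cref{prop:cocart-cosm-closure}). (3) \emph{$\varphi'$ is a cocartesian fibration sliced over $D$.} Here I would argue that a $q_{C,D}$-vertical arrow in $C\times D$, i.e. an arrow of the form $\langle w:c\to_C c', \id_d\rangle$, has a $\varphi'$-cocartesian lift: using that $\varphi:E\to_B A\times B$ is sliced cocartesian over $B$ and that $k$ is a cocartesian fibration on $A$, one first lifts $w$ to a $k$-cocartesian arrow in $A$ over which $\xi$ provides a $\varphi$-cocartesian lift (this is exactly \Cref{prop:char-fib-cocart-left}, $\xi$ being cocartesian and $\pi$-vertical on cocartesian lifts); the resulting arrow in $E$ is $\pi$-vertical hence $m\pi$-vertical, and one verifies directly that it is $\varphi'$-cocartesian by the defining universal property, since the image under $\varphi'$ is the chosen lift of $\langle w,\id_d\rangle$. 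Concretely I expect this to come down to showing that the sliced Leibniz cotensor $i_0\cotens_D\varphi'$ has a fibered LARI, obtained by whiskering the fibered LARI for $i_0\cotens_B\varphi$ with the cocartesian-lift data of $k$. (4) \emph{The fibered LARI $\chi_D$ is a cartesian functor.} This is the compatibility-of-lifts clause; it follows because the $\varphi'$-cocartesian lifts are built from the $\varphi$-cocartesian lifts (which satisfy the corresponding cartesianness by \Cref{thm:char-two-sid}\ref{it:char-two-sid-ii} applied to $P$) together with $k$-cocartesian and $m$-cartesian lifts, whose compatibility is automatic since $k$ and $m$ act on disjoint factors $A$ and $B$; hence the comparison $2$-cell measuring cartesianness of $\chi_D$ is a whiskering of an invertible one by identities, hence invertible. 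Assembling (1)--(4) and invoking \Cref{thm:char-two-sid} (direction \ref{it:char-two-sid-ii}$\Rightarrow$\ref{it:char-two-sid-i}) gives that $\varphi'$ is two-sided cartesian.

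The main obstacle I anticipate is step (3), specifically checking that the $\varphi$-cocartesian lift of the $k$-cocartesian lift of $w$ is genuinely $\varphi'$-cocartesian and not merely $\varphi$-cocartesian — i.e. that no information is lost when the base arrow $\langle w,\id_d\rangle$ is "spread out" into $A\times B$ via the $k$-cocartesian lift. Here the point is that $k$-cocartesian arrows are precisely those detecting $k$ as a cocartesian fibration, so mapping into $C$ is governed by exactly the right universal property; combined with $\pi$-verticality of $\xi$-cocartesian lifts, a diagram chase analogous to the one in the proof of \Cref{prop:char-fib-cocart-left} (see \Cref{fig:lift-cocart-left}) closes the argument. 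A secondary bookkeeping nuisance is organizing the "product of disjoint factors" observations of steps (2) and (4) cleanly, but these are routine once phrased via the projection equivalences.
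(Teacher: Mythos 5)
Your proposal is correct in substance, but it takes a genuinely different and considerably more labor-intensive route than the paper. The paper factors $k \times m$ as $(\id_C \times m)\circ(k \times \id_B)$, notes that each factor is itself a two-sided cartesian fibration (being a product of the two-sided fibrations $k$ and $\id_B$, resp.\ $\id_C$ and $m$, by \Cref{prop:2s-cart-closed-pi}), and then concludes in two steps from the already-established closure of cocartesian fibrations in cartesian fibrations under composition (\Cref{prop:clos-sl-cocart-fib-comp}), applied once over $B$ via \Cref{thm:char-two-sid}, \Cref{it:char-two-sid-ii}, and once, dually, over $C$ via \Cref{it:char-two-sid-iii}. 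You instead verify \Cref{it:char-two-sid-ii} over $D$ for the whole composite in one pass, re-deriving by hand the lifting and compatibility facts that the closure property packages; your key observations --- that the $\varphi'$-cocartesian lift of $\langle w,\id_d\rangle$ is the $\xi$-cocartesian lift of the $k$-cocartesian lift of $w$ (hence $\pi$-, so $m\pi$-vertical), and that the commutation comparison for the composite is literally the comparison for $P$ evaluated at the lifted arrows (since $\pi$-cartesian arrows are $\xi$-vertical and vice versa, the relevant $k$- and $m$-lifts are unchanged when the order of transport is swapped) --- are correct and do close the argument, essentially by reducing to \Cref{it:char-two-sid-i} via \Cref{prop:char-fib-cocart-left} and \Cref{prop:comm-lifts}. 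Two small repairs: in step (2), $k$ is only assumed cocartesian, so ``$k\times m$ is a product of cartesian fibrations'' is not the right justification --- all you need is that $k\times m$ sends arrows with identity $A$-component to arrows with identity $C$-component; and composing cartesian functors yields a cartesian functor out of $\pi$ over $m:B\to D$, whereas \Cref{it:char-two-sid-ii} over $D$ asks for one out of $m\pi$, which follows because $m\pi$-cartesian arrows are in particular $\pi$-cartesian, but that step deserves a sentence. What your approach buys is independence from the product-stability result \Cref{prop:2s-cart-closed-pi} (which the paper cites forward); what the paper's approach buys is a two-line proof with no new diagram chases.
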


\begin{proof}
	We argue as in~\cite[Lemma~7.2.5]{RV21}. By the characterization of two-sided cartesian fibrations via cocartesian fibrations in cartesian fibrations~ \Cref{thm:char-two-sid}, \Cref{it:char-two-sid-ii}, we reason as follows. Since $k:A \fibarr C$ and $\id_B: B \fibarr B$ both are two-sided cartesian fibrations as well also their cartesian product is, by~\Cref{prop:2s-cart-closed-pi}. Hence, the following fibered maps are cocartesian fibrations in cartesian fibrations
	\[\begin{tikzcd}
		E && {A \times B} & {A \times B} && {C \times B} \\
		& B &&& B
		\arrow["\varphi", from=1-1, to=1-3]
		\arrow["\pi"', two heads, from=1-1, to=2-2]
		\arrow["q", two heads, from=1-3, to=2-2]
		\arrow["{k \times \id_B}", from=1-4, to=1-6]
		\arrow["q"', two heads, from=1-4, to=2-5]
		\arrow[two heads, from=1-6, to=2-5]
	\end{tikzcd}\]
	and so is their horizontal composite $(k \times \id_B) \circ \varphi: E \fibarr_B C \times B$.
	
	One can argue similarly for the case $\id_C: C \fibarr C$ and $m:B \fibarr D$, which establishes the claim.
\end{proof}

\begin{figure}
	\[\begin{tikzcd}
		P & e & {u_*\,e} & {e'} && d & d & {d'} & Q \\
		&& B & b & b & {b'} \\
		A & a & {a'} & {a''} && c & c & {c'} & C
		\arrow["u", from=3-2, to=3-3]
		\arrow["{\forall \,u'}", from=3-3, to=3-4]
		\arrow["g"', dashed, from=1-3, to=1-4]
		\arrow["{\id_d}"', Rightarrow, no head, from=1-6, to=1-7]
		\arrow["r"', dashed, from=1-7, to=1-8]
		\arrow["{\id_b}", Rightarrow, no head, from=2-4, to=2-5]
		\arrow["v", dashed, from=2-5, to=2-6]
		\arrow["{\id_c}", Rightarrow, no head, from=3-6, to=3-7]
		\arrow["{\forall \,w}", from=3-7, to=3-8]
		\arrow["{u'u}"{description}, curve={height=18pt}, from=3-2, to=3-4]
		\arrow["{\forall\,v}"{description}, curve={height=18pt}, from=2-4, to=2-6]
		\arrow["{\forall \,r}"{description}, curve={height=-18pt}, from=1-6, to=1-8]
		\arrow["{\forall \,h}"{description}, curve={height=-18pt}, from=1-2, to=1-4]
		\arrow["f"', from=1-2, to=1-3, cocart]
		\arrow["w"{description}, curve={height=18pt}, from=3-6, to=3-8]
	\end{tikzcd}\]
	\caption{Cocartesian lift and universal property in the span composite}
	\label{fig:cocart-lift-spcomp}
\end{figure}

\begin{proposition}[Span composition of two-sided cartesian fibrations, \cf~\protect{\cite[Proposition~7.2.6]{RV21}}]
	Let $P:A\to B \to \UU$, $Q:B \to C \to \UU$ be two-sided cartesian families over Rezk types $A$, $B$, $C$. Then the family defined by \emph{span composition}
	\[ Q \spancomp P \defeq \lambda a,c.\sum_{b:B} P\,a\,b \times Q\,b\,c: A \to C \to \UU \]
	is also two-sided cartesian.
	
	In particular, the cocartesian and cartesian lifts, resp., are given as follows: For $u:a \to_A a'$, $w:c'\to_C c$, $b:B$, $e:P(a,b)$, and $d:Q(b,c)$ we have
	\begin{align*}
		(Q \spancomp P)_!(u:a \to_A a', \id_c,\angled{b,e,d}) & \defeq \angled{u,\id_c,\id_b, P_!(u,b,e), \id_d}, \\
		(Q \spancomp P)^*(\id_a, w,\angled{b,e,d}) & \defeq \angled{\id_a,w,\id_b, \id_e, Q^*(b,w,d)}.
	\end{align*}
\end{proposition}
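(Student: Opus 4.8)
The plan is to verify the two clauses of \Cref{def:2s-cart} directly, using the explicit shape of the total type of the span composite. First I would record the projection equivalence
\[ \totalty{Q \spancomp P} \;\simeq\; E \times_B F, \]
where $E \defeq \totalty{P} \fibarr A \times B$ has components $\pair{\xi_P}{\pi_P}$, $F \defeq \totalty{Q} \fibarr B \times C$ has components $\pair{\xi_Q}{\pi_Q}$, and the pullback is formed over $\pi_P : E \to B$ and $\xi_Q : F \to B$. Under this identification the left leg $\totalty{Q \spancomp P} \fibarr A$ factors as $E \times_B F \to E \to A$, the first map being the pullback of the cocartesian fibration $\xi_Q$ along $\pi_P$ and the second the cocartesian fibration $\xi_P$ (the latter since $P$ is cocartesian on the left, \cf~\Cref{prop:char-fib-cocart-left}). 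By closure of cocartesian fibrations under pullback along arbitrary maps and under composition (\Cref{prop:cocart-cosm-closure}) the composite $\totalty{Q \spancomp P} \fibarr A$ is a cocartesian fibration. Chasing the standard description of cocartesian lifts in a composite of cocartesian fibrations through this factorization, the lift of $u : a \to_A a'$ at $\angled{b,e,d}$ is built by first taking the $P$-cocartesian lift $P_!(u,b,e) : e \cocartarr u_!\,e$ in $E$ --- which is $\pi_P$-vertical, hence lies over $\id_b$ --- and then the cocartesian lift of $\id_b$ at $d$ in $F$, namely $\id_d$; this reproduces the asserted formula $\angled{u,\id_c,\id_b,P_!(u,b,e),\id_d}$. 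Dually, the right leg factors as $E \times_B F \to F \to C$ with the first map the pullback of the cartesian fibration $\pi_P$ along $\xi_Q$, so it is a cartesian fibration whose cartesian lifts are $\angled{\id_a,w,\id_b,\id_e,Q^*(b,w,d)}$.

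Next I would check the verticality conditions that distinguish ``cocartesian on the left'' and ``cartesian on the right'' from plain co-/cartesianness of the legs: the cocartesian lift $\angled{u,\id_c,\id_b,P_!(u,b,e),\id_d}$ has right-leg projection $\id_c$, hence is vertical over $C$, and symmetrically the cartesian lift has left-leg projection $\id_a$, hence is vertical over $A$. Together with the previous paragraph this establishes the first clause of \Cref{def:2s-cart} for $Q \spancomp P$.

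Finally, for the commutation condition (\Cref{def:2s-cart}, \Cref{it:lifts-commute}) the key observation is that, on the special lifts above, cocartesian transport along $u$ alters only the $E$-coordinate, sending $\angled{b,e,d} \mapsto \angled{b,u_!\,e,d}$ and fixing $b$ (since $\xi_P$-cocartesian arrows are $\pi_P$-vertical), while cartesian transport along $w$ alters only the $F$-coordinate, sending $\angled{b,e,d}\mapsto\angled{b,e,w^*\,d}$ and again fixing $b$ (since $\pi_Q$-cartesian arrows are $\xi_Q$-vertical). Hence both iterated transports yield $\angled{b,u_!\,e,w^*\,d}$ on the nose; unwinding the construction of the comparison cell $\kappa$ from \Cref{lem:comp-cart-cocart-transp} then shows that $\kappa$ is an identity, in particular an isomorphism, so the condition holds. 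This establishes that $Q \spancomp P$ is two-sided cartesian, with the stated lifts. I expect the main obstacle to be the bookkeeping in the first paragraph: matching the cocartesian lift of the composite $E \times_B F \to E \to A$ against the pullback structure so as to confirm that the $F$-component of the lift really collapses to $\id_d$ (and dually for the cartesian side); once the lifts are pinned down, verticality and commutation fall out essentially by inspection.
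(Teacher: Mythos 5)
Your proof is correct, but it takes a more hands-on route than the paper. The paper factors the span composite $E \times_B F \fibarr A \times C$ as a pullback of the two-sided fibration $\varphi : E \fibarr A \times B$ along $\id_A \times \kappa$ (where $\kappa : F \fibarr B$ is the left leg of $Q$) followed by postcomposition with $\id_A \times \mu$ for the cartesian fibration $\mu : F \fibarr C$, and then simply invokes the already-established \emph{two-sided} closure properties: pullback stability (\Cref{prop:2s-cart-closed-pb}) and whiskering with a cocartesian-times-cartesian fibration (\Cref{prop:2s-cart-comp-prod-fib}). That packaging delivers the commutation condition for free, with the lift formulas extracted afterwards from the constructions (the paper does note, as an aside, that one can alternatively verify the universal property directly). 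You instead verify \Cref{def:2s-cart} clause by clause: you establish each leg separately via the one-sided closure properties of \Cref{prop:cocart-cosm-closure} (pullback plus composition for $E \times_B F \to E \to A$, and dually), pin down the lifts explicitly, check verticality by inspection, and then check by direct computation that the comparison cell of \Cref{lem:comp-cart-cocart-transp} unwinds to an identity because the cocartesian transport only moves the $E$-coordinate while the cartesian transport only moves the $F$-coordinate. Both arguments are sound; the paper's is shorter and more modular because it reuses the two-sided machinery wholesale, while yours is more elementary and has the advantage of producing the explicit lift formulas and the commutation isomorphism as visibly an identity, rather than reading them off from the closure-property constructions after the fact.
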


\begin{proof}
	We can argue on the level of fibrations just as in \cite[Proposition~7.2.6]{RV21}. Let $\varphi \defeq \pair{\xi}{\pi} \defeq \Un_{A,B}(P): E \fibarr A \times B$, $\psi \defeq \pair{\kappa}{\mu} \defeq \Un_{B,C}(Q): E \fibarr B \times C$. The unstraightening of $Q \spancomp P$ corresponds to the composite
	\[\begin{tikzcd}
		{E \times _B F} && {E \times F} && {A \times C}
		\arrow["{\langle q,p\rangle}", from=1-1, to=1-3]
		\arrow["{\langle \xi,\mu\rangle}", from=1-3, to=1-5]
		\arrow["{\psi \boxdot \varphi}"{description}, curve={height=-24pt}, from=1-1, to=1-5]
	\end{tikzcd}\]
	where $q:E \times_B F \to E$ and $p:E \times_B F \to F$ are the projections from the pullback object. Now, the map $\psi \spancomp \varphi:A \to C \to \UU$ is constructed by first taking the pullback
	\[\begin{tikzcd}
		{E \times_B F} && E \\
		{A \times F} && {A \times B}
		\arrow["{\id_A \times \kappa}"', from=2-1, to=2-3]
		\arrow[from=1-1, to=1-3]
		\arrow["\varphi", two heads, from=1-3, to=2-3]
		\arrow["{\langle \xi \circ q,p \rangle}"', two heads, from=1-1, to=2-1]
		\arrow["\lrcorner"{anchor=center, pos=0.125}, draw=none, from=1-1, to=2-3]
	\end{tikzcd}\]
	and then postcomposing the map on $\pair{\xi q}{p}:{E \times_B F} \fibarr A \times F$ with $\id_A \times \mu: A \times F \to A \times C$. Pullback along products of maps preserves two-sided cartesian fibrations by~\Cref{prop:2s-cart-closed-pb}, and so does postcomposition with the cartesian product of a cocartesian and a cartesian fibration by~\Cref{prop:2s-cart-comp-prod-fib}. Hence, the resulting map $\psi \spancomp \varphi: E \times_B F \fibarr A \times C$ is two-sided cartesian as well.
	
	The proclaimed description of the co-/cartesian lifts comes out of this construction, using the descriptions of the lifts from the constructions in~\cite[Subsections~3.2.4 and~5.3.3]{BW21}. Alternatively, one can verify the universal property directly, \cf~\Cref{fig:cocart-lift-spcomp}. \Eg, for the cocartesian case, given any $u':a'\to_A a''$, $w:c \to_C c'$, an arrow lying over the (component-wise) composite with domain $\angled{b,e,d}$ consists of some arrow $v:b \to b'$ and dependent arrows $f:e \to^P_{\pair{u'u}{v}} e'$, $r:d \to^Q_{\pair{v}{w}} d'$. By initiality, $v:\id_b \to v$ is the unique filler in the comma object $\comma{b}{B}$, and so is $r:\id_d \to r$, lying over $v:\id_b \to v$ and $w:\id_c \to c'$. By cocartesianness, we also find~\wrt~the data given the unique filler $g \defeq \cocartFill_{P_*(u,b,e)}(f)$ with $g \circ f = h$, for $f \defeq P_*(u,b,e):e \cocartarr^P_{\pair{u}{b}}u_*\,e$ in $P$ as desired. 
\end{proof}

\subsection{Pullback and reindexing}

\begin{proposition}[Pullback stability of two-sided cartesian families, \protect{\cf~\cite[Proposition~7.2.4]{RV21}}]\label{prop:2s-cart-closed-pb}
	Let $P:A \to B \to \UU$ be a two-sided cartesian family over Rezk types $A$ and $B$. Then for any pair of maps $k:C \to A$, $m:D \to B$, the pullback family
	\[ (k \times m)^*P: C \to D \to \UU \]
	is two-sided as well.
	Diagrammatically, if the two-sided fibration $\varphi: E \fibarr A \times B$ denotes the unstraightening of $P$, this means that the map $\psi$ in the following diagram is a two-sided fibration:
	\[\begin{tikzcd}
		{(k \times m)^*E} && E \\
		{C \times D} && {A \times B}
		\arrow["{(k \times m)^*\varphi}"', two heads, from=1-1, to=2-1]
		\arrow["{k \times m}"', from=2-1, to=2-3]
		\arrow[from=1-1, to=1-3]
		\arrow["\varphi", two heads, from=1-3, to=2-3]
		\arrow["\lrcorner"{anchor=center, pos=0.125}, draw=none, from=1-1, to=2-3]
	\end{tikzcd}\]
	Furthermore, we claim that this square is a two-sided cartesian functor.
\end{proposition}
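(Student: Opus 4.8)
The plan is to derive both assertions from the Chevalley-style characterization of two-sided cartesian families in \Cref{thm:char-two-sid}, together with the pullback-closure results already available for its constituent notions. The decisive move is to factor the reindexing as a composite of two \emph{one-variable} reindexings,
\[ C \times D \xrightarrow{\ k \times \id_D\ } A \times D \xrightarrow{\ \id_A \times m\ } A \times B, \]
so that $(k \times m)^*P = (k \times \id_D)^*\big((\id_A \times m)^*P\big)$, and to treat each factor using the version of \Cref{thm:char-two-sid} whose distinguished (``sliced'') variable is the one \emph{not} being reindexed at that step. This detour is forced by the variances in the span $\xi\colon A \leftarrow E \rightarrow B\colon \pi$: since $\xi$ is cocartesian while $\pi$ is cartesian, pulling back along $k \times m$ in one stroke does not present itself as the pullback of a sliced cocartesian (or sliced cartesian) fibration along a base map, whereas each one-variable reindexing does, so that it matches the template of \Cref{prop:clos-sl-cocart-fib-pb} (and its dual) exactly. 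Throughout I take $C$ and $D$ to be Rezk, as the notion of two-sided cartesian family requires.

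In the first step I would reindex the second variable. By \Cref{thm:char-two-sid}, \Cref{it:char-two-sid-ii}, being two-sided makes $\varphi\colon E \to_B A\times B$ a cocartesian fibration in cartesian fibrations (from $\pi$ to $q$, \cf~\Cref{def:cocart-fibs-in-cart-fibs}) whose fibered LARI $\chi_B$ is a cartesian functor, and $\pi\colon E \fibarr B$ a cartesian fibration. Now pull back along $m\colon D \to B$, a map into the base $B$. Cartesian fibrations are closed under pullback (dually to \Cref{prop:cocart-cosm-closure}); by \Cref{prop:clos-sl-cocart-fib-pb} a cocartesian fibration in cartesian fibrations pulls back to one; and, as recorded in that proof, a cartesian fibered LARI pulls back to a cartesian fibered LARI (by the dual of \Cref{lem:cocart-sect-pb}). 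Since $m^*(A\times B) \simeq A\times D$ and $m^*E \simeq (\id_A\times m)^*E$, the family $(\id_A\times m)^*P\colon A \to D \to \UU$ then satisfies all of \Cref{it:char-two-sid-ii} over the Rezk bases $A$ and $D$, hence is two-sided cartesian.

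For the second step I would reindex the first variable, now invoking the mirror characterization \Cref{thm:char-two-sid}, \Cref{it:char-two-sid-iii} for $(\id_A\times m)^*P$: its unstraightening is a cocartesian functor between cocartesian fibrations which is also a cartesian fibration sliced over $A$ with cocartesian fibered RARI $\chi^A$, and its left leg $\xi$ is a cocartesian fibration. Pulling back along $k\colon C \to A$, a map into the base $A$, and invoking the duals of the closure statements just used (cocartesian fibrations and cocartesian functors are closed under pullback, \cf~\Cref{prop:cocart-cosm-closure} and~\cite[Proposition~5.3.21]{BW21}, and the dual of \Cref{prop:clos-sl-cocart-fib-pb} for sliced cartesian fibrations and for the cocartesian fibered RARI via \Cref{lem:cocart-sect-pb}), I conclude that $(k\times\id_D)^*\big((\id_A\times m)^*P\big) = (k\times m)^*P$ satisfies \Cref{it:char-two-sid-iii} over the Rezk bases $C$ and $D$, and is therefore two-sided cartesian. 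This settles the first assertion.

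For the ``moreover'' I would verify directly that the canonical projection $w\colon (k\times m)^*E \to E$ lying over $k\times m$ is a cocartesian functor between the left legs (over $k$) and a cartesian functor between the right legs (over $m$); these two together are exactly what it means for the square to be a two-sided cartesian functor. Using that co-/cartesian lifts in a pullback family are obtained by reindexing those of the original family (\cite[Subsections~5.2.3 and~5.3.3]{BW21}), a left-leg $(k\times m)^*P$-cocartesian arrow over $u\colon c \to_C c'$ with source $\angled{c,d,e}$, where $e\colon P(kc,md)$, is $\angled{u,\id_d, P_!(ku,md,e)}$, so its $w$-image $\angled{ku,\id_{md}, P_!(ku,md,e)}$ is again a left-leg $P$-cocartesian arrow; dually for cartesian arrows and the right legs. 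Hence $w$ is a two-sided cartesian functor. The main obstacle throughout is precisely the variance bookkeeping flagged in the first paragraph---arranging both pullback steps so that each is literally the pullback along a base map of a sliced cocartesian (resp.\ sliced cartesian) fibration, which is the only shape of pullback-closure that \Cref{prop:clos-sl-cocart-fib-pb} (and its dual) supplies directly; once that is in place, every step is an appeal to a previously established closure property.
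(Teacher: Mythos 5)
Your proposal is correct and takes essentially the same route as the paper: the paper likewise reduces to the Chevalley characterization of \Cref{thm:char-two-sid} combined with \Cref{prop:clos-sl-cocart-fib-pb} (and its dual), implicitly via the same factorization of $k\times m$ into the two one-variable reindexings displayed before its proof, and handles the ``moreover'' by projecting to each factor of the base and citing that pullback squares of co-/cartesian fibrations are co-/cartesian functors. Your write-up simply makes the variance bookkeeping and the two-step pullback explicit where the paper is terse.
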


Recalling the notation from~\Cref{ssec:two-var}, we write
\begin{align*}
	& P_m:A \to \UU, & P_m(a) & \defeq \sum_{b':B'} P_{m\,b'}(a) \jdeq \sum_{b':B'} P(a,m\,b'), \\
	& P^k:B \to \UU, & P^k(b) & \defeq \sum_{a':A'} P^{k\,a'}(b) \jdeq \sum_{a':A'} P(k\,a',b), \\
\end{align*}
\[\begin{tikzcd}
	{m^*\widetilde{P}} && {\widetilde{P}} && {k^*\widetilde{P}} && {\widetilde{P}} \\
	{A \times B'} && {A \times B} && {A' \times B} && {A \times B} \\
	A &&&& B
	\arrow[two heads, from=1-5, to=2-5]
	\arrow["{k \times \id_B}"', from=2-5, to=2-7]
	\arrow[two heads, from=1-7, to=2-7]
	\arrow[two heads, from=2-5, to=3-5]
	\arrow[from=1-5, to=1-7]
	\arrow["\lrcorner"{anchor=center, pos=0.125}, draw=none, from=1-5, to=2-7]
	\arrow["{\Un_B(P^k)}"'{pos=0.2}, curve={height=30pt}, from=1-5, to=3-5]
	\arrow[two heads, from=1-3, to=2-3]
	\arrow[from=1-1, to=1-3]
	\arrow[two heads, from=1-1, to=2-1]
	\arrow[two heads, from=2-1, to=3-1]
	\arrow["{\Un_A(P_m)}"'{pos=0.2}, curve={height=30pt}, two heads, from=1-1, to=3-1]
	\arrow["{\id_A \times m}"', from=2-1, to=2-3]
	\arrow["\lrcorner"{anchor=center, pos=0.125}, draw=none, from=1-1, to=2-3]
\end{tikzcd}\]
In particular, for the case of $k= \id_A: A \to A$ and $m=\id_B : B \to B$ we have $P^A = P^{\id_A}$ and $P_B = P_{\id_B}$, \cf~\Cref{ssec:two-var}.

\begin{proof}
	This follows by employing the characterization~\Cref{thm:char-two-sid}, \Cref{,it:char-two-sid-iii}, and then the closure property~\Cref{prop:clos-sl-cocart-fib-pb}. In particular, letting either of the maps $k,m$ be an identity, we can conclude that $P^k$ is cocartesian, and $P_m$ is cartesian.
	
	That the square is a two-sided cartesian functor follows by separately projecting to the factors in the base, and then using~\cite[Proposition~5.3.9]{BW21} or its dual. Namely, \eg~since $P_m:A \to \UU$ is cocartesian, so is its pullback along $k:A' \to A$ which arises as
	\[\begin{tikzcd}
		{(k \times m)^*\widetilde{P} \simeq k^*\widetilde{P_m}} && {\widetilde{P_m}} \\
		{A'} && A
		\arrow[two heads, from=1-3, to=2-3]
		\arrow["k"', from=2-1, to=2-3]
		\arrow[two heads, from=1-1, to=2-1]
		\arrow[from=1-1, to=1-3]
		\arrow["\lrcorner"{anchor=center, pos=0.125}, draw=none, from=1-1, to=2-3]
	\end{tikzcd}\]
	and the pullback square is known to be a cocartesian functor.
\end{proof}

\begin{proposition}[Pullback stability of two-sided cartesian functors]\label{prop:2s-cart-fun-pb}
	In the following, let all types be Rezk. Consider two-sided cartesian families $P,Q:A \to B\to \UU$ with unstraightenings $E \fibarr \to A \times B$ of $P$, and $F \fibarr A \times B$ of $Q$, resp. Let $\kappa:P\to_{A \times B} Q$ be a two-sided cartesian functor. Given maps $k:A' \to A$, $m:B' \to B$, then the functor $\kappa': P' \to_{A' \times B'} Q'$ induced by pullback along $k \times m$ is two-sided cartesian as well:
	\[\begin{tikzcd}
		{E'} &&& E \\
		& {Q'} &&& Q \\
		{A' \times B'} &&& {A \times B}
		\arrow["\kappa'", dashed, from=1-1, to=2-2]
		\arrow[from=1-1, to=1-4]
		\arrow["\kappa", from=1-4, to=2-5]
		\arrow[two heads, from=1-4, to=3-4]
		\arrow[two heads, from=2-5, to=3-4]
		\arrow[two heads, from=1-1, to=3-1]
		\arrow[two heads, from=2-2, to=3-1]
		\arrow["{k \times m}"{description}, from=3-1, to=3-4]
		\arrow["\lrcorner"{anchor=center, pos=0.125}, draw=none, from=2-2, to=3-4]
		\arrow["\lrcorner"{anchor=center, pos=0.125}, shift right=4, draw=none, from=1-1, to=3-4]
		\arrow[from=2-2, to=2-5, crossing over]
	\end{tikzcd}\]
\end{proposition}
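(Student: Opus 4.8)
\textit{Proof proposal.}
The plan is to reduce the statement to the definition of a two-sided cartesian functor together with the pullback-stability of cocartesian and of cartesian functors, both of which are already available. Recall that a fibered map $\kappa \colon P \to_{A \times B} Q$ between two-sided cartesian families is two-sided cartesian exactly when $\kappa$ constitutes a cocartesian functor $P_B \to_A Q_B$ on the left legs and a cartesian functor $P^A \to_B Q^A$ on the right legs. By \Cref{prop:2s-cart-closed-pb} the pulled-back families $P' \defeq (k \times m)^*P$ and $Q' \defeq (k \times m)^*Q$ are again two-sided cartesian, so the assertion is well posed; it remains to check that $\kappa'$ is a cocartesian functor on left legs and a cartesian functor on right legs.

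For the left legs I would use the factorization $k \times m = (\id_A \times m) \circ (k \times \id_{B'}) \colon A' \times B' \to A \times B$. Writing $\bar P \defeq (\id_A \times m)^*P \colon A \to B' \to \UU$, so that $\bar P(a,b') \simeq P(a, m\,b')$, \Cref{prop:2s-cart-closed-pb} gives that $\bar P$ is two-sided cartesian, hence cocartesian on the left, i.e.\ its left leg $\bar P_{B'} = P_m$ (in the notation introduced after \Cref{prop:2s-cart-closed-pb}) is a cocartesian family over $A$. The $P_m$-cocartesian arrows are precisely the $P_B$-cocartesian arrows whose $B$-coordinate lies in the image of $m$ (argue as in the proof of \Cref{prop:2s-cart-closed-pb}, using that $P_B$-cocartesian arrows are $P^A$-vertical), and these are preserved by $\kappa$ since $\kappa$ is a cocartesian functor on $P_B$; thus $\kappa$ restricts to a cocartesian functor $P_m \to_A Q_m$. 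Finally, $P' = (k \times \id_{B'})^*\bar P$ has left leg $P'_{B'} \simeq k^*P_m$, and $\kappa'$ on the left legs is the pullback of the cocartesian functor $P_m \to_A Q_m$ along $k \colon A' \to A$, hence again a cocartesian functor by pullback-stability of cocartesian functors (\Cref{prop:cocart-cosm-closure}, cf.\ \cite[Proposition~5.3.9]{BW21}).

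Dually, with the factorization $k \times m = (k \times \id_B) \circ (\id_{A'} \times m)$ and the $m = \id$ instance of \Cref{prop:2s-cart-closed-pb}, one identifies the right leg of $P'$ as ${P'}^{A'} \simeq m^*P^k$ with $P^k$ a cartesian family over $B$, checks that $\kappa$ restricts to a cartesian functor $P^k \to_B Q^k$, and then applies the evident dual of \Cref{prop:cocart-cosm-closure} (pullback-stability of cartesian functors, cf.\ also \cite[Proposition~5.3.21]{BW21}) to conclude that $\kappa'$ is a cartesian functor on the right legs. Combining the two halves shows $\kappa'$ is two-sided cartesian. The only non-formal ingredient is the identification of the legs of the pulled-back family with pullbacks of the reindexed legs of $P$, and the corresponding matching-up of co-/cartesian arrows; this is the same bookkeeping already carried out for the fibrations (rather than for a functor between them) in the proof of \Cref{prop:2s-cart-closed-pb}, and it is where I expect what little friction there is to lie.
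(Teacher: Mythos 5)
Your proof is correct, but it takes a more structural route than the paper's. The paper argues by direct computation on lifts: since the $P'$-cocartesian lift of $(u',b')$ at $e$ is literally the $P$-cocartesian lift of $(k\,u', m\,b')$ at $e$, and $\kappa'_{u',b'} = \kappa_{k\,u',m\,b'}$, one gets $\kappa'(P'_!(u',b',e)) = \kappa(P_!(k\,u',m\,b',e)) = Q_!(k\,u',m\,b',\kappa(e)) = Q'_!(u',b',\kappa'(e))$ in three lines, with the dual computation for cartesian lifts. You instead factor $k \times m$, identify the left leg of the pulled-back family as $P'_{B'} \simeq k^*P_m$, check that $\kappa$ restricts to a cocartesian functor $P_m \to_A Q_m$, and then invoke one-sided pullback-stability of cocartesian functors (\Cref{prop:cocart-cosm-closure}), dually on the right. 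This buys you a reduction to already-established closure properties and avoids writing down lifts explicitly, at the cost of exactly the leg-identification bookkeeping you flag at the end — which is real but routine, and is the same identification the paper itself performs in \Cref{prop:2s-cart-closed-pb} for the fibrations. One small imprecision: ``the $P_B$-cocartesian arrows whose $B$-coordinate lies in the image of $m$'' should be read as ``the arrows of the pullback $\totalty{P_m}$ whose image in $E$ is $P_B$-cocartesian,'' since $\totalty{P_m}$ is a pullback rather than a subtype of $E$ when $m$ is not an embedding; with that reading the detection/preservation of cocartesian arrows you use is the standard fact about pullbacks of cocartesian fibrations and your argument goes through.
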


\begin{proof}
	Let $u':a_0' \to_{A'} a_1'$, $b':B'$, $e:P'(a_0',b') \simeq P(k\,a_0',m\,b')$. Straightforward calculation gives
	\begin{align*}
		& \kappa'_{u',b'}(P_!'(u',b',e)) = \kappa_{k\,u',m\,'}(P_!(k\,u',m\,b',e)) \\
		& = Q_!(k\,u',m\,b',\kappa_{k\,u',m\,b'}(e)) = Q_!'(u',b',\kappa'_{u',b'}(e)), 
	\end{align*}
	where the second identity is given by $\kappa$ being cocartesian. The dual case for cartesian lifts works similarly (\cf~also~\cite[Proposition~5.3.18]{BW21}).
\end{proof}

\subsection{Dependent and sliced product}

\begin{proposition}[Product stability of two-sided cartesian families]\label{prop:2s-cart-closed-pi}
	Let $A,B:I \to \UU$ be families of Rezk types for a small type $I$. Consider a two-sided family $P:\prod_{i:I} A_i \to B_i \to \UU$. Then the induced product family
	\[ \prod_{i:I} P_i : \prod_{i:I} A_i \to \prod_{i:I} B_i \to \UU\]
	is two-sided cartesian as well.
	
	Moreover, denoting the unstraightenings of the $P_i$ by $\varphi_i: E_i \fibarr A_i \times B_i$, the squares
	\[\begin{tikzcd}
		{\prod_{i:I} E_i} && {E_k} \\
		{\prod_{i:I} A_i \times \prod_{i:I} B_i} && {A_k \times B_k}
		\arrow[two heads, from=1-1, to=2-1]
		\arrow[from=2-1, to=2-3]
		\arrow[from=1-1, to=1-3]
		\arrow[two heads, from=1-3, to=2-3]
	\end{tikzcd}\]
	are two-sided cartesian functors. Furthermore, these product cones are terminal \wrt~two-sided cartesian functors.\footnote{Here, and in the following we will not formally spell out the universal properties, but they are analogous to the respective propositions in~\cite[Subsection~5.3.3]{BW21}. The addition/generalization is that the base types are binary products, and the fibrations and functors are \emph{two-sided} cartesian.}
\end{proposition}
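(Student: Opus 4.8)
The plan is to reduce everything to the characterization \Cref{thm:char-two-sid} together with the product-closure results already established. Fibrantly replace: set $\varphi_i \defeq \pair{\xi_i}{\pi_i} \defeq \Un_{A_i,B_i}(P_i) : E_i \fibarr A_i \times B_i$, and note that under the canonical equivalence $\prod_{i:I}(A_i \times B_i) \simeq \big(\prod_{i:I} A_i\big) \times \big(\prod_{i:I} B_i\big)$ the unstraightening of $\prod_{i:I} P_i$ is $\varphi \defeq \prod_{i:I} \varphi_i$. By \Cref{thm:char-two-sid}, \Cref{it:char-two-sid-ii}, for each $i$ the family $P_i$ is two-sided iff $\pi_i$ is a cartesian fibration, $\varphi_i$ is a cartesian functor and a sliced cocartesian fibration over $B_i$ --- \ie\ a cocartesian fibration in cartesian fibrations over $B_i$ in the sense of \Cref{def:cocart-fibs-in-cart-fibs} --- and the induced fibered LARI $\chi_{B_i}$ is a cartesian functor.

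First I would check the two-sidedness of $\prod_{i:I} P_i$ via this criterion. Cartesian fibrations are closed under dependent products over Rezk bases (dually to \Cref{prop:cocart-cosm-closure}), so $\prod_i \pi_i$ is cartesian. By \Cref{prop:clos-sl-cocart-fib-prod}, since each $\varphi_i$ is a cocartesian fibration in cartesian fibrations, so is $\varphi = \prod_i \varphi_i$ over $\prod_i B_i$; in particular $\varphi$ is a cartesian functor and a sliced cocartesian fibration over $\prod_i B_i$. It then remains to see that the associated fibered LARI $\chi_{\prod_i B_i}$ is a cartesian functor. The key point is that, under the commutation of dependent products with sliced commas and vertical-arrow types (\Cref{prop:dep-prod-comm-sl-commas}) and the preservation of fibered LARIs by the dependent product (\Cref{prop:fib-lari-pres-by-sl-prod}), the lifting map for $\varphi$ is identified with $\prod_i \chi_{B_i}$; since cartesian functors between cartesian fibrations are closed under dependent products (dually to \Cref{prop:cocart-cosm-closure}), this is a cartesian functor, and \Cref{thm:char-two-sid}, \Cref{it:char-two-sid-ii} read in the reverse direction yields that $\prod_{i:I} P_i$ is two-sided cartesian.

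Next, for the projection squares: the left leg of $\prod_i P_i$ is $\big(\prod_i P_i\big)_{\prod_i B_i} \simeq \prod_i (P_i)_{B_i}$, and the displayed square restricts on left legs to the product-projection square $\prod_i \totalty{(P_i)_{B_i}} \to \totalty{(P_k)_{B_k}}$ over $\prod_i A_i \to A_k$, which is a cocartesian functor and exhibits the product cone as terminal among cocartesian functors, by \Cref{prop:cocart-cosm-closure}. Dually, restricting to right legs gives a cartesian functor with the analogous terminal property. Hence the square is a two-sided cartesian functor. Moreover a fibered functor $R \to \prod_i P_i$ out of a two-sided cartesian family $R$ is two-sided cartesian iff each composite $R \to P_k$ is --- because being a cocartesian functor on the left legs and a cartesian functor on the right legs is tested componentwise --- which, combined with the objectwise terminality just noted, gives the asserted terminality of the product cone with respect to two-sided cartesian functors.

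I expect the main obstacle to be the identification $\chi_{\prod_i B_i} \simeq \prod_i \chi_{B_i}$ of the fibered lifting maps: one must check that the LARI produced abstractly via \Cref{prop:fib-lari-pres-by-sl-prod} genuinely is the pointwise product of the lifting maps of \Cref{thm:sl-cocart-fam-char}, so that its cartesianness can be verified factorwise. This is a uniqueness-of-adjoints argument together with the explicit description of $\chi_B$ on arrows recorded in \Cref{rem:sl-cocart-fam-actn-on-arrs}. A more hands-on alternative, avoiding the characterization theorem entirely, is to observe via \Cref{prop:comm-lifts} and \Cref{lem:comp-cart-cocart-transp} that in a product family the cocartesian and cartesian transports and the comparison filler $\kappa$ are all computed componentwise, whence $\kappa$ is a tuple of isomorphisms and thus an isomorphism --- but this still relies on knowing that co-/cartesian lifts in a product of fibrations are computed componentwise, which is again \Cref{prop:cocart-cosm-closure}. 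Either way, once this bookkeeping is in place, the remaining verifications are routine appeals to the product-stability results of \cite{BW21} and the appendix.
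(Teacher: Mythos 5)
Your proposal is correct and follows essentially the same route as the paper: reduce two-sidedness of the product to the characterization of \Cref{thm:char-two-sid} (you use \Cref{it:char-two-sid-ii}, the paper its dual \Cref{it:char-two-sid-iii}) combined with the product-closure of cocartesian fibrations in cartesian fibrations (\Cref{prop:clos-sl-cocart-fib-prod}), and then obtain the projection squares and their terminality by postcomposing with the projections and invoking the one-sided product-stability results of \cite{BW21} and their duals. Your explicit verification that the fibered LARI of the product is identified with the product of the componentwise LARIs (so that its cartesianness can be checked factorwise) fills in a step the paper's terse proof leaves implicit.
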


Fibrationally, the proposition says that given a family of two-sided fibrations $\varphi_i: E_i \fibarr A_i \times B_i$ for $i:I$, the product fibration $\prod_{i:I} \varphi_i : \prod_{i:I} E_i \fibarr \prod_{i:I} A_i \times \prod_{i:I} B_i$ is also two-sided cartesian.

\begin{proof}
	This is a consequence of the characterization~\Cref{thm:char-two-sid}, \Cref{it:char-two-sid-iii}, in combination with the closure property~\Cref{prop:clos-sl-cocart-fib-prod}. Two-sided cartesianness of the projection squares follows upon postcomposition with the respective projection, and then employing either~\cite[Proposition~5.3.7]{BW21} or its dual. Similarly, one argues for the universal property for two-sided cartesian functors, using~\cite[Proposition~5.3.8]{BW21} or its dual, resp.
\end{proof}

\begin{corollary}[Sliced product stability of two-sided cartesian families]\label{prop:2s-cart-closed-pi-loc}
	Let $A,B$ be small Rezk types. Consider a two-sided family $P:\prod_{i:I} A \to B \to \UU$. Then the induced fiberweise product family
	\[ \times_{i:I}^{A \times B} P_i : A \to B \to \UU\]
	is two-sided cartesian as well.
	
	Moreover, for every $k:I$ there is an induced canonical commutative triangle
	\[\begin{tikzcd}
		{\times_{i:I}^{A \times B} E_i} && {E_k} \\
		& {A \times B}
		\arrow[from=1-1, to=1-3]
		\arrow["{\times_{i:I}^{A \times B} \varphi_i}"', two heads, from=1-1, to=2-2]
		\arrow["{\varphi_k}", two heads, from=1-3, to=2-2]
	\end{tikzcd}\]
	which is a two-sided functor. The two-sided fibration $\prod_{i:I} E_i \fibarr A \times B$ is the terminal cone over the $\varphi_k: E_k \fibarr A \times B$~\wrt~(triangle-shaped) cones into the $\varphi_k$ whose horizontal map is two-sided cartesian.
\end{corollary}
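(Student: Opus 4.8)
The plan is to reduce this to the product-stability statement over varying bases, \Cref{prop:2s-cart-closed-pi}, together with pullback stability, \Cref{prop:2s-cart-closed-pb}, by recognizing the fiberwise product as a pullback of the full product along a diagonal. First I would unfold the definition of the sliced product from \Cref{ssec:sl-constr}: with the constant base $A \times B$ in place of the varying $A_i, B_i$, the fibration $\times_{i:I}^{A \times B}\varphi_i$ is by construction the pullback
\[\begin{tikzcd}
	{\times_{i:I}^{A \times B} E_i} && {\prod_{i:I} E_i} \\
	{A \times B} && {(A \times B)^I}
	\arrow["{\times_{i:I}^{A\times B}\varphi_i}"', two heads, from=1-1, to=2-1]
	\arrow["{\mathrm{cst}}"', from=2-1, to=2-3]
	\arrow[from=1-1, to=1-3]
	\arrow["{\prod_{i:I} \varphi_i}", two heads, from=1-3, to=2-3]
	\arrow["\lrcorner"{anchor=center, pos=0.125}, draw=none, from=1-1, to=2-3]
\end{tikzcd}\]
where I identify $\prod_{i:I} A \times \prod_{i:I} B \simeq (A\times B)^I$ and the bottom map with the product $\mathrm{cst}_A \times \mathrm{cst}_B$ of the two constant (diagonal) maps $\mathrm{cst}_A : A \to A^I$ and $\mathrm{cst}_B : B \to B^I$.

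Next I would run the following chain. Applying \Cref{prop:2s-cart-closed-pi} to the constant families $\lambda i.A$ and $\lambda i.B$ shows that $\prod_{i:I}\varphi_i$ is a two-sided cartesian fibration; then \Cref{prop:2s-cart-closed-pb}, applied to the pair of maps $\mathrm{cst}_A$ and $\mathrm{cst}_B$, shows that its pullback $\times_{i:I}^{A\times B}\varphi_i$ is again two-sided cartesian, and moreover that the square displayed above is itself a two-sided cartesian functor. To obtain the asserted triangle over $A\times B$, I would compose that pullback square with the $k$-th projection square $\prod_{i:I}\varphi_i \to \varphi_k$ over the projection $(A\times B)^I \to A\times B$ — which is a two-sided cartesian functor by \Cref{prop:2s-cart-closed-pi} — and invoke composition stability of two-sided cartesian functors, \Cref{prop:2s-fun-comp}; this yields that the canonical comparison $\times_{i:I}^{A\times B}E_i \to E_k$ is a two-sided cartesian functor.

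Finally, for the terminal-cone property I would combine the universal property of the pullback (\Cref{ssec:sl-constr}) with that of the product established in \Cref{prop:2s-cart-closed-pi}, exactly as the unsliced universal property is assembled: a triangle-shaped cone with apex a two-sided fibration $\psi : G \fibarr A\times B$ and two-sided cartesian legs $G \to_{A\times B} \varphi_k$ assembles, via the product, into a single map $G \to \prod_{i:I} E_i$ over $\mathrm{cst}$, which factors uniquely through $\times_{i:I}^{A\times B}\varphi_i$ over $A\times B$; that $G \to \prod_{i:I} E_i$ is two-sided cartesian iff all legs are follows from the universal property in \Cref{prop:2s-cart-closed-pi}, and that the factorizing functor $G \to \times_{i:I}^{A\times B} E_i$ is then two-sided cartesian follows because co-/cartesian arrows in a pullback are detected by their image in the total space of the pulled-back fibration, by \cite[Proposition~5.3.9]{BW21} and its dual. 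I expect no genuine obstacle here; the only point requiring a little care is this last piece of bookkeeping — matching cones over $A\times B$ with cones over $(A\times B)^I$ through the constant map — but it runs in complete parallel to the unsliced argument for \Cref{prop:2s-cart-closed-pi} and to the one-sided sliced-product statements in \cite[Subsection~5.3.3]{BW21}.
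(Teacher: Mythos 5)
Your proposal is correct and follows essentially the same route as the paper: realize $\times_{i:I}^{A\times B}\varphi_i$ as the pullback of $\prod_{i:I}\varphi_i$ along $\mathrm{cst}: A \times B \to (A\times B)^I$, apply \Cref{prop:2s-cart-closed-pi} and then \Cref{prop:2s-cart-closed-pb}, and obtain the projection triangles by factoring the evaluations through the full product and invoking \Cref{prop:2s-fun-comp}. The universal-property bookkeeping you describe is likewise how the paper handles it (by deferring to the one-sided analogues in \cite[Subsection~5.3.3]{BW21}).
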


\begin{proof}
	Recall that we have equivalences
	\begin{align*}
		\prod_i E_i & \simeq \sum_{\substack{\alpha:I \to A \\ \beta:I \to B}} \prod_{i:I} P_i(\alpha_i,\beta_i) \fibarr A^I \times B^I, \\
		\times_{i:I}^{A \times B} E_i & \simeq \sum_{\substack{a:A \\ b:B}} \prod_{i:I} P_i(a,b) \fibarr A \times B.
	\end{align*}
	
	Denote by $E_i \fibarr A \times B$ the unstraightening of the family $P_i$. By~\Cref{prop:2s-cart-closed-pb}, the induced map $\prod_{i:I} E_i \fibarr (A \times B)^I$ is two-sided cartesian:
	\[\begin{tikzcd}
		{\times_{i:I}^{A \times B} E_i} && {\prod_{i:I} E_i} \\
		{A \times B} && {(A \times B)^I}
		\arrow[two heads, from=1-1, to=2-1]
		\arrow["{\mathrm{cst}}"', from=2-1, to=2-3]
		\arrow[from=1-1, to=1-3]
		\arrow[two heads, from=1-3, to=2-3]
		\arrow["\lrcorner"{anchor=center, pos=0.125}, draw=none, from=1-1, to=2-3]
	\end{tikzcd}\]
	Invoking pullback-stability, and then considering the straightening of this map to recover a type family establishes the claim.
	
	Now, by the above description via projection equivalence, we have evaluation maps yielding the desired cones $\ev_k: {\times_{i:I}^{A \times B} E_i} \to_{A \times B} E_k$ for $k:I$. But by the universal property of the standard dependent product~\Cref{prop:2s-cart-closed-pi} (\cf~\cite[Proposition~5.3.8]{BW21}), these factor as follows
	\[\begin{tikzcd}
		{\times_{i:I}^{A \times B} E_i} && {\prod_{i:I} E_i} && {E_k} \\
		{A \times B} && {A^I \times B^I} && {A \times B}
		\arrow["{\mathrm{ev}_k}"{description}, from=1-3, to=1-5]
		\arrow[from=1-3, to=2-3]
		\arrow[from=2-3, to=2-5]
		\arrow[from=1-5, to=2-5]
		\arrow[from=1-1, to=2-1]
		\arrow["{\mathrm{cst}}"', dashed, from=2-1, to=2-3]
		\arrow[dashed, from=1-1, to=1-3]
		\arrow["{\mathrm{ev}_k}"{description}, curve={height=-18pt}, from=1-1, to=1-5]
		\arrow["\lrcorner"{anchor=center, pos=0.125}, draw=none, from=1-1, to=2-3]
	\end{tikzcd}\]
	where the upper horizontal induced functor is two-sided cartesian, as are the evaluations from the standard dependent product. Hence, so is their composite, as desired, by~\Cref{prop:2s-fun-comp}.
\end{proof}

\begin{proposition}[Pullback cones are two-sided cartesian functors]\label{prop:2s-cart-fun-pb-cones}
	Consider two-sided families over Rezk types
	\[ P:A \to B \to \UU, \quad  P':A' \to B' \to \UU, \quad  P:A'' \to B'' \to \UU. \]
	Furthermore, assume there are maps
	\[ \alpha:A' \to A, \quad \alpha'':A'' \to A, \quad \beta':B' \to B, \quad \beta'':B'' \to B \]
	and two-sided cartesian functors
	\[ \kappa':P' \to_{A'\times B', A \times B} P, \quad  \kappa'':P'' \to_{A''\times B'', A \times B} P.\]
	Denote by
	\[ \varphi:E \fibarr A \times B, \quad \varphi': E' \fibarr A' \times B', \quad \varphi'': E'' \fibarr A'' \times B'' \]
	the unstraightenings of $P$, $P'$, and $P''$, resp. Consider the induced pullback:
	\[{\small \begin{tikzcd}
		{E' \times_E E''} &&& {E''} \\
		& {E'} &&& E \\
		{(A' \times B') \times_{A \times B} (A'' \times B'')} &&& {A'' \times B''} \\
		& {A' \times B'} &&& {A \times B}
		\arrow["{\varphi'\times_\varphi \varphi''}"{description}, dashed, two heads, from=1-1, to=3-1]
		\arrow[from=3-1, to=3-4]
		\arrow[from=1-1, to=1-4]
		\arrow["{\varphi''}"{description, pos=0.2}, two heads, from=1-4, to=3-4]
		\arrow[from=1-1, to=2-2]
		\arrow["\varphi"{description, pos=0.3}, two heads, from=2-5, to=4-5]
		\arrow[from=3-1, to=4-2]
		\arrow["{\alpha '' \times \beta''}"{description}, from=3-4, to=4-5]
		\arrow["{\kappa''}"{description}, from=1-4, to=2-5]
		\arrow["{\alpha' \times \beta'}"{description}, from=4-2, to=4-5]
		\arrow["\lrcorner"{anchor=center, pos=0.125, rotate=45}, draw=none, from=3-1, to=4-5]
		\arrow["\lrcorner"{anchor=center, pos=0.125, rotate=45}, draw=none, from=1-1, to=2-5]
		\arrow["{\varphi'}"{description, pos=0.3}, two heads, from=2-2, to=4-2, crossing over]
		\arrow["{\kappa'}"{description}, from=2-2, to=2-5, crossing over]
	\end{tikzcd}}\]
	Then the mediating map
	\[ \varphi''' \defeq \varphi'\times_\varphi \varphi'': E'''\defeq {E' \times_E E''} \fibarr A''' \times B'''\]
	where
	\[ A''' \defeq A' \times_A A'', \quad B''' \defeq B' \times_B B'' \]
	is a two-sided cartesian fibration.\footnote{Note in particular that we have an equivalence $A''' \times B''' \equiv (A' \times B') \times_{A \times B} (A'' \times B'')$}
	
	Moreover, each of the projection squares from $\varphi'''$ is a two-sided cartesian functor, and $\varphi''':E''' \to A''' \times B'''$ satisfies the expected terminal universal property for cones which are two-sided cartesian functors (analogous to~\cite[Propositions~5.3.10,11]{BW21}).
\end{proposition}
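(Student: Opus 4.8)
The plan is to follow the template of the one-sided statements \cite[Propositions~5.3.10,~5.3.11]{BW21}, reducing the two-sided assertion to them by means of the characterization \Cref{thm:char-two-sid}, \Cref{it:char-two-sid-iii}, exactly as was done for products in \Cref{prop:2s-cart-closed-pi} and for pullbacks along product maps in \Cref{prop:2s-cart-closed-pb}. First I would fix the projection equivalences, identifying $E'''$ with $\sum_{\langle a',a''\rangle:A''',\langle b',b''\rangle:B'''} P'(a',b')\times_{P(\alpha a',\beta'b')}P''(a'',b'')$, the fiberwise pullback being taken along the fiber restrictions of $\kappa'$ and $\kappa''$. Since $\kappa'$ and $\kappa''$ are two-sided cartesian functors, the componentwise pair $\langle P'_!(u',e'),P''_!(u'',e'')\rangle$ of cocartesian-on-the-left lifts is a well-defined arrow of $E'''$: under the mediating map $E'''\to E$ both legs become the $P$-cocartesian lift of the common image $u\defeq\alpha u'=\alpha''u''$, so the two projections to $E$ agree; dually for the cartesian-on-the-right lifts $\langle P'^*(v',e'),P''^*(v'',e'')\rangle$.

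Next I would invoke \Cref{thm:char-two-sid}, \Cref{it:char-two-sid-iii}: two-sided cartesianness of $\varphi'''$ amounts to $\xi''':E'''\fibarr A'''$ being a cocartesian fibration, $\varphi'''$ being a cocartesian functor between cocartesian fibrations and a cartesian fibration sliced over $A'''$, and the fibered RARI $\chi^{A'''}\colon\VertArr_{p'''}(A'''\times B''')\times_{A'''\times B'''}E'''\to_{A'''}\VertArr_{\xi'''}(E''')$ being a cocartesian functor. Each of these is the $A'''$-indexed ``pullback cone'' of the corresponding datum for $\xi',\xi''$ over $A',A''$ glued over $\xi$ over $A$; here one uses that the sliced comma and vertical-arrow constructions commute with pullback, as in the proofs of \Cref{prop:clos-sl-cocart-fib-pb} and \Cref{lem:cocart-sect-pb}. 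Thus each condition follows from the one-sided pullback-cone closure \cite[Propositions~5.3.10,~5.3.11]{BW21} (and its dual for the sliced cartesian structure), applied separately in the $A$- and the $B$-direction. The remaining axiom of \Cref{def:2s-cart}, \Cref{it:lifts-commute} — commutation of cocartesian and cartesian lifts — is then cheap: by the descriptions above the iterated transports in $P'''$ are computed componentwise, so the comparison filler of \Cref{lem:comp-cart-cocart-transp} for $P'''$ is the pair of comparison fillers for $P'$ and $P''$, each an isomorphism by hypothesis; since isomorphisms in the pullback $E'''=E'\times_EE''$ are detected componentwise, this pair is an isomorphism too.

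For the projection squares: $E'''\to E'$ and $E'''\to E''$ visibly carry the combined lifts to $P'$- resp.\ $P''$-lifts, while $E'''\to E$ carries them to $P$-lifts precisely because $\kappa',\kappa''$ are two-sided cartesian; hence all three squares are two-sided cartesian functors. For the terminal universal property, a cone over $\varphi',\varphi''$ agreeing over $\varphi$ whose legs are two-sided cartesian induces a mediating fibered map into $E'''=E'\times_EE''$ by the type-level universal property of the pullback; this map is automatically two-sided cartesian, since a co-/cartesian lift in the source is sent to co-/cartesian lifts in $E'$ and in $E''$, hence to the combined lift in $E'''$, and uniqueness holds because ``two-sided cartesian functor'' is a proposition.

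I expect the main obstacle to be purely organizational: matching up, in the reduction via \Cref{thm:char-two-sid}, \Cref{it:char-two-sid-iii}, the several sliced and fibered gadgets ($\VertArr_{p'''}(A'''\times B''')\times_{A'''\times B'''}E'''$, the fibered RARI, the sliced commas) with the corresponding ones for $P'$ and $P''$ so that the one-sided pullback-cone statements of \cite{BW21} literally apply; once this bookkeeping is in place no new idea is needed beyond what already occurs in \Cref{prop:2s-cart-closed-pb} and \Cref{prop:clos-sl-cocart-fib-pb}. Should the sliced bookkeeping prove unwieldy, a fallback is to verify the universal property of the combined componentwise lift directly — constructing fillers componentwise from the universal properties in $P'$ and $P''$ and gluing them over $E$ using cocartesian-functoriality of $\kappa',\kappa''$ together with uniqueness of co-/cartesian fillers — in the spirit of the direct verifications carried out elsewhere in the paper.
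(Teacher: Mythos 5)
Your proposal is essentially correct, but your \emph{primary} route differs from the paper's, and your \emph{fallback} is in fact what the paper does. The paper proves this proposition by direct verification: it writes the bifibers of $\varphi'''$ as $P'(a',b')\times_{P(a,b)}P''(a'',b'')$ via projection equivalence, posits the co-/cartesian lifts componentwise (using two-sided cartesianness of $\kappa',\kappa''$ to see that the $P'$- and $P''$-lifts lie over the $P$-lift), checks the commutation condition of \Cref{prop:comm-lifts} componentwise, and then gets the projection squares and the universal property by postcomposing with the projections to $A'''$ and $B'''$ and applying \cite[Propositions~5.3.10, 5.3.11]{BW21} and their duals. Your concrete componentwise computations, your treatment of the commutation axiom, and your handling of the projection squares and the terminal universal property all coincide with this. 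Where you diverge is in proposing first to route everything through \Cref{thm:char-two-sid}, \Cref{it:char-two-sid-iii}, ``exactly as for products and pullbacks.'' Be aware that this analogy is not free of charge: for products and base-change the paper has matching closure properties for the sliced gadgets (\Cref{prop:clos-sl-cocart-fib-prod}, \Cref{prop:clos-sl-cocart-fib-pb}), but there is no stated closure of sliced co-/cartesian fibrations (or of the fibered RARI being a co-/cartesian functor) under pullback \emph{cones} of the present shape, and \cite[Propositions~5.3.10, 5.3.11]{BW21} concern absolute fibrations, not the sliced commas and vertical-arrow objects appearing in \Cref{it:char-two-sid-iii}. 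So the reduction you sketch would require an auxiliary lemma whose proof would itself be the componentwise verification — which is why the paper simply does that verification directly. Your fallback paragraph is therefore not a fallback but the intended proof; if you write it up carefully (including the point that the comparison filler of \Cref{lem:comp-cart-cocart-transp} for $P'''$ is detected to be invertible componentwise), you recover the paper's argument.
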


\begin{proof}
	We use projection equivalence so that we can take the fibers of $\varphi'''$ to be\footnote{Where $a:A$, $a':A'$ lies strictly over $a$ via $k'$~\etc.}
	\[ P'''(a,a',a'',b,b',b'') \jdeq P'(a',b') \times_{P(a,b)} P''(a'',b'').\]
	We claim that the cocartesian lifts in $P'''$ are then given by
	\[ P_!'''(u,u',u'',b,b',b'',\angled{e,e',e''}) \jdeq \angled{P_!(u,b,e), P_!'(u',b',e'), P_!''(u'',b'',e'')},\]
	which can be checked to be cocartesian since the conditions are validated fiberwise. In particular, the cocartesian lifts in $P'$ and $P''$ indeed lie over the ones in $P$ by two-sided cartesian-ness of $\kappa'$ and $\kappa''$. So far, this is analogous to~\cite[Proposition~5.3.10]{BW21}, but we have the additional triple of points $\angled{b,b',b''}$ as data.
	
	The argument for the cartesian lifts works dually.
	Now, the compatibility condition from~\Cref{prop:comm-lifts} have to be checked. But by the projection equivalence above, the ensuing proposition is just witnessing that the condition is satisfied component-wise for triples $\angled{\sigma,\sigma',\sigma''}$ where $\sigma$ is a square of the form
	\[\begin{tikzcd}
		E && \bullet & \bullet \\
		&& \bullet & \bullet \\
		& \bullet & \bullet & \bullet & \bullet \\
		{A \times B} & \bullet & \bullet & \bullet & \bullet
		\arrow[cocart, from=1-4, to=2-4]
		\arrow[from=2-3, to=2-4]
		\arrow[from=1-3, to=2-3]
		\arrow[cart, from=1-3, to=1-4]
		\arrow[Rightarrow, no head, from=3-2, to=3-3]
		\arrow[from=3-3, to=4-3]
		\arrow[from=3-2, to=4-2]
		\arrow[Rightarrow, no head, from=4-2, to=4-3]
		\arrow[Rightarrow, no head, from=3-4, to=4-4]
		\arrow[from=4-4, to=4-5]
		\arrow[from=3-4, to=3-5]
		\arrow[Rightarrow, no head, from=3-5, to=4-5]
		\arrow[from=1-1, to=4-1]
	\end{tikzcd}\]
	and $\sigma',\sigma''$ are of the same shape, lying above. Since the compatbility condition is satisfied for each of those, we are done. This shows that $\varphi'''$ is a two-sided cartesian fibration, as desired.
	
	From the discussion of the lifts, it is also clear that both the projection squares are two-sided cartesian functors, since we just project to the respective coordinates. Furthermore, the universal property is established, again, by postcomposing separately with the projections to either $A'''$ or $B'''$, then applying either~\cite[Proposition~5.3.11]{BW21} for the one-sided cocartesian case, or its dual for the cartesian case.
\end{proof}

\begin{corollary}[Pullback cones in a slice are two-sided cartesian functors]\label{prop:2s-cart-fun-pb-cones-sliced}
	Consider two-sided families over Rezk types $P,P',P'':A \to B \to \UU$ with unstraightenings $\varphi:E \fibarr A \times B$, $\varphi':E' \fibarr A' \times B'$, and $\varphi'':E'' \fibarr A'' \times B''$. Given two-sided cartesian functors $\kappa':P' \to_{A \times B} P$ and $\kappa'':P'' \to P$,
	consider the induced pullback over $A \times B$:
	\[\begin{tikzcd}
		{E' \times_E E''} &&& {E''} \\
		& {E'} &&& E \\
		&& {A \times B}
		\arrow[from=1-1, to=1-4]
		\arrow[from=1-1, to=2-2]
		\arrow["{\kappa''}"{description}, from=1-4, to=2-5]
		\arrow["\lrcorner"{anchor=center, pos=0.125, rotate=45}, draw=none, from=1-1, to=2-5]
		\arrow[two heads, from=2-5, to=3-3]
		\arrow[two heads, from=1-4, to=3-3]
		\arrow[two heads, from=2-2, to=3-3]
		\arrow[curve={height=30pt}, from=1-1, to=3-3]
		\arrow["{\kappa'}"{description, pos=0.3}, from=2-2, to=2-5, crossing over]
	\end{tikzcd}\]
	Then the mediating map
	\[ \varphi''' \defeq \varphi'\times_\varphi \varphi'': E'''\defeq {E' \times_E E''} \fibarr A \times B\]
	is a two-sided cartesian fibration.
	
	Moreover, each of the projection squares from $\varphi'''$ is a two-sided cartesian functor, and $\varphi''':E''' \to A \times B$ satisfies the expected terminal universal property for cones which are two-sided cartesian functors over $A \times B$.
\end{corollary}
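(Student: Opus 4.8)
The plan is to obtain this as the special case of \Cref{prop:2s-cart-fun-pb-cones} in which all the base legs are identities. Concretely, I would instantiate the previous proposition with $A' \defeq A'' \defeq A$, $B' \defeq B'' \defeq B$ and $\alpha \defeq \alpha'' \defeq \id_A$, $\beta' \defeq \beta'' \defeq \id_B$. Then $A''' \defeq A \times_A A \simeq A$ and $B''' \defeq B \times_B B \simeq B$, and the canonical equivalence $A''' \times B''' \simeq (A \times B) \times_{A \times B} (A \times B) \simeq A \times B$ identifies the fibered pullback $E' \times_E E''$ formed over $A''' \times B'''$ with the pullback over $A \times B$ appearing in the statement here. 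Under this identification $\varphi''' = \varphi' \times_\varphi \varphi''$ is precisely the map $E''' \fibarr A \times B$ of the corollary, so the first assertion is immediate from \Cref{prop:2s-cart-fun-pb-cones}.

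\textbf{Key steps.} First I would record the identifications of types above and check that they are compatible with the projections, i.e.\ that the two projection squares from $\varphi'''$ in the present (sliced) diagram coincide, after the equivalence, with the projection squares of \Cref{prop:2s-cart-fun-pb-cones}; this is routine from the explicit descriptions via projection equivalence. Consequently those squares are two-sided cartesian functors by the cited proposition. Second, for the universal property I would observe that a triangle-shaped cone over $\varphi', \varphi''$ living over $A \times B$ is exactly a cone in the sense of \Cref{prop:2s-cart-fun-pb-cones} whose two base legs are the identity of $A \times B$; hence the terminal universal property over $A \times B$ is the restriction of the general one, and the comparison map it produces is again two-sided cartesian.

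\textbf{Alternative (direct) route.} Should one prefer to avoid threading the equivalences through the universal property, the argument of \Cref{prop:2s-cart-fun-pb-cones} specializes verbatim: using projection equivalence take the bifibers of $\varphi'''$ to be $P'''(a,b) \jdeq P'(a,b) \times_{P(a,b)} P''(a,b)$, posit $(P''')_!(u,b,\angled{e,e',e''}) \jdeq \angled{P_!(u,b,e),P'_!(u,b,e'),P''_!(u,b,e'')}$ and dually for cartesian lifts, verify cocartesianness and cartesianness fiberwise, check the commutation condition of \Cref{prop:comm-lifts} componentwise on triples of squares, and deduce the statements about the projection squares and the universal property by postcomposing separately with the two projections $A \times B \to A$ and $A \times B \to B$ and invoking \cite[Propositions~5.3.10,~5.3.11]{BW21} (and their duals).

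\textbf{Main obstacle.} There is no genuine difficulty here; the only point requiring a little care is the bookkeeping identifying ``cones over $A \times B$'' with ``cones in the non-sliced sense having identity base legs,'' so that the universal property transports correctly. In the write-up I would make that identification explicit and otherwise refer back to \Cref{prop:2s-cart-fun-pb-cones}.
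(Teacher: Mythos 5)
Your proposal is correct and matches the paper's intent: the corollary is stated without proof, being the immediate specialization of \Cref{prop:2s-cart-fun-pb-cones} to the case where all the base maps are identities (exactly parallel to how \Cref{prop:cocart-fun-seqlim-sl} is deduced from \Cref{prop:cocart-fun-seqlim}). Your bookkeeping of the identifications $A \times_A A \simeq A$, $B \times_B B \simeq B$ and of cones over $A \times B$ as cones with identity base legs is the right level of detail for writing this out.
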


\subsection{Sequential limit}

\begin{proposition}[Sequential limit cones are cocartesian functors]\label{prop:cocart-fun-seqlim}
	Consider an inverse diagram of two-sided cartesian fibrations as below where all of the connecting squares are two-sided cartesian functors:
	\[{\small \begin{tikzcd}
		& \ldots &&&& {E_\infty} \\
		\cdots && {E_2} && {E_1} && {E_0} \\
		& \ldots &&&& {A_\infty \times B_\infty} \\
		\cdots && {A_2 \times B_2} && {A_1 \times B_1} && {A_0 \times B_0}
		\arrow["{\pi_\infty}"{description, pos=0.3}, dashed, from=1-6, to=3-6]
		\arrow["{f_0}"{description, pos=0.7}, from=4-5, to=4-7]
		\arrow["{\kappa_0}"{description}, dashed, from=1-6, to=2-7]
		\arrow["{\langle \alpha_0,\beta_0 \rangle}"{description}, dashed, from=3-6, to=4-7]
		\arrow["{f_1}"{description, pos=0.7}, from=4-3, to=4-5]
		\arrow["{g_1}"{description, pos=0.7}, from=2-3, to=2-5]
		\arrow["{g_2}"{description, pos=0.7}, from=2-1, to=2-3]
		\arrow["{f_2}"{description, pos=0.7}, from=4-1, to=4-3]
		\arrow["{\kappa_1}"{description}, dashed, from=1-6, to=2-5]
		\arrow["{\kappa_2}"{description}, dashed, from=1-6, to=2-3]
		\arrow["{\langle \alpha_2,\beta_2 \rangle}"{description, pos=0.7}, curve={height=6pt}, dashed, from=3-6, to=4-3]
		\arrow["{\langle \alpha_1,\beta_1 \rangle}"{description}, dashed, from=3-6, to=4-5]
		\arrow["{\kappa_3}"{description}, curve={height=12pt}, dashed, from=1-6, to=2-1]
		\arrow["{\langle \alpha_3,\beta_3 \rangle}"{description}, curve={height=12pt}, dashed, from=3-6, to=4-1]
		\arrow["{g_0}"{description, pos=0.7}, from=2-5, to=2-7, crossing over]
		\arrow["{\varphi_2}"{description, pos=0.3}, two heads, from=2-3, to=4-3, crossing over]
		\arrow["{\varphi_0}"{description, pos=0.3}, two heads, from=2-7, to=4-7, crossing over]
		\arrow["{\varphi_1}"{description, pos=0.3}, two heads, from=2-5, to=4-5, crossing over]
	\end{tikzcd}}\]
	Then the induced map $\pi_\infty: E_\infty \to B_\infty$ between the limit types is a two-sided cartesian fibration, and the projection squares constitute two-sided cartesian functors.
	
	Furthermore, $\pi_\infty: E_\infty \to B_\infty$ together with the projection squares satisfies the universal property of a sequential limit~\wrt~to cones of two-sided cartesian functors.
\end{proposition}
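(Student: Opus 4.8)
The plan is to proceed exactly as in the proofs of \Cref{prop:2s-cart-closed-pi} and \Cref{prop:2s-cart-fun-pb-cones}: reduce the statement to a fiberwise check via the projection equivalence, using the commuting-lifts characterization of two-sidedness (\Cref{prop:comm-lifts}, \Cref{def:2s-cart}) together with the one-sided closure results of \cite{BW21}. Concretely, pass to the straightenings $P_n \defeq \St_{A_n \times B_n}(\varphi_n)$, and fix $\langle a_\infty, b_\infty\rangle : A_\infty \times B_\infty$ with images $\langle a_n, b_n\rangle : A_n \times B_n$. Since the isoinner (hence Rezk) conditions are right-orthogonality conditions and therefore stable under sequential limits, the types $A_\infty$, $B_\infty$, and $E_\infty$ are again Rezk, and by projection equivalence the fiber of $\varphi_\infty \defeq \langle \xi_\infty, \pi_\infty\rangle$ at $\langle a_\infty, b_\infty\rangle$ is the sequential limit of the tower of fibers $\bigl( P_n(a_n,b_n)\bigr)_n$, with connecting maps induced by the $\kappa_n$.

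For the existence of lifts: given $u_\infty : a_\infty \to_{A_\infty} a_\infty'$ and $e_\infty = (e_n)_n : P_\infty(a_\infty, b_\infty)$, I would set
\[ P_{\infty,!}(u_\infty, b_\infty, e_\infty) \defeq \bigl( P_{n,!}(u_n, b_n, e_n)\bigr)_n.\]
This is a well-defined arrow of $E_\infty$ precisely because each connecting square $\kappa_n$ is a two-sided cartesian functor, hence preserves $\xi$-cocartesian arrows on the left, so the componentwise cocartesian lifts cohere along the tower; each component is $\pi_n$-vertical, so the lift is $\pi_\infty$-vertical. That it is cocartesian — and that the analogously defined componentwise cartesian lift is cartesian — follows since the defining contractibility conditions for co-/cartesian arrows unwind, under the projection equivalence, into sequential limits of the contractible types witnessing those conditions for the $P_n$, and a sequential limit of contractible types is contractible. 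By \Cref{cor:cocart-left-and-cart-right}, $P_\infty$ is then cocartesian on the left and cartesian on the right.

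It remains to verify the commutation-of-lifts condition of \Cref{def:2s-cart}. By \Cref{prop:comm-lifts} this amounts to invertibility of the comparison morphism $\kappa$ of \Cref{lem:comp-cart-cocart-transp} built from these lifts; by the componentwise description, this $\kappa$ is the tuple of the comparison morphisms for the $P_n$, each invertible because $P_n$ is two-sided cartesian, and a tuple of isomorphisms in a sequential limit is an isomorphism. Hence $P_\infty$ is two-sided cartesian. Each projection square $E_\infty \to E_k$ sends a $P_\infty$-cocartesian arrow over $\langle u_\infty, b_\infty\rangle$ to its $k$-th component $P_{k,!}(u_k,b_k,e_k)$, which is $P_k$-cocartesian, and dually for cartesian arrows, so the projection squares are two-sided cartesian functors. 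For the universal property, postcompose a cone of two-sided cartesian functors separately with the base projections $A_\infty \times B_\infty \to A_\infty$ and $A_\infty \times B_\infty \to B_\infty$ and apply, respectively, the sequential-limit universal property for cocartesian functors (cf.\ \Cref{prop:cocart-cosm-closure} and \cite{BW21}) and its dual for cartesian functors; a cone map is two-sided cartesian iff both one-sided factorizations are, which yields the required factorization and its uniqueness.

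I expect the genuinely non-formal part to be the bookkeeping around the sequential limit in simplicial HoTT: establishing that the fiber of $\varphi_\infty$ really is the sequential limit of the fibers, that a dependent arrow of $E_\infty$ over $u_\infty$ is a coherent family of dependent arrows over the $u_n$, and that ``limits of contractibles are contractible'' applies to the orthogonality-type statements in play. An alternative that sidesteps some of this is to present the sequential limit as a pullback of a product — the equalizer of the two maps $\prod_n E_n \rightrightarrows \prod_n E_n$ given by the identity and by the shifted connecting maps — and then invoke \Cref{prop:2s-cart-closed-pi} and \Cref{prop:2s-cart-fun-pb-cones} (together with \Cref{thm:char-two-sid}) directly; this is cleaner formally but loses the explicit description of the lifts recorded in the statement.
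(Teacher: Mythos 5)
Your ``alternative'' at the end is in fact the paper's actual proof: the sequential limit is presented as a pullback of two dependent products over a third, and then \Cref{prop:2s-cart-closed-pi} and \Cref{prop:2s-cart-fun-pb-cones} are invoked, with the projection squares and the universal property obtained from \cite[Propositions~5.3.12 and 5.3.13]{BW21} and their duals. The paper even records in a footnote that this presentation was chosen deliberately ``because it circumvents dealing with unwieldy coherence data that would occur in different presentations of the sequential limit'' --- which is precisely the issue you flag at the end of your main argument.

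That main, componentwise argument is where the gap lies. You define $P_{\infty,!}(u_\infty,b_\infty,e_\infty)$ as the tuple of the componentwise cocartesian lifts and assert that these ``cohere along the tower'' because each $\kappa_n$ is a two-sided cartesian functor. But a cocartesian functor preserves cocartesian arrows only up to identification, not strictly, so the family $\bigl(P_{n,!}(u_n,b_n,e_n)\bigr)_n$ is not on the nose an element of $E_\infty^{\Delta^1}$ under whatever presentation of the limit carries the coherence data; assembling it into one requires transporting along an infinite tower of such identifications and checking their compatibility. Likewise, the claims that the fiber of $\varphi_\infty$ is the sequential limit of the fibers and that the contractibility conditions unwind into sequential limits of contractible types each presuppose a workable description of dependent arrows in $E_\infty$ over arrows of $A_\infty\times B_\infty$, which is exactly the bookkeeping you defer. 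None of this is unfixable, but as written the componentwise route is an outline with its hardest step left open, whereas the pullback-of-products route discharges everything by citing closure properties already proved. If you adopt that route as the primary argument (rather than a parenthetical), your proof matches the paper's.
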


\begin{proof}
	Using the closure properties from this chapter, we can argue along the lines of~\cite[Proposition~5.3.12]{BW21}.\footnote{We thank Ulrik Buchholtz for initially suggesting this proof in~\cite{BW21} because it circumvents dealing with unwieldy coherence data that would occur in different presentations of the sequential limit.} This means, the limit fibration, again is constructed via the pullback
	\[{\small \begin{tikzcd}
		{E_\infty} & {} & {\prod_{n:\mathbb N} E_{2n}} & {} \\
		& {\prod_{n:\mathbb N} E_{2n+1}} & {} & {\prod_{n:\mathbb N} E_{2n}} & {} \\
		{A_\infty \times B_\infty} && {\mathllap{\prod_{n:\mathbb N} A_{2n}} \times \prod_{n:\mathbb N} B_{2n}} \\
		& {\prod_{n:\mathbb N} A_{2n+1} \times \prod_{n:\mathbb N} B_{2n+1}} && {\mathllap{\prod_{n:\mathbb N} A_n} \times \prod_{n:\mathbb N} B_n} & {}
		\arrow[dashed, two heads, from=1-1, to=3-1]
		\arrow[two heads, from=1-3, to=3-3]
		\arrow[from=3-1, to=4-2]
		\arrow[shift right=2, dashed, from=1-1, to=2-2]
		\arrow[shorten >=41pt, from=4-2, to=4-4]
		\arrow[shorten >=45pt, from=3-1, to=3-3]
		\arrow[from=3-3, to=4-4]
		\arrow[two heads, from=2-4, to=4-4]
		\arrow[from=1-1, to=1-3]
		\arrow[from=1-3, to=2-4]
		\arrow["\lrcorner"{anchor=center, pos=0.125, rotate=45},draw=none, from=1-1, to=4-4, shift left=3]
		\arrow["\lrcorner"{anchor=center, pos=0.125, rotate=45}, draw=none, from=3-1, to=4-4]
		\arrow[from=2-2, to=2-4, crossing over]
		\arrow[two heads, from=2-2, to=4-2, crossing over]
	\end{tikzcd}}\]
	and is two-sided cartesian due to~\Cref{prop:2s-cart-fun-pb-cones}. From~\cite[Proposition~5.3.12]{BW21} and its dual we get that the projection squares are two-sided cartesian functors. The universal property is established using~\cite[Proposition~5.3.13]{BW21} and its dual.
\end{proof}

\begin{corollary}[Sequential limit cones in a slice are cocartesian functors]\label{prop:cocart-fun-seqlim-sl}
	Consider an inverse diagram of two-sided cartesian fibrations as below where all of the connecting squares are two-sided cartesian functors:
	\[\begin{tikzcd}
		&& \cdots && {E_\infty} \\
		\cdots & {E_2} && {E_1} && {E_0} \\
		& \cdots \\
		& {} && {A \times B}
		\arrow["{\kappa_0}"{description}, dashed, from=1-5, to=2-6]
		\arrow["{g_1}"{description, pos=0.7}, from=2-2, to=2-4]
		\arrow["{\kappa_1}"{description}, dashed, from=1-5, to=2-4]
		\arrow["{\kappa_2}"{description}, dashed, from=1-5, to=2-2]
		\arrow["{\varphi_1}"{description}, from=2-4, to=4-4]
		\arrow["{\varphi_2}"{description}, from=2-2, to=4-4]
		\arrow["{\varphi_0}"{description}, from=2-6, to=4-4]
		\arrow[dashed, from=1-5, to=4-4, "\varphi_\infty"]
		\arrow[from=2-1, to=2-2]
		\arrow["{g_0}"{description, pos=0.7}, from=2-4, to=2-6, crossing over]
	\end{tikzcd}\]
	Then the induced map $\pi_\infty: E_\infty \to A \times B$ between the limit types is a two-sided cartesian fibration, and the projection squares constitute two-sided cartesian functors.
	
	Furthermore, $\pi_\infty: E_\infty \fibarr A \times B$ together with the projection squares satisfies the universal property of a sequential limit, relativized to the basis $A \times B$.
\end{corollary}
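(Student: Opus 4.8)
The plan is to deduce this from the absolute statement \Cref{prop:cocart-fun-seqlim} by viewing the given tower as one over the \emph{constant} base tower. Concretely, I would instantiate \Cref{prop:cocart-fun-seqlim} with $A_n \defeq A$ and $B_n \defeq B$ for all indices $n$, likewise $A_\infty \defeq A$ and $B_\infty \defeq B$, and all base connecting maps $f_n$ and $\langle \alpha_n, \beta_n\rangle$ taken to be $\id_{A \times B}$. The hypothesis that each connecting square $\kappa_n$ is a fibered functor \emph{over $A \times B$} says precisely that its base component is $\id_{A \times B}$, which is a two-sided cartesian functor (e.g.\ as a fibered equivalence, or directly from the definition); so the hypotheses of \Cref{prop:cocart-fun-seqlim} are met. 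Since the sequential limit of the constant tower on $A \times B$ with identity transition maps is $A \times B$ itself, the resulting two-sided cartesian fibration $\varphi_\infty$ lives over $A \times B$ as required, and the projection squares — whose base components are again identities — are two-sided cartesian functors by the same proposition. Finally, the relativized universal property is exactly the instance of the absolute sequential-limit universal property applied to cones all of whose base legs are identities, i.e.\ to cones in the slice over $A \times B$.

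Alternatively — and this is the route that parallels the earlier sliced corollaries such as \Cref{prop:2s-cart-closed-pi-loc} and \Cref{prop:2s-cart-fun-pb-cones-sliced} — one can re-run the pullback construction underlying \Cref{prop:cocart-fun-seqlim}. There $E_\infty$ is exhibited as a pullback of $\prod_{n} E_{2n}$ and $\prod_{n} E_{2n+1}$ over $\prod_{n} E_n$; in the sliced situation one replaces each product by the fiberwise product $\times_{n}^{A \times B}(-)$, which is two-sided cartesian over $A \times B$ by \Cref{prop:2s-cart-closed-pi-loc}. The two structure maps of the defining cospan are, componentwise, either an identity functor or one of the connecting functors $g_n$, all of which are two-sided cartesian functors over $A \times B$; hence, by product-stability of two-sided cartesian functors, so are the induced maps between the fiberwise products, and the pullback of this cospan is two-sided cartesian by \Cref{prop:2s-cart-fun-pb-cones-sliced}. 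The commutation condition of \Cref{prop:comm-lifts} is then checked fiberwise, exactly as in the absolute case. That the projection squares are two-sided cartesian functors and that $\varphi_\infty$ has the relativized sequential-limit universal property follows by postcomposing with the two projections $A \times B \to A$ and $A \times B \to B$ and invoking the one-sided cocartesian statement \cite[Propositions~5.3.12 and~5.3.13]{BW21} together with its cartesian dual.

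The routine parts — identifying the limit of the constant tower, checking that the base components are identities, and running the fiberwise-product bookkeeping — carry no real difficulty. If anything is delicate it is only the matching of the two universal properties: one must confirm that a cone of two-sided cartesian functors over $A \times B$ is the same datum as an absolute cone whose base legs are all $\id_{A \times B}$, and that the mediating functor produced by \Cref{prop:cocart-fun-seqlim} is automatically fibered over $A \times B$. Both are immediate from the construction, so I expect no genuine obstacle here; the content of the corollary is entirely a specialization of \Cref{prop:cocart-fun-seqlim}.
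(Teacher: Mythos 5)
Your first route is exactly the paper's proof: the paper specializes \Cref{prop:cocart-fun-seqlim} to the constant base tower with identity transition maps and observes that the sequential limit of such a tower is $A \times B$ itself. The proposal is correct and takes essentially the same approach (your alternative second route is a reasonable variant but is not needed).
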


\begin{proof}
	The sequential limit of a diagram of identity maps is the object itself, \eg~
	\[ \seqlim_n \pair{B}{\id_B} \simeq \sum_{\sigma:\N \to B} \prod_{n,k:\N} \sigma(n) = \sigma(n+k) \simeq B.\]
	Thus, the claim follows from~\Cref{prop:cocart-fun-seqlim}.
\end{proof}

\subsection{Cotensors}

\begin{proposition}[Cocart.~fibrations are cotensored over maps/shape inclusions]\label{prop:2scart-fun-cotensor-maps}
	Let $P: A \to B \to \UU$ be a two-sided cartesian family with associated projection $\varphi \defeq \pair{\xi}{\pi}:E \fibarr A \times B$. For any type map or shape inclusion $j:Y \to X$, the maps $\varphi^X$ and $\varphi^Y$ are two-sided cartesian fibrations, and moreover the square
	\[\begin{tikzcd}
		{E^X} && {E^Y} \\
		{A^X \times B^X} && {A^Y \times B^Y}
		\arrow["{\varphi^X}"', two heads, from=1-1, to=2-1]
		\arrow[from=2-1, to=2-3]
		\arrow[from=1-1, to=1-3]
		\arrow["{\varphi^Y}", two heads, from=1-3, to=2-3]
	\end{tikzcd}\]
	is a two-sided cartesian functor.
\end{proposition}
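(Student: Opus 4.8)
The plan is to reduce the statement to the characterization \Cref{thm:char-two-sid} together with the cosmological closure properties \Cref{prop:cocart-cosm-closure} of one-sided co-/cartesian families and functors, and their duals. A preliminary remark first: Rezk types (isoinner families over $\unit$) are closed under cotensoring with shapes and types, so $A^X, B^X, A^Y, B^Y$ are again Rezk; hence $\varphi^X : E^X \to A^X\times B^X$ and $\varphi^Y$ do present two-variable families over Rezk bases, and the question of them being two-sided is well-posed. Now I would verify, clause by clause, the hypotheses of \Cref{thm:char-two-sid}, \Cref{it:char-two-sid-ii}, for $\varphi^X$. The map $\pi : E \fibarr B$ is cartesian, hence so is $\pi^X$ by the dual of \Cref{prop:cocart-cosm-closure}; $\varphi$ is a cartesian functor over $B$ (from $\pi$ to $q$), hence $\varphi^X$ is a cartesian functor over $B^X$ (from $\pi^X$ to $q^X$, the projection $A^X\times B^X\fibarr B^X$) by the same closure property. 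For the remaining clause — that $\varphi^X$ is a cocartesian fibration sliced over $B^X$ — I would go through \Cref{prop:char-fib-cocart-left}: being a cocartesian fibration sliced over $B$ is equivalent to $\langle\xi,\pi\rangle$ being a cocartesian functor over $A$ (from $\xi$ to $p:A\times B\fibarr A$) between cocartesian fibrations; cotensoring by $X$ and invoking closure of cocartesian fibrations and functors under cotensoring (\Cref{prop:cocart-cosm-closure}) gives that $\xi^X$ is cocartesian and $\varphi^X$ a cocartesian functor over $A^X$, so by \Cref{prop:char-fib-cocart-left} applied over $B^X$ the cotensored family is cocartesian on the left, i.e. $\varphi^X$ is a cocartesian fibration sliced over $B^X$.

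It remains to handle the commutation-of-lifts clause, i.e. the final condition in \Cref{it:char-two-sid-ii}: the sliced cocartesian lifting map
\[ \chi_{B^X} : \VertArr_{q^X}(A^X\times B^X)\times_{A^X\times B^X} E^X \to_{B^X} \VertArr_{\pi^X}(E^X) \]
must be a cartesian functor. The key point is that $X$-cotensoring commutes with the sliced vertical-arrow and comma constructions (assembled from pullbacks and further cotensors, which cotensoring preserves) and carries the fibered LARI adjunction witnessing sliced cocartesianness for $\varphi$ to the one for $\varphi^X$; thus the sliced Leibniz cotensor $i_0\cotens_{B^X}\varphi^X$ is identified with $(i_0\cotens_B\varphi)^X$ and $\chi_{B^X}$ with $(\chi_B)^X$, where $\chi_B$ is the lifting map attached to $\varphi$ by \Cref{thm:sl-cocart-fam-char}. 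Since $P$ is two-sided, $\chi_B$ is a cartesian functor by \Cref{thm:char-two-sid}; as cartesian functors are stable under cotensoring (dual of \Cref{prop:cocart-cosm-closure}), $\chi_{B^X}\simeq(\chi_B)^X$ is cartesian, whence $\varphi^X$ is two-sided. (Alternatively, once one knows that the co-/cartesian lifts and the comparison filler $\kappa$ of \Cref{lem:comp-cart-cocart-transp} in the cotensored family are computed pointwise from those of $P$, the commutation clause of \Cref{def:2s-cart} for $\varphi^X$ follows at once, $\kappa$ being pointwise invertible.) Running the same argument with $Y$ in place of $X$ shows $\varphi^Y$ is two-sided.

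Finally, for the functoriality square attached to $j:Y\to X$: naturality of $\varphi$ places $E^j:E^X\to E^Y$ over $A^j\times B^j$, and its left component — the square $\langle E^j, A^j\rangle$ from $\xi^X$ to $\xi^Y$ — is a cocartesian functor by \Cref{prop:cocart-cosm-closure} (stability of cocartesian functors under cotensoring with maps, a special case of closure under Leibniz cotensors), while dually its right component $\langle E^j, B^j\rangle$ from $\pi^X$ to $\pi^Y$ is a cartesian functor. By definition $\langle E^j, A^j\times B^j\rangle$ is therefore a two-sided cartesian functor, which completes the proof. The main obstacle is the compatibility claimed at the start of the second paragraph — that $X$-cotensoring commutes with the sliced $\VertArr$ and comma constructions and sends the fibered LARI of \Cref{thm:sl-cocart-fam-char} to the analogous fibered LARI for $\varphi^X$; granting that, the result is a formal consequence of the one-sided cotensor-closure properties and \Cref{thm:char-two-sid}.
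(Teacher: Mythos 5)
Your argument is correct, but for the existence half it takes a longer road than the paper. The paper simply observes that the cotensor $\varphi^X \simeq \prod_{x:X}\varphi$ is the dependent product of a constant family of two-sided cartesian fibrations, so two-sidedness of $\varphi^X$ and $\varphi^Y$ is an immediate instance of the already-established closure under dependent products (\Cref{prop:2s-cart-closed-pi}); you instead re-verify the clauses of \Cref{thm:char-two-sid}, \Cref{it:char-two-sid-ii}, by hand, which in effect re-runs the proof of that closure property in the special case at hand. The step you flag as the main obstacle---that cotensoring commutes with the sliced $\VertArr$ and comma constructions and carries the fibered LARI $\chi_B$ to $\chi_{B^X}$---is genuine but routine, and the appendix already supplies the ingredients (\Cref{prop:dep-prod-comm-sl-commas}, \Cref{prop:fib-lari-pres-by-sl-prod}, plus right properness); your alternative remark that the lifts and the comparison filler of \Cref{lem:comp-cart-cocart-transp} are computed pointwise also closes this cleanly. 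For the functoriality of the square attached to $j$ you do exactly what the paper does: postcompose with the projections to the $A$- and $B$-components and invoke the one-sided cotensor closure of cocartesian functors and its dual. The paper's route buys brevity by reusing a stronger result already in hand; yours buys an explicit check that the characterization-theorem data behave well under cotensoring, at the cost of redoing work.
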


\begin{proof}
	By closedness under products, the maps $\varphi^X$, $\varphi^Y$ are two-sided cartesian fibrations. From~\cite[Proposition~5.3.15]{BW21}, we know that the square formed by the composites
	\[\begin{tikzcd}
		{E^X} && {E^Y} \\
		{A^X \times B^X} && {A^Y \times B^Y} \\
		{A^X} && {A^Y}
		\arrow["{\varphi^X}"', two heads, from=1-1, to=2-1]
		\arrow[from=2-1, to=2-3]
		\arrow[from=1-1, to=1-3]
		\arrow["{\varphi^Y}", two heads, from=1-3, to=2-3]
		\arrow[two heads, from=2-1, to=3-1]
		\arrow[two heads, from=2-3, to=3-3]
		\arrow["{\xi^X}"{description}, curve={height=40pt}, two heads, from=1-1, to=3-1]
		\arrow["{\xi^Y}"{description}, curve={height=-40pt}, two heads, from=1-3, to=3-3]
		\arrow[from=3-1, to=3-3]
	\end{tikzcd}\]
	is a cocartesian functor (since by precondition $\xi:E \fibarr A$ is a cocartesian fibration). The cartesian case over $B$ works the same.
\end{proof}

\begin{corollary}[Cocartesian fibrations in a slice are cotensored over maps/shape inclusions]\label{prop:2scart-fun-cotensor-maps-sl}
	Let $P: A \to B \to \UU$ be a two-sided cartesian family with associated projection $\varphi \defeq \pair{\xi}{\pi}:E \fibarr A \times B$. For any type map or shape inclusion $j:Y \to X$, the maps $X \iexp \varphi$ and $Y \iexp \varphi$ are two-sided cartesian fibrations, and moreover the triangle
	\[\begin{tikzcd}
		{X \iexp E} && {Y \iexp E} \\
		& {A \times B}
		\arrow["j \iexp \varphi", from=1-1, to=1-3]
		\arrow["{X \iexp \varphi}"', two heads, from=1-1, to=2-2]
		\arrow["{Y \iexp \varphi}", two heads, from=1-3, to=2-2]
	\end{tikzcd}\]
	is a two-sided cartesian functor.
\end{corollary}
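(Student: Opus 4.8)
The plan is to derive this corollary from the non-sliced \Cref{prop:2scart-fun-cotensor-maps} by pulling everything back along the constant-diagram map, just as the earlier sliced corollaries in this section are obtained from their absolute counterparts. Recall that, by definition of the sliced cotensor, $X \iexp E$ is the pullback of $\varphi^X : E^X \fibarr (A\times B)^X$ along the constant map $\mathrm{cst} : A\times B \to (A\times B)^X$, and since $(A\times B)^X \simeq A^X\times B^X$ (and powers of Rezk types by shapes or types are again Rezk) this map is the product $\mathrm{cst}_A\times\mathrm{cst}_B$ of the constant maps on the two Rezk factors. By \Cref{prop:2scart-fun-cotensor-maps} the fibration $\varphi^X$ is two-sided cartesian, so \Cref{prop:2s-cart-closed-pb} (pullback stability along products of maps) yields that $X \iexp \varphi$ is two-sided cartesian; the same argument applies verbatim to $Y \iexp \varphi$.

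For the triangle, note first the strict identity $(A\times B)^j \circ \mathrm{cst} = \mathrm{cst} : A\times B \to (A\times B)^Y$, that is, a constant diagram restricted along $j : Y \to X$ is again constant. Hence the fibered functor $j \iexp \varphi : X\iexp E \to_{A\times B} Y\iexp E$ is precisely the map obtained by pulling back the square of \Cref{prop:2scart-fun-cotensor-maps} (whose bottom edge is $(A\times B)^j$) along $\mathrm{cst}$. Concretely, under projection equivalence $X\iexp E \simeq \sum_{\langle a,b\rangle} (X\to P(a,b))$ and $Y\iexp E \simeq \sum_{\langle a,b\rangle} (Y\to P(a,b))$, and $j\iexp\varphi$ is precomposition with $j$.

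It then remains to verify that $j\iexp\varphi$ is a two-sided cartesian functor over $A\times B$, \ie~that its left leg (over $A$) is a cocartesian functor and its right leg (over $B$) is a cartesian functor. The two legs are co-, resp.\ cartesian fibrations, being the legs of the two-sided cartesian fibrations $X\iexp\varphi$ and $Y\iexp\varphi$. For the left leg: by the cocartesian-on-the-left structure of $\varphi^X$ (\cf~\Cref{prop:char-fib-cocart-left} and the proof of \Cref{prop:2scart-fun-cotensor-maps}), the cocartesian lift of $u:a\to_A a'$ at $\langle b, h:X\to P(a,b)\rangle$ in $X\iexp E$ is a $\pi$-vertical $\varphi^X$-cocartesian arrow, hence computed pointwise in $X$, namely $x\mapsto P_!(u,b,h(x))$; its image under $j\iexp\varphi$, obtained by restricting along $j$, is $x'\mapsto P_!(u,b,h(j\,x'))$, which is exactly the corresponding cocartesian lift in $Y\iexp E$ at $\langle b, h\circ j\rangle$. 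So the left leg preserves cocartesian arrows; dually, using cartesianness on the right and the pointwise computation of cartesian lifts, the right leg preserves cartesian arrows. (Equivalently: each leg is the pullback along $\mathrm{cst}$ of the corresponding one-sided co-, resp.\ cartesian-functor square of \Cref{prop:2scart-fun-cotensor-maps}, and one invokes pullback stability of co-/cartesian functors, \cf~\cite[Subsection~5.3.3]{BW21}.) I expect the only mildly delicate point to be this last bookkeeping: the functor square of \Cref{prop:2scart-fun-cotensor-maps} lives over the varying bases $(A\times B)^X$ and $(A\times B)^Y$ rather than over $A\times B$, so the fixed-base pullback-stability \Cref{prop:2s-cart-fun-pb} for two-sided cartesian functors does not apply directly, and one must pass through the two legs (or establish a varying-base version of that stability) to transport the functor statement into the slice.
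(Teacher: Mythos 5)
Your proof is correct and follows exactly the route the paper intends: the statement is given as a corollary with no written proof, and the standard derivation (as for the other sliced corollaries in this section, e.g.\ the sliced product version \Cref{prop:2s-cart-closed-pi-loc}) is precisely your pullback of \Cref{prop:2scart-fun-cotensor-maps} along the constant-diagram map, using $(A\times B)^X \simeq A^X \times B^X$ and \Cref{prop:2s-cart-closed-pb}. Your closing observation --- that the fixed-base stability result \Cref{prop:2s-cart-fun-pb} does not directly apply to the varying-base functor square of \Cref{prop:2scart-fun-cotensor-maps}, so one must either pass through the two legs with the pointwise description of the lifts or use the one-sided pullback-stability results from \cite[Subsection~5.3.3]{BW21} --- is accurate and fills in a piece of bookkeeping the paper elides.
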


\begin{proposition}[Cocart.~functors are closed under Leibniz cotensors]\label{prop:2scart-fun-leibniz}
	Let $j: Y \to X$ be a type map or shape inclusion. Then, given two-sided cartesian fibrations $\psi: F \fibarr A \times B$, $\varphi: E \fibarr C \times D$, and a cocartesian functor
	\[\begin{tikzcd}
		F && E \\
		{A \times B} && {C \times D}
		\arrow["\psi"', two heads, from=1-1, to=2-1]
		\arrow["{\pair{k}{m}}"', from=2-1, to=2-3]
		\arrow["\mu", from=1-1, to=1-3]
		\arrow["\varphi", two heads, from=1-3, to=2-3]
	\end{tikzcd}\]
	the square induced between the Leibniz cotensors
	\[\begin{tikzcd}
		{F^X} && {F^Y\times_{E^Y} E^X} \\
		{(A \times B)^X} && {(A \times B)^X \times_{(A \times B)^Y} (C \times D)^X}
		\arrow[two heads, from=1-3, to=2-3]
		\arrow["{j \cotens \mu}", from=1-1, to=1-3]
		\arrow["{j \cotens \langle k,m \rangle }"', from=2-1, to=2-3]
		\arrow["{\psi^X}"', two heads, from=1-1, to=2-1]
	\end{tikzcd}\]
	is a cocartesian functor.
\end{proposition}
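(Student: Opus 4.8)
The plan is to follow the pattern already used in the proof of \Cref{prop:2scart-fun-cotensor-maps} (itself an adaptation of \cite[Section~7.2]{RV21}): reduce the two-sided assertion to the corresponding \emph{one-sided} closure statement for co-/cartesian functors under Leibniz cotensors --- the relevant clauses of \Cref{prop:cocart-cosm-closure} (i.e.\ \cite[Proposition~5.3.17]{BW21}) and its dual --- by testing the two-sided structure separately on the two legs of everything in sight.

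\textbf{Step 1: the square is a well-posed candidate.} First I would record that both the source and the target of $j \cotens \mu$ are again two-sided cartesian fibrations over the appropriate product bases. The source $F^X \fibarr (A\times B)^X \simeq A^X \times B^X$ is two-sided cartesian by \Cref{prop:2scart-fun-cotensor-maps}. The target $F^Y \times_{E^Y} E^X$ is a pullback of the cospan $\psi^Y \xleftarrow{\mu^Y} \cdots$, namely of $\mu^Y$ and of the map $E^X \to E^Y$ induced by $j$; both of these are two-sided cartesian functors (the latter by \Cref{prop:2scart-fun-cotensor-maps}, the former by the one-sided closure of co-/cartesian functors under cotensoring with $Y$, applied to the two legs of $\mu$). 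Hence the target is two-sided cartesian over $(A\times B)^Y \times_{(C\times D)^Y} (C\times D)^X$ by \Cref{prop:2s-cart-fun-pb-cones}, and this also supplies the base map $j \cotens \pair{k}{m}$.

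\textbf{Step 2: leg-wise reduction.} Since $\psi$ and $\varphi$ are two-sided cartesian, their left-leg projections $\xi_F : F \fibarr A$ and $\xi_E : E \fibarr C$ are cocartesian fibrations, and $\mu$ restricts to a cocartesian functor over $k : A \to C$. Because forming exponentials by $X$ and $Y$, forming pullbacks, and passing to a factor of a product all commute, the left-leg projection of the Leibniz-cotensor square of $\mu$ is precisely the one-sided Leibniz cotensor along $j$ of the cocartesian functor $\xi_F \to \xi_E$; by the Leibniz-cotensor clause of \Cref{prop:cocart-cosm-closure} this is a cocartesian functor. Dually, the right-leg projections $\pi_F : F \fibarr B$ and $\pi_E : E \fibarr D$ are cartesian fibrations, $\mu$ restricts to a cartesian functor over $m : B \to D$, and the right-leg projection of our square is the Leibniz cotensor of that cartesian functor, which is cartesian by the dual of \Cref{prop:cocart-cosm-closure}. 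Having a square over $A^X \times B^X$ that is cocartesian after projecting to $A^X$ and cartesian after projecting to $B^X$ is exactly the definition of a two-sided cartesian functor; in particular $j \cotens \mu$ is a cocartesian functor, as claimed.

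\textbf{Main obstacle.} The only genuinely fiddly point is the bookkeeping in Step~2: one must identify the base of the codomain pullback correctly under the equivalence $(A\times B)^? \simeq A^? \times B^?$ and check that ``project to a leg'' commutes on the nose with ``Leibniz cotensor along $j$'' and ``pullback'', so that the one-sided results of \cite{BW21} apply verbatim. This is a routine manipulation of the defining pullback squares of $F^Y \times_{E^Y} E^X$ and of the exponentials of a product; once it is carried out, no new lifting computation is required beyond what is already packaged in \Cref{prop:cocart-cosm-closure}, \Cref{prop:2scart-fun-cotensor-maps}, and \Cref{prop:2s-cart-fun-pb-cones}.
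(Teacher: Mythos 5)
Your proposal is correct, and it proves the statement by a route that differs from the paper's in the decisive step. The paper's proof is a one-line appeal to the two-sided analogue of \cite[Proposition~5.3.16]{BW21}: the Leibniz cotensor $j\cotens\mu$ is the map induced into the pullback $F^Y\times_{E^Y}E^X$ by the two cospan legs $F^X\to F^Y$ and $\mu^X:F^X\to E^X$, each of which is a two-sided cartesian functor by \Cref{prop:2scart-fun-cotensor-maps}; the terminal universal property of the pullback cone with respect to two-sided cartesian functors (\Cref{prop:2s-cart-fun-pb-cones}) then yields the claim directly at the two-sided level. You instead establish that source and target are two-sided fibrations (using the same two propositions) and then verify the functoriality condition leg-wise: after the identification $(A\times B)^?\simeq A^?\times B^?$, the left-leg projection of your square is literally the one-sided Leibniz-cotensor square of the cocartesian functor $\xi_F\to\xi_E$, so the Leibniz clause of \Cref{prop:cocart-cosm-closure} applies, and dually on the right; since a two-sided cartesian functor is by definition one that is cocartesian on the left legs and cartesian on the right legs, this suffices. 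Both arguments are sound and rest on the same underlying facts (the one-sided Leibniz closure in \cite{BW21} is itself proved by cotensor-plus-pullback). The paper's route avoids the commutation bookkeeping you flag as the main obstacle, while yours has the virtue of quoting the one-sided result verbatim rather than re-running its proof scheme at the two-sided level. One minor point in your favour: you silently correct the base of the codomain in the displayed square to $(A\times B)^Y\times_{(C\times D)^Y}(C\times D)^X$, which is what the Leibniz construction actually produces.
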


\begin{proof}
	This works, again, analogously to~\cite[Proposition~5.3.16]{BW21}, using~\Cref{prop:2scart-fun-cotensor-maps}, and then~\Cref{prop:2s-cart-fun-pb-cones}.
\end{proof}

\begin{corollary}[Cocartesian functors in a slice are closed under Leibniz cotensors]\label{prop:2scart-fun-leibniz-sl}
	Let $j: Y \to X$ be a type map or shape inclusion. Then, given two-sided cartesian fibrations $\psi: F \fibarr A \times B$, $\varphi: E \fibarr A \times B$, and a two-sided cartesian functor
	\[\begin{tikzcd}
		F && E \\
		& {A \times B}
		\arrow["\kappa", from=1-1, to=1-3]
		\arrow["\psi"', two heads, from=1-1, to=2-2]
		\arrow["\varphi", two heads, from=1-3, to=2-2]
	\end{tikzcd}\]
	the square induced between the Leibniz cotensors
	\[\begin{tikzcd}
		{X \iexp F} &&&& {Y \iexp F \times_{Y \iexp E} X \iexp F} \\
		&& {A \times B}
		\arrow["{j \cotens_{A \times B} \kappa}", from=1-1, to=1-5]
		\arrow[two heads, from=1-1, to=2-3]
		\arrow[two heads, from=1-5, to=2-3]
	\end{tikzcd}\]
	is a two-sided functor.
\end{corollary}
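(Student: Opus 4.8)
The plan is to run the argument of \Cref{prop:2scart-fun-leibniz} in the sliced setting, replacing each ingredient by its relative counterpart. By definition the sliced Leibniz cotensor $j \cotens_{A \times B} \kappa$ is the canonical comparison map from $X \iexp F$ into the pullback over $A \times B$ of the cospan
\[ Y \iexp F \xrightarrow{\, Y \iexp \kappa \,} Y \iexp E \xleftarrow{\, j \iexp \varphi \,} X \iexp E, \]
so it suffices to exhibit this comparison map as the mediating functor supplied by the terminal universal property of \Cref{prop:2s-cart-fun-pb-cones-sliced}, applied to a cone assembled out of two-sided cartesian functors over $A \times B$.

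First I would apply \Cref{prop:2scart-fun-cotensor-maps-sl} to both $\varphi$ and $\psi$: the sliced cotensors $X \iexp \varphi,\ Y \iexp \varphi,\ X \iexp \psi,\ Y \iexp \psi$ are two-sided cartesian fibrations over $A \times B$, and the sliced cotensors of $j$, namely $j \iexp \psi : X \iexp F \to_{A \times B} Y \iexp F$ and $j \iexp \varphi : X \iexp E \to_{A \times B} Y \iexp E$, are two-sided cartesian functors. Next I would verify that cotensoring the given two-sided cartesian functor $\kappa$ by a type or shape again yields two-sided cartesian functors $X \iexp \kappa : X \iexp \psi \to_{A \times B} X \iexp \varphi$ and $Y \iexp \kappa : Y \iexp \psi \to_{A \times B} Y \iexp \varphi$. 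This is proved exactly as \Cref{prop:2scart-fun-cotensor-maps-sl}: compose with the projections to $A$ and to $B$ and invoke \cite[Proposition~5.3.15]{BW21} for the cocartesian-on-the-left leg and its dual for the cartesian-on-the-right leg, using that being a two-sided cartesian functor is a pointwise condition on arrows and that co-/cartesian lifts in a sliced cotensor are computed pointwise in the exponent.

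Finally I would apply \Cref{prop:2s-cart-fun-pb-cones-sliced} to the cospan of two-sided cartesian functors $Y \iexp \kappa$ and $j \iexp \varphi$ over the common base $A \times B$. This gives that $Y \iexp F \times_{Y \iexp E} X \iexp E$ is a two-sided cartesian fibration over $A \times B$, that its two projection squares are two-sided cartesian functors, and that it satisfies the terminal universal property with respect to cones of two-sided cartesian functors over $A \times B$. The cone consisting of $j \iexp \psi : X \iexp F \to_{A \times B} Y \iexp F$ and $X \iexp \kappa : X \iexp F \to_{A \times B} X \iexp E$ is made of two-sided cartesian functors and is compatible over $Y \iexp \varphi$, since the two composites $\bigl(Y \iexp \kappa\bigr) \circ \bigl(j \iexp \psi\bigr)$ and $\bigl(j \iexp \varphi\bigr) \circ \bigl(X \iexp \kappa\bigr)$ agree (naturality of the cotensor in the functor and in the shape map). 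The induced mediating functor into the pullback is precisely $j \cotens_{A \times B} \kappa$, which is therefore two-sided cartesian, as claimed.

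The main obstacle is the middle step: one has to be careful that the cotensor action $(-) \iexp (-)$ preserves two-sided cartesian \emph{functors}, not merely fibrations, over a fixed base. Concretely this reduces to checking that the cocartesian-on-the-left and cartesian-on-the-right lifts in $X \iexp E$ are obtained from those of $E$ pointwise over $X$, so that preservation of such lifts by $\kappa$ transfers to $X \iexp \kappa$; this is routine given the explicit descriptions of the lifts used throughout \cite{BW21} and the present paper, but should be recorded explicitly rather than assumed. Everything else is a formal consequence of the universal properties already in place.
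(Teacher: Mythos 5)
Your proposal is correct and follows essentially the same route the paper intends: the paper leaves this corollary without an explicit proof, but its proof of the absolute version (\Cref{prop:2scart-fun-leibniz}) is precisely ``apply the cotensor closure result, then the pullback-cone universal property,'' and you carry out the sliced analogue using \Cref{prop:2scart-fun-cotensor-maps-sl} and \Cref{prop:2s-cart-fun-pb-cones-sliced}. Your explicit flagging of the middle step --- that $X \iexp \kappa$ and $Y \iexp \kappa$ are again two-sided cartesian functors because the lifts in a sliced cotensor are computed pointwise in the exponent --- is a genuine point the paper glosses over, and (as a minor aside) you correctly read the pullback in the statement as $Y \iexp F \times_{Y \iexp E} X \iexp E$ rather than the typo printed there.
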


In sum, we obtain a synthetic analogue of the cosmological closure properties of two-sided cartesian fibrations, \wrt~varying as well as a fixed base (\cf~\cite[Theorem~7.1.4, Proposition~7.1.7, and Definition~7.2.1]{RV21}):
\begin{theorem}[(Sliced) cosmological closure properties of two-sided cartesian families]\label{thm:2scart-cosm-closure}
	Over Rezk bases, it holds that:
	
	Two-sided cartesian families are closed under composition, dependent products, pullback along arbitrary maps, and cotensoring with maps/shape inclusions. Families corresponding to equivalences or terminal projections are always cocartesian.
	
	Between two-sided cartesian families over Rezk bases, it holds that:
	Two-sided cartesian functors are closed under (both horizontal and vertical) composition, dependent products, pullback, sequential limits,\footnote{all three objectwise limit notions satisfying the expected universal properties \wrt~to cocartesian functors} and Leibniz cotensors.
	
	Fibered equivalences and fibered functors into the identity of $\unit$ are always cocartesian.
	
	Furthermore, all of this is analogously true~\wrt~two-sided cartesian families over the same base and applying sliced versions of the constructions,
\end{theorem}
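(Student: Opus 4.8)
The plan is to observe that \Cref{thm:2scart-cosm-closure} is merely a consolidated restatement of the closure properties established throughout \Cref{sec:2s-cart} and \Cref{sec:2s-cart-fun}, supplemented by a few immediate base cases; so the proof is an exercise in bookkeeping rather than new mathematics. For the clauses concerning two-sided cartesian \emph{families} over varying Rezk bases: closure under (span) composition is the span-composition proposition above, closure under dependent products is \Cref{prop:2s-cart-closed-pi}, closure under pullback along arbitrary maps is \Cref{prop:2s-cart-closed-pb}, and closure under cotensoring with a type map or shape inclusion is \Cref{prop:2scart-fun-cotensor-maps}. The two base cases I would verify directly: if the unstraightening $\langle \xi,\pi\rangle : E \to A \times B$ of $P$ is an equivalence, then both legs are equivalences, hence trivially a cocartesian and a cartesian fibration whose co-/cartesian lifts are identities, so the commutation condition of \Cref{def:2s-cart} holds vacuously; and the constant family $\lambda a\, b.\, \unit$ unstraightens to $\id_{A \times B}$, which is two-sided cartesian for the same reason.

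For the clauses concerning two-sided cartesian \emph{functors} over varying bases, I would cite: \Cref{prop:2s-fun-comp} for closure under (horizontal and vertical) composition, the functor part of \Cref{prop:2s-cart-closed-pi} for dependent products together with the terminality of the product cone, \Cref{prop:2s-cart-fun-pb} and \Cref{prop:2s-cart-fun-pb-cones} for pullback, \Cref{prop:cocart-fun-seqlim} for sequential limits (with the expected universal property), and \Cref{prop:2scart-fun-leibniz} for Leibniz cotensors. The remaining base cases are immediate: a fibered equivalence $P \simeq_{A\times B} Q$ preserves $P_B$-cocartesian and $P_A$-cartesian arrows because an equivalence preserves and reflects the defining contractibility conditions (equivalently, by \Cref{thm:char-cocart-fun} its mates stay invertible), and any fibered functor whose codomain is the terminal family $\lambda a\, b.\, \unit$ preserves co-/cartesian arrows since every arrow of $\unit$ is an isomorphism.

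Finally, the \emph{sliced} statements---over a fixed base $A \times B$---are obtained by citing the local/sliced counterparts proved in tandem: \Cref{prop:2s-fun-comp-sl}, \Cref{prop:2s-cart-closed-pi-loc}, \Cref{prop:2s-cart-fun-pb-cones-sliced}, \Cref{prop:cocart-fun-seqlim-sl}, \Cref{prop:2scart-fun-cotensor-maps-sl}, and \Cref{prop:2scart-fun-leibniz-sl}; sliced pullback-closure for families is the special case of \Cref{prop:clos-sl-cocart-fib-pb} accessed through the characterization \Cref{thm:char-two-sid}. I expect no real obstacle, since all the underlying work is already done; the only point needing a little care is to confirm in each clause that the universal property asserted here (``terminal with respect to two-sided cartesian functors'') is exactly the one extracted in the cited proposition, and that each sliced statement is a genuine instance of the constructions over $A \times B$ rather than merely an analogue---which I would do by invoking, case by case, the projection-equivalence descriptions of the relevant total types used in those proofs.
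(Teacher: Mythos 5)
Your proposal is correct and matches the paper's intent exactly: the paper gives no separate proof of \Cref{thm:2scart-cosm-closure}, presenting it as a summary ("In sum, we obtain\ldots") of the propositions established in \Cref{sec:2s-cart,sec:2s-cart-fun}, and your citations for each clause (span composition, \Cref{prop:2s-cart-closed-pi,prop:2s-cart-closed-pb,prop:2scart-fun-cotensor-maps,prop:2s-fun-comp,prop:2s-cart-fun-pb,prop:2s-cart-fun-pb-cones,prop:cocart-fun-seqlim,prop:2scart-fun-leibniz} and their sliced counterparts) are the right ones. The direct verifications you add for the equivalence and terminal-projection base cases are sound and fill in the only steps the paper leaves entirely implicit.
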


\section{Two-sided Yoneda Lemma}\label{sec:2s-yon}

In this section we establish a two-sided Yoneda Lemma in fibrational form. It constitutes a type-theoretic version of~\cite[Theorem~7.3.2]{RV21}, after the previous versions for the discrete case~\cite[Theorems~9.1 and Theorem~9.5]{RS17} and the one-sided cocartesian case~\cite[Theorems~7.2.2 and~7.2.3]{BW21}. As explained in the aforementioned sources, in the type-theoretic context, it can be understood as hom-type induction principle for two-sided cartesian type families, analogous to path induction for identity types, \cf~\eg~\cite[Section~1.12.1]{hottbook}, \cite[Section~5.1]{RijIntro}, ~\cite[September~15:~Identity Types]{RieHoTT}.

\subsection{Two-sided cartesian sections}

\begin{definition}[Two-sided cartesian sections]
	Let $P:A \times B \to \UU$ be a two-sided family with associated cocartesian fibration $\xi: E \fibarr A$ and cartesian fibration $\pi: E \fibarr B$, resp.
	
	A section $\sigma: \prod_{\substack{a:A \\ b:B}} P(a,b)$ is \emph{two-sided cartesian} if it maps pairs $\pair{u}{\id_b}$ to $\xi$-cocartesian sections and $\pair{\id_a}{v}$ to $\pi$-cartesian sections, \ie:~for all arrows $u:\Delta^1 \to A$, $v:\Delta^1 \to B$ and elements $a:A$, $b:B$ the dependent arrow $\sigma(u,\id_b) : \prod_{t:\Delta^1} P(u(t),b)$ is $\xi$-cocartesian while $\sigma(\id_a,v): \prod_{t:\Delta^1} P(a,v(t))$ is $\pi$-cartesian.
	
	Note that this yields a proposition, and the (sub-)type of such sections is denoted by
	\[ \prod_{\substack{a:A\\b:B}}^\tscart P(a,b) \cofibarr  \prod_{\substack{a:A\\b:B}} P(a,b) . \]
\end{definition}

Of central importance will be the following map. We fix a two-sided family $P:A \times B \to \UU$, and assume $a:A$ to be initial and $b:B$ to be terminal. We then define
\[ \yon: P(a,b) \to \prod_{A \times B} P, \quad \yon \defeq \lambda d,x,y. (\emptyset_x)_!((!_y)^* d). \]
Note that by two-sidedness of $P$ we have a path
\[ \yon(d)(x,y) = (\emptyset_x)_!((!_y)^* b) = (!_y)^*((\emptyset_x)_! d). \]

In the following, we first show that $\yon$ is, in fact, valued in two-sided cartesian sections. We then conclude that it is a quasi-inverse of the evaluation map, constituting a quasi-equivalence:
\[\begin{tikzcd}
	{\prod_{A \times B}^\tscart P} && {P(a,b)}
	\arrow[""{name=0, anchor=center, inner sep=0}, "{\ev_{\langle a,b \rangle}}"', curve={height=12pt}, from=1-1, to=1-3]
	\arrow[""{name=1, anchor=center, inner sep=0}, "\yon"', curve={height=12pt}, from=1-3, to=1-1]
	\arrow["\simeq"{description}, Rightarrow, draw=none, from=1, to=0]
\end{tikzcd}\]
Finally, the Yoneda Lemmas will follow as instances from this.

\begin{proposition}
	Let $P:A \times B \to \UU$ be a two-sided family. Assume $a:A$ is initial and $b:B$ is terminal. Then for all $d:P(a,b)$, the section $\yon\,d:\prod_{A \times B} P$ is two-sided cartesian. 
\end{proposition}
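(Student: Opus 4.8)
The plan is to verify the two defining conditions of a two-sided cartesian section directly for $\yon\,d$: that it carries arrows of the form $\pair{u}{\id_{y}}$ to $\xi$-cocartesian dependent arrows, and arrows of the form $\pair{\id_{x}}{v}$ to $\pi$-cartesian dependent arrows. By the symmetry between the two sides (swapping the roles of $A$ and $B$, initial and terminal, cocartesian and cartesian, dualizing everything), it suffices to treat one of them — say the cocartesian condition — and the other follows by the dual argument; I would remark this explicitly rather than write both out.

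For the cocartesian condition: fix $u : x \to_A x'$ and $y : B$. Using the two formula for $\yon$ together with the identity
\[ \yon(d)(x,y) = (\emptyset_x)_!((!_y)^* d) = (!_y)^*((\emptyset_x)_! d) \]
noted just before the statement, I would first reduce the claim to a statement purely about the cocartesian family $P_B : A \to \UU$ (equivalently $\xi : E \fibarr A$) at the fixed second coordinate $y$. Concretely, using the description $\yon(d)(x,y)=(\emptyset_x)_!((!_y)^*d)$, the dependent arrow $\yon(d)(u,\id_y)$ over $u$ is obtained by pushing the element $(!_y)^*d : P(a,y)$ along the composite triangle in $A$ witnessing $\emptyset_{x'} = u \circ \emptyset_x$ (which exists and is unique, since $a$ is initial). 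Thus $\yon(d)(u,\id_y)$ is — up to the canonical identification coming from that unique triangle — exactly the $P_B$-cocartesian transport of the cocartesian lift $P_!(\emptyset_x, (!_y)^*d)$ along $u$; a composite of a cocartesian arrow with the cocartesian filler over $u$ is again cocartesian, by the closure of cocartesian arrows under composition (the $2$-of-$3$-type property established for cocartesian arrows, cf.~the cocartesian filler machinery of \cite[Section~5]{BW21}). Hence $\yon(d)(u,\id_y)$ is $\xi$-cocartesian, which is what we needed. The dual argument, using $\yon(d)(x,y)=(!_y)^*((\emptyset_x)_!d)$ and the uniqueness of the triangle $!_y = !_{y'} \circ v$ in $B$ (terminality of $b$), shows $\yon(d)(\id_x,v)$ is $\pi$-cartesian.

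The main obstacle I anticipate is bookkeeping the coherences: the two expressions for $\yon(d)(x,y)$ agree only via a path supplied by two-sidedness (condition~\ref{it:lifts-commute} of \Cref{def:2s-cart}), so when I want to conclude cocartesianness of $\yon(d)(u,\id_y)$ I must be careful that the identification I transport along is the one induced by the \emph{unique} $2$-simplex in $A$, and similarly check that the two-sided compatibility isomorphism is compatible with these transports. This is exactly the kind of argument where one wants to organize things as: (i) cocartesian lifts in a cocartesian family are determined up to homotopy by their source (\cite[Proposition~5.1.3]{BW21}), so any arrow over $u$ with the right source that is built from cocartesian data is forced to agree with the genuine cocartesian transport; (ii) being a cocartesian arrow is a proposition, so once we have produced \emph{some} identification with a cocartesian arrow we are done. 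Structuring the proof around (i) and (ii) sidesteps the need to manipulate the higher coherence cells explicitly. A clean illustration of the relevant $2$-simplices in $A$ and the induced square in $\totalty{P}$ would be worth including.
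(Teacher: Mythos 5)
Your proposal is correct and follows essentially the same route as the paper: treat one of the two dual conditions, recognize $\yon\,d(u,\id_y)$ as the filler between the cocartesian lifts of the initial arrows $\emptyset_x$ and $\emptyset_{x'} = u \circ \emptyset_x$ (the paper packages this as the naturality square of the $2$-cell from $\lambda x,y.(!_y)^*d$ to $\yon\,d$, with an identity on top and cocartesian lifts on the sides), and conclude that the bottom edge is cocartesian. The only nitpick is that the operative fact is \emph{right cancellation} of cocartesian arrows (if $gf$ and $f$ are cocartesian then so is $g$), not ``closure under composition''; your parenthetical appeal to the two-of-three property shows you intend the correct statement.
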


\begin{proof}
	This is an extension of \cite[Proposition~7.1.3]{BW21} to the two-sided case. We write $E \defeq \totalty{P}$ and fix an element $d:P(a,b)$. Let $\xi:E \fibarr A$ denote the associated cocartesian fibration. Again, we will only establish one of the two dual parts of the statement, namely that $\yon d(u,\id_y)$ is $\xi$-cocartesian for any $u:\Delta^1 \to A$ and $y:B$
	
	From this, we define the map that yields the cartesian transport along the terminal maps in $B$, \ie,
	\[ \tau \jdeq \lambda x,y.(!_y)^*d : A \times B \to E. \]
	Next, consider the family of cocartesian lifts over the initial maps in $A$, starting at the points given by $\tau$. This is realized by the $2$-cell $\tau:\hom_{A \times B \to E}(\tau\,d, \yon \,d)$ defined by
	$\tau(x,y) \defeq \xi_!(\emptyset_x,(!_y)^*\,d): (!_y)^*d \cocartarr_{\pair{\emptyset_x}{y}} (\emptyset_x)_!(!_y)^*d$.
	The action of the $2$-cell $\chi$ on a pair $\pair{u}{\id_y}$ for $u:\Delta^1 \to A$ and $y:B$ is given by the following dependent square:
	\[\begin{tikzcd}
		E & {\tau d(x,y)} && {\tau d(x',y)} \\
		& {\yon d(x,y)} && {\yon d(x',y)} \\
		{A \times B} & a && a \\
		& x && {x'} \\
		& y && y \\
		& y && y
		\arrow["{\id_{(!_y)^*d}}", Rightarrow, no head, from=1-2, to=1-4]
		\arrow[from=1-2, to=2-2, cocart]
		\arrow["{\yon d(u,\id_y)}"', from=2-2, to=2-4]
		\arrow[from=1-4, to=2-4, cocart]
		\arrow["{\id_a}", Rightarrow, no head, from=3-2, to=3-4]
		\arrow["u"', from=4-2, to=4-4]
		\arrow[dashed, from=3-4, to=4-4, "\emptyset_x"]
		\arrow[dashed, from=3-2, to=4-2, "\emptyset_{x'}", swap]
		\arrow["{\id_y}"', Rightarrow, no head, from=5-2, to=6-2]
		\arrow["{\id_y}", Rightarrow, no head, from=5-2, to=5-4]
		\arrow["{\id_y}"', Rightarrow, no head, from=6-2, to=6-4]
		\arrow[two heads, from=1-1, to=3-1]
		\arrow["{\id_y}", Rightarrow, no head, from=5-4, to=6-4]
	\end{tikzcd}\]
	By right cancelation of cocartesian arrows, $\yon d(u,\id_y)$ is cocartesian, too.
\end{proof}

We need one more lemma before we are ready to prove the main theorem of this subsection, which in turn will specialize to the desired versions of the Yoneda Lemma. The lemma gives canonical identities in the presence of inital and terminal elements, resp., in the base types.

\begin{lemma}[Coherence of terminal transport with two-sided cartesian sections]\label{lem:coh-ttransp-2scart}
	Let $A$ and $B$ types with an initial element $a:A$ and terminal element $b:B$. Furthermore, consider a two-sided family $P:A \times B \to \UU$ with associated cocartesian fibration $\xi: E \fibarr A$ and cartesian fibration $\pi: E \fibarr B$, resp. Given a section $\sigma:\prod_{\substack{a:A\\b:B}}^\tscart P$, for any $x:A$, $y:B$  there are identifications
	\[ (!_y)^*\sigma(a,b) = \sigma(a,y) \quad \text{and} \quad (\emptyset_x)_!\sigma(a,b) = \sigma(x,b). \]
\end{lemma}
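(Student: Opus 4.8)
The plan is to exploit that a two-sided cartesian section carries the ``walking arrow'' inputs in which one coordinate is held at an identity to genuine co-/cartesian arrows, and then to invoke the uniqueness of co-/cartesian lifts. I will spell out the first identity in detail; the second is entirely dual, exchanging the roles of $\xi$ and $\pi$, of cocartesian and cartesian, and of source and target vertices.

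First I would observe that, since $b:B$ is terminal, $\hom_B(y,b)$ is contractible, so $!_y : y \to_B b$ denotes its canonical inhabitant (and dually $\emptyset_x : a \to_A x$, using that $a$ is initial); because $P$ is two-sided, in particular cartesian on the right, the $\pi$-cartesian transport $(!_y)^*\sigma(a,b) : P(a,y)$ is defined together with its $\pi$-cartesian witness $P^*(!_y,\sigma(a,b)) : (!_y)^*\sigma(a,b) \cartarr_{\pair{a}{!_y}} \sigma(a,b)$, an arrow over $\pair{\id_a}{!_y} : \pair{a}{y} \to_{A \times B} \pair{a}{b}$. On the other hand, by the definition of two-sided cartesian section, the dependent arrow $\sigma(\id_a,!_y) : \prod_{t:\Delta^1} P(a,!_y(t))$ is a $\pi$-cartesian arrow; it lies over the \emph{same} base arrow $\pair{\id_a}{!_y}$, with target vertex $\sigma(a,!_y(1)) \jdeq \sigma(a,b)$ and source vertex $\sigma(a,!_y(0)) \jdeq \sigma(a,y)$. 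Hence both $\pair{\sigma(a,y)}{\sigma(\id_a,!_y)}$ and $\pair{(!_y)^*\sigma(a,b)}{P^*(!_y,\sigma(a,b))}$ inhabit the type of $\pi$-cartesian lifts of $\pair{\id_a}{!_y}$ with prescribed target $\sigma(a,b)$; by the dual of the uniqueness of co-/cartesian lifts, \ie~contractibility of that type (\cf~the remark following the definition of cocartesian family and \cite[Proposition~5.1.3]{BW21}), the two are identified, and projecting to the first component yields the path $(!_y)^*\sigma(a,b) = \sigma(a,y)$.

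For the second identity I would argue dually: $\sigma(\emptyset_x,\id_b) : \prod_{t:\Delta^1} P(\emptyset_x(t),b)$ is a $\xi$-cocartesian arrow over $\pair{\emptyset_x}{\id_b}$ with source $\sigma(a,b)$ and target $\sigma(x,b)$, and comparing it inside the contractible type of $\xi$-cocartesian lifts of $\pair{\emptyset_x}{\id_b}$ with prescribed source $\sigma(a,b)$ against the canonical lift $P_!(\emptyset_x,\sigma(a,b)) : \sigma(a,b) \cocartarr_{\pair{\emptyset_x}{b}} (\emptyset_x)_!\sigma(a,b)$ gives $(\emptyset_x)_!\sigma(a,b) = \sigma(x,b)$. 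There is no genuinely hard step; the only point requiring care is the bookkeeping of which arrow of $A \times B$ each co-/cartesian witness lies over --- namely the $\xi$-vertical arrow $\pair{\id_a}{!_y}$ for the first part and the $\pi$-vertical arrow $\pair{\emptyset_x}{\id_b}$ for the second --- so that ``$\pi$-cartesian'' and ``$\xi$-cocartesian'' in the sense of two-sided families match the co-/cartesian-arrow notions for $\pi : E \fibarr B$ and $\xi : E \fibarr A$, together with citing the uniqueness statement of the correct variance.
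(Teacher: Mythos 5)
Your proof is correct and follows essentially the same route as the paper: the paper explicitly constructs the comparison filler $g$ with $f\circ g = h$, shows it is cartesian by left cancellation and $\pi$-vertical, hence an isomorphism, whereas you invoke the packaged uniqueness (contractibility) of cartesian lifts with prescribed target --- which is proved by exactly that filler argument. Your bookkeeping of the base arrows $\pair{\id_a}{!_y}$ and $\pair{\emptyset_x}{\id_b}$ and the appeal to the two-sided-cartesian-section condition for the co-/cartesianness of $\sigma(\id_a,!_y)$ and $\sigma(\emptyset_x,\id_b)$ match the paper's proof.
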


\begin{proof}
	We only treat the first named case since the second named one is completely dual.
	
	Consider on the one hand the cartesian lift of the terminal map $!_y: y \to b$ \wrt~$\sigma(a,b)$ (over the identity $\id_a: a = a$), \ie, the dependent arrow $f \defeq (!_y)^*\sigma(a,b) \cartarr \sigma(a,b)$. On the other hand, consider the action of $\sigma$ on the pair $\pair{\id_a}{!_y}$, namely $h\defeq \sigma(\id_a,!_y) : \sigma(a,y) \cartarr \sigma(a,b)$, which is a cartesian arrow since the section $\sigma$ is two-sided cartesian. Then the mediating induced arrow $g \defeq \tyfill_h(f): \sigma(a,y) \cartarr (!_y)^*\sigma(a,b)$ is cartesian by left cancelation. But since it also $\pi$-vertical, lying over $\id_y$, it is an isomorphism, \cf~\Cref{fig:coh-2ssec}.
\end{proof}
\begin{figure}
	\[\begin{tikzcd}
		& {\sigma(a,y)} \\
		E & {(!_y)^*\sigma(a,b)} && {\sigma(a,b)} \\
		& a && a \\
		{A \times B} & y && b
		\arrow["{!_y}", from=4-2, to=4-4]
		\arrow[Rightarrow, no head, from=3-2, to=3-4]
		\arrow["f"', cart, from=2-2, to=2-4]
		\arrow["g"', dashed, "\simeq", from=1-2, to=2-2]
		\arrow["h", cart, from=1-2, to=2-4]
		\arrow[two heads, from=2-1, to=4-1]
	\end{tikzcd}\]
	\caption{Coherence of terminal transport with two-sided cartesian sections}\label{fig:coh-2ssec}
\end{figure}

In analogy with \cite[Proposition~7.1.4]{BW21}, \cite[Theorem~5.7.18]{RV21}, \cite[Theorem~9.7]{RS17}, the map $\yon: P(a,b) \to \prod_{A \times B}^{\tscart} P$ mediates an equivalence between these two types:
\begin{proposition}\label{prop:fiber-init-term-2s}
	Let $A,B$ be Rezk types with an initial element $a:A$ and a terminal element $b:B$. For a two-sided family $P:A \times B \to \UU$, evaluation at $\pair{a}{b}$ given by is an equivalence
	 $\ev_{\pair{a}{b}}: \Big( \prod_{A \times B}^\tscart P\Big) \stackrel{\simeq}{\to} P(a,b)$.
\end{proposition}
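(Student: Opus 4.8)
The strategy is to exhibit $\yon$ as a two-sided quasi-inverse to $\ev_{\pair{a}{b}}$ on the subtype of two-sided cartesian sections. By the preceding proposition, $\yon$ lands in $\prod_{A \times B}^\tscart P$, so it suffices to produce homotopies $\ev_{\pair{a}{b}} \circ \yon \sim \id_{P(a,b)}$ and $\yon \circ \ev_{\pair{a}{b}} \sim \id$ on $\prod_{A \times B}^\tscart P$. The first of these is the easy direction: $\yon(d)(a,b) = (\emptyset_a)_!((!_b)^* d)$, and since $a$ is initial and $b$ is terminal, $\emptyset_a = \id_a$ and $!_b = \id_b$ up to the canonical identifications, so the cocartesian and cartesian transports along identities are (canonically homotopic to) the identity; hence $\ev_{\pair{a}{b}}(\yon(d)) = d$.

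\emph{The main work is the other composite.} Here I would argue pointwise: given a two-sided cartesian section $\sigma : \prod_{A \times B}^\tscart P$ and any $x : A$, $y : B$, I must identify $\yon(\sigma(a,b))(x,y) = (\emptyset_x)_!((!_y)^* \sigma(a,b))$ with $\sigma(x,y)$. The key input is \Cref{lem:coh-ttransp-2scart}: it gives $(!_y)^*\sigma(a,b) = \sigma(a,y)$ and $(\emptyset_x)_!\sigma(a,b) = \sigma(x,b)$. Applying cocartesian transport $(\emptyset_x)_!$ to the first identity yields $(\emptyset_x)_!((!_y)^*\sigma(a,b)) = (\emptyset_x)_!\sigma(a,y)$, so it remains to see $(\emptyset_x)_!\sigma(a,y) = \sigma(x,y)$. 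But this is exactly the content of (the $B$-fiber-at-$y$ instance of) the coherence lemma again: $\sigma$ restricted to $A \times \{y\}$ is a $\xi$-cocartesian section over the Rezk type $A$ with initial object $a$, and $a$ being initial means $\emptyset_x : a \to x$ together with the $\xi$-cocartesian lift $\sigma(\emptyset_x, \id_y)$ exhibits $\sigma(x,y)$ as $(\emptyset_x)_!\sigma(a,y)$ — this is precisely the one-sided statement \cite[Proposition~7.1.4]{BW21} applied fiberwise, or equivalently a second application of \Cref{lem:coh-ttransp-2scart} in the slice over $y$. Chaining these identifications gives $\yon(\sigma(a,b))(x,y) = \sigma(x,y)$, and I would then promote the pointwise identifications to an identification of sections, using that the type of two-sided cartesian sections is closed under the relevant path constructions (being a subtype cut out by a proposition of $\prod_{A\times B} P$, function extensionality suffices).

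The remaining subtlety — and the step I expect to demand the most care — is \textbf{coherence}: making sure that the pointwise identifications assemble into a genuine homotopy $\yon \circ \ev_{\pair{a}{b}} \sim \id$ that is compatible with the extension-type structure, rather than merely a family of unrelated paths at each $\pair{x}{y}$. Concretely, one must check that the identification $\yon(\sigma(a,b))(u,\id_y) = \sigma(u,\id_y)$ for an arrow $u : \Delta^1 \to A$ (and dually for arrows in $B$) is the ``same'' as the one built on vertices, which comes down to the naturality of the cancellation-of-cocartesian-arrows argument used in the proof of the previous proposition. I would handle this exactly as in the proof of \cite[Proposition~7.1.4]{BW21}: the uniqueness clause in the definition of a cocartesian (resp.\ cartesian) arrow makes the relevant fillers contractible, so all coherence data is automatically available, and one never has to exhibit higher cells by hand. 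Once both composites are shown to be homotopic to identities, $\ev_{\pair{a}{b}}$ is a quasi-equivalence, hence an equivalence, which is the claim.
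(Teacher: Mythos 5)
Your proposal is correct and follows essentially the same route as the paper: the easy round-trip via transport along identities, and for the other composite an appeal to \Cref{lem:coh-ttransp-2scart} followed by a right-cancellation argument showing the induced vertical filler between $\sigma(x,y)$ and $\yon(\sigma(a,b))(x,y)$ is cocartesian and hence an isomorphism. The only cosmetic difference is that the paper packages your two steps into a single dependent square (Figure~\ref{fig:2s-ev-dep-sq}) with the identification $q$ as its left edge, and your coherence worry is resolved simply by function extensionality on $\prod_{A\times B}P$ together with the fact that being two-sided cartesian is a proposition.
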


\begin{figure}
	\[\begin{tikzcd}
		E && {\sigma(a,y)} && {\sigma(x,y)} \\
		&& {(!_y)^*\sigma(a,b)} && {\yon d(\sigma(a,b))(x,y)} \\
		{A \times B} && a && x \\
		&& a && x \\
		&& y && y \\
		&& y && y
		\arrow["{\sigma(\emptyset_x, \id_y)}", from=1-3, to=1-5, cocart]
		\arrow["q"', Rightarrow, no head, from=1-3, to=2-3]
		\arrow["{\xi_!(\emptyset_x,(!_y)^* \sigma(a,b))}"', from=2-3, to=2-5, cocart]
		\arrow["{\emptyset_x}", from=3-3, to=3-5]
		\arrow["{\id_a}"', Rightarrow, no head, from=3-3, to=4-3]
		\arrow["{\id_x}", Rightarrow, no head, from=3-5, to=4-5]
		\arrow["{\id_y}"', Rightarrow, no head, from=5-3, to=6-3]
		\arrow["{\id_y}", Rightarrow, no head, from=5-3, to=5-5]
		\arrow["{\id_y}"', Rightarrow, no head, from=6-3, to=6-5]
		\arrow["{\id_y}", Rightarrow, no head, from=5-5, to=6-5]
		\arrow[two heads, from=1-1, to=3-1]
		\arrow["{\emptyset_x}"', from=4-3, to=4-5]
		\arrow[dashed, "g", from=1-5, to=2-5]
	\end{tikzcd}\]
	\caption{Two-sided cartesian sections}
	\label{fig:2s-ev-dep-sq}
\end{figure}

\begin{proof}
	We show that for the map $\yon$ as defined above we have identifications $\ev_{\pair{a}{b}} \circ \yon = \id_{P(a,b)}$ and $\yon \circ \ev_{\pair{a}{b}} = \id_{\prod_{A \times B}P}$. The first case is easy: the initial map into the initial element $a$ is just the identity, whose cocartesian lift is an identity as well, and the same holds analogously for the terminal element $b$, \ie, $ \yon(d)(\sigma)(a,b) = (\emptyset_a)_!(!_b)^*(d) = (\id_a)_!(\id_b)^*(d) = d$.
	For the other round-trip, we have to give an identification
	$(\yon \circ \ev_{\pair{a}{b}})(\sigma)(x,y) = \sigma(x,y)$.
	Fix elements $x:A$ and $y:B$.
	Note that by \Cref{lem:coh-ttransp-2scart} there is a path
	$q:\sigma(a,y) =_{P(a,y)} (!_y)^*(\sigma(a,b))$.
	Since $\sigma$ is two-sided cartesian, we obtain the dependent square in~\Cref{fig:2s-ev-dep-sq}. As the filler $g$ is cocartesian by right cancelation, and vertical at the same time it is an isomorphism, hence an identity $\sigma(x,y) = \yon d(\sigma(a,b))(x,y)$.
\end{proof}

\subsection{Dependent and absolute two-sided Yoneda Lemma}

Following~\cite[Theorem~9.5]{RS17} and~\cite[Theorem~7.2.2]{BW21}, we obtain the \emph{dependent Yoneda Lemma for two-sided families} by~\Cref{prop:fiber-init-term-2s}, and again this will in turn imply the absolute version. Simultaneously, this functions as a type-theoretic version of~\cite[Theorem~7.3.2]{RV21}.

\begin{theorem}[Dependent Yoneda Lemma for two-sided families]\label{thm:dep-yon-2s}
	Let $Q: \comma{a}{A} \times \comma{B}{b} \to \UU$ be a two-sided family over Rezk types $A$ and $B$. For any $a:A$ and $b:$, the evaluation map
	\[ \ev_{\id_{\pair{a}{b}}} : \Big( \prod_{\comma{a}{A} \times \comma{B}{b}}^\tscart Q\Big) \rightarrow Q(\id_a,\id_b) \]
	is an equivalence.
\end{theorem}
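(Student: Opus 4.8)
The plan is to read this off as an instance of \Cref{prop:fiber-init-term-2s} by a change of base, in exact parallel to how the one-sided dependent Yoneda Lemma \cite[Theorem~7.2.2]{BW21} is obtained from \cite[Proposition~7.1.4]{BW21}. Since the substantive work has already been carried out in \Cref{prop:fiber-init-term-2s} and the supporting \Cref{lem:coh-ttransp-2scart}, the argument at this point is short: one only needs to recognize the slices $\comma{a}{A}$ and $\comma{B}{b}$ as the Rezk types to which the previous proposition applies.

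First I would recall the relevant structural facts about slices. For a Rezk type $A$ and an element $a:A$, the coslice $\comma{a}{A}$ is again a Rezk type (commas of Rezk types are again Rezk), and $\id_a \jdeq \pair{a}{\id_a}$ is an \emph{initial} element of $\comma{a}{A}$ --- already for $A$ merely Segal, $\id_a$ is initial in the coslice, \cf~\cite[Section~8]{RS17}. Dually, for $B$ Rezk and $b:B$, the slice $\comma{B}{b}$ is a Rezk type with \emph{terminal} element $\id_b \jdeq \pair{b}{\id_b}$. Consequently $\id_{\pair{a}{b}} \jdeq \pair{\id_a}{\id_b}$ is an element of the Rezk type $\comma{a}{A} \times \comma{B}{b}$ which is initial in the first coordinate and terminal in the second.

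Next, since $Q : \comma{a}{A} \times \comma{B}{b} \to \UU$ is by hypothesis a two-sided family over these Rezk types, I would apply \Cref{prop:fiber-init-term-2s} directly, instantiating its $A$, $B$, initial element, terminal element, and $P$ by $\comma{a}{A}$, $\comma{B}{b}$, $\id_a$, $\id_b$, and $Q$, respectively. This immediately yields that
\[ \ev_{\id_{\pair{a}{b}}} : \Big( \prod_{\comma{a}{A} \times \comma{B}{b}}^\tscart Q \Big) \to Q(\id_a,\id_b) \]
is an equivalence, with quasi-inverse the Yoneda map $\yon$ of that proposition, namely the map sending $d$ to the two-sided cartesian section $\pair{x}{u},\pair{y}{v} \mapsto (\emptyset_{\pair{x}{u}})_!\big((!_{\pair{y}{v}})^*\,d\big)$, where $\emptyset$ and $!$ denote the unique arrows out of the initial and into the terminal element, respectively.

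There is no genuine obstacle in this final step; the only things requiring attention are bookkeeping --- verifying that $\comma{a}{A}$ and $\comma{B}{b}$ are Rezk and that $\id_a$, $\id_b$ are the required universal elements, all standard consequences of the Segal/Rezk machinery of \cite{RS17,BW21}. As remarked before the statement, the resulting equivalence is the two-sided form of $\hom$-type (``path'') induction, and it restricts to the one-sided dependent Yoneda Lemma \cite[Theorem~7.2.2]{BW21} when one of the two variables is taken to be trivial.
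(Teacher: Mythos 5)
Your argument is exactly the paper's: identify $\id_a$ as initial in $\comma{a}{A}$ and $\id_b$ as terminal in $\comma{B}{b}$ (via \cite[Lemma~8.9]{RS17}), note the comma types are Rezk, and apply \Cref{prop:fiber-init-term-2s}. The proposal is correct and matches the paper's proof.
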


\begin{proof}
	Recall that by~\cite[Lemma~8.9]{RS17}, the identity map $\id_a$ is an initial object of the comma type $\comma{a}{A}$, while analogously the identity map $\id_b$ is a terminal object of the cocomma type $\comma{B}{b}$. Thus, the claim follows as an instance of~\Cref{prop:fiber-init-term-2s}. 
\end{proof}

We also obtain an absolute analogue in the fashion of~\cite[Theorem~9.1]{RS17} and~ \cite[Theorem~7.2.3]{BW21}.

\begin{theorem}[Absolute Yoneda Lemma for two-sided families, \protect{\cite[Theorem~7.3.2]{RV21}}]\label{thm:abs-yon-2s}
	Let $P: A \times B \to \UU$ be a two-sided family over Rezk types $A$ and $B$. For any $a:A$ and $b:B$, the evaluation map
	\[ \ev_{\pair{\id_a}{\id_b}} : \Big( \prod_{\substack{u:\comma{a}{A} \\ v:\comma{B}{b}}}^\tscart P(\partial_1 \, u, \partial_0 \, v)\Big) \rightarrow P(a,b) \]
	is an equivalence.
\end{theorem}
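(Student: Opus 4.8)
The plan is to deduce the absolute version from the dependent version exactly as in the one-sided case \cite[Theorem~7.2.3]{BW21} and in \cite[Theorem~9.1]{RS17}. First I would fix $a:A$ and $b:B$, and set up the two-sided family over the comma types: define
\[ Q : \comma{a}{A} \times \comma{B}{b} \to \UU, \qquad Q(u,v) \defeq P(\partial_1\,u, \partial_0\,v), \]
that is, $Q$ is obtained by restricting $P$ along the product of the two projection functors $\partial_1 : \comma{a}{A} \to A$ and $\partial_0 : \comma{B}{b} \to B$. The key point is that $Q$ is again a two-sided cartesian family: this is precisely an instance of pullback stability of two-sided cartesian families, \Cref{prop:2s-cart-closed-pb}, applied to the maps $k \defeq \partial_1 : \comma{a}{A} \to A$ and $m \defeq \partial_0 : \comma{B}{b} \to B$. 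Here I should also note that both comma types $\comma{a}{A}$ and $\comma{B}{b}$ are Rezk whenever $A$ and $B$ are, so that $Q$ is a two-sided family over Rezk bases and \Cref{thm:dep-yon-2s} applies to it.

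Next I would apply the dependent two-sided Yoneda Lemma, \Cref{thm:dep-yon-2s}, to $Q$: it gives that evaluation at $\id_{\pair{a}{b}}$, i.e.\ at the pair $\pair{\id_a}{\id_b}$ of identity arrows, is an equivalence
\[ \ev_{\pair{\id_a}{\id_b}} : \Big( \prod_{\comma{a}{A} \times \comma{B}{b}}^\tscart Q \Big) \stackrel{\simeq}{\longrightarrow} Q(\id_a, \id_b). \]
Now I unwind both sides. On the target, $Q(\id_a, \id_b) \jdeq P(\partial_1\,\id_a, \partial_0\,\id_b) \jdeq P(a,b)$ by the definition of $Q$ and the fact that $\partial_1\,\id_a \jdeq a$, $\partial_0\,\id_b \jdeq b$. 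On the source, $\prod_{\comma{a}{A} \times \comma{B}{b}}^\tscart Q$ is by definition the type of two-sided cartesian sections $\sigma : \prod_{u:\comma{a}{A}, v:\comma{B}{b}} P(\partial_1\,u, \partial_0\,v)$, which is exactly the domain $\prod_{u:\comma{a}{A}, v:\comma{B}{b}}^\tscart P(\partial_1\,u, \partial_0\,v)$ appearing in the statement. Under these identifications $\ev_{\pair{\id_a}{\id_b}}$ for $Q$ is literally the map $\ev_{\pair{\id_a}{\id_b}}$ of the theorem, so we are done.

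The only genuinely delicate point—though still routine—is checking that the two notions of ``two-sided cartesian section'' agree under the pullback, namely that $\sigma$ is a two-sided cartesian section of $Q$ precisely when, viewed as a family of elements of $P$, it sends $\pair{\varphi}{\id_v}$ to $\xi$-cocartesian arrows and $\pair{\id_u}{\psi}$ to $\pi$-cartesian arrows (where $\xi, \pi$ are the legs of the unstraightening of $P$). This follows because the unstraightening of $Q$ is the pullback of the unstraightening of $P$ along $\partial_1 \times \partial_0$, so the co-/cartesian arrows of $Q$ are exactly those lying over co-/cartesian arrows of $P$; this is the content of the last clause of \Cref{prop:2s-cart-closed-pb} asserting that the pullback square is a two-sided cartesian functor, together with the characterization of co-/cartesian functors (being exactly the functors preserving co-/cartesian arrows, reflecting them along pullbacks, \cf~\cite[Proposition~5.3.9]{BW21} and its dual). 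Hence no obstacle of substance arises; the proof is a direct specialization of \Cref{thm:dep-yon-2s} along the comma projections, mirroring the passage from \cite[Theorem~9.5]{RS17} to \cite[Theorem~9.1]{RS17}.
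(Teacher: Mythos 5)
Your proposal is correct and follows essentially the same route as the paper: both define $Q \defeq \pair{\partial_1}{\partial_0}^*P$, invoke pullback stability (\Cref{prop:2s-cart-closed-pb}) to see that $Q$ is again two-sided cartesian, and then specialize the dependent Yoneda Lemma (\Cref{thm:dep-yon-2s}) at $\pair{\id_a}{\id_b}$. Your extra check that the two notions of two-sided cartesian section agree under the pullback is a reasonable elaboration of a step the paper leaves implicit.
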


\begin{proof}
	The claim follows by setting $Q \jdeq \pair{\partial_1}{\partial_0}^* P : \comma{a}{A} \times \comma{B}{b} \to \UU$ in~\Cref{thm:dep-yon-2s}. In particular, $Q$ is a two-sided fibration again by pullback stability.	
\end{proof}

\section{Discrete two-sided families}\label{sec:2s-disc}

\subsection{Definition and characterization}

\begin{definition}[Two-sided discrete families, \protect{\cite[Definition~8.28]{RS17}}]\label{def:2s-disc-cart}
	Let $P:A \to B \to \UU$ be a two-variable family over Rezk types $A$ and $B$. Then $P$ is a \emph{two-sided discrete family} if for all $a:A$, $b:B$ the family $P_b:A \to \UU$ is covariant and $P^a:B \to \UU$ is contravariant.
\end{definition}

\begin{proposition}[Two-sided discrete families as discrete objects, \cf~\protect{\cite[Prop.~7.2.4]{RV21}}]\label{prop:2s-disc}
	Given $P:A \to B \to \UU$ over Rezk types, the following are equivalent:
	\begin{enumerate}
		\item\label{it:2s-disc-i} The family $P$ is two-sided discrete.
		\item\label{it:2s-disc-ii} The family $P$ is both cocartesian on the left and cartesian on the right, and every bifiber $P(a,b)$ is discrete, for any $a:A$, $b:B$.
	\end{enumerate}
\end{proposition}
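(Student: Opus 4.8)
The strategy is to reduce the statement to the corresponding one-sided facts, which have been established (or are essentially contained) in~\cite{RS17} for covariant/contravariant families and in~\cite{BW21}, and to use the characterization of cocartesianness on the left from~\Cref{prop:char-fib-cocart-left}. First I would recall that, for a family $R:C\to\UU$ over a Rezk type $C$, being covariant (right orthogonal to $0:\unit\hookrightarrow\Delta^1$) is equivalent to being a cocartesian family all of whose cocartesian lifts are isomorphisms, which in turn forces all fibers to be discrete; dually for contravariant and cartesian. This is the one-variable content underlying~\Cref{def:cov-fam} versus the cocartesian case, and it is the essential input.

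\textbf{From \ref{it:2s-disc-i} to \ref{it:2s-disc-ii}.} Assume $P$ is two-sided discrete, so that $P_b:A\to\UU$ is covariant for every $b:B$ and $P^a:B\to\UU$ is contravariant for every $a:A$. Then $P_B=\lambda a.\sum_{b:B}P_b(a)$ inherits cocartesianness: I would argue that a cocartesian lift of $u:a\to_Aa'$ at $\pair{b}{x}$ is obtained by taking the (iso-)cocartesian lift in $P_b$ in the $A$-direction and the identity in the $B$-coordinate, since $P_b$ is covariant hence cocartesian. Moreover such a lift is automatically $P^A$-vertical because its $B$-component is an identity, so $P$ is cocartesian on the left; dually $P$ is cartesian on the right. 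For discreteness of the bifiber $P(a,b)$: fix $a:A$ and $b:B$; then $P(a,b)$ is the fiber of the covariant family $P_b$ at $a$, and fibers of covariant families over Rezk types are discrete (the one-variable case of the statement, \cf~\cite[Section~8]{RS17}), establishing~\ref{it:2s-disc-ii}.

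\textbf{From \ref{it:2s-disc-ii} to \ref{it:2s-disc-i}.} Conversely, assume $P$ is cocartesian on the left and cartesian on the right with all bifibers discrete. Fix $b:B$. Then $P_b=\lambda a.P(a,b)$ is, by restricting the cocartesian-on-the-left structure along $\langle\id_A,b\rangle:A\to A\times B$ (equivalently, pulling back $\varphi$ as in the bifiber diagram of~\Cref{ssec:two-var}), a cocartesian family $A\to\UU$. Its fibers are the bifibers $P(a,b)$, which are discrete by hypothesis. A cocartesian family over a Rezk type with discrete fibers is covariant: indeed, the cocartesian transport functors $u_!:P(a,b)\to P(a',b)$ are functors between discrete types, hence between $\infty$-groupoids, and by discreteness every such functor is an equivalence; equivalently, the cocartesian lifts become isomorphisms, which is precisely the extra orthogonality needed to upgrade right orthogonality against $\Lambda^2_1\hookrightarrow\Delta^2$ (and $\mathbb E\to\unit$) to right orthogonality against $0:\unit\hookrightarrow\Delta^1$. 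Hence $P_b$ is covariant; dually, $P^a$ is contravariant for every $a:A$. This is exactly~\ref{it:2s-disc-i}.

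\textbf{Main obstacle.} The technical heart is the one-variable lemma that a cocartesian family over a Rezk type is covariant if and only if it is cocartesian with discrete fibers (equivalently, all cocartesian arrows are isomorphisms). The ``discrete fibers $\Rightarrow$ covariant'' direction requires checking that when the transport maps are equivalences, the unique-filling condition for cocartesian arrows (right orthogonality to $\Lambda^2_1\hookrightarrow\Delta^2$, refined by Rezk-completeness) specializes to right orthogonality to $0:\unit\hookrightarrow\Delta^1$, i.e.\ that dependent arrows over a \emph{point} in the base admit unique fillers. I would either invoke this from~\cite[Section~8]{RS17} directly or give a short argument: a cocartesian lift of $\id_a$ at $x:P(a,b)$ is $\id_x$ up to the contractibility of cocartesian lifts, and for a general dependent arrow $f:x\to^{P_b}_{\id_a}y$ over an identity, comparing with the trivial cocartesian lift $\id_x$ via the universal property exhibits $f$ as (homotopic to) an isomorphism, which is the covariance condition. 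The rest of the proof is bookkeeping with the bifiber pullback squares already set up in~\Cref{ssec:two-var} together with~\Cref{prop:char-fib-cocart-left} and its dual.
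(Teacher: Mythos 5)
Your overall strategy coincides with the paper's: both directions are reduced to the one-variable statement that a family over a Rezk type is covariant precisely when it is cocartesian with discrete fibers, applied to the restrictions $P_b$ and $P^a$ obtained by pulling back along $\langle\id_A,b\rangle$ and $\langle a,\id_B\rangle$ (the paper phrases this restriction via the pulled-back fibered adjunction of Proposition~\ref{prop:char-fib-cocart-left}, and cites \cite[Corollary~6.1.4]{BW21} for the one-variable lemma). The direction \ref{it:2s-disc-i}$\Rightarrow$\ref{it:2s-disc-ii} is handled essentially identically in both.

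However, the form in which you state the key one-variable lemma is wrong, and the justification you sketch for it would fail. Covariance is \emph{not} equivalent to all cocartesian lifts being isomorphisms: in the covariant family $\comma{a}{A}\fibarr A$ the cocartesian lift of $u$ is essentially postcomposition by $u$, which is an isomorphism in the total type only when $u$ is one in the base. Likewise, a map between discrete types (synthetic $\infty$-groupoids) is by no means automatically an equivalence, so the transport functors $u_!$ of a covariant family need not be equivalences (consider $\hom_A(a,-)$). The correct statement --- and the one the paper actually invokes --- is that a cocartesian family is covariant iff its fibers are discrete, equivalently iff every arrow of the total type is cocartesian; nothing is, or can be, asserted about invertibility of the lifts or of the transports. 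The repair is short: for $u:a\to_A a'$ and $x:P(a,b)$, any $f:x\to_u y$ factors uniquely through the cocartesian lift $x\cocartarr_u u_!\,x$ by a vertical arrow $u_!\,x\to y$, and discreteness of the fiber identifies that vertical arrow with a path, so the whole type of lifts of $u$ with source $x$ contracts onto the cocartesian one. Since you also offer the option of simply citing this lemma (as the paper does via \cite[Corollary~6.1.4]{BW21}), your proof goes through once the erroneous ``isomorphism'' formulation is discarded.
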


\begin{proof}
	\begin{description}
		\item[$\ref{it:2s-disc-ii} \implies \ref{it:2s-disc-i}$] 
		By \Cref{prop:char-fib-cocart-left}, \Cref{it:coc-left-iii}, there is a fibered adjunction which pulls back as follows by~\cite[Proposition~B.2.3]{BW21}:\footnote{In contrast to the current version of~\cite[Proposition~B.2.3]{BW21} one only needs the fibrations involved to be isoinner, and not cocartesian, which in any case becomes clear from the given proof.}
		\[\begin{tikzcd}
			{E_b} &&& E \\
			& {\xi_b\downarrow A} &&& \comma{\xi}{A} \\
			{A \times \mathbf{1}} &&& {A \times B}
			\arrow[from=1-1, to=1-4, shorten <=22pt]
			\arrow[""{name=0, anchor=center, inner sep=0}, dashed, from=1-1, to=2-2]
			\arrow[two heads, from=1-1, to=3-1]
			\arrow[two heads, from=2-2, to=3-1]
			\arrow["{\id_A \times b}"{description}, from=3-1, to=3-4]
			\arrow[two heads, from=1-4, to=3-4]
			\arrow[two heads, from=2-5, to=3-4]
			\arrow[""{name=1, anchor=center, inner sep=0}, "\iota"{description}, from=1-4, to=2-5]
			\arrow[""{name=2, anchor=center, inner sep=0}, "\tau"{description}, curve={height=18pt}, dashed, from=2-5, to=1-4]
			\arrow[""{name=3, anchor=center, inner sep=0}, curve={height=12pt}, dashed, from=2-2, to=1-1, curve={height=18pt}]
			\arrow["\lrcorner"{anchor=center, pos=0.125}, draw=none, from=2-2, to=3-4]
			\arrow["\lrcorner"{anchor=center, pos=0.125}, shift right=5, draw=none, from=1-1, to=3-4]
			\arrow["\dashv"{anchor=center, rotate=-137}, draw=none, from=3, to=0]
			\arrow["\dashv"{anchor=center, rotate=-133}, draw=none, from=2, to=1]
			\arrow[from=2-2, to=2-5, crossing over]
		\end{tikzcd}\]
		This means exactly that $P^a$ is covariant. The analogous reasoning establishes the claim for $P_b$ being covariant since $P$ is cartesian on the right, for any $b:B$.
		Now, since any $P(a,b)$ is discrete, and the fibers of $P_b$ are given by $P(a,b)$ for any $a:A$, we obtain that $P^a:B \to \UU$ is a cocartesian family with discrete fibers, which is equivalent to $P^a$ being covariant by~\cite[Corollary~6.1.4]{BW21}.
		\item[$\ref{it:2s-disc-i} \implies \ref{it:2s-disc-ii}$] The fibrations $P_a$ and $P^b$, resp., being contra- and covariant, resp., imply that all bifibers $P(a,b)$ are discrete, for all $a:A$, $b:B$.
		
		Furthermore, for any $b:B$, the family $P^b: A \to \UU$ being cocartesian means that $P^B :A \to \UU$ is cocartesian, and in addition all $P^B$-cocartesian lifts are $P_A$-vertical, \ie, lie over an identity in $B$.\footnote{This could have also been used as a more direct argument to prove ``$\ref{it:2s-disc-ii} \implies \ref{it:2s-disc-i}$'' as well.}
	\end{description}
\end{proof}

\begin{corollary}[Co-/cart.~arrows and two-sided cart.~functors, \cf~\protect{\cite[Lemma~7.4.3]{RV21}}]
	In a two-sided discrete family $P:A \to B \to \UU$ an arrow is $P_B$-cocartesian if and only if it is $P^A$-vertical. Similarly, an arrow is $P^A$-cartesian if and only if it is $P_B$-vertical.
	In particular, any two-sided discrete cartesian family is two-sided cartesian, and any fibered functor between two-sided discrete families is two-sided cartesian.
\end{corollary}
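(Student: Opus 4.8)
The first claim — that in a two-sided discrete family $P:A \to B \to \UU$ an arrow is $P_B$-cocartesian iff it is $P^A$-vertical — is exactly an instance of \Cref{prop:comm-lifts} in the extremal case. The plan is to invoke \Cref{prop:2s-disc}: by its equivalence ``\ref{it:2s-disc-i} $\iff$ \ref{it:2s-disc-ii}'', the discreteness hypothesis tells us $P$ is cocartesian on the left and cartesian on the right, so \Cref{prop:comm-lifts} applies. Now specialize the diagram in \Cref{prop:comm-lifts} by taking $v \jdeq \id_b$, so $k' \jdeq \pi^*(\id_b,e)$ is an identity and $v^*e = e$; then the criterion there reads: $f$ is $\xi$-cocartesian iff $g$ is $\pi$-cartesian, where $g$ is $\xi$-vertical. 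But an arrow that is simultaneously $\pi$-cartesian and lies over an identity in $B$ is precisely a $\pi$-vertical arrow that is a cartesian lift of an identity, hence (by uniqueness of cartesian lifts and the fact that the cartesian lift of $\id_b$ is an identity) an isomorphism; conversely, since in a two-sided discrete family every bifiber is discrete (again \Cref{prop:2s-disc}), a $\pi$-vertical arrow \emph{is} automatically $\pi$-cartesian. Thus the condition ``$g$ is $\pi$-cartesian'' collapses to ``$g$ is $\pi$-vertical'', and chasing the square back, this is equivalent to $f$ being $P^A$-vertical. The second named equivalence (cartesian $\iff$ vertical on the other side) is completely dual and I would simply say so.

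**The functor statements.** For the ``in particular'' claims I would proceed as follows. That a two-sided discrete cartesian family is two-sided cartesian: by \Cref{prop:2s-disc} it is cocartesian on the left and cartesian on the right, so by \Cref{def:2s-cart} it only remains to verify condition~\ref{it:lifts-commute}, that the comparison filler $\kappa: u_!v^*e \to v^*u_!e$ of \Cref{lem:comp-cart-cocart-transp} is an isomorphism. But that filler is $\pi$-vertical (it lies over $\langle a',b'\rangle$ in $A \times B$, up to identification, by construction), and every bifiber $P(a',b')$ is discrete, so any arrow of the bifiber — in particular $\kappa$ — is an isomorphism. Hence $P$ is two-sided cartesian. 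For the final assertion, that any fibered functor $h: P \to_{A \times B} Q$ between two-sided discrete families is two-sided cartesian: by definition of two-sided cartesian functor we must check that $h$ preserves $P_B$-cocartesian arrows (sending them to $Q_B$-cocartesian arrows) and $P^A$-cartesian arrows. By the first part of the corollary, a $P_B$-cocartesian arrow is the same as a $P^A$-vertical arrow, i.e.\ an arrow lying over an identity in $B$; any fibered map over $A \times B$ visibly preserves the property of lying over $\id_b$, so $h$ sends it to a $Q^A$-vertical arrow, which by the corollary applied to $Q$ is $Q_B$-cocartesian. The cartesian half is dual.

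**Main obstacle.** There is no deep obstacle; the content is entirely in recognizing that in the discrete setting ``cartesian over $B$'' degenerates to ``vertical over $B$'' (and dually), which is what makes the compatibility conditions and the preservation conditions automatic. The one point requiring a little care is the identification of the comparison filler $\kappa$ as a genuine bifiber arrow, so that discreteness of $P(a',b')$ can be brought to bear — this uses that $\kappa$ is both $\xi$-vertical and $\pi$-vertical, i.e.\ lies over $\id_{\langle a',b'\rangle}$, which follows from its description in \Cref{lem:comp-cart-cocart-transp} and \Cref{prop:comm-lifts}. Everything else is invocation of \Cref{prop:2s-disc}, \Cref{prop:comm-lifts}, and the elementary fact that cartesian lifts of identities are isomorphisms, used symmetrically for the two variances.
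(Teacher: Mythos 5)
Your proof is correct in substance and rests on the same key observation as the paper's: in a two-sided discrete family every arrow of a bifiber is an isomorphism, so ``vertical'' collapses to ``co-/cartesian'' and the compatibility and preservation conditions become automatic. Your treatment of the two ``in particular'' claims matches the paper's almost verbatim: the paper verifies the commutation condition of \Cref{def:2s-cart} by noting that the comparison filler of \Cref{lem:comp-cart-cocart-transp} is a vertical arrow of a discrete bifiber, and disposes of the functor claim ``by naturality,'' which is exactly your observation that a fibered functor over $A \times B$ preserves the property of lying over an identity. Where you diverge is the first claim: the paper deduces it from the inspection inside the proof of \Cref{prop:2s-disc} together with the cited result that every arrow in a covariant family is cocartesian, whereas you route it through \Cref{prop:comm-lifts}. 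That detour has two wrinkles. First, the square criterion of \Cref{prop:comm-lifts} is stated as being \emph{equivalent to} the commutation of lifts, so invoking it presupposes that $P$ is already known to be two-sided cartesian; in your write-up that is only established in the second paragraph, so the two arguments should be reordered (the second is independent of the first, so this is harmless but should be said). Second, to apply the criterion to a given $P^A$-vertical arrow $f : e \to d$ over $\pair{u}{\id_b}$ you must first exhibit the commuting square, i.e.\ produce a vertical $g : d \to u_!\,e$ with $g f = \xi_!(u,e)$; the only way to do this is to take the comparison arrow $h : u_!\,e \to d$ furnished by cocartesianness of $\xi_!(u,e)$, note that $h$ lives in the discrete bifiber $P(a',b)$ and is therefore invertible, and set $g \defeq h^{-1}$. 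But once you have written that down you have already shown directly that $f = h \circ \xi_!(u,e)$ is a cocartesian arrow postcomposed with an isomorphism, hence cocartesian, so the appeal to \Cref{prop:comm-lifts} is dispensable. The converse direction, that $P_B$-cocartesian arrows are $P^A$-vertical, is immediate from cocartesianness on the left (via \Cref{prop:2s-disc}) and deserves an explicit sentence rather than being folded into ``chasing the square back.''
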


\begin{proof}
	The first statement follows from the inspection in the proof of~\Cref{prop:2s-disc}, together with~\cite[Proposition~6.1.5]{BW21}.
	
	This also establishes that any two-sided discrete cartesian family is, in fact, two-sided cartesian: the commutation condition~\Cref{prop:comm-lifts} is readily verified, because for dependent arrows being vertical (in the respective appropriate sense) is already sufficient for being co-/cartesian, resp.
	
	By naturality, this implies that any fibered functor between two-sided discrete families is two-sided cartesian.
\end{proof}

\section{Conclusion}

Working in the type theory introduced by Riehl--Shulman~\cite{RS17} we have developed and investigated a synthetic notion of two-sided cartesian family of $\inftyone$-categories. This extends previous work~\cite{BW21} about the one-sided case. The present work also constitutes a translation of the results from Riehl--Verity's $\infty$-cosmos theory~\cite[Chapter~7]{RV21} into type theory.

Since we have been able to obtain a lot of the central results, namely characterization theorems, the Yoneda Lemma and several closure properties, we conclude that simplicial homotopy type theory allows to do a fair amount of technically challenging fibered $\inftyone$-category theory in a synthetic and more native way. However, in the future it would be desirable, given an appropriate internal construction of the universe types $\mathrm{Cat}$ or $\mathrm{Space}$, to capture the theory in a more categorical spirit. \Eg, we cannot describe in our type theory a sliced cocartesian fibration $\varphi: \pi \to_B \xi$ literally as a fibration internal to a Rezk type $\mathrm{Cat}/B$, since the latter has not been defined in this theory in the first place. Thus, our presentation necessarily becomes quite explicit at times somewhat blurring the bigger categorical picture. 

\appendix

\section{Fibered equivalences}\label{sec:fib-equiv}

We state some expected and useful closure properties of fibered equivalences.

\begin{lemma}[Right properness]\label{lem:rprop}
	Pullbacks of weak equivalences are weak equivalences again, \ie, given a pullback diagram
	\[\begin{tikzcd}
		{C \times_A B} && B \\
		C && A
		\arrow["\simeq" swap, from=1-3, to=2-3, "k"]
		\arrow["j", from=2-1, to=2-3]
		\arrow["\simeq", from=1-1, to=2-1, "k'" swap]
		\arrow[from=1-1, to=1-3]
		\arrow["\lrcorner"{anchor=center, pos=0.125}, draw=none, from=1-1, to=2-3]
	\end{tikzcd}\]
	then, as indicated if the right vertical map is a weak equivalence, then so is the left hand one.	
\end{lemma}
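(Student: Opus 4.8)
The plan is to compute the fibers of the left vertical map directly. By the typal Grothendieck construction (\Cref{thm:typal-gconstr}) and the usual description of pullbacks as $\Sigma$-types, we may identify
\[ C \times_A B \simeq \sum_{c:C} \sum_{b:B} \big( j(c) =_A k(b) \big) \simeq \sum_{c:C} \fib_k(j(c)), \]
compatibly with the projections to $C$; under this identification the map $k' : C \times_A B \to C$ becomes the first projection $\pr_1$. Hence for each $c:C$ the fiber $\fib_{k'}(c)$ is equivalent to $\fib_k(j(c))$.

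First I would record that a map is a weak equivalence if and only if all of its fibers are contractible (the standard characterization, \cf~\cite[Theorem~4.4.5]{hottbook}). Since $k$ is assumed to be a weak equivalence, $\fib_k(j(c))$ is contractible for every $c:C$, so by the fiber computation above $\fib_{k'}(c)$ is contractible for every $c:C$, and therefore $k'$ is a weak equivalence. Dually — or rather analogously — one could instead invoke directly that weak equivalences are closed under pullback, but unwinding this closure property amounts to exactly the same fiber computation, so it is cleaner to give the argument once and for all in terms of fibers.

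The only point requiring a little care is the first equivalence: one must check that the strict pullback square drawn in the statement is interpreted as a homotopy pullback, i.e.~that $C \times_A B$ really is (equivalent to) $\sum_{c:C}\sum_{b:B}(j(c) = k(b))$ rather than some stricter object. This is handled exactly as elsewhere in the paper, by passing through the projection equivalence to replace $k$ by its unstraightening and noting that pullback of a family along $j$ computes the claimed $\Sigma$-type; I do not anticipate any genuine obstacle here, as this is the routine homotopy-invariant reading of pullback squares used throughout \Cref{sec:intro-fib-syn}.
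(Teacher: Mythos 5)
Your proof is correct and is essentially the paper's own argument: the paper likewise straightens $k$ to the family $P \defeq \lambda a.\fib_a(k)$, identifies the pullback with $\sum_{c:C} P(j\,c)$ so that $k'$ becomes the projection, and concludes from the characterization of equivalences via contractible fibers. The extra paragraph about the strict versus homotopy pullback is a reasonable precaution but adds nothing beyond the projection equivalence the paper already invokes.
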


\begin{proof}
	Denote by $P: A \to \UU$ the straightening of $k:B \to A$, so that $B \simeq \sum_{a:A} P\,a$, and $D \simeq \sum_{c:C} P(j\,c)$. The map $k:B \to A$ (identified with its projection equivalence) is a weak equivalence if and only if $\prod_{a:A} \isContr(P\,a)$. This implies $\prod_{c:C} P(j\,c)$ which is equivalent to $k' = j^*k$ being a weak equivalence, as desired.
\end{proof}

\begin{proposition}[Homotopy invariance of homotopy pullbacks]\label{prop:htopy-inv-of-pb}
	Given a map between cospans of types
	\[\begin{tikzcd}
		C && A && B \\
		{C'} && {A'} && {B'}
		\arrow["{\simeq~ r}"', from=1-1, to=2-1]
		\arrow["{g'}"', from=2-1, to=2-3]
		\arrow["g", from=1-1, to=1-3]
		\arrow["{\simeq~p}"', from=1-3, to=2-3]
		\arrow["f"', from=1-5, to=1-3]
		\arrow["{f'}", from=2-5, to=2-3]
		\arrow["{\simeq~q}", from=1-5, to=2-5]
	\end{tikzcd}\]
	where the vertical arrows are weak euivalences the induced map
	\[ B \times_A C \to B' \times_{A'} C'\]
	is an equivalence as well.
\end{proposition}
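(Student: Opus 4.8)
The plan is to reduce to the straightening picture, following the style of \Cref{thm:typal-gconstr} and \Cref{lem:rprop}. Let $P \defeq \St_A(f) : A \to \UU$ and $R \defeq \St_{A'}(f') : A' \to \UU$ be the straightenings of $f : B \to A$ and $f' : B' \to A'$, so that up to equivalence $B \simeq \sum_{a:A} P\,a$ and $B' \simeq \sum_{a':A'} R\,a'$ with $f,f'$ the respective projections. The commuting square $p \circ f = f' \circ q$ exhibits $q$ as a fibered map from $f$ to $f'$ over $p : A \to A'$; concretely $q\,\pair{a}{x} = \pair{p\,a}{q_a(x)}$ for a family of maps $q_a : P\,a \to R(p\,a)$. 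Since $q$ and $p$ are weak equivalences, each $q_a$ is a weak equivalence as well (an equivalence over a map induces equivalences on fibers, \cf~\cite[Theorem~4.7.7]{hottbook}).

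First I would rewrite both homotopy pullbacks as $\Sigma$-types: using $B \simeq \sum_{a:A} P\,a$ together with the commutation $p \circ g = g' \circ r$ one gets
\[ B \times_A C \;\simeq\; \sum_{c:C} P(g\,c), \qquad B' \times_{A'} C' \;\simeq\; \sum_{c':C'} R(g'\,c'). \]
Then I would assemble the comparison as a chain of equivalences: $\sum_{c:C} P(g\,c) \xrightarrow{\;\sum_c q_{g\,c}\;} \sum_{c:C} R(p(g\,c)) \;=\; \sum_{c:C} R(g'(r\,c))$ (rewriting along $p\,g = g'\,r$), and finally $\sum_{c:C} R(g'(r\,c)) \simeq \sum_{c':C'} R(g'\,c')$ by reindexing the base along the weak equivalence $r : C \to C'$ (a $\Sigma$-type over $C$ of a family pulled back along an equivalence is equivalent to the corresponding $\Sigma$-type over $C'$). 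Composing the three equivalences and checking that the result is homotopic to the canonically induced map $B \times_A C \to B' \times_{A'} C'$ finishes the argument.

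Alternatively — and perhaps more transparently — one can avoid straightening and factor the induced map through the intermediate pullbacks $B \times_{A'} C$ (with $B \to A'$ taken to be $f'q = pf$ and $C \to A'$ taken to be $g'r = pg$) and $B' \times_{A'} C$. The map $B \times_A C \to B \times_{A'} C$ is an equivalence since it is induced fiberwise by $\mathsf{ap}_p : (f\,b = g\,c) \simeq (p\,f\,b = p\,g\,c)$; the map $B \times_{A'} C \to B' \times_{A'} C$ is an equivalence by pasting pullbacks and applying right properness (\Cref{lem:rprop}) to the equivalence $q$; and $B' \times_{A'} C \to B' \times_{A'} C'$ is again right properness applied to $r$. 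Composition of equivalences then yields the claim.

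The only genuine content is the fact that a weak equivalence between total spaces over a base restricts to fiberwise weak equivalences; everything else (identifying pullbacks with $\Sigma$-types, and verifying that the assembled map is the canonical one) is routine bookkeeping of the two commutation witnesses $p\,g = g'\,r$ and $p\,f = f'\,q$. I expect that bookkeeping — threading both coherences through the chain — to be the main, though entirely mechanical, obstacle; no idea beyond those already used for \Cref{thm:typal-gconstr} and \Cref{lem:rprop} is needed.
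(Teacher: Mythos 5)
Your proposal is correct; both of your routes work. Your second, pullback-factoring argument is essentially the paper's proof with a different choice of intermediate objects: the paper first shows that the mediating map $C \to C' \times_{A'} A$ is an equivalence (right properness applied to $p$, then $2$-out-of-$3$ against $r$), then assembles a cube and uses the pullback pasting lemma to exhibit $B \times_A C \to B' \times_{A'} C'$ as a pullback of the equivalence $B \to B'$, so that right properness finishes the job. You instead change the apex of the cospan first, passing through $B \times_{A'} C$ and $B' \times_{A'} C$; the toolbox (right properness as in \Cref{lem:rprop}, pasting of pullbacks) is identical, and the step via $\mathsf{ap}_p$ being an equivalence on identity types is a clean way to handle the apex change that the paper instead absorbs into its $2$-out-of-$3$ argument. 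Your first, straightening-based route is genuinely different in flavor: it trades the diagrammatic bookkeeping for fiberwise reasoning, at the cost of having to justify that $q$ restricts to fiberwise equivalences over the non-identity base map $p$ --- note that \cite[Theorem~4.7.7]{hottbook} as stated is for maps over a \emph{fixed} base, so you need the extra (easy) step of reindexing $R$ along the equivalence $p$ before invoking it. Either route yields a complete proof.
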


\begin{proof}
	By right properness and $2$-out-of-$3$ the mediating map $C \to P \defeq C' \times_{A'} A$ is an equivalence as can be seen from the diagram:
	\[\begin{tikzcd}
		C &&& A \\
		& P \\
		{C'} &&& {A'}
		\arrow["\simeq"{description}, from=1-1, to=3-1]
		\arrow[from=3-1, to=3-4]
		\arrow[from=1-1, to=1-4]
		\arrow["\simeq", from=1-4, to=3-4]
		\arrow["\simeq"{description}, dashed, from=1-1, to=2-2]
		\arrow[from=2-2, to=1-4]
		\arrow["\simeq"{description}, from=2-2, to=3-1]
		\arrow["\lrcorner"{anchor=center, pos=0.125}, draw=none, from=2-2, to=3-4]
	\end{tikzcd}\]
	This gives the following cube
	\[\begin{tikzcd}
		{C \times_A B} && B \\
		& C && A \\
		{C' \times_{A'} B'} && {B'} \\
		& {C'} && {A'}
		\arrow[from=1-1, to=3-1]
		\arrow[from=3-3, to=4-4]
		\arrow[from=3-1, to=4-2]
		\arrow[from=1-1, to=2-2]
		\arrow[from=1-3, to=2-4]
		\arrow["\lrcorner"{anchor=center, pos=0.125, rotate=45}, draw=none, from=1-1, to=2-4]
		\arrow["\lrcorner"{anchor=center, pos=0.125}, draw=none, from=2-2, to=4-4]
		\arrow["\lrcorner"{anchor=center, pos=0.125, rotate=45}, draw=none, from=3-1, to=4-4]
		\arrow[from=4-2, to=4-4]
		\arrow[from=3-1, to=3-3]
		\arrow["\simeq"{description, pos=0.3}, from=1-3, to=3-3]
		\arrow["\simeq"{description, pos=0.3}, from=2-4, to=4-4]
		\arrow[from=1-1, to=1-3]
		\arrow["\simeq"{description, pos=0.3}, from=2-2, to=4-2, crossing over]
		\arrow[from=2-2, to=2-4, crossing over]
	\end{tikzcd}\]
	and by the Pullback Lemma we know that the back face is a pullback, too. Then again, by right properness
	\[B \times_A C \to B' \times_{A'} C' \]
	is an equivalence.
\end{proof}

\begin{proposition}
	Given a fibered equivalence as below
	\[\begin{tikzcd}
		F && E \\
		& G \\
		& B
		\arrow[two heads, from=1-1, to=2-2]
		\arrow[two heads, from=1-3, to=2-2]
		\arrow[two heads, from=2-2, to=3-2]
		\arrow[two heads, from=1-1, to=3-2]
		\arrow[two heads, from=1-3, to=3-2]
		\arrow["\simeq" swap, "\varphi", from=1-1, to=1-3]
	\end{tikzcd}\]
	and a map $k:A \to B$ the fibered equivalence $\varphi$ pulls back as shown below:
	\[\begin{tikzcd}
		{F'} &&&&& F \\
		&&& {E'} &&&&& E \\
		& {G'} &&&&& {G} \\
		\\
		A &&&&& B
		\arrow["\varphi", "\simeq" swap, from=1-6, to=2-9]
		\arrow[two heads, from=2-9, to=5-6]
		\arrow[two heads, from=1-6, to=3-7]
		\arrow[two heads, from=2-9, to=3-7]
		\arrow[from=1-1, to=1-6]
		\arrow["\varphi'", "\simeq" swap, dashed, from=1-1, to=2-4, shorten >=8pt]
		\arrow[two heads, from=2-4, to=3-2]
		\arrow[two heads, from=1-1, to=5-1]
		\arrow["k", from=5-1, to=5-6]
		\arrow[two heads, from=3-2, to=5-1]
		\arrow[two heads, from=3-7, to=5-6]
		\arrow["\lrcorner"{anchor=center, pos=0.125}, draw=none, from=3-2, to=5-6]
		\arrow["\lrcorner"{anchor=center, pos=0.125}, draw=none, from=2-4, to=5-6]
		\arrow["\lrcorner"{anchor=center, pos=0.125}, draw=none, from=1-1, to=5-6]
		\arrow[two heads, from=1-6, to=5-6]
		\arrow[two heads, from=1-1, to=3-2]
		\arrow[from=3-2, to=3-7, crossing over]
		\arrow[two heads, from=2-4, to=5-1, crossing over]
		\arrow[from=2-4, to=2-9, crossing over]
	\end{tikzcd}
	\]
\end{proposition}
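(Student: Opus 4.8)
The plan is to reduce the claim to the homotopy invariance of homotopy pullbacks, \Cref{prop:htopy-inv-of-pb}, after identifying the various pulled-back objects. Write $k:A\to B$ for the given map and, as in the statement, let $F'$, $E'$, $G'$ denote the pullbacks of $F$, $E$, $G$ along $k$ (equivalently, along their respective composite projections to $B$). Since the projections $F\to B$ and $E\to B$ factor through $G$, pasting of pullbacks (the Pullback Lemma) provides canonical equivalences $E'\simeq G'\times_G E$ and $F'\simeq G'\times_G F\simeq E'\times_E F$, all fibered over $A$; in particular the induced map $\varphi':F'\to E'$ is, up to these identifications, the pullback of $\varphi:F\to E$ along $E'\to E$, and it commutes with all the relevant projections down to $G'$, $A$, and $B$ by construction of the mediating maps into the pullbacks.

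First I would check that $\varphi'$ is an equivalence. By the previous paragraph $\varphi'$ is a pullback of the weak equivalence $\varphi$, hence a weak equivalence by right properness, \Cref{lem:rprop}. Alternatively, and more symmetrically, $\varphi'$ is the comparison map between the homotopy pullbacks $A\times_B F\to A\times_B E$ induced by the evident map of cospans $(A\xrightarrow{k}B\leftarrow F)\to(A\xrightarrow{k}B\leftarrow E)$ whose three components are $\id_A$, $\id_B$, and $\varphi$; since all three are equivalences, \Cref{prop:htopy-inv-of-pb} again yields that $\varphi'$ is one. It then remains to see that $\varphi'$ is a \emph{fibered} equivalence over $G'$: it is by construction a fibered functor between the unstraightenings $F'\fibarr G'$ and $E'\fibarr G'$, and a fibered functor whose total map is an equivalence is automatically a fibered equivalence. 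This is most transparent through the projection equivalence of \Cref{thm:typal-gconstr}: straightening over $G'$, the family presenting $F'\fibarr G'$ is the reindexing of $\St_G(F)$ along $G'\to G$, similarly for $E'$, and $\varphi'$ straightens to the corresponding reindexing of the fiberwise equivalence presenting $\varphi$, so it is a fiberwise equivalence and hence descends over $A$ and $B$ as well.

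There is essentially no genuine obstacle here; the only care needed is the bookkeeping of which faces of the displayed cube-over-a-cube are pullbacks, which is exactly the Pullback-Lemma chase already performed in the proof of \Cref{prop:htopy-inv-of-pb}. If one preferred to avoid invoking a characterization of fibered equivalences altogether, one could instead apply \Cref{lem:rprop} once more to each of the squares $F'\to F$, $E'\to E$, $G'\to G$ to conclude directly that the whole cube is a morphism of fibration towers restricting to $\varphi$ on the back face and to the identity on the front face, with $\varphi'$ the only new edge, shown to be an equivalence as above.
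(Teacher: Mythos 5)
Your proposal is correct, and its decisive step coincides with the paper's proof: the paper simply straightens the whole tower via the projection equivalence of \Cref{thm:typal-gconstr}, observes that $\varphi$ is a family of fiberwise equivalences indexed over $B$ (and the fibers of $G$), and notes that pullback along $k$ is just reindexing by precomposition with $k$, which manifestly preserves fiberwise equivalences and makes all the squares commute because every vertical map is a projection. That is exactly the content of your final straightening paragraph. What you add on top is a preliminary external argument---identifying $F'\simeq E'\times_E F$ by the Pullback Lemma and concluding that $\varphi'$ is an equivalence via right properness (\Cref{lem:rprop}) or via homotopy invariance of pullbacks (\Cref{prop:htopy-inv-of-pb})---which is valid but redundant once the reindexing argument is in place, since a reindexed fiberwise equivalence is already a fiberwise equivalence and hence an equivalence on total types. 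In short: your route buys an extra, model-categorical sanity check; the paper's route gets everything in one step from straightening.
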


\begin{proof}
	By projection equivalence, we can consider families $R:B \to \UU$, $P: F \jdeq \totalty{R} \to \UU$, $Q : E \jdeq \totalty{P} \to \UU$, with $F \jdeq \totalty{Q}$. The fibered equivalence $\varphi$ is given by a family of equivalences
	\[ \varphi: \prod_{\substack{b:B \\ x:R\,b}} \big( \sum_{e:P\,b\,x} Q \, b \, x \, e \big) \stackrel{\simeq}{\longrightarrow} P\,b\,x. \]
	The induced family
	\[ \varphi' \defeq \lambda a,x.\varphi(k\,a,x): \prod_{\substack{b:B \\ x:R\,k(a)}} \big( \sum_{e:P\,k(a)\,x} Q \, k(a) \, x \, e \big) \stackrel{\simeq}{\longrightarrow} P\,k(a)\,x\]
	also constitutes a fibered equivalence. Commutation of all the diagrams is clear since, after projection equivalence, all the vertical maps are given by projections.
\end{proof}

		\begin{lemma}[Closedness of fibered equivalences under dependent products]\label{lem:fib-eq-closed-pi}
			Let $I$ be a type. Suppose given a family $B:I\to \UU$ and indexed families $P,Q:\prod_{i:I} B_i \to \UU$ together with a fiberwise equivalence $\varphi: \prod_{i:I} \prod_{b:B} P_i\,b \stackrel{\simeq}{\longrightarrow} Q_i\,b$. Then the map
			\[ \prod_{i:I} \varphi_i : \big(\prod_{i:I} \totalty{P_i} \big) \longrightarrow_{\prod_{i:I} B_i} \big(\prod_{i:I} \totalty{Q_i} \big) \]
			induced by taking the dependent product over $I$ is a fiberwise equivalence, too.
		\end{lemma}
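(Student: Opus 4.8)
The plan is to reduce the claim to the elementary fact that equivalences are stable under dependent products. First I would note that $\prod_{i:I}\varphi_i$ preserves the base coordinate: it sends a section $s:\prod_{i:I}\totalty{P_i}$ to the section $i \mapsto \pair{b_i}{\varphi_i(b_i,p_i)}$, where $\pair{b_i}{p_i}\jdeq s\,i$. Hence it is a genuine fibered map over $\prod_{i:I}B_i$, and being a fiberwise equivalence over this base is a pointwise condition on the base; so it suffices to check, for each $\beta:\prod_{i:I}B_i$, that the induced map on the fibers of the two total-type projections over $\beta$ is an equivalence.

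Next I would identify these fibers using the type-theoretic axiom of choice \cite[Theorem~2.15.7]{hottbook}: there is an equivalence
\[ \prod_{i:I}\totalty{P_i} \;\jdeq\; \prod_{i:I}\sum_{b:B_i}P_i\,b \;\;\stackrel{\simeq}{\longrightarrow}\;\; \sum_{\beta:\prod_{i:I}B_i}\prod_{i:I}P_i(\beta_i) \]
carrying the projection to $\prod_{i:I}B_i$ to the first projection, and likewise for $Q$. Thus the fiber of $\prod_{i:I}\totalty{P_i}\fibarr\prod_{i:I}B_i$ over $\beta$ is $\prod_{i:I}P_i(\beta_i)$, that of $\prod_{i:I}\totalty{Q_i}\fibarr\prod_{i:I}B_i$ over $\beta$ is $\prod_{i:I}Q_i(\beta_i)$, and, unwinding the definition of $\prod_{i:I}\varphi_i$, the induced map between them is the pointwise dependent product
\[ \lambda p.\big(\,i \mapsto \varphi_i(\beta_i,p_i)\,\big)\;:\;\prod_{i:I}P_i(\beta_i)\;\longrightarrow\;\prod_{i:I}Q_i(\beta_i) \]
of the family of maps $\varphi_i(\beta_i,-):P_i(\beta_i)\to Q_i(\beta_i)$.

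Each $\varphi_i(\beta_i,-)$ is an equivalence by hypothesis, and by function extensionality the dependent product of a pointwise family of equivalences over $I$ is again an equivalence; applied to the family $i\mapsto\varphi_i(\beta_i,-)$ this shows the fiber map over $\beta$ is an equivalence, and since $\beta$ was arbitrary, $\prod_{i:I}\varphi_i$ is a fiberwise equivalence. The only point that needs care is the bookkeeping in the middle step --- checking that the interchange equivalence transports the two total-type projections to first projections and $\prod_{i:I}\varphi_i$ to the displayed pointwise product --- but this is a routine unwinding of the definitions of the choice equivalence and of $\prod_{i:I}\varphi_i$, with no genuine obstacle.
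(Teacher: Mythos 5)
Your proof is correct and follows essentially the same route as the paper's: both commute the dependent product past the $\Sigma$ via the type-theoretic axiom of choice and then invoke function extensionality to see that a dependent product of pointwise equivalences is an equivalence. The only cosmetic difference is that the paper phrases the last step via contractibility of the fibers of $\varphi_i$ itself (after fibrant replacement) together with weak function extensionality, whereas you work directly with the induced maps between the fibers of the two projections; these are interchangeable formulations of the same fact.
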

		
		\begin{proof}
			For $i:I$, we fibrantly replace the given fiberwise equivalence $\varphi_i$ by projections, giving rise to (strictly) commutative diagrams:
			\[\begin{tikzcd}
				{\sum_{b:B} \sum_{x:Q_i(b)} P_i(b,x) \simeq E_i} && {F_i \simeq \sum_{b:B} P_i(b)} \\
				& {B_i}
				\arrow["{\pi_i}"', two heads, from=1-1, to=2-2]
				\arrow["{\xi_i}", two heads, from=1-3, to=2-2]
				\arrow["{\varphi_i}", from=1-1, to=1-3, "\simeq" swap]
			\end{tikzcd}\]
			Now, $\varphi$ being a fiberwise equivalence is equivalent to
			\begin{align*}
				\prod_{i:I} \isEquiv(\varphi_i)	&\simeq  \prod_{i:I} \prod_{\substack{b:B_i \\ x:Q_i(b)}} \isContr(P_i(b,x)) \\
				&	\simeq   \prod_{\beta:\prod_{i:I} B_i} \prod_{\sigma:\prod_{i:I} \beta^* P_i} \prod_{i:I} \isContr\left( P_i(\beta(i), \sigma(i))\right).
			\end{align*}
			By (weak) function extensionality,\footnote{\cf~\cite[Theorem~13.1.4(ii)]{RijIntro}, or the discussion at the beginning of~\cite[Section~4.4.]{RS17}} this implies
			\begin{align*}
				&\prod_{\beta:\prod_{i:I} B_i} \prod_{\sigma:\prod_{i:I} \beta^*Q_i} \isContr \left( \prod_{i:I} \pair{\beta}{\sigma}^* P_i\right) \\ 
				\simeq & \isEquiv\left( \prod_{i:I} \varphi_i \right)
			\end{align*}
			wich yields the desired statement. Note, that the latter equivalence follows by projection equivalence of the diagram obtained by applying $\prod_{i:I}(-)$:
			\[\begin{tikzcd}
				{\sum_{\substack{\beta:\prod_{i:I} B_i \\ \sigma:\prod_{i:I} \beta^*Q_i}} \prod_{i:I} \langle\beta,\sigma\rangle^*P_i \simeq \prod_{i:I} E_i} && {\mathllap{\prod_{i:I} F_i \simeq} \sum_{\beta:\prod_{j:I} B_i} \prod_{i:I} \beta^*Q_i} \\
				& {\prod_{i:I} B_i}
				\arrow["{\prod_{i:I} \pi_i}"', from=1-1, to=2-2]
				\arrow["{\prod_{i:I} \varphi_i}"{pos=0.3}, shorten >=45pt, from=1-1, to=1-3]
				\arrow["{\prod_{i:I} \xi_i}", from=1-3, to=2-2]
			\end{tikzcd}\]
		\end{proof}
		
		\begin{lemma}[Closedness of fibered equivalences under sliced products]\label{prop:fib-eqs-sliced-pi}
			Given indexed families $P,Q:I \to B \to \UU$ and a family of fibered equivalences $\prod_{i:I} \prod_{b:B} P_i \,b  \stackrel{\simeq}{\longrightarrow} Q_i \,b$. Then the induced fibered functor
			\[ \times_{i:I}^B \varphi_i : \prod_{i:I} \prod_{b:B} \times_{i:I}^B P_i \,b \longrightarrow \times_{i:I}^B Q_i\,b \]
			between the sliced products over $B$ is also a fibered equivalence.
		\end{lemma}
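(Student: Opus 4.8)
The plan is to deduce this from the closure of fibered equivalences under \emph{ordinary} dependent products, \Cref{lem:fib-eq-closed-pi}, together with the stability of equivalences under pullback. Recall from \Cref{ssec:sl-constr} that the sliced product $\times_{i:I}^B E_i$ is by definition the pullback of the ordinary product $\prod_{i:I} E_i \fibarr B^I$ along the constant-tuple map $\mathrm{cst}: B \to B^I$. Applying this to the two families $P$ and $Q$ exhibits $\times_{i:I}^B P_i$ and $\times_{i:I}^B Q_i$ as base changes along $\mathrm{cst}$ of $\prod_{i:I}\totalty{P_i} \fibarr B^I$ and $\prod_{i:I}\totalty{Q_i} \fibarr B^I$, and the fibered functor $\times_{i:I}^B\varphi_i$ as the base change of $\prod_{i:I}\varphi_i$.

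First I would invoke \Cref{lem:fib-eq-closed-pi} to conclude that $\prod_{i:I}\varphi_i : \prod_{i:I}\totalty{P_i} \to \prod_{i:I}\totalty{Q_i}$ is a fibered equivalence over $B^I$, hence in particular an equivalence of total types. The two pullback squares defining the sliced products then assemble into a map of cospans over $B^I$, namely $B \xrightarrow{\mathrm{cst}} B^I \leftarrow \prod_{i:I}\totalty{P_i}$ mapping to $B \xrightarrow{\mathrm{cst}} B^I \leftarrow \prod_{i:I}\totalty{Q_i}$ via $(\mathrm{id}_B,\mathrm{id}_{B^I},\prod_{i:I}\varphi_i)$, whose three vertical components are all equivalences. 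By \Cref{prop:htopy-inv-of-pb} (or, spelled out, by right properness, \Cref{lem:rprop}) the induced map on pullbacks — which is precisely $\times_{i:I}^B\varphi_i$ — is an equivalence of total types; since it commutes with the projections down to $B$ by construction, it is a fibered equivalence over $B$, as claimed.

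Alternatively, and more directly, one can argue fiberwise: by the projection equivalence the fiber of $\times_{i:I}^B P_i$ at a point $b:B$ is $\prod_{i:I} P_i(b)$, and under this identification $\times_{i:I}^B\varphi_i$ restricts to $\lambda\sigma.\lambda i.\varphi_{i,b}(\sigma(i))$, a dependent product of the equivalences $\varphi_{i,b}: P_i(b)\to Q_i(b)$, hence an equivalence by (weak) function extensionality; a fibered map over $B$ that is a fiberwise equivalence is a fibered equivalence. There is essentially no real obstacle here: the only point requiring care is the standard but slightly fiddly identification, via projection equivalence, of the fibers (resp.\ total spaces) of the sliced products with the expected $\Pi$-types and of the induced map with the expected product of the $\varphi_i$ — exactly the bookkeeping already carried out in the proof of \Cref{lem:fib-eq-closed-pi}, so I would simply refer to it rather than repeat it.
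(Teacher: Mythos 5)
Your proposal is correct and follows essentially the same route as the paper: apply \Cref{lem:fib-eq-closed-pi} to get that $\prod_{i:I}\varphi_i$ is an equivalence over $B^I$, then pull back along $\mathrm{cst}:B\to B^I$ and conclude by right properness (\Cref{lem:rprop}) that the mediating map between the sliced products is an equivalence. The alternative fiberwise argument you sketch is also sound, but the pullback argument is exactly what the paper does.
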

		
		\begin{proof}
			As usual, denote for $i:$ by $\pi_i \defeq \Un_B(P_i) : E_i \to B$ and $\xi_i \defeq \Un_B(Q_i) : F_i \to B$ the unstraightenings of the given fibered families, giving rise to a (strict) diagram:
			\[\begin{tikzcd}
				{E_i} && {F_i} \\
				& B
				\arrow["{\pi_i}"', two heads, from=1-1, to=2-2]
				\arrow["{\xi_i}", two heads, from=1-3, to=2-2]
				\arrow["{\varphi_i}", from=1-1, to=1-3]
			\end{tikzcd}\]
			Since weak equivalences are closed under taking dependent products, the induced fibered map $\prod_{i:I} \varphi_i: \prod_{i:I} E_i \to_{I \to B} \prod_{i:I} F_i$ is also a weak equivalence, and by right properness \Cref{lem:rprop} the desired mediating map is as well:
			\[\begin{tikzcd}
				{\times^B_{i:I} E_i} && {\prod_{i:I} E_i} \\
				& {\times^B_{i:I} F_i} && {\prod_{i:I} F_i} \\
				& B && {B^I}
				\arrow["{\prod_{i:I} \varphi_i}", from=1-3, to=2-4]
				\arrow[from=1-1, to=1-3]
				\arrow["{\times_{i:I}^B \varphi_i}"{pos=0.2} description, dashed, from=1-1, to=2-2]
				\arrow[two heads, from=2-4, to=3-4]
				\arrow["\cst", from=3-2, to=3-4]
				\arrow[two heads, from=2-2, to=3-2]
				\arrow[curve={height=6pt}, two heads, from=1-1, to=3-2]
				\arrow["\lrcorner"{anchor=center, pos=0.125}, draw=none, from=2-2, to=3-4]
				\arrow["\lrcorner"{anchor=center, pos=0.125}, draw=none, from=1-1, to=3-4]
				\arrow[curve={height=6pt}, two heads, from=1-3, to=3-4]
				\arrow[from=2-2, to=2-4, crossing over]
			\end{tikzcd}\]
		\end{proof}

		\begin{proposition}
			For an indexing type $I$ and a base Rezk type $B$, families of fibered equivalences between Rezk types over $B$ are closed under taking sliced products, \ie: Given a family of isoinner fibrations over $B$ together with a fibered equivalence as below left, the induced maps on the right make up a fibered equivalence as well:
			\[\begin{tikzcd}
				{F_i} && {E_i} & {\times_{i:I}^B F_i} && {\times_{i:I}^B E_i} \\
				& {G_i} & {} & {} & {\times_{i:I}^B G_i} \\
				& B &&& B
				\arrow[""{name=0, anchor=center, inner sep=0}, "{\varphi_i}"', curve={height=6pt}, from=1-1, to=1-3]
				\arrow[""{name=1, anchor=center, inner sep=0}, "{\psi_i}"', curve={height=12pt}, from=1-3, to=1-1]
				\arrow[""{name=2, anchor=center, inner sep=0}, "{\times_{i:I}^B \varphi_i}"', curve={height=6pt}, from=1-4, to=1-6]
				\arrow[""{name=3, anchor=center, inner sep=0}, "{\times_{i:I}^B \psi_i}"', curve={height=12pt}, from=1-6, to=1-4]
				\arrow[two heads, from=1-1, to=2-2]
				\arrow[two heads, from=1-3, to=2-2]
				\arrow[curve={height=12pt}, two heads, from=1-1, to=3-2]
				\arrow[curve={height=-12pt}, two heads, from=1-3, to=3-2]
				\arrow[two heads, from=2-2, to=3-2]
				\arrow[curve={height=12pt}, two heads, from=1-4, to=3-5]
				\arrow[curve={height=-12pt}, two heads, from=1-6, to=3-5]
				\arrow[two heads, from=2-5, to=3-5]
				\arrow[two heads, from=1-6, to=2-5]
				\arrow[two heads, from=1-4, to=2-5]
				\arrow[squiggly, from=2-3, to=2-4]
				\arrow["\dashv"{anchor=center, rotate=-90}, draw=none, from=1, to=0]
				\arrow["\dashv"{anchor=center, rotate=-90}, draw=none, from=3, to=2]
			\end{tikzcd}\]
		\end{proposition}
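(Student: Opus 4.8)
The plan is to reduce the statement, fiber by fiber over $B$, to the already-established closure of fibered equivalences under dependent products, \Cref{lem:fib-eq-closed-pi}. Concretely, the sliced product $\times_{i:I}^B(-)$ is computed fiberwise over $B$ by an honest dependent product over the indexing type $I$, so over each $b:B$ the map $\times_{i:I}^B\varphi_i$ restricts to a dependent product of the fibered equivalences $\varphi_i$ — now fibered over $\prod_{i:I}(G_i)_b$ — to which \Cref{lem:fib-eq-closed-pi} applies verbatim. An alternative route is to first invoke \Cref{prop:fib-eqs-sliced-pi} to see that $\times_{i:I}^B\varphi_i$ is a fibered equivalence over $B$, and then upgrade this to a fibered equivalence over $\times_{i:I}^B G_i$ by right properness, \Cref{lem:rprop}, using that an equivalence of total types commuting with the projections down to an intermediate base is automatically fiberwise an equivalence over that base; this is perhaps quicker but obscures the role of function extensionality.

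First I would fibrantly replace the data by projection equivalence: take $R_i:B\to\UU$ to be the straightening of $G_i\fibarr B$, and $P_i,Q_i:\totalty{R_i}\to\UU$ the straightenings of $E_i\fibarr G_i$ and $F_i\fibarr G_i$, so that $\varphi_i$ becomes a family of equivalences $\varphi_i:\prod_{b:B}\prod_{g:R_i\,b}Q_i(b,g)\stackrel{\simeq}{\longrightarrow}P_i(b,g)$ with $\psi_i$ and the unit/counit given pointwise. Next I would unwind the sliced products over $B$: using that $\prod_{i:I}(-)$ commutes with $\Sigma$-types one obtains, at stage $b:B$, the identifications $\big(\times_{i:I}^B E_i\big)_b\simeq\sum_{\gamma:\prod_{i:I}R_i\,b}\prod_{i:I}P_i(b,\gamma_i)$ and similarly for $F_i$ with $Q_i$, while $\big(\times_{i:I}^B G_i\big)_b\simeq\prod_{i:I}R_i\,b$. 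Under these identifications the restriction of $\times_{i:I}^B\varphi_i$ to the fiber over $b$ is precisely $\prod_{i:I}$ (over the indexing type $I$, with base family $i\mapsto R_i\,b$) of the fibered equivalences $\varphi_i(b,-,-)$; \Cref{lem:fib-eq-closed-pi} then says this is a fibered equivalence over $\prod_{i:I}R_i\,b$. Since the fiber of $\times_{i:I}^B F_i\to\times_{i:I}^B G_i$ over a point $\pair{b}{\gamma}$ is by construction the fiber of $\prod_{i:I}Q_i(b,-)$ over $\gamma$ — and likewise for $E_i$ — this exactly means that $\times_{i:I}^B\varphi_i$ is a fibered equivalence over $\times_{i:I}^B G_i$, as claimed. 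The fibered quasi-inverse $\times_{i:I}^B\psi_i$ and the required homotopies are produced by applying the sliced-product construction to $\psi_i$ and to the pointwise unit and counit, and all the triangles in the right-hand diagram commute strictly because, after projection equivalence, every structural map occurring is a projection.

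I do not expect a genuine obstacle here; the argument is essentially bookkeeping on top of \Cref{lem:fib-eq-closed-pi}. The one point requiring care is the matching of fibers in the final step: one must check that the fiberwise-over-$B$ statement ``$\prod_{i:I}\varphi_i(b,-,-)$ is a fibered equivalence over $\prod_{i:I}R_i\,b$'' genuinely assembles into the fiberwise-over-$\times_{i:I}^B G_i$ statement that is the actual claim, i.e.\ that the base over which one is ultimately fibering is $\times_{i:I}^B G_i$ and not merely $B$. A second, minor, point is to make sure the full adjoint-equivalence datum (not just the underlying bi-invertible map) transports through the sliced product, which it does since the product functor is applied uniformly to $\varphi_i$, $\psi_i$, and the two-cells witnessing invertibility.
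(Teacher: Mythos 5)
Your proposal is correct and in substance matches the paper's argument: the paper likewise reduces everything to \Cref{lem:fib-eq-closed-pi}, exhibiting the sliced products as pullbacks of the dependent products along $\mathrm{cst}\colon B\to B^I$ and transporting the fibered equivalence across that pullback --- so your sketched ``alternative route'' via \Cref{prop:fib-eqs-sliced-pi} and \Cref{lem:rprop} is essentially the paper's own proof. Your primary, fiberwise-over-$B$ organization of the same computation is an equally valid rearrangement.
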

		
		\begin{proof}
			This is a consequence of~\Cref{lem:fib-eq-closed-pi,prop:fib-eqs-sliced-pi}, considering the following diagram:
			\[\begin{tikzcd}
				{\times^B_{i:I} F_i} &&&&& {\prod_{i:I} E_i} \\
				& {} & {\times^B_{i:I} F_i} &&&&& {\prod_{i:I} F_i} \\
				& {\times^B_{i:I} G_i} &&&&& {\prod_{i:I} G_i} \\
				B &&&&& {B^I}
				\arrow[from=1-1, to=1-6]
				\arrow["\simeq"{description}, from=1-1, to=2-3]
				\arrow[two heads, from=1-1, to=4-1]
				\arrow[from=4-1, to=4-6]
				\arrow[two heads, from=1-6, to=4-6]
				\arrow[dashed, two heads, from=1-1, to=3-2]
				\arrow[two heads, from=3-2, to=4-1]
				\arrow[dashed, two heads, from=2-3, to=3-2]
				\arrow[two heads, from=1-6, to=3-7]
				\arrow["\simeq"{description}, from=1-6, to=2-8]
				\arrow[two heads, from=2-8, to=3-7]
				\arrow[curve={height=-22pt}, two heads, from=2-8, to=4-6]
				\arrow[two heads, from=3-7, to=4-6]
				\arrow["\lrcorner"{anchor=center, pos=0.125}, draw=none, from=2-3, to=3-7]
				\arrow["\lrcorner"{anchor=center, pos=0.125}, draw=none, from=3-2, to=4-6]
				\arrow["\lrcorner"{anchor=center, pos=0.125}, draw=none, from=1-1, to=2-8]
				\arrow[from=3-2, to=3-7]
				\arrow[from=2-3, to=2-8, crossing over]
				\arrow[curve={height=-22pt}, two heads, from=2-3, to=4-1, crossing over]
			\end{tikzcd}\]
		\end{proof}
		
		\section{Fibered (LARI) adjunctions}\label{sec:fib-lari-adj}
		
		Building on previous work \cite[Section~11]{RS17} and \cite[Appendix~B]{BW21} we provide a characterization of fibered LARI adjunctions along similar lines.

			\begin{theorem}[Characterizations of fibered adjunctions, cf.~{\protect\cite[Theorem~11.23]{RS17}, \cite[Theorem~B.1.4]{BW21}}]\label{thm:char-fib-adj}
				Let $B$ be a Rezk type. For $P,Q:B \to \UU$ isoinner families we write $\pi \defeq \Un_B(P) : E \defeq \totalty{P} \fibarr B$ and $\xi\defeq \Un_B(Q):F \jdeq \totalty{Q} \fibarr B$. Given a fibered functor $\varphi: E \to_B F$ such that (strictly)
				\[\begin{tikzcd}
					E && F \\
					& B
					\arrow["\varphi", from=1-1, to=1-3]
					\arrow["\pi"', two heads, from=1-1, to=2-2]
					\arrow["\xi", two heads, from=1-3, to=2-2]
				\end{tikzcd}\]
				the following are equivalent propositions:
				\begin{enumerate}
					\item\label{it:fib-ladj-vert-i} The type of \emph{fibered left adjoints} of $\varphi$, \ie, fibered functors $\psi$ which are ordinary (transposing) left adjoints of $\varphi$ whose unit, moreover, is vertical.
					\item\label{it:fib-ladj-vert-ii} The type of fibered functors $\psi:F \to_B E$ together with a vertical $2$-cell
					$\eta: \id_F \Rightarrow_B \varphi \psi$ 
					 s.t.
					\[\Phi_\eta \defeq \lambda u,d,e,k.\varphi_u \,k \circ \eta_d: \prod_{\substack{a,b:B \\ u:a \to b}} \prod_{d:Q\,a,~  e:P\,b} \big(\psi_a \,d \to^P_u e \big) \to \big( d \to^Q_u \varphi_b \, e)\]
					is a fiberwise equivalence. 
					\item\label{it:fib-ladj-sliced} The type of \emph{sliced} (or \emph{fiberwise}) \emph{left adjoints (over $B$)} to $\varphi$, \ie, fibered functors $\psi:F \to_B E$ together with a fibered equivalence
					\[ \relcomma{B}{\psi}{E} \simeq_{F \times_B E} \relcomma{B}{F}{\varphi}.\]
					\item\label{it:fib-ladj-bi-diag-i} The type of \emph{bi-diagrammatic fibered} (or \emph{fiberwise}) \emph{left adjoints}, \ie, fibered functors $\psi$ together with:
					\begin{itemize}
						\item a vertical natural transformation $\eta : \id_F \Rightarrow_B \varphi \psi$
						\item two \emph{vertical} natural transformations $\varepsilon, \varepsilon': \psi \varphi \Rightarrow_B \id_E$
						\item homotopies\footnote{by Segal-ness, the witnesses for the triangle identities are actually unique up to contractibility} $\alpha : \varphi \varepsilon \circ \eta \varphi =_{E \to F} \id_\varphi, ~\beta : \varepsilon' \psi \circ \psi \eta =_{F \to E} \id_\psi$
					\end{itemize}
					\item\label{it:fib-ladj-bi-diag-ii} The type of fibered functors $\psi$ together with:
					\begin{itemize}
						\item a vertical natural transformation $\eta : \id_F \Rightarrow_B \varphi \psi$
						\item two natural transformations $\varepsilon, \varepsilon': \psi \varphi \Rightarrow \id_E$
						\item homotopies $\alpha : \varphi \varepsilon \circ \eta \varphi =_{E \to F} \id_\varphi, ~\beta : \varepsilon' \psi \circ \psi \eta =_{F \to E} \id_\psi$
					\end{itemize}
				\end{enumerate}
			\end{theorem}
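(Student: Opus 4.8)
The plan is to adapt the proofs of the corresponding absolute statements, namely Riehl--Shulman's \cite[Theorem~11.23]{RS17} and its extension \cite[Theorem~B.1.4]{BW21}, keeping careful track of the verticality (over $B$) and fiberedness conditions throughout. Two structural observations make this possible. First, since $B$ is Rezk and $P,Q$ are isoinner, every fiber $E_b$ and $F_b$ is itself a Rezk type, so the absolute theory of transposing and bipointed adjunctions applies fiberwise, with $\pi_b \defeq (\id_{E_b})$ and $\xi_b$. Second, for fibered functors hom-types decompose over the base: $\hom_E(\psi\,d,e) \simeq \sum_{w:\xi\,d \to_B \pi\,e} (\psi\,d \to^P_w e)$ and likewise for $F$, and a transposition map induced by a \emph{vertical} $\eta$ respects this decomposition summand-wise via the maps $\Phi_{\eta,w}$. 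These reduce many steps to the absolute case applied either in a fiber or fibered over $\hom_B$.

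First I would treat $\ref{it:fib-ladj-vert-i} \iff \ref{it:fib-ladj-vert-ii}$: by definition a fibered left adjoint is a transposing left adjoint $\psi$ with vertical unit $\eta$, and by the absolute characterization this is the data of $\psi$, $\eta$ such that the transposition $\hom_E(\psi\,d,e) \to \hom_F(d,\varphi\,e)$, $h \mapsto \varphi(h) \circ \eta_d$, is an equivalence; by the second observation this global map is an equivalence exactly when each $\Phi_{\eta,w}$ is one, so the two packages of data agree. Next I would run $\ref{it:fib-ladj-vert-ii} \iff \ref{it:fib-ladj-bi-diag-ii} \iff \ref{it:fib-ladj-bi-diag-i}$ along the lines of \cite[Theorem~11.23]{RS17}: from a fiberwise-equivalent $\Phi_\eta$ one extracts the counits $\varepsilon,\varepsilon'$ by applying $\Phi^{-1}_{\eta,\id_b}$ to identity arrows (these land over $\id_b$, hence are vertical), and the triangle homotopies $\alpha,\beta$ are produced, and shown unique, by the Segal/Rezk conditions; conversely the bipointed data reconstitutes $\Phi_\eta$ with quasi-inverse $k \mapsto \varepsilon_e \circ \psi(k)$, where the round-trip homotopies hold \emph{over arbitrary} $u$ precisely because $\eta,\varepsilon$ are vertical and $\psi,\varphi$ are fibered, so that composites over $u$ are well-formed and the triangle identities plus naturality close the loop. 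The equivalence $\ref{it:fib-ladj-bi-diag-i} \iff \ref{it:fib-ladj-bi-diag-ii}$ is then easy: one direction forgets verticality of $\varepsilon,\varepsilon'$, and for the other the triangle identity $\varphi\varepsilon \circ \eta\varphi = \id_\varphi$ with $\eta$ vertical and $\varphi$ fibered forces $\varepsilon$ (and symmetrically $\varepsilon'$) to lie over an identity.

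Finally I would establish $\ref{it:fib-ladj-vert-ii} \iff \ref{it:fib-ladj-sliced}$. The sliced commas unstraighten, over a point $\pair{d}{e}$ of $F \times_B E$, to $(\psi_b\,d \to_{E_b} e)$ and $(d \to_{F_b} \varphi_b\,e)$ respectively, so the fiberwise maps $\Phi_{\eta,\id_b}$ assemble into a fibered map $\relcomma{B}{\psi}{E} \to_{F \times_B E} \relcomma{B}{F}{\varphi}$; by function extensionality it is a fibered equivalence iff each $\Phi_{\eta,\id_b}$ is an equivalence, which upgrades to all $\Phi_{\eta,u}$ being equivalences via the passage through $\ref{it:fib-ladj-bi-diag-i}$ and the triangle-identity computation above. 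Conversely, a fibered equivalence of sliced commas restricts on each Rezk fiber $E_b$ to a comma equivalence, hence (by the absolute comma characterization) to a fiberwise transposing adjunction $\psi_b \dashv \varphi_b$ with vertical $\eta_b,\varepsilon_b$ and triangle identities; these glue to global vertical natural transformations, naturality in the $B$-direction following from naturality of the comma equivalence, producing the data of $\ref{it:fib-ladj-bi-diag-i}$.

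The main obstacle I anticipate is exactly this last reconciliation: the sliced commas record only arrows internal to the fibers, whereas $\ref{it:fib-ladj-vert-ii}$ asserts a transposition equivalence over \emph{every} arrow $u : a \to_B b$, not merely identities, so the implication $\ref{it:fib-ladj-sliced} \Rightarrow \ref{it:fib-ladj-vert-ii}$ is not a formal manipulation but genuinely requires first recovering the bipointed data $\ref{it:fib-ladj-bi-diag-i}$ and then running the triangle-identity argument, which goes through only because all $2$-cells involved are vertical and all functors are fibered over $B$. Some care is also needed to verify that the fiberwise adjunction data obtained from $\ref{it:fib-ladj-sliced}$ is coherent in the $B$-direction, so that it assembles to honest natural transformations $\eta,\varepsilon,\varepsilon'$ on the total types $E$ and $F$ rather than merely to a compatible system on the fibers.
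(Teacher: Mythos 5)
Your proposal is correct and follows essentially the same route as the paper: in both, the substance is the fiberwise application of \cite[Theorem~11.23]{RS17} together with the key computation that lifts the transposition equivalence from vertical arrows to arbitrary base arrows $u$ via a triangle identity and naturality of the (vertical) counit, and the verticality of $\varepsilon,\varepsilon'$ is likewise forced by projecting the triangle homotopy $\alpha$ down to $B$. The only point you leave implicit is the propositionality of each package of witnessing data (needed since the theorem asserts the items are equivalent \emph{propositions}), which the paper dispatches, before and after its chain of logical equivalences, by the same argument as in the non-fibered case of \cite[Theorem~11.23]{RS17}.
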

			
			\begin{proof} ~ \\
				At first, we prove that, given a \emph{fixed} and \emph{fibered} functor $\psi:F \to_B E$ the respective witnessing data are propositions.\footnote{This justifies the ensuing list of \emph{logical} equivalences.}
				\begin{description}
					\item[$\ref{it:fib-ladj-vert-i} \iff \ref{it:fib-ladj-bi-diag-ii}$] This follows from the equivalence between transposing left adjoint and bi-diagrammatic left adjoint data, \cf~\cite[Theorem~11.23]{RS17}.
					\item[$\ref{it:fib-ladj-bi-diag-i} \implies \ref{it:fib-ladj-bi-diag-ii}$] This is clear since the latter is a weakening of the former. 
					\item[$\ref{it:fib-ladj-bi-diag-ii} \implies \ref{it:fib-ladj-bi-diag-i}$] Denoting the base component of $\varepsilon$ by $v:\Delta^1 \to (B \to B) \simeq B \to (\Delta^1 \to B)$, projecting down from $\alpha$ via $\xi$ we obtain the identity $\xi \alpha : v \circ \id_B = \id_B$. Thus $\varepsilon$ is vertical, and similarly one argues for $\varepsilon'$.
					\item[$\ref{it:fib-ladj-sliced} \iff \ref{it:fib-ladj-bi-diag-i}$] Given the fibered functor $\psi$, both lists of data witness that for every $b:B$ the components $\psi_b \dashv \varphi_b: P\,b \to Q\,b$ define an adjunction between the fibers, again by~\cite[Theorem~11.23]{RS17}.
					\item[$\ref{it:fib-ladj-vert-ii} \implies \ref{it:fib-ladj-sliced}$] The latter is an instance of the former.
					\item[$\ref{it:fib-ladj-bi-diag-i} \implies \ref{it:fib-ladj-vert-ii}$] Using naturality and the triangle identities, we show that the fiberwise conditions (vertical case) can be lifted to the case of arbitrary arrows in the base.\footnote{We thank Ulrik Buchholtz for pointing out the subsequent argument.}
					Consider the transposing maps:
					\begin{align*}
						\Phi \defeq \lambda k.\varphi_u\,k \circ \eta_d & : \prod_{\substack{a,b:B \\ u:a \to b}} \prod_{\substack{d:Q\,a \\  e:P\,b}} \big(\psi_a \,d \to^P_u e \big) \to \big( d \to^Q_u \varphi_b \, e \big) \\
						\Psi \defeq \lambda m.\varepsilon_e \circ \psi_u\,m  & : \prod_{\substack{a,b:B \\ u:a \to b}} \prod_{\substack{d:Q\,a \\ e:P\,b}}  \big( d \to^Q_u \varphi_b \, e) \to \big(\psi_a \,d \to^P_u e \big) 
					\end{align*}
					The first roundtrip yields:
					\[\begin{tikzcd}
						{\big(k:\psi_a\,d} & {e\big)} & {\big(\varphi_u\,k \circ \eta_d: d} & {\varphi_b\,e \big)} & {} \\
						&& {\big( \varepsilon_e \circ \psi_a(\varphi_u\,k \circ \eta_d):\psi_a\,d} & e
						\arrow["P", from=1-1, to=1-2]
						\arrow["\Phi", maps to, from=1-2, to=1-3]
						\arrow[""{name=0, anchor=center, inner sep=0}, "Q", from=1-3, to=1-4]
						\arrow["P", from=2-3, to=2-4]
						\arrow["\Psi"{description}, shorten <=6pt, maps to, from=0, to=2-3]
					\end{tikzcd}\]
					The result yields back $k$ using a triangle identity in the triangle on the left, and naturality of $\varepsilon$ in the square on the right:
					\[\begin{tikzcd}
						{\psi_a\,d} && {(\psi \varphi \psi)_a\,d} & {} & {(\psi\varphi)_b\,e} \\
						&& {\psi_a\,d} && e
						\arrow["{\psi_a\eta_d}", from=1-1, to=1-3]
						\arrow[""{name=0, anchor=center, inner sep=0}, "{\id_{\psi_a\,d}}"', curve={height=12pt}, Rightarrow, no head, from=1-1, to=2-3]
						\arrow["{\psi_a\varepsilon_d}", from=1-3, to=2-3]
						\arrow["{(\psi\varphi)_u\,k}", from=1-3, to=1-5]
						\arrow["k"', from=2-3, to=2-5]
						\arrow["{\varepsilon_e}", from=1-5, to=2-5]
						\arrow[Rightarrow, no head, from=2-3, to=1-5]
						\arrow[shorten <=6pt, Rightarrow, no head, from=0, to=1-3]
					\end{tikzcd}\]
					In addition, we have also used naturality of $\varepsilon$ for $\psi_a \varepsilon_d \jdeq \varepsilon_{\psi_a\,d}$. An analogous argument proves the other roundtrip.
				\end{description}
				We have proven, that relative to a fixed fibered functor $\psi: F \to_B E$ the different kinds of witnesses that this is a fibered left adjoint to $\varphi$ are equivalent propositions, giving rise to a predicate $\isFibLAdj_\varphi : (F \to_B E) \to \Prop$. What about the $\Sigma$-type $\FibLAdj_\varphi \defeq \sum_{\psi:F \to_B E} \isFibLAdj_\varphi(\psi)$ as a whole? E.g.~using the data from \cref{it:fib-ladj-sliced} (after conversion via~\cite[Theorem~11.23]{RS17}), said type is equivalent to
				\begin{align*}
					\FibLAdj_\varphi(\psi) & \simeq \sum_{\psi:\prod_{b:B}  P\,b \to Q\,b} \sum_{\eta:\prod_{b:B} \prod_{d:Q\,b} \hom_{Q\,b}(d,(\varphi\,\psi)_b\,d)} \prod_{\substack{b:B \\ d:Q\,b \\e:P\,b}} \isEquiv\big( \lambda k.\varphi_b(k) \circ \eta_d\big) \\ 
					& \simeq  \prod_{\substack{b:B \\ d:Q\,b}} \sum_{\psi_b:P\,b} \sum_{\eta_d:d \to_{Q\,b} \varphi_b(\psi_b d)} \prod_{e:P\,b} \isEquiv(\lambda k.\varphi_b(k) \circ \eta_d).
				\end{align*}
				Finally, one shows that this is indeed a proposition, completely analogously to the argument given in the proof of \cite[Theorem~11.23]{RS17} for the non-dependent case.
			\end{proof}
			
			\begin{definition}[Fibered (LARI) adjunction]\label{def:fib-lari-adj}
				Let $B$ be a Rezk type and $\pi: E \fibarr B$, $\xi:F \fibarr B$ be isoinner fibrations, with $P \defeq \St_B(\pi)$ and $Q \defeq \St_B(\xi)$. Given a fibered functor $\varphi: E \to_B F$, the data of a \emph{fibered left adjoint right inverse (LARI) adjunction} is given by
				\begin{itemize}
					\item a fibered functor $\psi: F \to_B E$,
					\item and an equivalence $\Phi: \relcomma{B}{\psi}{E} \simeq_{F \times_B E} \relcomma{B}{F}{\varphi}$ s.t.~the fibered unit
					\[ \eta_\Phi \defeq \lambda b,d.\Phi_{b,d,\psi_b \, d}(\id_{\psi_b\,d}):\prod_{b:B} \prod_{d:Q\,b} d \to_{Q\,b} (\varphi \psi)_b(d) \]
					is a componentwise homotopy.
				\end{itemize} 
				Together, this defines the data of a \emph{fibered LARI adjunction}. Diagrammatically, we represent this by:
				\[\begin{tikzcd}
					E && F \\
					& B
					\arrow["\pi"', two heads, from=1-1, to=2-2]
					\arrow["\xi", two heads, from=1-3, to=2-2]
					\arrow[""{name=0, anchor=center, inner sep=0}, "\varphi"', from=1-1, to=1-3]
					\arrow[""{name=1, anchor=center, inner sep=0}, "\psi"', curve={height=12pt}, dotted, from=1-3, to=1-1]
					\arrow["\dashv"{anchor=center, rotate=-90}, draw=none, from=1, to=0]
				\end{tikzcd}\]
			\end{definition}
			
			In fact, as established in the previous works of~\cite[Section~11]{RS17} the unit of a coherent adjunction is determined uniquely up to homotopy. Hence, using the characterizations of a (coherent) LARI adjunction, the type of fibered LARI adjunctions in the above sense is equivalent to the type of LARI adjunctions which are also fibered adjunctions. This implies the validity of the familiar closure properties for this restricted class as well.

			\section{Sliced commas and products}\label{sec:sl-comma-prod}
			
			We record here explicitly some closure properties involving sliced commas and products that are often used, especially in the treatise of two-sided fibrations and related notions.
			
			\begin{proposition}[Dependent products of sliced commas]\label{prop:dep-prod-comm-sl-commas}
				For a type $I$ and $i:I$, given fibered cospans
				\[ \psi_i: F_i \to_{B_i} G_i \leftarrow_{B_i} E_i : \varphi_i \]
				of Rezk types, taking the dependent product fiberwisely commutes with forming sliced comma types:
				\[\begin{tikzcd}
					& {\psi_i \downarrow_{B_i} \varphi_i} \\
					{F_i} && {E_i} & {\prod_{i:I} (\psi_i \downarrow_{B_i} \varphi_i)} & {\left(\prod_{i:I} \psi_i \right) \downarrow_{\left(\prod_{i:I} B_i \right)} \left(\prod_{i:I}\varphi_i\right)} \\
					& {G_i} & {} & {} & {} \\
					& {B_i} &&& {\prod_{i:I} B_i}
					\arrow["{\psi_i}"{description}, two heads, from=2-1, to=3-2]
					\arrow["{\varphi_i}"{description}, two heads, from=2-3, to=3-2]
					\arrow[two heads, from=3-2, to=4-2]
					\arrow[curve={height=12pt}, two heads, from=2-1, to=4-2]
					\arrow[curve={height=-12pt}, two heads, from=2-3, to=4-2]
					\arrow[from=1-2, to=2-1]
					\arrow[from=1-2, to=2-3]
					\arrow[shorten <=25pt, shorten >=25pt, Rightarrow, from=2-1, to=2-3]
					\arrow[two heads, from=2-4, to=4-5]
					\arrow[squiggly, from=3-3, to=3-4]
					\arrow[two heads, from=2-5, to=4-5]
					\arrow[""{name=0, anchor=center, inner sep=0}, shift left=2, curve={height=-6pt}, from=2-5, to=2-4]
					\arrow[""{name=1, anchor=center, inner sep=0}, shift left=2, curve={height=-6pt}, from=2-4, to=2-5]
					\arrow["\simeq"{description}, Rightarrow, draw=none, from=1, to=0]
				\end{tikzcd}\]
			\end{proposition}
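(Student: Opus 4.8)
The plan is to reduce the statement to the typal Grothendieck construction (\Cref{thm:typal-gconstr}) and then to observe that, fiberwise over the base, a sliced comma is built entirely out of $\Sigma$-types and a single directed $\hom$-type, each of which commutes with $\prod_{i:I}$ in the relevant sense. First I would fibrantly replace: for $i:I$ write $R_i \defeq \St_{B_i}(G_i \fibarr B_i)$, $P_i \defeq \St_{B_i}(E_i \fibarr B_i)$, and $Q_i \defeq \St_{B_i}(F_i \fibarr B_i)$, so that $\varphi_i$ and $\psi_i$ become fiberwise maps $\varphi_i : \prod_{b:B_i} P_i\,b \to R_i\,b$ and $\psi_i : \prod_{b:B_i} Q_i\,b \to R_i\,b$. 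Unwinding the definition of the sliced comma type, the projection $\relcomma{B_i}{\varphi_i}{\psi_i} \fibarr B_i$ then straightens to the family $\lambda b.\sum_{x:P_i\,b}\sum_{y:Q_i\,b}\big(\varphi_i(x) \to_{R_i\,b} \psi_i(y)\big)$. By projection equivalence together with the standard description of the dependent product of a fibration (as used, e.g., in the proof of \Cref{prop:2s-cart-closed-pi-loc}), the left-hand side $\prod_{i:I}\relcomma{B_i}{\varphi_i}{\psi_i}$, regarded over $\prod_{i:I}B_i$, has straightening $\lambda\beta.\prod_{i:I}\sum_{x:P_i\,\beta_i}\sum_{y:Q_i\,\beta_i}\big(\varphi_i(x) \to_{R_i\,\beta_i} \psi_i(y)\big)$, while the right-hand side $\big(\prod_{i:I}\psi_i\big)\downarrow_{\prod_{i:I}B_i}\big(\prod_{i:I}\varphi_i\big)$ has straightening $\lambda\beta.\sum_{\sigma:\prod_{i:I}P_i\,\beta_i}\sum_{\tau:\prod_{i:I}Q_i\,\beta_i}\big((\prod_{i:I}\varphi_i)(\sigma) \to_{\prod_{i:I}R_i\,\beta_i} (\prod_{i:I}\psi_i)(\tau)\big)$, using that $\prod_{i:I}\varphi_i$ and $\prod_{i:I}\psi_i$ are computed pointwise.

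Next I would exhibit a fiberwise chain of equivalences between these two families over a fixed $\beta:\prod_{i:I}B_i$. The crucial input is that the directed $\hom$-type, being an extension type along $\partial\Delta^1 \hookrightarrow \Delta^1$, commutes with $\prod_{i:I}$: since $\prod_{i:I}R_i\,\beta_i$ is a function type and extension types obey the same structural rules as $\Pi$-types (\cite[Subsection~2.2]{RS17}), one gets $\big((\lambda i.\varphi_i(\sigma_i)) \to_{\prod_{i:I}R_i\,\beta_i} (\lambda i.\psi_i(\tau_i))\big) \simeq \prod_{i:I}\big(\varphi_i(\sigma_i) \to_{R_i\,\beta_i} \psi_i(\tau_i)\big)$. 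Combining this with the distributivity equivalence $\prod_{i:I}\sum_{x}C(i,x) \simeq \sum_{f}\prod_{i:I}C(i,f\,i)$ (the type-theoretic axiom of choice, \cf~\cite{hottbook}, applied twice to pull the two $\Sigma$'s out past $\prod_{i:I}$) yields a composite equivalence from $\prod_{i:I}\sum_{x}\sum_{y}\big(\varphi_i(x)\to\psi_i(y)\big)$ to $\sum_{\sigma}\sum_{\tau}\big((\prod_{i:I}\varphi_i)(\sigma)\to(\prod_{i:I}\psi_i)(\tau)\big)$, which is precisely the comparison map we want. Since each step is natural in $\beta$ and is manifestly compatible with the first and second projections, the composite assembles (via \Cref{thm:typal-gconstr} again) into a fibered equivalence over $\prod_{i:I}B_i$, compatible with the evident projections down to $\prod_{i:I}F_i$ and $\prod_{i:I}E_i$, as claimed.

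The only genuinely non-formal ingredient is the commutation of the extension type defining $\hom$ with $\prod_{i:I}$; once this is granted, everything else is a bookkeeping exercise in the Grothendieck correspondence and in distributivity of $\Sigma$ over $\Pi$, with no coherence obstructions since all manipulations take place fiberwise over a fixed $\beta$. I therefore expect the main effort in the write-up to lie in carefully recording the projection-equivalence normal forms of both sides and in checking that the resulting equivalence indeed respects the slicing over $\prod_{i:I}B_i$ — rather than in any single hard step.
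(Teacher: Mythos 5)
Your proposal is correct and follows essentially the same route as the paper's proof: fibrantly replace via projection equivalence, compute the sliced comma as a $\Sigma$-type over a fiberwise $\hom$, and then commute $\prod_{i:I}$ past the $\Sigma$'s by the type-theoretic axiom of choice and past the $\hom$ by the fact that extension types distribute over $\Pi$. The only difference is presentational: the paper leaves the $\hom$-commutation step implicit inside its chain of equivalences, whereas you correctly isolate it as the one non-formal ingredient.
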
 
			
			\begin{proof}
				We denote by $P_i, Q_i, R_i: B_i \to \UU$ the straightenings of the given maps $E_i \fibarr B_i$, $F_i \fibarr B_i$, and $G_i \fibarr B_i$, resp.
				Using projection equivalence, the sliced commas are computed as
				\begin{align}\relcomma{B_i}{\psi_i}{\varphi_i} \simeq \sum_{b:B_i} \sum_{\substack{e:P_i(b) \\ d:Q_i(b)}} \big(\psi_i\big)_b(d) \longrightarrow_{R_i(b)} \big(\varphi_i\big)_b(e).
				\end{align}\label{eq:fam-sliced-comma}
				From this and the type-theoretic axiom of choice, we obtain as projection equivalence for
				\[ \prod_{i:I} (\psi_i \downarrow_{B_i} \varphi_i) \fibarr \prod_{i:I} B_i\]
				the type
				\begin{align*}
					\prod_{i:I} (\psi_i \downarrow_{B_i} \varphi_i)  & \stackrel{\text{(\ref{eq:fam-sliced-comma})} }{\simeq} \prod_{i:I}  \sum_{b:B_i}  \sum_{\substack{e:P_i(b) \\ d:Q_i(b)}} \big(\psi_i\big)_b(d) \longrightarrow_{R_i(b)} \big(\varphi_i\big)_b(e) \\
					&  \stackrel{\text{(AC)}}{\simeq} \sum_{\beta:\prod_{i:I} B_i} \prod_{i:I} \sum_{\substack{e:P_i(\beta(i)) \\ d:Q_i(\beta(i))}} \big(\psi_i\big)_{\beta(i)}(d) \longrightarrow_{R_i(\beta(i))} \big(\varphi_i\big)_{\beta(i)}(e) \\
					& \stackrel{\text{(\ref{eq:fam-sliced-comma})}}{\simeq} \sum_{\beta:\prod_{i:I} B_i} 
					\sum_{\substack{\sigma:\prod_{i:I} P_i(\beta(i)) \\	\tau:\prod_{i:I} Q_i(\beta(i))}} \big( \prod_{i:I} \psi(\tau) \big) \to_{ \big( \prod_{i:I} R_i(\beta(i))\big) }  \big( \prod_{i:I} \varphi_i(\sigma)\big) \\
					&  \stackrel{\text{(AC)}}{\simeq} \left(\prod_{i:I} \psi_i \right) \downarrow_{\left(\prod_{i:I} B_i \right)} \left(\prod_{i:I} \varphi_i \right)	
				\end{align*}
				This yields the desired fibered equivalence.
			\end{proof}
			
			\begin{corollary}[Products of commas in a slice]\label{lem:prod-comma-slice}
				Fix a base Rezk type $B$ be and an indexing type $I$. Given for $i:I$ an isoinner fibration $\pi_i:E_i \fibarr B$ consider a cospan of isoinner fibrations $\psi_i: F_i \to E_i \leftarrow G_i: \varphi$. Then we have a fibered equivalence:
				\[\begin{tikzcd}
					{ \times_{i:I}^B \big( \relcomma{B}{\psi_i}{\varphi_i}  \big)} && {\relcomma{B}{\left( \times_{i:I}^B \varphi_i \right)}{\left(\times_{i:I}^B \psi_i \right)}} \\
					{\prod_{i:I} F_i \times_B E_i} && {\left(\prod_{i:I} F_i\right) \times_{I \to B} \left(\prod_{i:I} E_i\right)} \\
					& B
					\arrow[two heads, from=1-1, to=2-1]
					\arrow["\simeq", from=1-1, to=1-3]
					\arrow["\simeq", from=2-1, to=2-3]
					\arrow[two heads, from=1-3, to=2-3]
					\arrow[two heads, from=2-1, to=3-2]
					\arrow[two heads, from=2-3, to=3-2]
				\end{tikzcd}\]
			\end{corollary}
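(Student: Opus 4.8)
The plan is to deduce this as the special case of \Cref{prop:dep-prod-comm-sl-commas} obtained by pulling back along the constant map $\cst \colon B \to B^I$, in exactly the spirit in which \Cref{prop:fib-eqs-sliced-pi} is derived from \Cref{lem:fib-eq-closed-pi}. Recall that for any family of fibrations $\chi_i \colon X_i \fibarr B$ indexed by $i \colon I$, the sliced product $\times_{i:I}^B X_i$ is by definition the pullback of $\prod_{i:I} X_i \fibarr B^I$ along $\cst$. Hence both sides of the claimed equivalence arise as pullbacks along $\cst$: the left-hand side $\times_{i:I}^B\big(\relcomma{B}{\psi_i}{\varphi_i}\big)$ is $\cst^*\big(\prod_{i:I}\relcomma{B}{\psi_i}{\varphi_i}\big)$ by the definition of the sliced product, and the right-hand side $\relcomma{B}{(\times_{i:I}^B \varphi_i)}{(\times_{i:I}^B\psi_i)}$ will be identified with $\cst^*\big(\relcomma{B^I}{(\prod_{i:I}\varphi_i)}{(\prod_{i:I}\psi_i)}\big)$.

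First I would invoke \Cref{prop:dep-prod-comm-sl-commas} with constant base family $B_i \defeq B$, yielding a fibered equivalence $\prod_{i:I}\relcomma{B}{\psi_i}{\varphi_i} \simeq \relcomma{B^I}{(\prod_{i:I}\varphi_i)}{(\prod_{i:I}\psi_i)}$ over $B^I$ (more precisely over $\big(\prod_{i:I}F_i\big) \times_{B^I} \big(\prod_{i:I} E_i\big)$). Pulling this fibered equivalence back along $\cst \colon B \to B^I$, \Cref{lem:rprop} (right properness: pullbacks of weak equivalences are weak equivalences) guarantees that the induced map between the pullbacks is again an equivalence; alternatively one may phrase this invocation via \Cref{prop:htopy-inv-of-pb}. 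Since the original equivalence lived over $B^I$ and $\cst$ is a map of bases lying over $B$, the pulled-back equivalence is fibered over $B$, as required.

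The remaining work is bookkeeping: checking that $\cst^*$ sends the two sides of \Cref{prop:dep-prod-comm-sl-commas} to the two sides of the present statement. For the left side this is immediate from the definition of $\times_{i:I}^B$. For the right side one uses that sliced comma types commute with pullback of the base — the same projection-equivalence computation (together with the type-theoretic axiom of choice) that occurs in the proof of \Cref{prop:clos-sl-cocart-fib-pb} and in the chain of equivalences inside the proof of \Cref{prop:dep-prod-comm-sl-commas} — applied to the cospan $\prod_{i:I}\psi_i \to_{B^I} \prod_{i:I}E_i \leftarrow_{B^I} \prod_{i:I}G_i$ along $\cst$. Concretely, straightening $\pi_i, \psi_i, \varphi_i$ to families $P_i, Q_i, R_i \colon B \to \UU$, both $\cst^*\big(\relcomma{B^I}{(\prod_i\varphi_i)}{(\prod_i\psi_i)}\big)$ and $\relcomma{B}{(\times_i^B\varphi_i)}{(\times_i^B\psi_i)}$ are computed to be $\sum_{b:B}\sum_{\sigma:\prod_i P_i(b),\ \tau:\prod_i Q_i(b)} \big(\prod_i \psi_i(\tau)\big) \to_{\prod_i R_i(b)} \big(\prod_i \varphi_i(\sigma)\big)$, and the comparison is the evident map.

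I expect the only genuinely delicate point to be arranging the two base-change identifications to be strictly compatible, so that the resulting map is literally the fibered comparison functor $\times_{i:I}^B\big(\relcomma{B}{\psi_i}{\varphi_i}\big) \to \relcomma{B}{(\times_i^B\varphi_i)}{(\times_i^B\psi_i)}$ rather than merely an unspecified equivalence; this is handled exactly as in \Cref{prop:dep-prod-comm-sl-commas}, by carefully tracking the projection equivalences. Everything else — right properness and the axiom-of-choice rearrangements — is routine.
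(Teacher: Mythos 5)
Your proposal is correct and follows exactly the route the paper intends: the corollary is stated without proof as an immediate consequence of \Cref{prop:dep-prod-comm-sl-commas}, obtained by pulling back along $\cst \colon B \to B^I$ and using right properness together with the commutation of pullback with sliced commas — precisely the maneuver the paper spells out later in the proof of \Cref{prop:fib-lari-pres-by-sl-prod}. Your explicit bookkeeping of the projection equivalences on both sides is a welcome addition but does not change the argument.
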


			\begin{proposition}[Fibered (LARI) adjunctions are preserved by sliced products]\label{prop:fib-lari-pres-by-sl-prod}
				For an indexing type $I$ and a base Rezk type $B$, families of fibered (LARI) adjunctions between Rezk types over $B$ are closed under taking sliced products, \ie: Given a family of isoinner fibrations over $B$ together with a fibered (LARI) adjunction as below left, the 
				induced maps on the right make up a fibered (LARI) adjunction as well:
				\[\begin{tikzcd}
					{E_i} && {F_i} & {\times_{i:I}^B E_i} && {\times_{i:I}^B F_i} \\
					& B &&& B
					\arrow[two heads, from=1-3, to=2-2]
					\arrow[two heads, from=1-1, to=2-2]
					\arrow[""{name=0, anchor=center, inner sep=0}, "{\varphi_i}"{description}, curve={height=8pt}, from=1-1, to=1-3]
					\arrow[""{name=1, anchor=center, inner sep=0}, "{\psi_i}"{description}, curve={height=14pt}, from=1-3, to=1-1]
					\arrow[squiggly, from=1-3, to=1-4]
					\arrow[two heads, from=1-6, to=2-5]
					\arrow[two heads, from=1-4, to=2-5]
					\arrow[""{name=2, anchor=center, inner sep=0}, "{\times_{i:I}^B \varphi_i}"{description}, curve={height=8pt}, from=1-4, to=1-6]
					\arrow[""{name=3, anchor=center, inner sep=0}, "{\times_{i:I}^B \psi_i}"{description}, curve={height=14pt}, from=1-6, to=1-4]
					\arrow["\dashv"{anchor=center, rotate=-90}, draw=none, from=1, to=0]
					\arrow["\dashv"{anchor=center, rotate=-90}, draw=none, from=3, to=2]
				\end{tikzcd}\]
			\end{proposition}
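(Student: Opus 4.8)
The plan is to run everything through the sliced-comma characterization of fibered left adjoints, \Cref{thm:char-fib-adj}, \Cref{it:fib-ladj-sliced}, and then transport the resulting data along sliced products using the two preservation results already in hand: the commutation of sliced products with sliced comma types (\Cref{lem:prod-comma-slice}, itself a specialization of \Cref{prop:dep-prod-comm-sl-commas}), and the closure of families of fibered equivalences under sliced products (\Cref{prop:fib-eqs-sliced-pi}, with the dependent-product version \Cref{lem:fib-eq-closed-pi}).

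Concretely, first apply \Cref{thm:char-fib-adj} to each given fibered adjunction $\psi_i \dashv_B \varphi_i$: this produces, for every $i:I$, a fibered equivalence $\Phi_i \colon \relcomma{B}{\psi_i}{E_i} \simeq_{F_i \times_B E_i} \relcomma{B}{F_i}{\varphi_i}$ whose induced fibered unit $\eta_{\Phi_i}$ is a componentwise homotopy. Next, form the sliced product over $I$. By \Cref{lem:prod-comma-slice} there are fibered equivalences
\[ \times_{i:I}^B\big(\relcomma{B}{\psi_i}{E_i}\big) \simeq \relcomma{B}{\big(\times_{i:I}^B \psi_i\big)}{\big(\times_{i:I}^B E_i\big)}, \qquad \times_{i:I}^B\big(\relcomma{B}{F_i}{\varphi_i}\big) \simeq \relcomma{B}{\big(\times_{i:I}^B F_i\big)}{\big(\times_{i:I}^B \varphi_i\big)}, \]
both over $\big(\times_{i:I}^B F_i\big)\times_B\big(\times_{i:I}^B E_i\big)$. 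By \Cref{prop:fib-eqs-sliced-pi} the sliced product $\times_{i:I}^B \Phi_i$ is again a fibered equivalence, so composing it with the two identifications above yields a fibered equivalence $\relcomma{B}{(\times_{i:I}^B\psi_i)}{(\times_{i:I}^B E_i)} \simeq \relcomma{B}{(\times_{i:I}^B F_i)}{(\times_{i:I}^B\varphi_i)}$ over the appropriate fibered product. By \Cref{thm:char-fib-adj} this is precisely the data exhibiting $\times_{i:I}^B\psi_i$ as a fibered left adjoint of $\times_{i:I}^B\varphi_i$ over $B$, once we check that its induced unit is componentwise vertical.

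For that, one unwinds the fibered unit through the projection equivalences underlying \Cref{prop:dep-prod-comm-sl-commas} and \Cref{lem:prod-comma-slice}: over $b:B$ and a point $d \colon \prod_{i:I} Q_i\,b$ of the fiber of $\times_{i:I}^B F_i$ (where $Q_i \defeq \St_B(\xi_i)$), the corresponding component of the unit is the dependent tuple $\big(\eta_{\Phi_i,b,d(i)}\big)_{i:I}$ of the componentwise-vertical units of the factors; being a dependent tuple of identity homotopies it is again an identity homotopy, by (weak) function extensionality exactly as in the proof of \Cref{lem:fib-eq-closed-pi}. This establishes the unit condition of \Cref{def:fib-lari-adj}, so $\times_{i:I}^B\psi_i \dashv_B \times_{i:I}^B\varphi_i$ is a fibered adjunction. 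The LARI refinement is analogous: the right-inverse datum $\times_{i:I}^B\varphi_i \circ \times_{i:I}^B\psi_i \simeq \times_{i:I}^B(\varphi_i\psi_i) \simeq \id$ is assembled componentwise from the identifications $\varphi_i\psi_i \simeq \id$, and by the uniqueness of units of coherent (LARI) adjunctions recalled after \Cref{def:fib-lari-adj} it promotes to a fibered LARI adjunction.

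Apart from stitching together the three cited lemmas, the step that requires genuine care — and the main obstacle to a fully formal write-up — is checking that the equivalence produced by \Cref{lem:prod-comma-slice} is compatible with the adjunction units, i.e.\ that the unit of $\times_{i:I}^B\Phi_i$ is carried, under the projection equivalences, to the pointwise tuple of the units $\eta_{\Phi_i}$ rather than to some merely equivalent $2$-cell whose verticality is not manifest. This is a naturality-bookkeeping matter, settled by tracking the construction of \Cref{prop:dep-prod-comm-sl-commas} through the type-theoretic axiom of choice, but it is where the argument goes beyond a bare invocation of the closure properties.
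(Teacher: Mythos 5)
Your proposal is correct and follows essentially the same route as the paper: pass to the sliced-comma characterization of \Cref{thm:char-fib-adj}, use that sliced products preserve fibered equivalences (the paper inlines the dependent-product-then-pullback-along-$\cst$ argument that \Cref{prop:fib-eqs-sliced-pi} packages) and commute with sliced commas via \Cref{prop:dep-prod-comm-sl-commas,lem:prod-comma-slice}. Your extra care about the componentwise verticality of the induced unit for the LARI refinement is a point the paper leaves implicit, but it is handled exactly as you describe.
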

			
			\begin{proof}
				Given a family of fibered adjunctions as indicated amounts to a family of fibered equivalences, themselves fibered over $B$, for $i:I$:
				\[\begin{tikzcd}
					{\psi_i \downarrow_B E_i} && {\varphi_i \downarrow_B E_i} \\
					& {F_i \times_B E_i} \\
					& B
					\arrow[two heads, from=1-1, to=2-2]
					\arrow[two heads, from=1-3, to=2-2]
					\arrow[two heads, from=1-1, to=3-2]
					\arrow[two heads, from=1-3, to=3-2]
					\arrow[""{name=0, anchor=center, inner sep=0}, curve={height=-12pt}, from=1-1, to=1-3]
					\arrow[""{name=1, anchor=center, inner sep=0}, curve={height=-6pt}, from=1-3, to=1-1]
					\arrow[two heads, from=2-2, to=3-2]
					\arrow["\simeq"{description}, draw=none, from=0, to=1]
				\end{tikzcd}\]
				Taking the dependent product over $i:I$ produces a fibered equivalence, itself fibered over $B^I$. Pullback along $\cst: B \to B^I$ yields the sliced products and again preserves the fibered equivalence:
				\[\begin{tikzcd}
					{	{\times_{i:I}^B \psi_i \downarrow_B E_i}} &&& {\prod_{i:I} \psi_i \downarrow_B E_i } & {} \\
					& {{\times_{i:I}^B F_i \downarrow_B \varphi_i }} & {} && {\prod_{i:I} F_i \downarrow_B \varphi_i} & {} \\
					& {{\times_{i:I}^B F_i \times_B E_i}} &&& {\prod_{i:I} F_i \times_B E_i} \\
					B &&& {B^I}
					\arrow[two heads, from=1-1, to=4-1]
					\arrow[from=1-1, to=1-4]
					\arrow["\cst"', from=4-1, to=4-4]
					\arrow[two heads, from=1-1, to=3-2]
					\arrow[two heads, from=3-2, to=4-1]
					\arrow[two heads, from=3-5, to=4-4]
					\arrow["\simeq"{description}, dashed, from=1-1, to=2-2]
					\arrow[two heads, from=2-2, to=3-2]
					\arrow["\simeq"{description}, from=1-4, to=2-5]
					\arrow[two heads, from=2-5, to=3-5]
					\arrow[two heads, from=1-4, to=4-4]
					\arrow["\lrcorner"{anchor=center, pos=0.125}, draw=none, from=3-2, to=4-4]
					\arrow["\lrcorner"{anchor=center, pos=0.125}, draw=none, from=2-2, to=3-5]
					\arrow["\lrcorner"{anchor=center, pos=0.125}, draw=none, from=1-1, to=2-5]
					\arrow[two heads, from=1-4, to=3-5, crossing over]
					\arrow[from=3-2, to=3-5, crossing over]
					\arrow[from=2-2, to=2-5, crossing over]
				\end{tikzcd}\]
				Since sliced products canonically commute with both sliced commas and fiber products, this gives a fibered equivalence
				\[{\scriptsize
					\begin{tikzcd}
					{\big( \times_{i:I}^B \psi_i  \big) \downarrow_{I \to B} \big( \times_{i:I}^B  E_i \big)} && {\big(\times_{i:I}^B F_i \big) \downarrow_{I \to B} \big( \times_{i:I}^B  \varphi_i\big)} \\
					\\
					& {\big(\times_{i:I}^B F_i \big) \times_{I \to B} \big(\times_{i:I}^B E_i \big)}
					\arrow[""{name=0, anchor=center, inner sep=0}, curve={height=-12pt}, from=1-1, to=1-3]
					\arrow[""{name=1, anchor=center, inner sep=0}, curve={height=-12pt}, from=1-3, to=1-1]
					\arrow[two heads, from=1-1, to=3-2]
					\arrow[two heads, from=1-3, to=3-2]
					\arrow["\simeq"{description}, Rightarrow, draw=none, from=0, to=1]
				\end{tikzcd} }\]
				which exactly yields the desired fibered adjunction of the sliced products.
			\end{proof}


%

\section*{Acknowledgments}
I am grateful to the US Army Research Office for the support of some stages of this work under MURI Grant W911NF-20-1-0082. I also thank the MPIM~Bonn for its hospitality and financial support during some work on this project. I wish to thank Ulrik Buchholtz, Emily Riehl, and Thomas Streicher for many helpful discussions, valuable feedback, and steady guidance. An anonymous referee has provided detailed and valuable corrections and suggestions to significantly improve the presentation of the paper, which I am highly grateful for.
I am thankful to Tim Campion and Sina Hazratpour for further discussions.

Furthermore, I am indebted to Ulrik Buchholtz for his collaboration on synthetic fibered $\inftyone$-category theory that made the work at hand possible in the first place.

\phantomsection%
\nocite{AFfib,GHT17,RV2cat,RVexp,RVscratch,RVyoneda,rasekh2021cartesian,LiBint,clementino2020lax,hermida1992fibred,rezk2017stuff,kock2013local,BorHandb2,BarwickShahFib,JoyQcat}

\printbibliography[heading=bibintoc]

\end{document}